\def\mainfile
\DeclareMathAlphabet{\mathpzc}{OT1}{pzc}{m}{it}
\newcommand{\Bad}{{\mathsf{Bad}}}
\numberwithin{equation}{section}
\newtheorem{thm}[subsection]{Theorem}
\newtheorem{coro}[subsection]{Corollary}
\newtheorem*{cor*}{Corollary}
\newtheorem{lemma}[subsection]{Lemma}
\newtheorem{propos}[subsection]{Proposition}
\newtheorem*{thm*}{Theorem}
\newtheorem*{thma*}{Theorem A}
\newtheorem*{thmb*}{Theorem B}
\newtheorem{defn}[subsection]{Definition}
\newtheorem{cor}{Corollary}[subsection]
\newcounter{consta}
\renewcommand{\theconsta}{{A_{\arabic{consta}}}}
\newcommand{\consta}{\refstepcounter{consta}\theconsta}
\newcounter{constb}
\newcounter{constc}
\newcounter{constD}
\newcommand\absolute[1]{{\left|{#1}\right|}}
\newcommand\norm[1]{{\left\|{#1}\right\|}}
\newcommand{\elon}[1]{{\color{Fuchsia}#1}}
\newcommand{\newelon}[1]{{\color{ForestGreen}#1}}
\DeclareMathOperator{\exceptional}{Exc}
\def\bbz{\mathbb{Z}}
\def\bbq{\mathbb{Q}}
\def\bbr{\mathbb{R}}
\def\bbc{\mathbb{C}}
\def\bbn{\mathbb{N}}
\def\Gbf{\mathbb{G}}
\def\Q{\bbq}
\def\Z{\bbz}
\def\R{\bbr}
\def\fcal{\mathcal{F}}
\def\bcal{\mathcal{B}}
\def\ncal{\mathcal{N}}
\def\hcal{\mathcal{H}}
\def\Hcal{\mathcal{H}}
\def\hfrak{\mathfrak{h}}
\def\ufrak{\mathfrak{u}}
\def\gfrak{\mathfrak{g}}
\def\lfrak{\mathfrak{l}}
\def\Gbf{\mathbf{G}}
\def\Hbf{\mathbf{H}}
\def\Lbf{\mathbf{L}}
\def\Wbf{{\mathbf{W}}}
\def\Mbf{\mathbf{M}}
\def\vbf{\mathbf{v}}
\def\H{\Hbf}
\def\G{\Gbf}
\def\apz{\mathpzc{a}}
\def\bpz{\mathpzc{b}}
\def\dpz{\mathpzc{d}}
\def\vpz{\mathpzc{v}}
\def\upz{\mathpzc{u}}
\def\wpz{\mathpzc{w}}
\def\zpz{\mathpzc{z}}
\def\SL{{\rm{SL}}}
\def\GL{{\rm{GL}}}
\def\Lie{{\rm Lie}}
\DeclareMathOperator\height{ht}
\DeclareMathOperator\Ad{Ad}
\def\sl{{\mathfrak{sl}}}
\def\cfun{{\mathsf c}}
\def\gadd{{\rm G}_{\rm a}}
\def\qpl{{\mathcal S}}
\def\places{{\Sigma}}
\def\la{\lambda}
\def\vare{\varepsilon}
\def\zg0{Z_{G_\omega}(s)}
\def\zg{Z_G(s)}
\def\be{\begin{equation}}
\def\ee{\end{equation}}
\def\lplus{\Mbf}
\def\llplus{M}
\def\vsr{{r}}       % the radius in vector space
\def\vsballr{{B_{V_H}(\vpz_H,\vsr)}}
\def\RMH{{m}}
\def\rad{\operatorname{R}}
\def\dotll{{\ll}}
\def\rank{{\rm rk}}
\def\zeds{\bbz_\places}
\def\vsdist{{\rm d}}
\begin{document}
\title[Quantitative behavior of unipotent flows]{Quantitative behavior of unipotent flows and an effective avoidance principle}
\date{\today}

\author{E.~Lindenstrauss}
\address{E.L.: The Einstein Institute of Mathematics, Edmond J.\ Safra Campus, 
Givat Ram, The Hebrew University of Jerusalem, Jerusalem, 91904, Israel}
\email{elon@math.huji.ac.il}
\thanks{E.L.\ acknowledges support by the ISF 891/15. Parts of this work was written when E.L. was a member at the IAS in Princeton, supported by NSF grant DMS-1638352}

\author{G.~Margulis}
\address{G.M.: Mathematics Department, Yale university, New Haven, CT 06511}
\email{gregorii.margulis@yale.edu}
\thanks{G.M.\ acknowledges support by the NSF grant DMS-1265695}

\author{A.~Mohammadi}
\address{A.M.: Department of Mathematics, University of California, San Diego, CA 92093}
\email{ammohammadi@ucsd.edu}
\thanks{A.M.\ acknowledges support by the NSF grants 
DMS-1724316, 1764246, 1128155, and Alfred P.~Sloan Research Fellowship.}

\author{N.~Shah}
\thanks{N.S.\ acknowledges support by the NSF grant DMS-1700394}
\address{N.S.: Department of Mathematics, The Ohio State University, Columbus, OH 43210}
\email{shah@math.osu.edu}

\begin{abstract}
We give an effective bound on how much time orbits of a unipotent group $U$ on an arithmetic quotient $G / \Gamma$ can stay near homogeneous subvarieties of $G / \Gamma$ corresponding to $\bbq$-subgroups of $G$. In particular, we show that if such a $U$-orbit is moderately near a proper homogeneous subvariety of $G / \Gamma$ for a long time, it is very near a different homogeneous subvariety. Our work builds upon the linearization method of Dani and Margulis.

Our motivation in developing these bounds is in order to prove quantitative density statements about unipotent orbits, which we plan to pursue in a subsequent paper. New qualitative implications of our effective bounds are also given.
\end{abstract}
\maketitle

\section{Introduction}\label{sec:introduction}
A basic challenge in homogeneous dynamics is the quantitative understanding of behavior of orbits, in particular of unipotent orbits. In this paper, we give a sharper form of the Dani-Margulis linearization method~\cite{DM}, that allows to control the amount of time a unipotent trajectory spends near invariant subvarieties of a homogeneous space; related techniques were also considered by Shah in~\cite{Shah}. 

One important use of this technique is to be able to relate the behavior of individual unipotent (or unipotent-like, see e.g. \cite{EMS}) orbits with Ratner's landmark measure classification result \cite{Ratner-Ann}. This result says that any measure invariant and ergodic under a connected unipotent group $U$ on a homogeneous space $G/\Gamma$ has to be in one of countably many families; for the cases of $G / \Gamma$ and unipotent group $U$ we will consider, the group $U$ acts ergodically on  $G / \Gamma$ with respect to the uniform measure on $G/\Gamma$, hence this uniform measure is one of the countably many possibilities. All other ergodic measures will be supported on proper homogeneous subvarieties of $G / \Gamma$. If one is able to show, using linearization or a different technique, that a given collection of orbits under consideration of increasing size do not spend much time next to any one of these countably many families of not fully supported invariant measures, then one is able to conclude using the measure classification that this collection of orbits tends to become equidistributed in $G / \Gamma$. We note that for the special case when one looks at a single orbit of a one parameter unipotent group Ratner was able to establish such an avoidance by a different argument in \cite{Ratner-Top}.

However, the linearization technique of \cite{DM} is interesting in its own sake, and in fact originated in work of Dani and Margulis before the proof of measure classification such as \cite{DM-SL3} in order to give a purely topological proof of Raghunathan Conjecture for the action of a generic one parameter unipotent group on $\SL_3 (\bbr) / \SL_3 (\bbz)$. Notably, unlike the techniques of Ratner used to prove the measure classification result in \cite{Ratner-Ann, Ratner-Duke} or the techniques used to give a related but different proof of this result by Margulis and Tomanov in~\cite{MT}, which in particular rely on results such as the pointwise ergodic theorem and Luzin's theorem which are hard to make effective, the linearization technique relies essentially only on the polynomial nature of the action: not only are the elements of the unipotent group (considered as a subgroup of some $\SL_N$) polynomial, but the same holds for any linear representation of~$G$.

In a subsequent paper we plan to make essential use of the results of this paper in order to provide a fully effective orbit closure classification theorem for unipotent flows on arithmetic homogenous spaces (albeit with very slow rates). We provide some other applications of independent interest here.

Somewhat surprisingly, many of the most striking applications of the theory of unipotent flows to number theory require working in the $S$-arithmetic context, i.e. for products of real and $p$-adic groups (here we prefer to use $\Sigma$ for the set of places instead of the more traditional $S$, so we refer to this case as the $\Sigma$-arithmetic case). Ratner's measure classification result was generalized to this context by Ratner \cite{Ratner-Duke} and by Margulis and Tomanov~\cite{MT}; the linearization techniques of Dani and Margulis were adapted to this context by Tomanov and by Gorodnik and Oh in~\cite{T, GrOh-AdelEq}. In view to potential applications, our paper is written for $\Sigma$-arithmetic quotients. For simplicity we state here in the introduction the main results in the special case where we consider the action of a one-parameter unipotent group and consider only real algebraic groups, deferring stating the slightly more technical general statements to~\S\ref{sec:S-arith}.
We emphasize that in order to get a fully explicit and effective result, we assume that the lattice is arithmetic. By Margulis Arithmeticity Theorem this assumption automatically holds for a large class of groups $G$, and in any case arithmetic quotients are the only type of quotients $G/\Gamma$ that seem to appear in number theoretic applications.

The nondivergence result of Margulis \cite{Margulis-Nondiv}, which were sharpened by Dani in \cite{Dani} are effective and have been given a very explicit and effective form by Kleinbock and Margulis in \cite{KM}. The technique of linearization is related, but we are not aware of an effective treatment of the main results in~\cite{DM}, and doing so in this paper relies on employing an effective Nullstellensatz by Masser and W\"ustholz~\cite[Thm. IV]{EffNul-Mass-Wus} as well as some local non-vanishing theorems related to Lojasiewicz inequality by Brownawell and Greenberg~\cite{Brwell-Loj, GrBerg-Loj-1, GrBerg-Loj-2}. Moreover, since we are not content with analysing what happens in the limit, we need to be able to analyse trajectories that are somewhat near a subvariety for a long time, which is an issue that has not been discussed in previous works on the linearization method.

Let $\Gbf$ be a connected $\bbq$-group and put $G=\Gbf(\bbr)$.
We assume $\Gamma$ is an arithmetic lattice in $G$. 
More specifically, fix an embedding $\iota:\Gbf\rightarrow{\SL}_N,$ 
defined over $\bbq$ so that $\iota(\Gamma)\subset\SL_N(\bbz)$. 
Using $\iota$, we identify $\G$ with $\iota(\G)\subset\SL_N$ and hence $G\subset\SL_N(\bbr)$.  
Note that using the restriction of scalars from number fields to $\bbq$, our results are applicable also in the case of groups defined over a general number field.
%For any $\epsilon>0$ we let 
%\[
%\mathcal{K}_\ep:=\iota^{-1}\Bigl\{g\SL_N(\bbz_\places): \min_{0\neq v\in\bbz_\places^N}(gv)\geq\ep\Bigr\}.
%\]
%Since $\iota:G/\Gamma\to\SL_N(\bbq_\places)/\SL_N(\bbz_\places)$ 
%is a proper map, it follows from Mahler's compactness criteria 
%that $\mathcal{K}_\ep$ is a compact set for any $\ep>0.$ 

Let $U=\{u(t):t\in\bbr\}\subset G$ be a one parameter unipotent subgroup of $G$, and put $X=G/\Gamma$.

Define the following family 
\[
\mathcal{H}= \Bigl\{\Hbf\subset\G:\text{$\H$ is a connected $\bbq$-subgroup and $\rad(\H)=\rad_u(\H)$}\Bigr\} 
\]
where $\rad(\H)$ (resp.\ $\rad_u(\H)$) denotes the solvable (resp.\ unipotent) radical of $\H.$ 
Alternatively, $\Hbf\in\hcal$ if and only if $\Hbf$ is a connected $\bbq$-subgroup 
which is generated by unipotent subgroups over the algebraic closure of $\bbq.$ 
By a theorem of Borel and Harish-Chandra, $\H(\bbr)\cap\Gamma$ is a lattice in $\H(\bbr)$
for any $\H\in\mathcal H$.  

\medskip
\begin{center}
\textbf{Our standing assumption is that $\G\in\Hcal$ and that $U$ is not contained in $\H(\bbr)$ for any proper normal $\H \lhd \G$}.
\end{center}
\medskip

For any $\H\in\hcal$ put $H=\H(\bbr)$, we also write $H\in\hcal$. 
Define
\[
N_G(U,H):=\{g\in G: Ug\subset gH\}.
\]
Note that $N_G(U,H)$ is an $\bbr$-subvariety of $G.$ 
Moreover, if $H\lhd G$ and $U\subset H$, then $N_G(U,H)=G$.

Put
\[
\mathcal{S}(U)=\Bigl(\bigcup_{\substack{{H}\in\mathcal{H}\\ {H}\neq{G}}}\; N_G(U,H)\Bigr)/\Gamma
\hspace{2mm}\mbox{and}\hspace{2mm}\mathcal{G}(U)=X\setminus\mathcal{S}(U).
\]

Following Dani and Margulis,~\cite{DM}, points in $\mathcal{S}(U)$ are called {\it singular} with respect to $U,$
and points in $\mathcal{G}(U)$ are called {\it generic} with respect to $U$ 
--- these are, a priori, different from the measure theoretically generic  points in the sense of Furstenberg for
the action of $U$ on $X$ equipped with the the $G$-invariant probability measure (see e.g.\ \cite[p.~98]{Glasner} for a definition);  
however, any measure theoretically generic point is generic in this explicit sense as well. 
In the early 1990's Ratner proved the remarkable result~\cite{Ratner-Top}, previously conjectured by Raghunathan, that for every $x\in\mathcal G(U)$
we have $\overline{Ux}=X$. Prior to Ratner's proof of the general case of Raghunathan's Conjecture in~\cite{Ratner-Top}, important cases of Raghunathan's Conjecture were proven in~\cite{Margulis-Oppenheim, DM-SO21, DM-SL3}.

Roughly speaking our main theorems guarantee that unless there is an explicit obstruction, {\em most} points on 
a unipotent orbit are generic. We begin with the following statement which follows from our main effective theorems in this paper. 

\begin{thm}\label{thm:linearization-noneff}
Let $\eta>0$. Let $\{H_i:1\leq i\leq r\}\subset\Hcal$ be a finite subset consisting of proper subgroups, and for each $1\leq i\leq r$
let $\mathcal C_i\subset N_G(U,H_i)$ be a compact subset. 
There exists an open set $\mathcal O=\mathcal O(\eta,\{H_i\}, \{\mathcal C_i\})$ so that $X \setminus \mathcal O$ is compact and disjoint from $\cup_i \mathcal C_i\Gamma/\Gamma$, and so that for every $x\in\mathcal G(U)$ there exists some $T_0=T_0(\eta,\{H_i\}, \{\mathcal C_i\}, x)$ 
so that for all $T\geq T_0$ we have 
\[
|\{t\in[-T,T]: u(t)x\in\mathcal O\}|< \eta T 
\] 
\end{thm}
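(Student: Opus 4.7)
The plan is to decompose $\mathcal{O}$ into a cusp neighborhood and $r$ ``linearization tubes'' $\mathcal{O}_i$ about $\mathcal{C}_i\Gamma/\Gamma$, and to estimate the time $u(t)x$ spends in each piece separately. For the cusp part, apply quantitative non-divergence (Margulis \cite{Margulis-Nondiv}, Dani \cite{Dani}, in the explicit form of Kleinbock-Margulis \cite{KM}) to find a compact $K_0 \subset X$ and a threshold $T_0^{(0)}(x)$ such that $|\{t \in [-T,T] : u(t)x \notin K_0\}| < \eta T/(r+1)$ for all $T \geq T_0^{(0)}(x)$, and set $\mathcal{O}_0 := X \setminus K_0$.

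For each $i$, invoke the linearization setup of Dani-Margulis \cite{DM}: there is a $\bbq$-representation $\rho_i : \G \to \GL(V_i)$ and a $\bbq$-vector $v_i \in V_i$ with $\stab_{\G}(v_i) = N_{\G}(H_i)$ (Chevalley) such that: (a) $t \mapsto f_{i,g}(t) := \rho_i(u(t)g)v_i$ is a polynomial of degree $\leq d_i$ that is constant exactly when $g \in N_G(U,H_i)$; and (b) $\rho_i(\Gamma)v_i$ is discrete in $V_i$. Put $A_i := \rho_i(\mathcal{C}_i)v_i$, a compact subset of $V_i$, pick a small convex open neighborhood $B_i \supset A_i$, and set
\[
\mathcal{O}_i := \{x \in X : \exists\, \gamma \in \Gamma,\; \rho_i(\tilde x\gamma)v_i \in B_i\},
\]
where $\tilde x$ is any lift of $x$. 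Then $\mathcal{O} := \mathcal{O}_0 \cup \bigcup_i \mathcal{O}_i$ contains $\bigcup_i \mathcal{C}_i\Gamma/\Gamma$, while $X \setminus \mathcal{O} \subset K_0$ is compact and disjoint from $\bigcup_i \mathcal{C}_i\Gamma/\Gamma$.

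For $x \in \mathcal{G}(U)$ with lift $\tilde x \in G$ and each $\gamma \in \Gamma$, the Dani-Margulis polynomial trajectory lemma yields a dilation $B_i^+ \supset B_i$ and $\delta_0 > 0$ (made arbitrarily small by enlarging $B_i^+$ relative to $B_i$) such that for every $T$, either (A) $f_{i,\tilde x\gamma}([-T,T]) \subset B_i^+$, or (B) $|\{t \in [-T,T] : f_{i,\tilde x\gamma}(t) \in B_i\}| < \delta_0 \cdot 2T$. The set $\Gamma_T^{(A)}$ of $\gamma$'s in case (A) is finite---since a polynomial of degree $\leq d_i$ bounded on $[-T,T]$ has $f_{i,\tilde x\gamma}(0) = \rho_i(\tilde x\gamma)v_i$ bounded, and $\rho_i(\Gamma)v_i$ is discrete---decreasing in $T$, and its intersection over all $T$ consists of $\gamma$ for which $f_{i,\tilde x\gamma}$ is bounded on all of $\bbr$; such a polynomial must be constant, forcing $\tilde x\gamma \in N_G(U,H_i)$, which contradicts $x \in \mathcal{G}(U)$. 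Hence $\Gamma_T^{(A)} = \emptyset$ for all $T \geq T_0(x,i)$, and for such $T$ every $\gamma$ falls in case (B).

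To conclude, bound the measure of $\{t \in [-T,T] : u(t)x \in \mathcal{O}_i\} = \bigcup_\gamma \{t : f_{i,\tilde x\gamma}(t) \in B_i\}$ by subdividing $[-T,T]$ into subintervals on which the polynomial growth of $\rho_i(u(t))$ is controlled; the discreteness of $\rho_i(\Gamma)v_i$ ensures that on each subinterval only a bounded number $C_i$ of $\gamma$'s contribute, and applying (B) yields a total of at most $C_i\delta_0\cdot 2T$. Choosing $\delta_0$ small enough relative to $\eta, r, C_i$ gives each $\mathcal{O}_i$ contribution less than $\eta T/(r+1)$; summing with the cusp bound proves the theorem. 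The main obstacle is this last union-estimate---bounding the measure of the union over $\gamma$ rather than of each individual term---which requires the delicate interplay between the polynomial character of the trajectories and the discreteness of $\rho_i(\Gamma)v_i$. Making this step effective, and uniform in $x$ (so that $\mathcal{O}$ itself does not depend on $x$, only $T_0$ does), is the central technical content of the effective linearization theorems of this paper.
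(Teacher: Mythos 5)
Your sketch sets up the classical Dani--Margulis tube-and-polynomial scaffolding but explicitly leaves the crucial step --- bounding the measure of $\bigcup_\gamma \{t : f_{i,\tilde x\gamma}(t) \in B_i\}$ with constants independent of the base point --- as an acknowledged gap, so this is not a proof. The difficulty is real: the paper itself remarks, immediately after the theorem statement, that using only the statements of \cite{DM} one obtains a set $\mathcal O$ that depends on $x$, i.e.\ the naive version of your strategy does not deliver the required uniformity. There is also a circularity lurking in your last step that you do not acknowledge: making the per-$\gamma$ bound (B) hold with parameter $\delta_0$ requires the dilated region $B_i^+$ to grow as $\delta_0 \to 0$, yet the count $C_i$ of simultaneously contributing $\gamma$'s per subinterval (controlled by discreteness of $\rho_i(\Gamma)v_i$ inside $B_i^+$) grows with the size of $B_i^+$, so the product $C_i\delta_0$ need not be small without a structural ``self-intersection'' mechanism that your proposal never invokes.

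The paper's actual proof is a genuinely different route and does not build $\mathcal O$ from tubes around the $\mathcal C_i\Gamma/\Gamma$. It sets $\mathcal O$ to be the set of points which either lie outside $X_\eta$ or fail the $(\vare,t)$-Diophantine condition of Definition~\ref{def:Diophantine-intro}, where $\vare$ is built from the obstruction function $\sigma$ and $t$ is chosen so large that every $h\in\mathcal C_i$ satisfies $\cfun(\eta_{H_i}(h))<e^t$ and $\height(\H_i)\leq e^t$. This $\mathcal O$ is uniform in $x$ by construction, contains each $\mathcal C_i\Gamma/\Gamma$ (since $\zpz\wedge\eta_{H_i}(h)=0$ for $h\in\mathcal C_i$ and $\zpz \in \mathcal B_U$), and has compact complement. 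The measure bound then follows from Theorem~\ref{thm:eff-linearization-general} applied to a lift $g$ of $x$: alternative (3) is ruled out by the definition of $\sigma$; alternative (2) along a sequence $k_n\to\infty$ would force $\zpz\wedge\eta_H(g)=0$ for a fixed proper $\Hbf\in\Hcal$, contradicting $x\in\mathcal G(U)$; so alternative (1) --- which is the required bound --- holds for all $k\geq k_0(x)$. (See the proof of Theorem~\ref{thm:linearization-noneff-S-arith}, of which the stated result is the real special case.) The union-over-$\gamma$ estimate you flag as ``the central technical content'' is carried out inside the proof of Theorem~\ref{thm:eff-linearization-general}, in Lemma~\ref{main lemma}, via a Vitali covering argument combined with the effective intersection estimate of Lemma~\ref{lem:loj} --- a quite different mechanism from the counting argument you sketch.
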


We note that this theorem can also be deduced by combining Ratner's measure classification theorem,~\cite{Ratner-Ann}, and results in~\cite{DM}; however this would only give a non-effective proof of the above statement.
Without appealing to~\cite{Ratner-Ann} and only utilizing statements in~\cite{DM} (where the proof is essentially effective), one does not get uniformity as in Theorem~\ref{thm:linearization-noneff}:
indeed, from the argument in \cite{DM} the set $\mathcal O$ above will depend on the initial point $x$. 
This distinction is similar to the difference between the non-divergence statement given by Dani in~\cite{Dani} and the dependence on the base point in Margulis'~\cite{Margulis-Nondiv}.

\iffalse%%%%%%%%%%%%%%%%%%%%%%%%%%%%%
\begin{thm}\label{thm:linearization-noneff-eff}
%There exists some $\eta_0>0$ depending on $G,$ $N$ and $\Gamma$
%so that 
\newelon{For every $0<\eta\leq 1$ there is an (explicit) compact subset $\mathcal K\subset\mathcal G(U)$ depending on $\eta$
with the following property.} Let $\{x_m\}$ be a bounded sequence 
of points in $X$, and let $T_m\to\infty$ be a sequence of real numbers. 
For each $m$ let $I_m\subset[-T_m,T_m]$ be a measurable set with measure $>\eta T_m$.  
%Let $A=\cup_n\{u(t)x_n: t\in I_n\}$.
Then one of the following holds.
\begin{enumerate}
\item $\overline{\bigcup_m\{u(t)x_m: t\in I_m\}}\cap\mathcal K\neq\emptyset$, or
\item \elon{there exists a finite collection $H_1,\ldots, H_r\in\mathcal H$ and for each $1\leq i\leq r$ there is 
a compact subset $\mathcal C_i\subset N(U,H_i)$, 
so that 
\[ \bigcap_{k\geq 1} \overline{\bigcup_{m\geq k} \{u(t)x_m: t\in I_m\}} \subset \bigcup_{i=1}^r\mathcal C_i\Gamma/\Gamma
.\]
In particular, all of the limit points of $\{x_m\}$ lie in $\bigcup_{i=1}^r\mathcal C_i\Gamma/\Gamma$.}
\end{enumerate}
\end{thm}
\fi

%It is also worth mentioning
%that even the qualitative and more restrictive Theorem~\ref{thm:linearization-noneff} 
%does not follow from the statements proved in~\cite{DM}. 

\subsection{Effective versions of linearization}\label{sec:eff-linearization}
The main theorems in this paper yield a 
more precise and \emph{effective} information about the compact set $X \setminus \mathcal O$ appearing above, with a \emph{polynomial} dependence on the relevant parameters. 
We need some preliminary notation before we can state our main results.

\medskip

%; using this basis, we induce a $\bbz$-structure on $\Lie(\G)$ and on $\gfrak$. 

Let $\|\;\|_{\infty}$ (or simply $\|\;\|$) denote the max norm on $\sl_N(\bbr)$ with respect to the standard basis;
this induces a family of norms on $\wedge\sl_N(\bbr)$, which we continue to denote by $\|\;\|_{\infty}$ (or simply $\|\;\|$). We also let $\|\;\|$ be a norm on $\SL_N(\bbr)$ fixed once and for all. 
For every $g\in\SL_N(\bbr)$, in particular for any $g\in G$, we let 
\[
|g|=\max\{\|g\|,\|g^{-1}\|\}.
\]

Let $\gfrak=\Lie(G)$ and put $\gfrak(\bbz):=\gfrak\cap{\sl}_N(\bbz)$.

For every $\eta>0$, we define 
\[
X_\eta=\Bigl\{g\Gamma\in X: \min_{0\neq v\in\gfrak(\bbz)}\|\Ad(g)v\|\geq \eta\Bigr\}.
\]
For every $\eta > 0$ the space $X _ \eta$ is compact (see \S\ref{sec:space-lattices} and Lemma~\ref{prop:eff-red-theory}), and $\bigcup_ {\eta > 0} X _ \eta = G / \Gamma$.

%and hence on $\gfrak$ on $\wedge\gfrak$.
Recall that $U$ is a one parameter unipotent subgroup of $G$. Fix a $\zpz\in\gfrak$ with $\|\zpz\|=1$ so that
\be\label{def of z in intro}
U=\{u(t)=\exp(t\zpz): t\in\R\}.
\ee

Let $\Hbf\in\Hcal$ be a nontrivial proper subgroup of $\Gbf$ and put  
\[
\text{$\rho_H:=\wedge^{\dim\H}\Ad\quad$ and 
$\quad V_H:=\wedge^{\dim\H}\gfrak.$}
\] 
The representation $\rho_H$ is defined over $\bbq.$ 
 
Let ${\vbf_\H}$ be a primitive integral vector in $\wedge^{\dim\H}\Lie(\G)$ 
corresponding to the Lie algebra of $\H$, i.e., we fix a $\Z$-basis for $\Lie(H)\cap\sl_N(\Z)$, and let ${\vbf_\H}$ be the corresponding wedge product. 

We also view $\vbf_\H$ as an element in $\wedge^{\dim\H}\gfrak$; 
%(which is a product of local factors); 
in order to put an emphasis on the local nature of this vector, we will denote it by $\vpz_H$. 
Define 
\[
\eta_H(g):=\rho_H(g)\vpz_H\;\text{ for every $g\in G$}.
\]

%Given a nonzero vector $\wpz\in\wedge^r\gfrak$, for some $0<r\leq\dim\G$, 
%we define ${}\wpz:=\frac{\wpz}{\|\wpz\|}$.

With this notation, for an element $\Hbf\in\hcal,$ we have
\[
N_G(U,H)= \{g\in G: \zpz\wedge \eta_H(g)=0\}.
\]
Note that $N_G(U,H)$ is a variety and could change drastically under small perturbations of $U$. 
However, {\em effective} notions must be stable under small perturbations. 
We will use the above finite dimensional representations to give an effective notion of generic points. 
The integer vector $\vbf_\H$ also allows us to give a notion of arithmetic complexity for subgroups in $\hcal$ by defining the height of the group $\Hbf$ to be
\be%\label{e;def-height}
\height(\Hbf ):=\|\vbf_\H\|_{\infty}.
\ee
Thus the height of a $\bbq$-group $\Hbf$ is given by the height of the correspoding point in the Grassmanian of $\Lie(\G)$; Cf.~\cite[\S1.5]{Bombieri-Gubler}.

The following definition will play a crucial role in this paper.

\begin{defn}\label{def:Diophantine-intro-real} 
Let $\vare : \bbr^+ \to (0,1)$ be a monotone decreasing function, and let $t\in\bbr^+$. Let $\zpz$ be as in~\eqref{def of z in intro}.
A point $g\Gamma$ is called \textbf{$(\vare,t)$-Diophantine} for the action of $U$ if for all $\Hbf \in \hcal$ with $\{e\}\neq\H\neq\G$ 
\be\label{eq:dioph-cond-intro-real}
\norm{\zpz\wedge {\eta_H(g)}}\geq\vare(\norm{\eta_H(g)}) \qquad\text{if}\qquad \norm{\eta_H(g)}< e^t
.\ee 
A point is \textbf{$\vare$-Diophantine} if
it is $(\vare,t)$-Diophantine for all $t>0$.
\end{defn}

\noindent
Note that this is a condition on the \emph{pair} $(U,g\Gamma)$. Unless $U < \H(\R)$ for some (proper) $\H \lhd \Gbf$, the set $\mathcal G(U)$ is nonempty, and moreover any $x\in\mathcal G(U)$ is $\vare$-Diophantine for some $\vare$ as above.
In most interesting examples the singular set $\mathcal S(U)$ is a dense subset of $X$%, for instance, e.g.~long closed horcycles 
%become equidistributed in the modular surface
.
Therefore, $\mathcal G(U)$ is usually a $G_\delta$-set without any interior points.
For any $t \in \bbr^+$, on the other hand, the set of $(\vare,t)$-Diophantine points in Definition~\ref{def:Diophantine-intro-real} is a nice closed set with interior points (indeed, is the closure of its interior points).

We can now state our main theorem (in slightly simplified form, see Theorem~\ref{thm:eff-linearization-general} below for the full version with all the features):
\begin{thm}\label{thm:eff-linearization-general-real}
There are constants $A, D >1$ depending only on $N$, and $E_1>1$ depending on $N$, $G$ and $\Gamma$, so that the following holds. Let $g\in G$, $t>0$, $k\geq 1$, and $0<\eta<1/2$. Assume $\vare:\bbr^+ \to (0,1)$ satisfies 
for any $s >0$ that
\[
\vare(s) \leq  \eta^A s^{-A} /E_1.
\]
Then at least one of the following three possibilities holds.
\begin{enumerate}
\item 
\begin{equation*}
\biggl|\biggl\{\xi \in [-1,1]:
\begin{array}{c} 
u(e^k \xi)g\Gamma\not\in X_\eta\text{ or }\\
 \text{$u(e^k \xi)g\Gamma$ is not $(\vare,t)$-Diophantine}
\end{array}
\biggr\}\biggr|
< E_1\eta^{1/D}
\end{equation*} 

\item There exist a nontrivial proper subgroup $\Hbf\in\Hcal$ of \[\height(\Hbf)\leq E_1 ( |g|^A+ e^{At})\eta^{-A}\] 
so that the following hold for all $\xi \in [-1,1]$:
\begin{align*}
\norm{\eta_{H}(u(e^k\xi)g)}&\leq E_1 ( |g|^A+ e^{At})\,\eta^{-A}\\
\norm{\zpz\wedge{\eta_{H}(u(e^k\xi)g)}}&\leq E_1 e^{-k/D} ( |g|^A+ e^{At})\,\eta^{-A}
\end{align*}
where $\zpz$ is as in~\eqref{def of z in intro}.
\item There exist a nontrivial proper normal subgroup $\Hbf \lhd \Gbf$ of  \[\height(\Hbf)\leq E_1  e^{At}\eta^{-A}\]
so that 
\[
 \Bigl\|\zpz\wedge\vpz_{H}\Bigr\|\leq \vare(\height(\Hbf)^{1/A} \eta/ E_1 )^{1/A}.
\]

\end{enumerate}
\end{thm}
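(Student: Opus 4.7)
The plan is to prove the theorem by effectively implementing the Dani--Margulis linearization scheme, combined with Kleinbock--Margulis quantitative non-divergence and the $(C,\alpha)$-good polynomial property of the $U$-action. Set $B := \{\xi \in [-1,1] : u(e^k\xi)g\Gamma \notin X_\eta \text{ or is not } (\vare,t)\text{-Diophantine}\}$ and suppose alternative (1) fails, so $|B| \geq E_1 \eta^{1/D}$; I will derive alternative (2) or (3). First I would apply the effective Kleinbock--Margulis non-divergence estimate to the polynomial curve $\xi \mapsto u(e^k\xi)g$ in the adjoint representation on $\gfrak$; this bounds by $C\eta^{1/D_0}$ the measure of those $\xi$ with $u(e^k\xi)g\Gamma \notin X_\eta$. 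For $E_1$ chosen sufficiently large, a subset $B' \subset B$ of measure at least $\tfrac{1}{2}E_1 \eta^{1/D}$ thus consists of $\xi$ whose orbit lies in $X_\eta$ but fails the Diophantine condition.

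For each $\xi \in B'$ there is a witness $\Hbf_\xi \in \Hcal$ with $\|\eta_{H_\xi}(u(e^k\xi)g)\| < e^t$ and $\|\zpz \wedge \eta_{H_\xi}(u(e^k\xi)g)\| < \vare(\|\eta_{H_\xi}(u(e^k\xi)g)\|)$. Membership in $X_\eta$ yields an a priori lower bound $\|\eta_H(u(e^k\xi)g)\| \gtrsim \eta^{O(1)} \height(\Hbf)$ valid for every $\Hbf \in \Hcal$, forcing $\height(\Hbf_\xi) \leq e^t \eta^{-O(1)}$ and reducing the witnesses to an effectively finite set of primitive integer wedges of cardinality at most $e^{O(t)}\eta^{-O(1)}$. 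Pigeonholing over these witnesses produces a single $\Hbf_0 \in \Hcal$ serving as witness on a subset $B'' \subseteq B'$ of proportion $\mu \gtrsim \eta^{O(1)} e^{-O(t)}$. The map $\xi \mapsto \eta_{H_0}(u(e^k\xi)g) = \rho_{H_0}(u(e^k\xi))\rho_{H_0}(g)\vpz_{H_0}$ is polynomial in $\xi$ of degree bounded by $D_1 := (\dim H_0)\nu$, where $\nu$ is the nilpotency index of $\operatorname{ad}\zpz$ on $\gfrak$. Applying the $(C,\alpha)$-good property of polynomials of degree $\leq D_1$ coordinate-wise upgrades the bound $\|\eta_{H_0}\| < e^t$ on $B''$ to $\sup_{\xi \in [-1,1]}\|\eta_{H_0}(u(e^k\xi)g)\| \leq C\mu^{-D_1} e^t$, which after absorbing constants yields the height bound on $\Hbf_0$ and the first displayed inequality of case (2).

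For the remaining inequality of case (2) versus case (3), I would expand
\[
f(\tau) := \zpz \wedge \eta_{H_0}(u(\tau)g) = \sum_{j=0}^{D_1} \tfrac{\tau^j}{j!}\, \zpz \wedge (\operatorname{ad}\zpz)^j \rho_{H_0}(g)\vpz_{H_0}
\]
and split on whether $\Hbf_0 \lhd \Gbf$. If $\Hbf_0$ is normal, then $\operatorname{Ad}(u(\tau))\Lie(H_0) = \Lie(H_0)$ forces $\rho_{H_0}(u(\tau))\vpz_{H_0} = \vpz_{H_0}$, so $f$ is the constant $\zpz \wedge \rho_{H_0}(g)\vpz_{H_0}$; the smallness of $f$ on $B''$ combined with a polynomial bound for $\|\rho_{H_0}(g)^{-1}\|$ in $|g|$ gives a bound on $\|\zpz \wedge \vpz_{H_0}\|$ of the form required in case (3). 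If $\Hbf_0$ is not normal, then $f$ is genuinely non-constant on $[-e^k,e^k]$ of degree $\geq 1$; applying the $(C,\alpha)$-good estimate to $f$ on the $\tau$-interval $[-e^k,e^k]$ (on which $|f| < \vare$ on a subset of proportion $\mu$), and using the scaling $\tau = e^k\xi$ that couples the coefficient of $\tau^j$ to a factor $e^{kj}$ in $\xi$-coordinates, produces the sought $e^{-k/D}$ factor in the uniform bound.

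The main technical obstacle is this last step: extracting an explicit $e^{-k/D}$ factor in the non-normal case. This requires a careful interplay between the $(C,\alpha)$-good polynomial estimate, the effective Nullstellensatz of Masser--W\"ustholz (to control the arithmetic complexity of the variety $N_G(U,\Hbf_0)$ and to quantify the non-degeneracy of $f(\tau)$ when $\Hbf_0$ is not normal), and the Brownawell--Greenberg Łojasiewicz-type local non-vanishing inequality, to ensure that $f(\tau)$ cannot degenerate further than the normal/non-normal dichotomy forces, so that the scale $e^k$ of the time interval genuinely translates into the claimed improvement.
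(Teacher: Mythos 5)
There is a genuine gap in your proposal at precisely the point you flag as the ``main technical obstacle'': the mechanism for extracting the $e^{-k/D}$ factor in case~(2). In your plan, after pigeonholing to fix a single witness $\Hbf_0$, you hope that applying the $(C,\alpha)$-good estimate to $\xi\mapsto\|\zpz\wedge\eta_{H_0}(u(e^k\xi)g)\|$ on $[-1,1]$ and ``the scaling $\tau=e^k\xi$'' will produce a factor $e^{-k/D}$. It will not: the $(C,\alpha)$-good estimate only upgrades smallness on a set of proportion $\mu$ to a sup bound $\sup\|\zpz\wedge\eta_{H_0}\|\lesssim\mu^{-\star}\vare$, with no gain in $k$ whatsoever; and in fact there is no reason for $\|\zpz\wedge\eta_{H_0}\|$ to be exponentially small in $k$ for the group $\Hbf_0$ that violates the Diophantine condition. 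The subgroup $\Hbf$ appearing in conclusion~(2) of the theorem is \emph{not} the Diophantine-violating witness itself; it is the group $\mathbf M_{\Hbf'}=\Lbf_{\Hbf'}^{\Hcal}$ (see \S\ref{sec:LH-MH}) built from a stabilizer. The key observation, which is entirely absent from your plan, is the ``speed'' estimate (Lemma~\ref{lem:U-speed} and Proposition~\ref{prop:almost-inv-Lie}): if $\cfun(\eta_{H'}(\la_k(u)g))$ stays bounded for \emph{all} $u\in\mathsf B_U(e)$ but $\zpz\wedge\eta_{M_{H'}}(g)$ is not tiny, then the polynomial $u\mapsto\eta_{H'}(\la_k(u)g)$ has nontrivial low-degree coefficients that get expanded by $e^{\star k}$ under $\la_k$, contradicting the boundedness. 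Running this contrapositively forces $\|\zpz\wedge\eta_{M_{H'}}\|\ll e^{-k/D}$, which is exactly where the exponential factor comes from. Without introducing $\mathbf M_\Hbf$ and this mechanism, your argument cannot produce case~(2).

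Beyond this central gap, there are two further issues. First, your pigeonholing step glosses over the fact that the height bound on a witness $\Hbf_\xi$ is only obtained after conjugating by an element $\gamma_\xi\in\Gamma$ (depending on $\xi$) that brings $u(e^k\xi)g\gamma_\xi$ to bounded size; the groups $\Hbf_\xi$ themselves are not a priori of bounded height, so ``pigeonholing over a finite set of primitive integer wedges'' doesn't directly produce a single $\Hbf_0$ with bounds on the orbit $\xi\mapsto\eta_{H_0}(u(e^k\xi)g)$. Second, and related, the paper does not pigeonhole at all: it runs a Vitali-type covering argument (Lemma~\ref{main lemma}), attaching to each ball $\mathsf B_i$ its own witness $\Hbf_i$, and when a point's witness differs from its ball's witness, it conjugates both to bounded height by the \emph{same} $\gamma$ and invokes the effective \L{}ojasiewicz inequality (Lemma~\ref{l;lojas-ineq}, via the effective Nullstellensatz of Masser--W\"ustholz and Theorems~A and~B) to pass to the intersection $(\Hbf\cap\Hbf_i)^{\Hcal}$, which has strictly smaller dimension; this is why the proof is an induction on $\dim\Hbf$, a structure your proposal lacks. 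Your treatment of case~(3), by contrast, is essentially correct: normality of $\Hbf$ and $\Gbf\in\Hcal$ force $\rho_H(g)\vpz_H=\vpz_H$, so the wedge map is constant and a bound on $\|\zpz\wedge\vpz_H\|$ follows directly.
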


% {Note that there exists an explicit finite subset 
% $\mathcal F'=\mathcal F'(U,\mathcal F)\subset\Hcal\times\bbn$ so that if $g$ is 
% $(\eta^D\ell_C;\mathcal F')$-Diophantine, then Theorem~\ref{thm:eff-linearization-general-real}(2) cannot hold; see Theorem \ref{thm:eff-linearization-cor} for a more precise statement. 
% In essence thus Theorem~\ref{thm:eff-linearization-general-real} 
% states that {\em genericity} is inherited by most points along the $U$-orbit. }
% The reader may compare Definition~\ref{def:Diophantine-intro-real} and  Theorem~\ref{thm:eff-linearization-general-real} to the discussion in~\cite[\S4]{LM-EffOpp}. 

Similar to~\cite{DM}, the proof of Theorem~\ref{thm:eff-linearization-general-real}, and its $\places$-arithmetic analogue, relies on the polynomial like behavior of the unipotent orbits. However, in addition to being {\em polynomially} effective, our results here also differ from~\cite{DM} in the following sense.
They provide a compact subset of $\mathcal G(U)$ which is {\em independent} of the base point 
and to which a unipotent orbit returns unless there is an algebraic obstruction;
this uniformity is used essentially in Theorem~\ref{thm:linearization-noneff} and Theorem~\ref{thm:linearization-noneff-eff}. Regarding nondivergence properties of unipotent orbits, such uniformity is well known and is due to Dani (see~\cite{Dani, DM-Nondiv}), but in this context it is new.

These features have been made possible using two main ingredients. 
First is the use of an effective notion of a generic point, Definition~\ref{def:Diophantine-intro-real}. 
The second ingredient is the use of a group ${\bf M}_{\bf H}$, see \S\ref{sec:LH-MH}, to control the {\em speed} of unipotent orbits in
the representation space $V_H$; this group does not feature in the analysis in~\cite{DM}.

Using Theorem~\ref{thm:eff-linearization-general-real} one can give a topological analogue of a result of Mozes and Shah \cite{Mozes-Shah}. To deal with groups with infinitely many normal $\Q$-subgroups we need the following definition: 

For any $T>0$, put
\[
\sigma(T)=\min\left (\{1\}\cup \biggl\{\|\zpz\wedge\vpz_H \|:\begin{array}{c} \Hbf\in\Hcal, \Hbf\lhd\Gbf,\\  
\height(\H)\leq T, \{1\}\neq\Hbf\neq \G\end{array}\biggr\}\right ).
\]

\begin{thm}\label{thm:linearization-noneff-eff-explicit}\label{thm:linearization-noneff-eff}
There exists some $D>1$ depending on $N$ and $E_1>0$ depending on $N$, $G$, and $\Gamma$ 
so that the following holds. Let $0<\eta<1/2$.
%and for any $0<\eta<1/2$ constants $\epsilon_1,E_1>0$ depending on $G$, $N$, $\Gamma$ and (polynomially) on $\eta$

Let $\{x_m\}$ be a sequence of points in $X$, and let $T_m\to\infty$ be a sequence of real numbers. 
For each $m$ let $I_m\subset[-T_m,T_m]$ be a measurable set with measure $>\eta T_m$.  
Let
\[
Y=\bigcap_{k\geq 1} \overline{\bigcup_{m\geq k} \{u(t)x_m: t\in I_m\}}.
\]
%Let $A=\cup_n\{u(t)x_n: t\in I_n\}$.
Then exactly one of the following holds.
\begin{enumerate}
\item $Y$ contains an $\vare$-Diophantine point for 
\[
\vare(s)=\Bigl(\eta s^{-1} \sigma(E_1^A\eta^{-A}s^A)/2E_1\Bigr)^A.
\]
\item There exists a countable (or finite) collection  
\[
\fcal = \{(\mathbb H_i, L_i): i \in I\} \subset \hcal\times\R^+
\]
so that if
\[
Y_i = \left\{g \in N(U,H_i):\norm{\eta_{H_i}(g)}\leq L_i\right\}\Gamma/\Gamma
\]
then
\begin{enumerate}
    \item $Y \subset \bigcup_{i \in I} Y_i$
\item for any $\beta>0$ 
\[
\#\{i \in I : Y\cap  X_\beta\cap Y_i\neq\emptyset\}<\infty.
\] 
\end{enumerate} 
\end{enumerate}
\end{thm}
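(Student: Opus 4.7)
\textbf{Ruling out case (3) of Theorem~\ref{thm:eff-linearization-general-real}.} The plan is to assume case (1) of the theorem fails (i.e., $Y$ contains no $\vare$-Diophantine point) and deduce case (2). The specified $\vare$ is engineered so that case (3) of Theorem~\ref{thm:eff-linearization-general-real} cannot occur. If a proper normal $\bbq$-subgroup $\Hbf\lhd\Gbf$ of height $L$ satisfied its conclusion, substituting $s = L^{1/A}\eta/E_1$ into the formula for $\vare$ gives $\vare(s)^{1/A} = \sigma(L)/(2L^{1/A})$, so the case (3) bound would force $\|\zpz\wedge\vpz_H\| \le \sigma(L)/(2L^{1/A})$. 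Since $\Hbf$ is itself a candidate for the minimum defining $\sigma(L)$, one also has $\sigma(L) \le \|\zpz\wedge\vpz_H\|$, which is impossible for $L \ge 1$.

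\textbf{Rebasing and the trichotomy.} Fix $\beta > 0$ and $y \in Y \cap X_\beta$, with $y = \lim_k u(t_k) x_{m_k}$ along a subsequence with $t_k \in I_{m_k}$. Eventually $u(t_k)x_{m_k} \in X_{\beta/2}$, so compactness of $X_{\beta/2}$ supplies representatives $h_k$ of $u(t_k)x_{m_k}$ with $|h_k| \le R(\beta)$. After shifting time by $t_k$, the orbit becomes $\{u(s)h_k : s \in I_{m_k} - t_k\}$, with $I_{m_k} - t_k$ of measure $\ge \eta T_{m_k}$ inside $[-2T_{m_k}, 2T_{m_k}]$. Choose $\eta_1 \in (0, 1/2)$ small enough that $E_1\eta_1^{1/D}$ is smaller than the rescaled orbit measure, set $K_k = \lceil\log(2T_{m_k})\rceil$, and apply Theorem~\ref{thm:eff-linearization-general-real} to $(h_k, t = n, k = K_k, \eta_1)$ for each integer $n \ge 1$. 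The previous paragraph excludes case (3), so only cases (1) and (2) of Theorem~\ref{thm:eff-linearization-general-real} can occur.

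\textbf{Diophantine versus algebraic alternatives.} Suppose case (1) of Theorem~\ref{thm:eff-linearization-general-real} holds for every $n$ along infinitely many $k$. Then for each $n$ the ``good'' subset of $\xi \in [-1, 1]$ meets the rescaled $(I_{m_k} - t_k)/e^{K_k}$, yielding $(\vare, n)$-Diophantine orbit points lying in $X_{\eta_1}$. Diagonalizing in $n$, using compactness of $X_{\eta_1}$ and closedness of the $(\vare, n)$-Diophantine locus, extracts a limit $y^* \in Y \cap X_{\eta_1}$ that is $\vare$-Diophantine, contradicting the failure of case (1). Hence there exists $n(y, \beta) \in \bbn$ such that case (2) of Theorem~\ref{thm:eff-linearization-general-real} holds for all large $k$, producing subgroups $H_k \in \Hcal$ with $\height(H_k) \le L_{\beta, n} := E_1(R(\beta)^A + e^{An})\eta_1^{-A}$. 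Only finitely many such subgroups exist, so on a subsequence $H_k = H(y)$ is constant; the bound $\|\zpz\wedge\eta_{H(y)}(h_k)\| \le L_{\beta, n}e^{-K_k/D} \to 0$ together with $\|\eta_{H(y)}(h_k)\| \le L_{\beta, n}$ yields in the limit $y \in N_G(U, H(y))\Gamma/\Gamma$ with $\|\eta_{H(y)}(y)\| \le L_{\beta, n}$, so $y$ lies in the $Y_i$ associated to $(H(y), L_{\beta, n(y, \beta)})$.

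\textbf{Assembly and finiteness.} Define $\mathcal F$ to be the countable collection of pairs $(H, L_{\beta, n})$ as $\beta$ ranges over $\{1/j\}_j$, $n$ over $\bbn$, and $H \in \Hcal$ over subgroups with $\height(H) \le L_{\beta, n}$. Coverage $Y \subset \bigcup_i Y_i$ follows from the previous paragraph applied to each $Y \cap X_{1/j}$, using $Y = \bigcup_j (Y \cap X_{1/j})$. The finiteness condition (2b) reduces to showing $n(y, \beta)$ is uniformly bounded on $Y \cap X_\beta$: if not, a two-stage diagonal over $y_\ell \in Y \cap X_\beta$ with $n(y_\ell, \beta) \to \infty$ again produces an $\vare$-Diophantine limit in $Y$, again contradicting the failure of case (1). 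The main technical obstacle is this two-stage diagonal argument, combined with the careful rebasing keeping $|h_k|$ controlled by $\beta$ rather than by the potentially large times $t_k$.
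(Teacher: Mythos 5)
Your approach is genuinely different from the paper's. You rebase around individual points $y \in Y \cap X_\beta$ using local injectivity radius to find bounded representatives $h_k$, and then apply Theorem~\ref{thm:eff-linearization-general-real} per point. The paper instead stratifies the \emph{sequence} $\{x_m\}$ once and for all according to how deep into the cusp the orbit pieces travel (via repeated use of Lemma~\ref{lem:cusp-or-not-limsup}), obtaining strata $J_1, J_2, \dots$ with $J_1$ the "bounded" stratum and $J_i$, $i\geq 2$, the cuspidal strata, and applies the effective linearization theorem separately in each stratum with controlled parameters. Your per-point approach is a reasonable alternative structure for establishing coverage (2a), but it runs into a serious difficulty at (2b).

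The gap is precisely in the finiteness statement (2b). The family $\mathcal F$ you construct contains pairs $(H, L_{\beta, n})$ where $L_{\beta, n} = E_1\bigl(R(\beta)^A + e^{An}\bigr)\eta_1^{-A}$ blows up as $\beta \to 0$ (because representatives of $y \in Y \cap X_\beta$ deep in the cusp have $|h_k| \leq R(\beta)$ with $R(\beta) \to \infty$). For a fixed $\beta_0 > 0$, the tubes constructed for $\beta' < \beta_0$ have arbitrarily large $L$, hence arbitrarily large $Y_i$, and there is no mechanism preventing infinitely many of them from intersecting $Y \cap X_{\beta_0}$: a tube $\{g\in N_G(U,H): \|\eta_H(g)\|\le L\}\Gamma/\Gamma$ only grows as $L$ grows. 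Your proposed reduction of (2b) to a uniform bound on $n(y,\beta)$ over $Y \cap X_\beta$ addresses only the tubes constructed \emph{at} the scale $\beta$, not the unboundedly many tubes from smaller scales $\beta'$ that may still cut $X_{\beta_0}$. The paper's proof avoids this by invoking the effective non-divergence theorem~\ref{thm:non-div-par} for the cuspidal strata $J_i$, $i\geq 2$: this furnishes specific unipotent subgroups $\mathbf W_j$ together with a \emph{shrinking} parameter $r_j \to 0$ bounding $\cfun(\eta_{W_j}(g))$ from above, so the closed sets $Z_j$ it produces are eventually disjoint from any fixed $X_{\beta_0}$, which is exactly what gives local finiteness. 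This non-divergence input is the missing ingredient in your argument; the effective linearization theorem alone does not yield information that improves as the orbit goes into the cusp.

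There is also a parameter-matching issue worth flagging: you apply Theorem~\ref{thm:eff-linearization-general-real} with a new parameter $\eta_1$ chosen small enough that $E_1\eta_1^{1/D}$ undercuts the rescaled orbit measure $\asymp \eta$, so $\eta_1 \lesssim \eta^D$. But the $\vare$ you use (and must use, to get the conclusion of the theorem you're proving) is defined in terms of $\eta$, and the hypothesis of Theorem~\ref{thm:eff-linearization-general-real} is $\vare(s) \leq \eta_1^A s^{-A}/E_1$, which essentially requires $\eta_1 \gtrsim \eta$. These two constraints conflict for $\eta$ small. This can likely be absorbed by adjusting the constants $A,D,E_1$ in the statement you are proving relative to those of Theorem~\ref{thm:eff-linearization-general-real}, but it needs to be addressed explicitly. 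The paper sidesteps this because its Theorem~\ref{thm:linearization-noneff-eff-S-arith} takes the measure lower bound directly as $\vartheta E_1 \eta^{1/D}$, so a single $\eta$ appears throughout.
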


\medskip

%\item $Y$ contains an $\vare$-Diophantine point for 
%\[
%\vare(s)=\epsilon_1 s^{-D} \sigma(E_1s^D)^D .
%\]
%\item there exists a finite collection $H_1,\ldots, H_r\in\mathcal H$ and for each $1\leq i\leq r$ a $R_i>0$, 
%so that 
%\[ Y \subset \bigcup_{i=1}^r\mathcal \{g \in N(U,H_i):\norm{\eta_{H_i}(g)}<R_i\}\Gamma/\Gamma
%.\]
%a compact subset $\mathcal C_i\subset N(U,H_i)$, 
%so that 
%\[ Y \subset \bigcup_{i=1}^r\mathcal C_i\Gamma/\Gamma
%.\]

\noindent
As we shall see in Corollary~\ref{lem:tube-closed} below, for any $\Hbf \in \hcal$ and $L>0$ the set 
\[
Y = \{g \in N(U,H):\norm{\eta_{H}(g)}\leq L\}\Gamma/\Gamma
\]
is a closed (though in general not compact) subset of $X$. For instance, for $G=\SL_2(\bbr)$ and $\Gamma=\SL_2(\bbz)$, if we take $\Hbf$ to be the stabilizer of the vector $\biggl(\begin{matrix} 0\\[-2pt]1 \end{matrix}\biggr) \in \Z^2$, \ $U=H=\Hbf(\bbr)$ and define $Y$ as above then $Y$ is the union of all periodic $U$-orbits of period $\leq L$.

\medskip

Theorem~\ref{thm:linearization-noneff-eff-explicit} is related to~\cite[Thm.~4]{DM}.
Specifically in that paper it is proved that if one assumes that sequence $\{x_m\}$ converges to a point in $\mathcal G(U)$ then a less precise form of (1) of Theorem~\ref{thm:linearization-noneff-eff-explicit}, namely that $Y$ contains a point in $\mathcal G(U)$, holds.

\subsection{Friendly measures}\label{sec:friendly-intro}

In this section we discuss generalizations of Theorem~\ref{thm:linearization-noneff-eff-explicit} 
to the class of friendly measures which were studied in~\cite{KLW}.

Let $(Y,d)$ be a $\sigma$-compact metric space; for every $y\in Y$ and $r>0$, let $B(y,r)$ denote the open ball of radius $r$ centered at $y$.
Let $\mu$ be a locally finite Borel measure on $Y$.
If $A>0$ and $O\subset Y$ is an open subset, the measure $\mu$ is called $A$-{\em Federer} on $O$ 
if for all $y\in{\rm supp}(\mu)\cap O$ one has
\[
\frac{\mu(B(y,3r))}{\mu(B(y,r))}<A
\]
whenever $B(y,3r)\subset O$. 

%The measure $\mu$ is called {\em Federer} if for $\mu$-a.e.\ $y\in Y$ there exists some neighborhood $O$ of $y$ and some $D>0$ so that $\mu$ is $D$-Federer on $O$.

Let $Y=\bbr$ be equipped with the standard metric.
Given a point $a\in \bbr$ and $\delta>0$, we let $I_\delta(a)=(a-\delta,a+\delta)$.
Given $c,\alpha>0$ and an open subset $O\subset\bbr$, we say $\mu$ is $(c,\alpha)$-{\em absolutely decaying} on $O$ if for every non-empty open
interval $J\subset O$ centered in ${\rm supp}(\mu)$, every point $a\in \bbr$, and every $\delta>0$ we have 
\be\label{eq:mu-decaying}
\mu(J\cap I_\delta(a))< c \Bigl(\frac{\delta}{r}\Bigr)^\alpha\mu(J)
\ee  
where $J$ has length $2r$, see~\cite[Lemma 2.2]{KLW}.

%$\|d_a\|_{\mu, J}=\inf\{\delta: \mu(J\setminus I_\delta(a))=0\}$ 
%denotes the essential supremum of $d_a$ on $J$.
%
%First note that by~\cite[Lemma 2.2]{KLW}, $\mu$ is absolutely decaying. That is there are $c,\alpha>0$ so that 
%\[
%\mu(J\cap I_\delta(a))< c \Bigl(\frac{\delta}{r}\Bigr)^\alpha\mu(J)
%\]
%for any $J$, $a$ and $\delta$ as in~\eqref{eq:mu-decaying} where $J$ has length $2r$.

%A measure $\mu$ on $\bbr$ is called {\em absolutely decaying} if for $\mu$-a.e.\ $y\in\bbr$ there exists some neighborhood $O$ of $y$ and $c,\alpha>0$ so that $\mu$ is $(c,\alpha)$-absolutely decaying on $O$.

We will say a measure $\mu$ on $\bbr$ is {\em uniformly friendly}
%\footnote{We note that in general this is different from the notion defined in~\cite{KLW}; in the case at hand, however, the two notions coincide thanks to~\cite[Lemma 2.2]{KLW}.} 
if $\mu$ is  $A$-Federer and $(c,\alpha)$-absolutely decaying for some $A,c,\alpha>0$.

Let the notation be as in \S\ref{sec:eff-linearization}; in particular,
\[
U=\{u(t)=\exp(t\zpz): t\in\R\}.
\]
for some nilpotent element $\zpz\in\mathfrak{g}$ with $\|\zpz\|=1$.

\begin{thm}\label{thm:linearization-noneff-eff-friendly}
Let $\mu$ be a uniformly friendly measure on $\R$.
There exists some $D>1$ depending on $N$ and $\mu$, and $E_1>0$ depending on $N$, $G$, $\Gamma$, and $\mu$ so that the following holds.

Let $0<\eta< 1/2$.
Let $\{x_m\}$ be a sequence 
of points in $X$, \ $0<\eta<1/2$, and let $k_m\to\infty$ be a sequence of real numbers. 
For each $m$ let $I_m\subset[-1,1]$ be a measurable set with 
\(
\mu(I_m)>\eta\,\mu([-1,1]).
\)  
Let
\[
Y=\bigcap_{\ell\geq 1} \overline{\bigcup_{m\geq \ell} \{u(e^{k_m}t)x_m: t\in I_m\}}.
\]
%Let $A=\cup_n\{u(t)x_n: t\in I_n\}$.
Then exactly one of the following holds.
\begin{enumerate}
\item $Y$ contains an $\vare$-Diophantine point for 
\[
\vare(s)=\Bigl(\eta s^{-1} \sigma(E_1^A\eta^{-A}s^A)/2E_1\Bigr)^A.
\]
\item There exists a countable (or finite) collection  
\[
\fcal = \{(\mathcal H_i, L_i): i \in I\} \subset \hcal\times\R^+
\]
so that if
\[
Y_i = \{g \in N(U,H):\norm{\eta_{H_i}(g)}\leq L_i\}\Gamma/\Gamma,
\]
then
\begin{enumerate}
    \item $Y \subset \bigcup_{i \in I} Y_i$
\item for any $\beta>0$ 
\[
\#\{i \in I : Y\cap  X_\beta\cap Y_i\neq\emptyset\}<\infty.
\] 
\end{enumerate}
\end{enumerate}
\end{thm}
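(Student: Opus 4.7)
The plan is to reduce Theorem~\ref{thm:linearization-noneff-eff-friendly} to a friendly-measure analogue of Theorem~\ref{thm:eff-linearization-general-real}, and then apply the same limit-extraction argument used to deduce Theorem~\ref{thm:linearization-noneff-eff-explicit} from its effective counterpart. First I would prove the following variant of Theorem~\ref{thm:eff-linearization-general-real}: with the same hypotheses, alternative~(1) is replaced by
\[
\mu\bigl(\bigl\{\xi\in[-1,1]:u(e^k\xi)g\Gamma\notin X_\eta\text{ or not }(\vare,t)\text{-Diophantine}\bigr\}\bigr)<E_1\eta^{1/D}\mu([-1,1]),
\]
with $D,E_1$ now allowed to depend additionally on the Federer/decay parameters $(A,c,\alpha)$ of~$\mu$, while alternatives~(2) and~(3), being uniform-over-$[-1,1]$ statements, are unchanged.

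The only measure-theoretic input into the proof of Theorem~\ref{thm:eff-linearization-general-real} is the $(K,C)$-good bound: for every polynomial $p$ of degree $\leq d$ and every subinterval $J\subset[-1,1]$,
\[
\bigl|\{\xi\in J:|p(\xi)|<\delta\textstyle\sup_J|p|\}\bigr|\leq C_d\delta^{1/d}|J|.
\]
For a uniformly friendly $\mu$ the analogous inequality
\[
\mu\bigl(\{\xi\in J:|p(\xi)|<\delta\textstyle\sup_J|p|\}\bigr)\leq C'_{d,A,c,\alpha}\,\delta^{\alpha/d}\,\mu(J)
\]
holds on every $J$ centered in $\mathrm{supp}(\mu)$; this is essentially~\cite[Thm.~2.1]{KLW}, obtained by combining the absolute-decay property~\eqref{eq:mu-decaying} with a Remez inequality and iterating via the Federer property. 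Substituting this friendly bound wherever the Lebesgue $(K,C)$-good bound appears in the proof of Theorem~\ref{thm:eff-linearization-general-real}---in the Kleinbock--Margulis nondivergence step, in the linearization analysis in $V_H$, and in the control of the speed of motion via $\Mbf_\Hbf$---yields the desired friendly effective linearization.

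With this variant in hand, Theorem~\ref{thm:linearization-noneff-eff-friendly} is obtained by the same diagonal argument as in Theorem~\ref{thm:linearization-noneff-eff-explicit}. For each $m$ I would apply the friendly effective linearization to $x_m$ with scale $k_m$ and some $t_m\to\infty$, and with threshold $\eta'$ chosen so that $E_1(\eta')^{1/D}\mu([-1,1])<\tfrac12\mu(I_m)$. If alternative~(1) holds along a subsequence, then combined with $\mu(I_m)>\eta\mu([-1,1])$ it produces $\xi_m\in I_m$ such that $u(e^{k_m}\xi_m)x_m\in X_{\eta'}$ is $(\vare,t_m)$-Diophantine; any accumulation point lies in $Y$ and is $\vare$-Diophantine, giving alternative~(1) of the theorem. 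Otherwise, from some $m$ on, alternative~(2) or~(3) of the effective linearization produces subgroups $\Hbf_m\in\hcal$ of controlled height together with levels $L_m$ such that the entire orbit piece $\{u(e^{k_m}\xi)x_m:\xi\in[-1,1]\}$ lies in the tube $Y_{\Hbf_m,L_m}$. Collecting the pairs $(\Hbf_m,L_m)$, removing redundancies, and invoking Corollary~\ref{lem:tube-closed} for closedness yields the countable family $\fcal$ covering $Y$ as in~(2a); local-finiteness~(2b) on $X_\beta$ follows because only finitely many subgroups whose height is bounded in terms of $\beta$ and $L_i$ can give tubes meeting the compact set $Y\cap X_\beta$.

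The principal obstacle is the verification inside Step~1: one must check that every appeal to the Lebesgue $(K,C)$-good property in the proof of Theorem~\ref{thm:eff-linearization-general-real} is genuinely polynomial in $\xi$---so that the friendly version applies---and that the Vitali/covering arguments used to pass from local to global estimates survive in the friendly setting. The Federer hypothesis is exactly what makes those doubling steps go through, and the relevant polynomials (coordinates of $\rho_H(u(e^k\xi)g)\vpz_H$ and of $\Ad(u(e^k\xi))$ on $\gfrak(\bbz)$) are precisely those already analyzed in the Lebesgue proof. By contrast, Step~2 is an essentially formal diagonalization once the friendly effective linearization is available.
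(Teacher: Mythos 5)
Your proposal follows the paper's approach essentially verbatim: establish a friendly-measure analogue of the effective linearization theorem by replacing the Lebesgue $(C,\alpha)$-good bound (Lemma~\ref{l;polygood}) with its friendly counterpart from~\cite{KLW} (the paper cites Prop.~7.33 there), then deduce the qualitative statement by the same limit-extraction argument used to pass from Theorem~\ref{thm:eff-linearization-general} to Theorem~\ref{thm:linearization-noneff-eff-S-arith}. One small slip: alternative~(3) of the effective theorem concerns a normal subgroup $\Hbf\lhd\Gbf$ and does not produce a tube containing the orbit piece; rather, the specific choice of $\vare$ involving $\sigma(\cdot)$ rules out alternative~(3) entirely (as in the proof of Lemma~\ref{lem:linearization-noneff-eff-S-arith-t-fixed}), so only alternatives~(1) and~(2) need to be considered in the diagonalization.
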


See \S\ref{sec:friendly} for a more detailed discussion of this generalization.

\subsection*{Acknowledgements}
We would like to thank M.~Einsiedler, H.~Oh, and A.~Wieser for their helpful comments on earlier drafts of this paper.

\section{Notation}\label{s;nottaion}\label{sec:notation}

\subsection{}\label{sec:group}
Let $\qpl=\{\infty\}\cup\{p:p\text{ is a prime}\}$ denote the set of places of $\Q$.
We let $\qpl_f=\qpl\setminus\{\infty\}$ denote the set of finite places in $\qpl$.
For every $v\in\qpl$ let $\bbq_v$ be the completion of $\bbq$
at $v$; we often write $\R$ for $\bbq_\infty$. 

For every $p\in\qpl_f$, we let $\bbc_p$ be the completion of the algebraic closure, $\overline{\bbq}_p$, 
of $\bbq_p$ with respect to the $p$-adic norm. The field $\bbc_p$ is a complete and algebraically closed field.

Given a finite subset $\places\subset\qpl$,  
we put $\places_f:=\places\setminus\{\infty\}$; 
also set $\bbq_\places=\prod_{v\in\places}\bbq_v$. 
Given an element $r\in\Q_\places$, we put $|r|_\places=\max_{v\in\places}|r|_v$.

For any $p\in\qpl_f$, let $\bbz_p$ 
denote the ring of $p$-adic integers in $\bbq_p.$ 
The ring of $\places$-integers in $\bbq$ is as usual denoted by $\bbz_\places.$ 

\medskip

Given a $\Q$-variety $\bf Y$, we put $Y_v={\bf Y}(\Q_v)$; given a finite subset $\places\subset\qpl$, 
we also write $Y_\places$, or simply $Y$ if there is not confusion, for $\prod_\places Y_v$.  

For any $\Q$-variety $\mathbf{Y}$, we denote by $\dim\mathbf Y$ the dimension in the algebro geometric sense. 
In particular $\dim\G$ is the dimension of $\G$ as an algebraic group. Note that
$\dim_{\bbq_v} Y_v,$ the dimension of $Y_v$ as a $\bbq_v$-manifold, equals $\dim{\bf Y},$ see e.g.~\cite[Ch.~I, \S2.5]{Margulis-Book}. 
We also put
\[
\dim Y_\places:=\textstyle\sum_{\places} \dim_{\bbq_p} Y_p=\Bigl(\#\places\Bigr)\dim{\bf Y}.
\] 

Given a $\bbq$-group, $\H$, we denote by $\Lie(\H)$
the Lie algebra of $\H$. We will use lower case gothic letters to denote the Lie algebra of $\H$ 
over various local fields, e.g., $\hfrak_p=\Lie\bigl(\H(\bbq_p)\bigr)$;
similarly, we write $\hfrak_\places$, or simply $\hfrak$, for $\oplus_{\places}\hfrak_p$.

The space $\hfrak_\places$ is a $\Q_\places$-module; and the notation $r\wpz$ 
for $r\in\Q_\places$ and $\wpz\in\hfrak_\places$ in the sequel refers to this module structure.

Given a natural number $m\leq \dim(\Lie(\H))$, we write $\wedge^m\hfrak$ or 
$\wedge^m\hfrak_\places$ to denote $\oplus_{\places}(\wedge^m\hfrak_v)$.

\medskip

For any (compact) subset $\mathsf K\subset \bbq_v^m$ and any $\delta>0$, 
we let $\ncal_\delta(\mathsf K )$ denote the $\delta$-neighborhood of $\mathsf K.$ 
Also let $|\mathsf K|$ denote the Haar measure of $\mathsf K.$ 

Let $\mathbf H$ be a $\bbq_\places$-group and put $H=\mathbf H(\bbq_\places).$
Given a subset $B\subset H$, we define 
\[
Z_H(B)=\{g \in H: gb=bg\;\text{for all}\;b\in B\}.
\]
Given two subsets $B_1,B_2\subset H$, we define 
\[
N_H(B_1,B_2)=\Bigl\{g\in H: g^{-1}B_1g\subset B_2\Bigr\},
\]
and put $N_H(B):=N_H(B,B)$ for any $B\subset H.$

\subsection{}\label{sec:norms}
For any place $v\in\qpl$, let $\|\;\|_{v}$ denote the max norm, with respect to the standard basis, 
on $\sl_N(\bbq_v)$  and on $\wedge \sl_N(\bbq_v)$. 
Given a finite subset $\places\subset\qpl$, the norm $\|\;\|_\places$ (or simply $\|\;\|$) is defined by 
\[
\|\;\|_\places=\max_{v\in\places}\|\;\|_{v}.
\]

We let $\vsdist$ denote the induced metric on the exterior algebra $\wedge\sl_N(\bbq_\Sigma)$  induced from $\|\;\|.$

We also fix norms, which we continue to denote by $\|\;\|_v$, on $\SL_N(\bbq_v)$ for all $v$; 
and put $\|\;\|=\max_{\places}\|\|_v$. 
For every $g\in\SL_N(\bbq_\places)$, in particular for every $g\in G$, we set 
\be\label{eq:def-|g|}
|g|:=\max\{\|g\|,\|g^{-1}\|\}.
\ee 
Note that
\be\label{e;norm-like-est}
\text{$|g|=|g^{-1}|\;$ and $\;|g_1g_2|\ll|g_1||g_2|,$}
\ee

Fix $\consta\label{k:adjoint}$ and $\consta\label{k:adjoint-mult}$ both depending only on $N$ so that
\be\label{eq:def-k-adjoint}
\|\wedge^r\Ad (g) \zpz\|\leq \ref{k:adjoint-mult} |g|^{\ref{k:adjoint}}\|\zpz\|\quad\text{for all $\zpz\in\sl_N(\bbq_\places)$ and $1\leq r\leq N^2.$}
\ee 

\subsection{}\label{ss;height}%\label{sec:height}
Let ${\bf W}\subset\sl_N$ be a rational subspace,
then $\wedge^{\dim{\bf W}}{\bf W}$ defines a rational line in $\wedge^{\dim\Wbf}\sl_N.$
This line is diagonally embedded in $\wedge^{\dim{\bf W}}\sl_N(\bbq_\Sigma)$, and we do not 
distinguish between this diagonal embedding and the line.

Fix a $\bbz$-basis for ${\bf W}(\R)\cap\sl_N(\Z)$.
Let $\vbf_{\bf W}$ denote the corresponding primitive integral vector on $\wedge^{\dim{\bf W}}{\bf W}$, and  
define
\[
\height(\Wbf )=\|\vbf_\Wbf\|.
\]
Note that we used the max norm in the above definition, in particular, we have: $\height(\Wbf)$
is an integer.

Alternatively, $\height(\Wbf)$ may be defined as follows. Let $\{{\bf e}_1,\ldots,{\bf e}_{\dim\Wbf}\}$
be a $\bbq$-basis for $\Wbf.$ Then
\[
\height(\Wbf)=\prod_{v\in\qpl}\|{\bf e}_1\wedge\cdots\wedge{\bf e}_{\dim\Wbf}\|_{v}
\]
In view of the product formula, the above is independent of our choice of the rational basis for $\Wbf$, see~\cite[\S1.5]{Bombieri-Gubler}.  

Given a $\bbq$-subgroup $\Hbf$ of $\SL_N$, we put $\vbf_\H:=\vbf_{\Lie(\H)}$ and define
\be\label{e;def-height}
\height(\H):=\height\Bigl(\Lie(\H)\Bigr)=\|\vbf_\H\|.
\ee

\subsection{}\label{sec:notation-exponent}
For the rest of this paper, fix a finite subset $\places\subset\qpl$ containing $\infty$. 
%We also let $\places'\subset\places$ be as in~\eqref{eq:def-Sigma'}.

The exponents in this paper are denoted by $A$ with numerical indecies. 
These constants depend only on $N$. %and $\#\places'$.
The understanding is that $A_{\cdot}>1$. %and $0<\alpha_{\cdot}<1$.

Similarly, the constants $C, D,$ and $F$ in the sequel depend only on $N$, and are implicitly assumed to be $>1$.  

We use the notation $T\ll R$ to denote $T\leq c R$ where the multiplicative constant $c$ is allowed to depend on $N$, the number of places $\#\places$, and polynomially on the finite primes in $\places$ and on $\height(G)$. Similarly we define $T\gg R$.
It will also be convenient to use $\star$ to denote a constant. More precisely,  
we write $T\ll R^\star$ if $T\leq cR^{A}$ or $T\leq cR^\alpha$ where $c$ is allowed to depend on $N$, $\#\places$, polynomially on the finite primes in $\places$ and $\height(\Gbf)$, and the exponent is either a ``big enough'' constant or a ``small enough'' constant depending only on $N$; hopefully the context will make it clear if the exponent needs to be large or small.
%The notation $T\leq \star R^\star$ is used a few times and it means $T\leq cR^{A}$ or $T\leq cR^{1/A}$ where 
%$c$ depends only on $N$, $\#\places$, and polynomially on the finite primes in $\places$ (but should hopefully not depend on $\height(\Gbf)$).
%We define $T\gg R^\star$ and $T\geq \star R^\star$ accordingly. 
%Finally $T\asymp R^\star$ means 
%$R^\star\ll T\ll R^\star$ possibly with different exponents.

\subsection{}\label{sec:norms-gfrak}
For all $v\in\places$, let $\|\;\|_v$ denote the max norm with respect to the standard basis on $\bbq_v^m$; we put
$\|\zpz\|=\|\zpz\|_\places=\max\|\zpz_v\|_v$ for all $\zpz=(\zpz_v)\in\bbq_\places^m$.

Define
\be\label{def:cFun-intro}
\cfun(\zpz)=\prod_{v\in\places}\|\zpz\|_v\text{ for all $\zpz\in\bbq_\places^m$}.
\ee
Note that $\cfun(r\zpz)=\cfun(\zpz)$ for all $r\in\bbz_\places^\times$ and all $\zpz\in\bbq_\places^m$.

\begin{lemma}\label{lem:c-fun-norm}
There exists $\consta\label{k:exp-cfun}$ and some $C_{m,\places}\ge 1$ so that the following holds.   
Let $\zpz\in\bbq_\places^m$ be a vector so that $\cfun(\zpz)\neq0$. 
\begin{enumerate}
\item There exists some $r_0\in\bbz_\places^\times$ so that \[
C_{m,\places}^{-1}\|r_0\zpz\|_\places\leq \|r_0\zpz\|_v\leq C_{m,\places}\|r_0\zpz\|_\places
\]
for all $v\in\places$, in particular, we have 
\be\label{eq:norm-prod}
\min_{r\in\bbz_\places^\times}\|r\zpz\|_\places\leq C_{m,\places}\|r_0\zpz\|_\places\leq C_{m,\places}\cfun(r_0\zpz)^{1/{\#\places}}.
\ee
\item Let $\|\zpz\|_\places=1$, and let $T>0$. Then 
\be\label{eq:number-ZS-log}
\#\{r\in\bbz_\places^\times: \|r\zpz\|_\places\leq T\}\leq C_{m,\places}\Bigl(\log\frac{T}{\cfun(\zpz)}\Bigr)^{\ref{k:exp-cfun}}. 
\ee
\end{enumerate}
\end{lemma}

\begin{proof}
The claim in part (1) is proved in~\cite[Lemma 8.6]{KT:Nondiv}.

We now turn to the proof of part (2). Let $\ell=\#\places$ and for every $\apz>0$ put 
\[
\mathcal E_\apz=\Bigl\{(w_1,\ldots,w_{\ell})\in\bbr_+^{\ell}:\textstyle\prod w_i=\apz\Bigr\}.
\]
Note that $\mathcal E_\apz$ is invariant under multiplication by positive diagonal matrices in $\SL_\ell(\bbr)$.

Let $D_\places$ denote the group of positive diagonal matrices in $\SL_\ell(\bbq)$ whose entries are in $\zeds$.
Let $\apz=\cfun(\zpz)$. Then $(|\zpz|_v)\in\mathcal E_\apz$, and for every $r\in\bbz_\places^\times$ we have ${\rm Diag}(|r|_v)\in D_\places$.

Let $\|\;\|_{\rm op}$ denote the operator norm on $\SL_\ell(\bbr)$ and let $\|\;\|_{\rm m}$ denote the
max norm on $\bbr^\ell$. We have
\be\label{eq:number-diag-log}
\#\{A\in D_\places: \|A\|_{\rm op}\leq S\}\ll_{m,\places} (\log S)^\star.
\ee
Further, if $\wpz=(w_1,\ldots,w_\ell)\in \mathcal E_\apz$ is so that $|w_i|\ll_{m,\places}\|\wpz\|_{\rm m}\ll_{m,\places} |w_i|$
for all $i$, then $\|A\|_{\rm op}\|\wpz\|_{\rm m}\ll_{m,\places}\|\wpz A\|_{\rm m}\ll_{m,\places} \|A\|_{\rm op}\|\wpz\|_{\rm m}$.

Hence, the claim follows from~\eqref{eq:number-diag-log} 
if we replace $\zpz$ by $r_0\zpz$ so that 
\[
\|r_0\zpz\|_\places\ll_{m,\places}\|r_0\zpz\|_v\ll_{m,\places}\|r_0\zpz\|_\places
\]
for all $v\in\places$.
\end{proof}

Similar to~\eqref{def:cFun-intro}, we define $\cfun(\wpz)=\prod\|\wpz\|_{v}$ for all $\wpz\in\wedge\sl_N(\bbq_\places)$.

\subsection{}\label{sec:space-lattices}
Let $\G$ be a connected $\Q$-group of class $\Hcal$. 
Fix an embedding $\iota:\Gbf\rightarrow{\SL}_N$ defined over $\bbq$. Put $G=G_\places$ and $\gfrak=\gfrak_\places$.

We identify $\G$ with $\iota(\G)\subset\SL_N$, hence, $G\subset\SL_N(\bbq_\places)$.
Let $\gfrak(\bbz_\places):=\gfrak\cap\sl_N(\bbz_\places)$, then $[\gfrak(\bbz_\places),\gfrak(\bbz_\places)] \subset\gfrak(\bbz_\places)$.

\medskip

We fix a $\Z$-basis, $\mathcal B_\G=\{\zpz_1,\ldots,\zpz_d\}$, for $\gfrak\cap \SL_N(\Z)$ 
so that $\cfun(\zpz_i)\dotll\height(\G)^\star$ for all $1\leq i\leq d$. 
Using this basis, we identify $\Lie(\G)$ with a $d$-dimensional vector space with a $\Q$-structure.
We also identify the $\Z$-span of $\mathcal B_\G$ with $\Z^d$;
hence we get a representation $\Ad:\G\to\SL_d$.  

Let $\Gamma\subset G \cap \SL_N(\zeds)$ be a lattice. Then $\Gamma$ fixes $\gfrak(\bbz_\places)$, which implies that $\Ad(\Gamma)\subset\SL_d(\bbz_\places)$ --- recall that we are using $\mathcal B_G$ to define $\Ad$ over the ring~$\bbz_\places$ (and not just as a $\bbq$-representation).

\medskip

Let $X:=G/\Gamma$. For any $\eta>0$, set 
\be\label{eq:def-XR}
X_\eta:=\Bigl\{g\Gamma\in X: \min_{0\neq v\in\gfrak(\bbz_\places)}\cfun(gv)\geq \eta\Bigr\}
\ee
where here and in what follows we often simply write $gv$ for $\Ad(g)v$; similarly, for $\wpz\in\wedge\gfrak$,
we simply write $g\wpz$ to denote the corresponding wedge power of the adjoint representation.

For any $\eta > 0$, the set $X _ \eta$ is a compact subset of $G / \Gamma$, and $G / \Gamma = \bigcup_ {\eta > 0} X _ \eta$. 

We will need a quantitative version of the former statement:

\begin{lemma}\label{prop:eff-red-theory}
There exist some $E_\Gbf$ (depending on the geometry of $G/\Gamma$) and $F$ (depending only on $N$) so that the following holds. 
Let $g\in G$ be so that $g\Gamma\in X_\eta$. There exists some $\gamma\in\Gamma$ so that 
\[
|g\gamma|\leq E_\Gbf \eta^{-F}
\]
\end{lemma}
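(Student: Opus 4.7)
The plan is to reduce the statement to the quantitative $\places$-arithmetic reduction theory for the arithmetic lattice $\Gamma$, applied through the adjoint representation of $G$ on $\gfrak$. The hypothesis $g\Gamma \in X_\eta$ is, by definition, a lower bound on the $\cfun$-shortest vector of the $\zeds$-lattice $\Lambda(g) := \Ad(g)\gfrak(\zeds) \subset \gfrak$: every nonzero $v \in \gfrak(\zeds)$ satisfies $\cfun(\Ad(g)v) \geq \eta$. I will extract a short basis of $\Lambda(g)$, realise the associated change of basis as an element of $\Gamma$ up to bounded error via reduction theory, and translate the resulting bound on $\Ad(g\gamma)$ back to a bound on $|g\gamma|$ using \eqref{eq:def-k-adjoint}.

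First, I apply a $\places$-arithmetic version of Minkowski's successive minima theorem --- a consequence of Mahler's compactness criterion on $\bbq_\places^d$ combined with the product formula. Since $\Ad(g)$ is volume preserving, $\Lambda(g)$ has covolume independent of $g$, while its $\cfun$-shortest vector has length at least $\eta$. It follows that $\Lambda(g)$ admits a $\zeds$-basis $\{\Ad(g)u_1,\ldots,\Ad(g)u_d\}$ with $\|\Ad(g)u_i\|_\places \dotll \eta^{-F_0}$ for some $F_0$ depending only on $d=\dim\Gbf$, hence only on $N$; equivalently, if $M \in \GL_d(\zeds)$ carries $\mathcal B_\Gbf$ to $\{u_i\}$, then $\|\Ad(g)M\|_\places \dotll \eta^{-F_0}$.

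Next, I realise $M$ as the adjoint of a $\Gamma$-element up to bounded error, by invoking the Borel--Harish-Chandra reduction theory in its $\places$-arithmetic generalisation due to Borel, Behr, and Prasad. This provides a Siegel fundamental set $\mathfrak F \subset G$ for $\Gamma$ of the form $\mathfrak F = \Omega \cdot A^+ \cdot K$, with $\Omega \subset G$ bounded, $A^+$ contained in a maximal $\bbq$-split torus, and $K$ compact. Choose $\gamma \in \Gamma$ with $g\gamma \in \mathfrak F$ and write $g\gamma = \omega a k$. Since $\Gbf \in \Hcal$ has no central $\bbq$-torus, the restricted roots of $\Ad|_{A^+}$ span a finite-index sublattice of the character lattice of $A^+$, so they separate points of $A^+$; hence the persistent lower bound $\cfun(\Ad(g\gamma) v) \geq \eta$ for every nonzero $v \in \gfrak(\zeds)$ --- which holds because $g\gamma\Gamma = g\Gamma \in X_\eta$ --- forces a polynomial upper bound $|a| \dotll \eta^{-F_1}$, the implicit constant absorbing the heights of the weights and the complexity of $\mathfrak F$. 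Combining via \eqref{e;norm-like-est} and the boundedness of $\omega, k$, one gets $|g\gamma| \leq E_\Gbf \eta^{-F}$ for some $F$ depending only on $N$.

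The principal obstacle is ensuring \emph{polynomial} (rather than merely qualitative) dependence on $\eta$. The qualitative compactness of $X_\eta$ is classical Mahler compactness, but the polynomial rate needs the map $a \mapsto \bigl(\cfun(\Ad(a) v)\bigr)_{v \in \mathcal B_\Gbf}$ to be polynomially proper on $A^+$; this is precisely where the hypothesis $\Gbf \in \Hcal$ enters, as it guarantees that $\Ad$ restricted to a maximal $\bbq$-split torus has enough weights to control $|a|$ polynomially in terms of the smallest $\cfun$-weight on $\gfrak(\zeds)$.
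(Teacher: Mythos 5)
Your plan is a genuinely different route from the paper's (which writes $g=g^0g^u$ along a Levi decomposition, quotes the effective reduction theory of \cite{MoSa-Eff-Red} and \cite{PR} for the semisimple Levi, and then corrects the unipotent part by some $\gamma_1\in\rad_u(G)\cap\Gamma$ using compactness of the arithmetic quotient of the unipotent radical), but your central quantitative step is asserted rather than proved, and in the configuration you state it is false. You claim that $g\gamma=\omega a k\in\Omega\cdot A^+\cdot K$ together with $\cfun(\Ad(g\gamma)v)\geq\eta$ for all nonzero $v\in\gfrak(\zeds)$ forces $|a|\ll\eta^{-\star}$, justified by ``the restricted roots separate points of $A^+$''. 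That is not the operative mechanism, and with the compact factor $K$ placed adjacent to $\Gamma$ (i.e.\ next to the lattice $\gfrak(\zeds)$) the implication fails: already for $G=\SL_2(\bbr)$, $\Gamma=\SL_2(\bbz)$, take $g_t=a_tk_\theta$ with $a_t=\mathrm{diag}(t,t^{-1})$ and $\theta$ a badly approximable direction; then $g_t\Gamma$ lies in a fixed $X_\eta$ for all $t$ (the shortest vector of $\Ad(g_t)\sl_2(\bbz)$ is bounded below uniformly), yet $g_t$ lies in a set of the form $\Omega A^+K$ with torus part unbounded. What makes the genuine Siegel-set argument work is the finer structure you do not use: the bounded factor adjacent to the lattice must lie in the unipotent radical of the minimal parabolic, so that it fixes the integral highest-root vector, while the compact (norm-distortion-bounded) factor sits next to where the norm is measured; only then does depth of $a$ in the chamber produce a short integral vector, and the contrapositive bounds $a$. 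Making this quantitative for a general class-$\Hcal$ group over $\bbq_\places$ is exactly the effective reduction theory that the paper imports from \cite{MoSa-Eff-Red} and \cite{PR} rather than reproves, so your sketch in effect assumes the lemma's main content.

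Two further gaps. First, your opening paragraph (Minkowski successive minima, a change of basis $M\in\GL_d(\zeds)$ with $\|\Ad(g)M\|\ll\eta^{-F_0}$) is never used and cannot be used as stated, since $M$ need not equal $\Ad(\gamma)$ for any $\gamma\in\Gamma$; moreover \eqref{eq:def-k-adjoint} bounds adjoint norms by $|g|$, not the reverse, and a bound on $\Ad(g\gamma)$ does not bound $|g\gamma|$ because $\Ad$ has a positive-dimensional (unipotent) central kernel in general for $\Gbf\in\Hcal$ --- this is precisely why one must correct by integral elements of the unipotent radical/centre, as the paper does via \cite[Lemma 5.6]{MoSa-Eff-Red}. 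Second, you invoke a Siegel fundamental set of the stated shape for the non-reductive $G$ while citing reduction theory for reductive groups; the unipotent radical has to be handled separately (its arithmetic quotient is compact), and the passage from the Diophantine condition on $g$ to one on its Levi component (the paper's appeal to \cite[Lemmas 4.4 and 6.8]{MoSa-Eff-Red}) is a genuine step that is absent from your sketch.
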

\subsection*{Remark} For a point $g \Gamma$ to be $X _ \eta$ essentially means that the local injectivity radius for $G / \Gamma$ at $g \Gamma$ is $\gg \eta ^{\star}$. Thus Lemma~\ref{prop:eff-red-theory} can be viewed as an estimate of the diameter of the part of $G / \Gamma$ which has injectivity radius greater than $\eta ^{\star}$. In particular, for $G / \Gamma$ compact, $F$ is essentially meaningless, and $E _ {\mathbf G}$ is the diameter of the smallest norm ball in $G$ needed to cover $G / \Gamma$, see~\cite[Thm.~1.7 and Thm.~6.9]{MoSa-Eff-Red} for more explicit estimates. 

For future convenience, we set $\tilde E_\Gbf = \height(\Gbf ) \cdot E_\Gbf$. In the proof of this lemma implicit constants are allowed to depend on $G$ and $\Gamma$ where indicated.

\begin{proof}
By \cite[Prop.\ 3.1]{MoSa-Eff-Red} there exists a Levi subgroup $\bf L$
so that $\height(\bf L)$ is bounded by $\height(\G)^\star$, in particular, $L\cap\Gamma$ is a lattice\footnote{Note that without the estimate on the the height, the existence of a Levi subgroup so that $L\cap\Gamma$ is a lattice in $L$ is a theorem of Mostow~\cite{Most-Levi}.} in $L$. 

For any $g\in G$, define
\[
\alpha _ {\mathfrak g}(g):=\max \{ \cfun(\Ad(g)\zpz)^{-1}:0 \neq \zpz \in\gfrak(\zeds)\};
\]
define similarly $\alpha_{\mathfrak l}$ for any $g\in L = \mathbf L(\bbq_\Sigma)$.

Let $g\in G$ and write $g=g^0g^u$ where $g^0\in L$ and $g^u\in\rad_u(G)$.
Then $\alpha _ {\mathfrak l}(g^0)\leq \alpha _ {\mathfrak g}(g^0)$ and by~\cite[Lemmas~4.4 and~6.8]{MoSa-Eff-Red} we have
$
\alpha _ {\mathfrak g}(g^0)\ll_{\Gbf,\Gamma}\alpha _ {\mathfrak g}(g)^\star.
$
It follows from reduction theory for $L$,
(\cite[Thm.~4.8]{PR} and 
\cite[Thm 4.17]{PR}) that there exists some $\gamma_0\in L\cap\Gamma$ so that
\[
|g^0\gamma_0|\ll_{\Gbf,\Gamma} \alpha_\lfrak(g^0)^\star,
\]
and combining the above we get $|g^0\gamma_0|\ll \alpha_\gfrak(g)^\star$.
Moreover, $\gamma_0^{-1}g^u\gamma_0\in\rad_u(G)$ and $\height(\rad_u(\G))\ll1$, see Lemma~\ref{lem:ht-rad-u-rad}.
Therefore, there exists some $\gamma_1\in \rad_u(G)\cap\Gamma$ so that
\[
|\gamma_0^{-1}g^u\gamma_0\gamma_1|\ll_{\Gbf,\Gamma}1,
\]
see, e.g.,~\cite[Lemma 5.6]{MoSa-Eff-Red}.

Put $\gamma=\gamma_0\gamma_1$. Then 
\begin{align*}
|g\gamma|&=|g^0g^u\gamma_0\gamma_1|\\
&\dotll_{\Gbf,\Gamma} |g^0\gamma_0||\gamma_0^{-1}g^u\gamma_0\gamma_1|\ll_{\Gbf,\Gamma} \alpha_\gfrak (g)^\star;
\end{align*}
as was claimed.
\end{proof}

\subsection{}\label{sec:lambda-n}
Let $U=\prod_{v\in\places}U_v\subset G$ where for all $v\in\places$ we have $U_v$ is a (possibly trivial) unipotent subgroup of $G$. We will refer to such groups as a $\bbq_\places$-unipotent subgroup of $G$. 
Define
\be\label{eq:def-Sigma'}
\places'=\{v\in\places: U_v\neq\{e\}\}.
\ee

Let $\ufrak$ (resp.\ $\ufrak_v$) denote the Lie algebra of $U$ (resp.\ $U_v$).
The exponential map defines an isomorphism of $\bbq_\places$-varieties from $\ufrak$ onto $U.$
We fix once and for all a basis $\mathcal B_U$ for $\ufrak$ consisting of elements which are nontrivial at only one place. 

For $\delta>0$, let $\mathsf B_\ufrak(0,\delta)=\Bigl\{\sum_{\zpz\in\mathcal B_U} r_\zpz\zpz: |r_\zpz|_\places\leq \delta\Bigr\}$
where $r_\zpz\in\Q_\places$ and $|r|_\places:=\max_{\places}|r_v|_v$; put 
\[
\mathsf B_U(e):=\exp\Bigl(\mathsf B_\ufrak(0,1)\Bigr).
\]
Thus $\mathsf B_U(e)$ is a product of neighborhoods of $1$ in $U_v$ for $v \in \Sigma$.
A subset $\mathsf B\subset \mathsf B_U(e)$ will be called a {\em ball} if it is the image, under
the exponential map, of a norm ball in $\ufrak.$

%Let $\la:\ufrak\to\ufrak$ be a $\bbq_\places$-diagonalizable map as we now explicate.
%For every $v$ so that $\ufrak_v\neq0$ 
%let $\mathcal C_v=\{\zpz_{v,1},\ldots,\zpz_{v,d_v}\}$ be a basis of $\ufrak_v$; 
%and fix $a_{v,i}\in\bbq_v$ with $|a_{v,i}|\geq p_v^3$ where $p_v=p$ if $v=p$ and $p_v=e$ if $v=\infty$.
%Define 
%\[
%\text{$\la(\zpz_{v,i})=a_{p,i}\zpz_{v,i}$ for every $\zpz_{v,i}\in\mathcal C_v$.}
%\]
%For all $k\in\bbn$ let $\la_k:\ufrak\to\ufrak$ denote the $k$-fold composition of $\la$ with itself, i.e., $\la_k=\la\circ\cdots\circ\la$, $k$-times.
%  
%Abusing the notation, we will write
%\[
%\la_k\Bigl(\exp(\zpz)\Bigr)=\exp\Bigl(\la_k(\zpz)\Bigr).
%\]

Let $\la:\ufrak\to\ufrak$ be a $\bbq_\places$-diagonalizable expanding linear map, and for all $k\in\bbn$ let $\la_k:\ufrak\to\ufrak$ denote the $k$-fold composition of $\la$ with itself, i.e., $\la_k=\la\circ\cdots\circ\la$, $k$-times. We will throughout make the assumption that for some fixed $\kappa>0$ and all $k \geq 1$
%\be \label{assumption on Greek lambda}
%\exp\Bigl(\lambda _ {k-1} (\mathsf B_\ufrak(0,\varrho))\Bigr) \cdot \exp\Bigl(\lambda _ {k-1} (\mathsf B_\ufrak(0,1))\Bigr) \subset \exp\Bigl(\lambda _ k (\mathsf B_\ufrak(0,1))\Bigr)
%.\ee
\be \label{assumption on Greek lambda}
\exp\Bigl(\lambda _ {k-\kappa} (\mathsf B_\ufrak(0,1))\Bigr) \cdot \exp\Bigl(\lambda _ {k-1} (\mathsf B_\ufrak(0,1))\Bigr) \subset \exp\Bigl(\lambda _ k (\mathsf B_\ufrak(0,1))\Bigr)
.\ee
%Indeed, any family of expanding maps $\la_k:\ufrak\to\ufrak$ satisfying \eqref{assumption on Greek lambda} and so that the ratio between the Jacobians of $\la_k$ and $\la_{k+1}$ are uniformly bounded would be good for us.

We now explicate two examples of $\lambda$ which satisfy the required conditions. One may take $\lambda$ to be an expanding automorphism of the Lie algebra $\ufrak$ as Margulis and Tomanov did in \cite{MT}; more explicitly, we may embed $\mathbf G$ in a larger group in which one can find an element $h$ so that $\lambda=\Ad(h)$ expands~$\ufrak$. 

The following is an alternative construction for a $\lambda$ which satisfies the required assumptions:
Let $v\in\places$, and consider the lower central series for $\ufrak_v$. That is:
\[
\ufrak_v=\ufrak_{v,0}\supset\ufrak_{v,1}\supset\cdots\supset\ufrak_{v,n_v}=\{0\}
\]
where $\ufrak_{v,i+1}=[\ufrak_v,\ufrak_{v,i}]$ for all $0\leq i<n_v$.

For each $i$, let $\ufrak_{v}^i$ denote an orthogonal complement\footnote{Recall that for a finite prime $p$, a set of unit vectors in $\bbq_p^m$ is called orthonormal if it can be extended to a $\bbz_p$-basis for $\bbz_p^m$.} 
of $\ufrak_{v,i+1}$ in $\ufrak_{v,i}$. In particular, we have
\[
\ufrak_{v,i}=\ufrak_{v,i+1}\oplus\ufrak_{v}^i.
\]

Fixing an orthonormal basis of $\ufrak_{v}^i$ for all $0\leq i<n_v$, we obtain an orthonormal 
basis of $\ufrak_v$. 
Let $a_p=p^{-3}$ if $p$ is a finite prime and $a_\infty=e^3$. 
For each $k\in\bbn$, define $\lambda_k:\ufrak_v\to\ufrak_v$ by
\[
\la_k(\zpz)=a_v^{(i+1)k}\zpz\;\;\text{ for all $\zpz\in\ufrak_v^i$}.
\]
%In the sequel, an {\em admissible expanding map} $\la_k:U\to U$ refers to a map defined as above. 
We leave the verification that this example does indeed satisfy~\eqref{assumption on Greek lambda} to the reader.

Abusing the notation, for an element $u=\exp(\zpz)\in U$ we set $\lambda(u):=\exp(\lambda(\zpz))$; that is: $\lambda$ and $\lambda_k$ are also considered as function on $U$.

%We emphasize that the only thing we need (other than expansion) is \eqref{assumption on Greek lambda}.
In this paper, we assume that a linear expanding map $\lambda$ satisfying \eqref{assumption on Greek lambda} is fixed; moreover, we assume that the parameters $\kappa$, $\frac {\absolute {\lambda _ 1 (\mathsf B _ U (e))} }{ \absolute {\mathsf B _ U (e))}}$, etc.\ depend only on $N$ and polynomially on $\height (\G)$. E.g., the examples above satisfy these properties, and the reader may take $\lambda$ to be one of these examples. 

However, if for some reason, the reader is keen on taking some particularly wild expanding linear map $\lambda$ satisfying \eqref{assumption on Greek lambda}, the only adverse effect would be that the implicit multiplicative constants need to be allowed to depend polynomially on the parameters of $\lambda$.

%to avoid making statements more cumbersome we assume at least that these parameters can be controlled by $N$ and $\height (\G)$ 

\section{Statements of the main theorems: $\places$-arithmetic}\label{sec:S-arith}
Let $\Gbf\subset \SL_N$ be a $\bbq$-group.
Recall the family 
\[
\mathcal{H}= \Bigl\{\Hbf\subset\G:\text{$\H$ is a connected $\bbq$-subgroup and $\rad(\H)=\rad_u(\H)$}\Bigr\}, 
\]
where $\rad(\H)$ (resp.\ $\rad_u(\H)$) denotes the solvable (resp.\ unipotent) radical of $\H.$
We always assume that $\G\in\hcal$.
Recall also our notation $G_v=\Gbf(\bbq_v)$ for all $v\in\places,$ and 
$G=\prod_{v\in\places} G_v$.

Let $\Hbf\in\Hcal$ be a proper subgroup and put  
\[
\text{$\rho_H:=\oplus_{\places}(\wedge^{\dim\H}\Ad)\quad$ and 
$\quad V_H:=\wedge^{\dim\H}\gfrak=\oplus_{\places}(\wedge^{\dim\H}\gfrak_v).$}
\] 
We shall identify between the the representation $\rho_H$ of $G$ and the $\bbq$-representaiton $ \wedge^{\dim\H}\Ad$ of $\G$.
 
Let ${\vbf_\H}$ be a primitive integral vector in $\wedge^{\dim\H}\Lie(\G)$ 
corresponding to the Lie algebra of $\H.$ 
Recall from~\eqref{e;def-height} that
\be\label{eq:def-height-again}
\height(\Hbf )=\|\vbf_\H\|_{\places}=\|\vbf_\H\|.
\ee
%Given $\Hbf\in \Hcal$ and $n\in\bbn$, define
%\[
%\height(\Hbf,n):=e^{n}\height(\Hbf).
%\]
%Given a finite collection $\mathcal F=\{(\Hbf, %n)\}\subset\Hcal\times\bbn,$ define
%\[
%\height(\mathcal F)=\max\{\height(\Hbf,n):(\Hbf, n)\in\mathcal F\}.
%\]

The vector $\vbf_\H$ is diagonally embedded in $V_H$ (which is a product of local factors); 
in order to put an emphasis on the local nature of this diagonally embedded vector, we will denote it by $\vpz_H$. 
Define 
\[
\eta_H(g):=\rho_H(g)\vpz_H\;\text{ for every $g\in G$}.
\]

%Given $\wpz=(\wpz_v)\in\wedge^r\gfrak$, for some $0<r\leq\dim\G$, so that $\prod\|\wpz_v\|_v\neq0$, 
%we define $\overline\wpz:=\Bigl(\frac{1}{\|\wpz_v\|_v}\wpz_v\Bigr)$; see the lemma in \S\ref{sec:norms-gfrak}.

Throughout, $U=\prod_{v\in\places} U_v\subset G$ is a $\bbq_\places$-unipotent subgroup. 
We will use the notation from~\S\ref{sec:lambda-n}. In particular, $\mathcal B_U$ is an orthonormal 
basis for $\ufrak$, and 
\[
\mathsf B_U(e)=\exp\Bigl(\Bigl\{\textstyle\sum_{\zpz\in\mathcal B_U} r_\zpz\zpz: |r_\zpz|_\places\leq 1\Bigr\}\Bigr).
\]
%where 
%$
%{\rm Box}_\ufrak(0,1)=\Bigl\{\textstyle\sum_{\zpz\in\mathcal B_U} r_\zpz\zpz: |r_\zpz|_\places\leq 1\Bigr\}.
%$ 
%Let $\mathsf B$ be the $\exp$ of a norm ball around containing $0$ in $\ufrak$, e.g., $\mathsf B=\mathsf B_U(e)$.
Recall also from~\S\ref{sec:lambda-n} the notion of an admissible expanding map $\la_k:U\to U$ for all $k\in\bbn$. 
 
 \medskip

The following is a $\places$-arithmetic version of Definition~\ref{def:Diophantine-intro-real}, and plays a crucial role in this paper.

% \begin{defn}\label{def:Diophantine-intro} 
% Let $\mathcal F\subset\hcal\times\bbn$, and let  
% $\vare:\mathcal F\to(0,1)$ be a function so that 
% $\vare(\Hbf, \cdot)$ is decreasing in $n$ and $\vare(\cdot,n)$ is decreasing in $\height(\H).$ 

% A point $g\Gamma$ is called $(\vare;\mathcal F)$-Diophantine for the action of $U$ if for all $(\Hbf,n)\in\mathcal F$ with $\{e\}\neq\H\neq\G$ we have
% \be\label{eq:dioph-cond-intro}
% {\max_{\zpz\in\bcal_U}\|\zpz\wedge {\eta_H(g\gamma)}\|\geq\vare(\Hbf,n),\;
% \forall\gamma\in\Gamma,\text{ with } \cfun(\eta_H(g\gamma))< e^n.}
% \ee 
% A point that is $(\vare;\hcal\times\bbn)$-Diophantine will simply be called 
% $\vare$-Diophantine for the action of $U$.  
% \end{defn}  

\begin{defn}\label{def:Diophantine-intro} 
Let $\vare : \bbr^+ \to (0,1)$ be a monotone decreasing function, $t\in\bbr^+$, and $\fcal\subset \mathcal H$ a subcollection that is $\Gamma$-invariant with respect to conjugation.
A point $g\Gamma$ is called \textbf{$(\vare,t,\mathcal F)$-Diophantine} for the action of $U$ if for all $\Hbf \in \fcal$ with $\{e\}\neq\H\neq\G$ and $\cfun(\eta_H(g))< e^t$
\be\label{eq:dioph-cond-intro-2}
\max_{\zpz\in\bcal_U}\|\zpz\wedge {\eta_H(g)}\|\geq\vare(\cfun(\eta_H(g)))
.\ee 
A point is \textbf{$(\vare,t)$-Diophantine} if it is $(\vare,t,\hcal)$-Diophantine. 
A point is \textbf{$\vare$-Diophantine} if
it is $(\vare,t)$-Diophantine for all $t>0$.
\end{defn}  
\medskip

\noindent
Note that if there exists some nontrivial $\Hbf \lhd \Gbf$ so that $U\subset \Hbf(\bbq_\Sigma)$, then for \emph{any} $\vare : \bbr^+ \to (0,1)$ the set of $\vare$-Diophantine points is empty.

We now state the main result of this paper.

\begin{thm}\label{thm:eff-linearization-general}
There exist constants $A$ and $D$ depending only on $N$, and constants $E$ depending on $N, \#\Sigma$ and polynomially on $\height(\mathbf G)$ and the primes in $\Sigma$, and $E_1$ depending in addition also (polynomially) on $E_\Gbf$, so that the following holds. Let $g\in G$, $t>0$, $k\geq 1$, and $0<\eta<1/2$. Assume $\vare:\bbr^+ \to (0,1)$ satisfies 
for any $s \in \bbr^+$ that
\begin{equation}
\label{condition on vare}
\vare(s) \leq  \eta^A s^{-A} /E_1.
\end{equation}
Then at least one of the following three possibilities holds.
\begin{enumerate}
\item 
\begin{equation*}
\biggl|\biggl\{u\in\mathsf B_U(e):\begin{array}{c}\la_k(u)g\Gamma\not\in X_\eta\text{ or }\\ \text{$\la_k(u)g\Gamma$ is not $(\vare,t)$-Diophantine}\end{array}\biggr\}\biggr|
< E_1 \eta^{1/D}
%E\eta^{1/D} + E_1 \eta^{-A} \left (\int_{1}^{t}\vare(e^s)^{1/A} \, ds + \vare(0)^{1/A}\right )
\end{equation*} 

\item There exist a nontrivial proper subgroup $\Hbf\in\Hcal$ with \[\height(\Hbf)\leq ( E|g|^A+ E_1 e^{At})\eta^{-A}\] 
so that the following hold for all $u\in\mathsf B_U(e)$:
\begin{align*}
\cfun(\eta_{H}(\la_k(u)g))&\leq  ( E|g|^A+ E_1 e^{At})\eta^{-A}\\
\max_{\zpz\in\mathcal B_U} \Bigl\|\zpz\wedge{\eta_{H}(\la_{k}(u)g)}\Bigr\|&\leq  e^{-k/D}( E|g|^A+ E_1 e^{At})\eta^{-A}
\end{align*}

\item There exist a nontrivial proper normal subgroup $\Hbf \lhd \Gbf$ with  
\[
\height(\Hbf)\leq E_1 ( e^{t}\eta^{-1})^A
\]
so that 
\[
\max_{\zpz\in\mathcal B_U} \Bigl\|\zpz\wedge\vpz_{H}\Bigr\|\leq  \vare(\height(\Hbf)^{1/A} \eta/ E_1 )^{1/A}.
\]

\end{enumerate}
\end{thm}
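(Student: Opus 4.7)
My plan is to follow the linearization strategy of Dani and Margulis, made effective using Kleinbock-Margulis-type polynomial non-concentration together with the effective reduction theory of Lemma~\ref{prop:eff-red-theory}. The architecture is: negate conclusion (1), extract a candidate $\Hbf \in \Hcal$ from the failure set by pigeonhole, and amplify the resulting pointwise failures to a bound valid on all of $\mathsf B_U(e)$ via polynomial estimates; the structure of $\Hbf$ (normal versus non-normal in $\Gbf$) decides whether the conclusion is (2) or (3). The first ingredient is an effective Kleinbock-Margulis-Dani non-divergence estimate applied to the polynomial orbit maps $u \mapsto \Ad(\la_k(u)g)\zpz$ with $\zpz \in \gfrak(\zeds)$: either the measure of $u \in \mathsf B_U(e)$ with $\la_k(u)g\Gamma \notin X_\eta$ is $\ll \eta^{1/D}$, or the argument produces (after absorbing a $\Gamma$-translate) the primitive integer vector $\vpz_H$ attached to some proper $\Hbf \in \Hcal$ whose $\rho_H$-orbit is controllably small throughout $\mathsf B_U(e)$, which feeds directly into the polynomial-amplification step below.

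Assuming (1) fails, on a set $E \subset \mathsf B_U(e)$ of measure $\gg \eta^{1/D}$ there is for each $u \in E$ a proper $\Hbf_u \in \Hcal$ witnessing failure of the $(\vare, t)$-Diophantine condition at $\la_k(u)g\Gamma$. Using Lemma~\ref{prop:eff-red-theory} to replace $\la_k(u)g$ by $\la_k(u)g\gamma_u$ with $|\la_k(u)g\gamma_u| \dotll \eta^{-\star}$, and absorbing $\gamma_u$ into the description of $\Hbf_u$, the primitive integer vector $\vpz_{H_u}$ has norm bounded polynomially in $|g|$, $e^t$, and $\eta^{-1}$. Since the number of primitive integer vectors in $\wedge\gfrak(\zeds)$ of bounded norm is polynomial in that bound, a pigeonhole yields a single $\Hbf \in \Hcal$ whose $\vpz_H$ witnesses the failure on a subset $E' \subset \mathsf B_U(e)$ of measure $\gg \eta^A$. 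The map $u \mapsto \rho_H(\la_k(u)g)\vpz_H$ is then polynomial of bounded degree, and applying Kleinbock-Margulis polynomial non-concentration (to $\zpz \wedge \rho_H(\la_k(u)g)\vpz_H$, and separately to its coordinates) upgrades the smallness on $E'$ to a bound on all of $\mathsf B_U(e)$; the rescaling built into $\la_k$ via~\eqref{assumption on Greek lambda} converts this into the decay factor $e^{-k/D}$ in (2). If $\Hbf$ is not normal in $\Gbf$, this is conclusion (2). If $\Hbf \lhd \Gbf$, then because $\Gbf \in \Hcal$ is generated by unipotents over the algebraic closure (so every $\bbq$-rational character of $\Gbf$ is trivial), $\rho_H(g)\vpz_H = \vpz_H$ for all $g \in G$; the Diophantine failure then reads $\max_{\zpz\in\bcal_U}\|\zpz \wedge \vpz_H\| < \vare(\cfun(\vpz_H))$ directly, and the height bound $\height(\Hbf) \leq E_1(e^t\eta^{-1})^A$ follows from $\cfun(\vpz_H) < e^t$ together with the arithmetic relation $\cfun(\vpz_H) \asymp \height(\Hbf)$ for primitive integer vectors, producing (3).

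The main obstacle is the quantitative control in the middle paragraph: converting pointwise failures at individual $u$'s into a \emph{single} subgroup $\Hbf$ whose height is polynomial in $|g|$, $e^t$, and $\eta^{-1}$, in the $\places$-arithmetic setting. This requires careful bookkeeping of how $\la_k$ interacts with integral structures at each place, applying Lemma~\ref{prop:eff-red-theory} to get fundamental-domain representatives with polynomially bounded norm, and using the discreteness of $\Ad(\Gamma) \subset \SL_d(\zeds)$ to control the number of candidate integer vectors $\vpz_H$. A secondary technical point is that the polynomial amplification has to be carried out for the $\bbq_\places$-polynomial maps arising from the exterior representations $\rho_H$ simultaneously at all places in $\places$, with degrees controlled uniformly in $N$, and that the decay-in-$k$ factor $e^{-k/D}$ has to be extracted carefully from the interaction between the expanding map $\la_k$ and the bounded polynomial degree.
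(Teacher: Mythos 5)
There is a genuine gap, and it sits exactly where you flag "the main obstacle": your pigeonhole step does not work. If you keep the witnessing groups $\Hbf_u$ as they are, their heights are \emph{not} polynomially bounded in $|g|,e^t,\eta^{-1}$: the Diophantine failure only bounds $\cfun(\eta_{H_u}(\la_k(u)g))$, and recovering $\height(\Hbf_u)=\cfun(\vpz_{H_u})$ costs a factor $\|\rho_H((\la_k(u)g)^{-1})\|\sim |g|^{\star}e^{\star k}$, so the number of candidate groups grows exponentially in $k$ and the pigeonholed set has measure $\gg \eta^{1/D}e^{-\star k}$ -- too small for any Remez-type amplification. If instead you conjugate by $\gamma_u$ from Lemma~\ref{prop:eff-red-theory} to tame the heights, then $\gamma_u$ varies with $u$, so after pigeonholing on the conjugated group $\mathbf L$ the relevant quantity is $u\mapsto \rho_H(\la_k(u)g\gamma_u)\vpz_L$, which is no longer a single polynomial map on $\mathsf B_U(e)$; pigeonholing further on $\gamma_u$ reintroduces exponentially many (in $k$) candidates. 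This is precisely the difficulty the paper's proof is built around: Lemma~\ref{main lemma} runs an induction on the dimension of subgroups in $\hcal$, covers the exceptional set by a Vitali family of balls each carrying \emph{one} group with near-saturated bounds (so Lemma~\ref{l;polygood} applies ball by ball), and when two distinct groups witness smallness at a common point of $X_\tau$ it passes to the intersection $(\Hbf_1\cap\Hbf_2)^{\hcal}$, of strictly smaller dimension, via the effective Nullstellensatz/Lojasiewicz inequality of Lemma~\ref{l;lojas-ineq}. Your sketch has no mechanism playing the role of this intersection/induction step, and a counting-plus-pigeonhole argument cannot replace it.

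A second, independent problem is the decay factor $e^{-k/D}$ in conclusion (2). Remez amplification of the threshold $\vare(\cdot)\leq\eta^A s^{-A}/E_1$ is $k$-independent, and the almost-semigroup property~\eqref{assumption on Greek lambda} of $\la_k$ does not by itself "convert" anything into decay in $k$. In the paper this decay comes from a different mechanism: one passes from $\Hbf$ to the group $\lplus_\Hbf=\Lbf_\Hbf^{\hcal}$ (the class-$\hcal$ part of the stabilizer of $\vpz_H$), and uses the speed estimates (Lemma~\ref{lem:U-speed}, via Lemma~\ref{lem:dist-fixu}) inside Proposition~\ref{prop:almost-inv-Lie}: if $\zpz\wedge\eta_{M_H}$ were not exponentially small, the polynomial orbit $\eta_H(\la_{k}(\cdot)g)$ would grow like $e^{\star k}$ on the expanded ball, contradicting the boundedness of $\cfun(\eta_H(\la_k(u)g))$ on all of $\mathsf B_U(e)$. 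Note that the group appearing in (2) is this $\lplus_\Hbf$ (or $\lplus_{\mathbf W}$ coming from the nondivergence branch, Theorem~\ref{thm:non-div-par}), not the original witnessing group; this is highlighted in the introduction as a new ingredient beyond Dani--Margulis, and it is absent from your proposal. The normal case (your conclusion (3)) and the use of nondivergence for the $X_\eta$ part are broadly in line with the paper, but without the dimension induction with Lemma~\ref{l;lojas-ineq} and without the $\lplus_\Hbf$/speed argument the proof does not go through.
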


\medskip

\noindent
Of course if $\mathbf G$ is $\bbq$-simple, possibility (3) cannot hold. A typical example where there are infinitely many normal subgroups is 
\[\mathbf G = \SL _ n \ltimes \left ((\gadd)^n\times (\gadd)^n\right )\]
 with $\gadd$ denoting the the one dimensional additive group (the simplest possible algebraic group!). The group $\mathbf G$ is a perfect group, and for any $l,k \in \bbz$ the subgroup
\begin{equation*}
\mathbf H _ {l,k} = \left\{ (g, l \vpz, k \vpz):  g \in \SL _ n, \vpz \in \gadd^n \right\}
\end{equation*}
is a normal subgroup of $\mathbf G$.

We note the following interesting corollary of Theorem~\ref{thm:eff-linearization-general}. For simplicity we state it in the case where $\Gbf$ has only finitely many normal $\bbq$-subgroups (it is fairly easy to adjust the statement and the proof to accommodate general $\Gbf$, but they become a bit messier). Results of similar flavour were given by Lindenstrauss and Margulis in \cite[Prop.~4.4]{LM-EffOpp}.

\begin{coro}\label{inheritance corollary}
Let $\Gbf, G, \Gamma, U$ be as above, with $\Gbf$ having only finitely many normal $\bbq$-subgroups, and $U \not\subset \Hbf (\bbq_\Sigma)$ for all $\Hbf \lhd \Gbf$.
There are $\consta \label{inheritance constant}, \consta \label{more interesting inheritance constant} $ depending only on $N$ and $\epsilon _ 1, \epsilon_2$ depending on $N, \#\Sigma$ and polynomially on $\height(\mathbf G)$, the primes in $\Sigma$, and $E_\Gbf$, and $t_0$ that depend in addition also on $U$ and how far it is from lying in any  $\Hbf \lhd \Gbf$, so that if $\vare(s)=\epsilon_1 \eta^{\ref{inheritance constant}}s^{-\ref{inheritance constant}} $ then if $t>t_0$, and if $t' , k \geq \ref{more interesting inheritance constant} (t+\log(1/\eta)+\log(1/\epsilon_2))$, then for any 
$(\vare, t')$-Diophantine $g \Gamma \in X_\eta$,  
\begin{equation*}%\label{usual Diophantine estimate}
\biggl|\biggl\{u\in\mathsf B_U(e):\begin{array}{c}\la_k(u)g\Gamma\not\in X_\eta\text{ or }\\ \text{$\la_k(u)g\Gamma$ is not $(\vare,t)$-Diophantine}\end{array}\biggr\}\biggr|
< E_1 \eta^{1/D}
\end{equation*}
\end{coro}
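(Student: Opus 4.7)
The plan is to apply Theorem~\ref{thm:eff-linearization-general} to the given $g, t, k, \eta$ with the choice $\vare(s) = \epsilon_1 \eta^{\ref{inheritance constant}} s^{-\ref{inheritance constant}}$, where I set $\ref{inheritance constant} = A$ (the exponent from the theorem) and $\epsilon_1 \leq 1/E_1$ so that the hypothesis \eqref{condition on vare} is satisfied. The entire content of the corollary then consists in ruling out cases (2) and (3) of the theorem, leaving case (1), which is precisely the claimed conclusion. Before applying the theorem, I use Lemma~\ref{prop:eff-red-theory} to replace $g$ by $g\gamma$ for a suitable $\gamma\in\Gamma$, so that $|g|\leq E_\Gbf\eta^{-F}$; this does not change the coset $g\Gamma$, its Diophantine property, or the conclusions of the theorem (up to replacing $\Hbf$ by $\gamma^{-1}\Hbf\gamma\in\hcal$). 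With this reduction, the bound $(E|g|^A+E_1 e^{At})\eta^{-A}$ in case (2) becomes $C_0:=E_1'e^{At}\eta^{-A(F+1)}$ for some $E_1'$ depending on the same data as $E_1$.

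To rule out case (3), I use the standing assumptions: $\Gbf$ has only finitely many proper nontrivial normal $\bbq$-subgroups $\Hbf_1,\dots,\Hbf_m$, and $U\not\subset\Hbf_i(\bbq_\Sigma)$ for each $i$. Hence
\[
c_U:=\min_{1\leq i\leq m}\max_{\zpz\in\mathcal B_U}\|\zpz\wedge\vpz_{\Hbf_i}\|>0.
\]
A direct computation with my choice of $\vare$ gives
\[
\vare\!\left(\tfrac{\height(\Hbf)^{1/A}\eta}{E_1}\right)^{1/A}=\frac{\epsilon_1^{1/A}E_1}{\height(\Hbf)^{1/A}}\leq \epsilon_1^{1/A}E_1
\]
since $\height(\Hbf)\geq 1$. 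Thus conclusion (3) forces $c_U\leq \epsilon_1^{1/A}E_1$, which is contradicted as soon as $\epsilon_1<(c_U/E_1)^A$; this is where the dependence on $U$ (encoded through $t_0$, or equivalently through the smallness of $\epsilon_1$) enters.

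To rule out case (2), I take $u=e$ in the bounds there (using $\la_k(e)=e$), obtaining $\Hbf\in\hcal$ nontrivial proper with $\cfun(\eta_H(g))\leq C_0$ and $\max_{\zpz\in\mathcal B_U}\|\zpz\wedge\eta_H(g)\|\leq e^{-k/D}C_0$. Using the lower bound on $t'$ in the hypothesis, together with $t>t_0$, the threshold $t_0$ is arranged so that $t'>\log C_0$; then the $(\vare,t')$-Diophantine property applied to this particular $\Hbf$ gives, since $\vare$ is decreasing,
\[
\max_{\zpz\in\mathcal B_U}\|\zpz\wedge\eta_H(g)\|\geq \vare(\cfun(\eta_H(g)))\geq \vare(C_0)=\epsilon_1\eta^A C_0^{-A}.
\]
Combining with the upper bound yields $e^{-k/D}C_0\geq \epsilon_1\eta^A C_0^{-A}$, i.e.
\[
k\leq D\bigl((A+1)\log C_0+A\log(1/\eta)+\log(1/\epsilon_1)\bigr),
\]
which, after substituting the explicit form of $C_0$, is of the shape $\ref{more interesting inheritance constant}(t+\log(1/\eta)+\log(1/\epsilon_2))$ for a suitable $\ref{more interesting inheritance constant}$ and $\epsilon_2$. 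Hence the hypothesis $k\geq \ref{more interesting inheritance constant}(t+\log(1/\eta)+\log(1/\epsilon_2))$ contradicts case (2).

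The main obstacle is essentially bookkeeping: carefully tracking how the various constants $\epsilon_1,\epsilon_2,t_0,\ref{inheritance constant},\ref{more interesting inheritance constant}$ depend on the data so that the thresholds match the statement. The underlying algebraic mechanism is clean: the Diophantine condition on $g\Gamma$ rules out proper non-normal $\Hbf\in\hcal$ (case 2), while the assumption that $U$ avoids all proper normal subgroups of $\Gbf$, together with finiteness of the latter, rules out case (3).
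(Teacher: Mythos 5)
Your proof follows the same strategy as the paper's: apply Theorem~\ref{thm:eff-linearization-general} and rule out alternatives~(2) and~(3). Your argument for ruling out~(2) is essentially the paper's argument: using $g\Gamma\in X_\eta$ to bound $|g|$, applying the $(\vare,t')$-Diophantine condition to the group $\Hbf$ produced by~(2), and deriving an upper bound on $k$ that is violated once $\ref{more interesting inheritance constant}$ and $\epsilon_2$ are chosen appropriately. The one place where you diverge from the paper's setup is in ruling out case~(3): you require $\epsilon_1 < (c_U/E_1)^A$, which makes $\epsilon_1$ depend on $U$ (through $c_U$). But the corollary explicitly attributes the $U$-dependence to $t_0$ alone, with $\epsilon_1$ depending only on $N$, $\#\Sigma$, $\height(\Gbf)$, the primes in $\Sigma$, and $E_\Gbf$. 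The paper's own treatment of case~(3) is quite terse (it simply says the hypothesis $U\not\subset\Hbf(\bbq_\Sigma)$ rules it out ``if $t_0$ is large enough''), but it is careful to place the $U$-dependence in $t_0$ rather than in $\epsilon_1$. You acknowledge this tension yourself with the parenthetical ``encoded through $t_0$, or equivalently through the smallness of $\epsilon_1$,'' but these are not equivalent with respect to the declared dependencies of the constants: to match the statement as written you would need to argue that case~(3) is excluded by taking $t>t_0$ with $t_0=t_0(U)$, while keeping $\epsilon_1$ independent of $U$. Aside from this bookkeeping issue the argument is sound and coincides with the paper's.
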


\begin{proof} [Proof of Corollary~\ref{inheritance corollary} assuming Theorem~\ref{thm:eff-linearization-general}]

Assuming that the constants in Corollary~\ref{inheritance corollary} were appropriately chosen, $\vare$ satisfies \eqref{condition on vare} and we may apply Theorem~\ref{thm:eff-linearization-general}. 

If (1) of that theorem holds there is nothing to prove. Otherwise either (2) or (3) of that theorem holds. (3) is ruled out by our assumption that $U \not\subset \Hbf (\bbq_\Sigma)$ for all $\Hbf \lhd \Gbf$ if $t_0$ is large enough.

Suppose then we are in case (2). As $g \Gamma \in X _ \eta$ it follows that there is a nontrivial subgroup $\mathbf H \in \mathcal{H}$ for which in particular\footnote{Possibly for a slightly larger $A$ than in the theorem.}
\begin{align}
\cfun(\eta_{H}(g))&\leq  2E_1 e^{At}\eta^{-A}\nonumber\\
\max_{\zpz\in\mathcal B_U} \Bigl\|\zpz\wedge{\eta_{H}((u)g)}\Bigr\|&\leq  2E_1 e^{-k/D} e^{At}\eta^{-A}\label{almost invariance in inheritance proof}
\end{align}
Chose $\ref{more interesting inheritance constant}, \epsilon_2$ so that in particular $t' > \log (2E_1 e^{At}\eta^{-A})$. Then since $g \Gamma$ is $(\vare, t ')$-Diophantine
\begin{equation*}
\max_{\zpz \in \mathcal B_U} \Bigl\| \zpz \wedge{\eta_{H}((u)g)}\Bigr\| \geq \epsilon_1 \eta^{\ref{inheritance constant}} \left (2E_1 e^{At}\eta^{-A}\right )^{-\ref{inheritance constant}} \gg \eta^{2A\ref{inheritance constant}} e^{-A\ref{inheritance constant}t}.
\end{equation*}
But this contradicts \eqref{almost invariance in inheritance proof} if $k \geq \ref{more interesting inheritance constant} (t+\log(1/\eta)+\log(1/\epsilon_2))$ for sufficiently large $\ref{more interesting inheritance constant}$.
\end{proof}

\section{The family $\Hcal$ and the Diophantine condition}\label{sec:Hcal-diophatine}\label{s;singular}

\subsection{}\label{sec:Hcal}
Recall the family 
\[
\mathcal{H}= \Bigl\{\Hbf\subset\G:\text{$\H$ is a connected $\bbq$-subgroup and $\rad(\H)=\rad_u(\H)$}\Bigr\} 
\]
where $\rad(\H)$ (resp.\ $\rad_u(\H)$) denotes the solvable (resp.\ unipotent) radical of $\H.$ 

%Alternatively, $\Hbf\in\hcal$ if and only if $\Hbf$ is a connected $\bbq$-subgroup 
%which is generated by unipotent subgroups over the algebraic closure of $\bbq.$

For any subgroup $\mathbf{H}\in\Hcal$, we put $H=\Hbf(\bbq_{\places}).$
Sometimes we write $H\in\hcal.$ 

\begin{lemma}\label{lem:ht-rad-u-rad}
There exists some $\consta\label{k:commut-rad}$ so that the following holds.
Let ${\bf L}\subset\SL_N$ be a connected algebraic group defined over $\bbq$.
Then 
\[
\height([{\bf L},{\bf L}])\dotll\height({\bf L})^{\ref{k:commut-rad}};\; \height(\rad({\bf L}))\dotll\height({\bf L})^{\ref{k:commut-rad}};\;\text{and }\; \height(\rad_u({\bf L}))\dotll\height({\bf L})^{\ref{k:commut-rad}}.
\]  
\end{lemma}

\begin{proof}
Let $\mathcal B$ be a $\bbz$-basis for $\Lie(\Lbf)\cap\SL_N(\bbz)$
so that $\|\zpz\|\ll\height(\Lbf)^\star$ for all $\zpz\in\mathcal B$.
Then $\{[\zpz,\zpz']: \zpz,\zpz'\in\mathcal B\}$ generates $[\Lie(\Lbf),\Lie(\Lbf)].$
Hence, 
\[
\height([\Lie(\Lbf),\Lie(\Lbf)])\dotll\height(\Lbf)^\star.
\]

It remains to bound $\height(\rad_u(\Lbf)).$ To that end, first note that
\[
\rad(\Lie(\Lbf))=\Bigl\{\zpz\in\Lie(\Lbf): \mathsf k_{\Lbf}(\zpz,[\wpz,\wpz'])=0, \forall\wpz,\wpz'\in\Lie(\Lbf)\Bigr\}
\]
where $\mathsf k_{\Lbf}$ is the killing form of $\Lie(\Lbf)$.
Therefore, $\height(\rad(\Lie(\Lbf)))\dotll\height(\Lbf)^\star$. 

Now let $\mathcal B'$ be a $\bbz$-basis for $\rad(\Lie(\Lbf))\cap\SL_N(\bbz)$
so that $\|\zpz\|\dotll\height(\Lbf)^\star$ for all $\zpz\in\mathcal B'$. Then
\[
\rad_u(\Lie(\Lbf))=\Bigl\{\zpz\in \rad(\Lie(\Lbf)):{\rm tr}(\wpz_1\cdots\wpz_s\zpz)=0, \forall1\leq s\leq N,\wpz_i\in\mathcal B'\Bigr\}.
\]
Hence, $\height(\rad_u(\Lie(\Lbf)))\dotll\height(\Lbf)^\star$. 
\end{proof}

\subsection{Algebraic properties of subgroups in class $\Hcal$}\label{sec:algeb-Hcal}
A quantitative notion of a point satisfying a Diophantine condition was given 
in Definition~\ref{def:Diophantine-intro}.
This definition is formulated in terms of certain representations whose constructions and basic properties we now recall. 

Let $\Hbf\in\Hcal$ be a proper subgroup.
Recall that $\gfrak=\oplus_{\places}\,\gfrak_v$ where $\gfrak_v=\Lie(G_v)$. Put   
\[
\text{$\rho_H:=\wedge^{\dim\H}\Ad\;$ and 
$\;V_H:=\oplus_\places\wedge^{\dim\H}\gfrak_v.$}
\] 
The representation $\rho_H$ is defined over $\bbq.$ 
%Recall that we defined 
%\[
%\dim V_H=\sum_{p\in\places}\dim_{\bbq_p}\Bigl(\wedge^{\dim\H}\gfrak_p\Bigr).
%\]

%Recall from~\S\ref{sec:space-lattices} that $\gfrak(\bbz_\places)=\gfrak\cap\SL_N(\bbz_\places).$

Let ${\vbf_\H}$ be a primitive integral vector in 
$\wedge^{\dim\H}\Lie(\G)$ (or $\wedge^{\dim\H}\sl_N$) corresponding to the Lie algebra of $\H$, see~\S\ref{ss;height}. 
We embed ${\vbf_\H}$ diagonally in $V_H$ and denote this vector by $\vpz_H$. 
Let $\eta_H:G\to V_H$ denote the orbit map, that is
\[
\mbox{$\eta_H(g)=\rho_H(g)\vpz_H$ for all $g\in G$.}
\] 

%Given $\wpz=(\wpz_v)\in\wedge^r\gfrak$, for some $0<r\leq\dim\G$, so that $\cfun(\wpz)\neq0$
%we define $\overline\wpz:=\Bigl(\frac{1}{\|\wpz_v\|_v}\wpz_v\Bigr)$, see the lemma in \S\ref{sec:norms-gfrak}.

Note that $\rho_H$ and $V_H$ depend only on $\dim\H,$ however, $\vbf_\H$ (similarly $\vpz_H$) 
uniquely determines $\Lie(\H)$ and hence $\H.$ 

\begin{lemma}\label{lem:rep-properties}
\begin{enumerate}
% \item $\vbf_\H=\height(\H)\vpz_{H}\in V_H(\bbz_\places).$
 \item $N_G(H)=\Bigl\{g\in G: \rho_H(g)\vpz_H=\Bigl(\chi_H(g_p)\Bigr)_{v\in\places}\vpz_H\Bigr\},$ 
 where $\chi_H$ is a rational character.
\item The orbit $\eta_H(\Gamma)$ is discrete and closed in $V_H$.
\end{enumerate}
\end{lemma}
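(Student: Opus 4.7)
For part (1), the plan is to identify $N_\Gbf(\Hbf)$ with the algebraic stabilizer of the Plücker line spanned by $\vbf_\H$. Since $\Hbf$ is connected, an element $g\in\Gbf$ lies in $N_\Gbf(\Hbf)$ if and only if $\Ad(g)$ preserves $\Lie(\Hbf)$ as a subspace of $\sl_N$, which, by passing to $\dim\Hbf$-th exterior powers, is equivalent to $\rho_H(g)\vbf_\H\in\bbq\cdot\vbf_\H$. The resulting scalar defines a rational character $\chi_H$ on $N_\Gbf(\Hbf)$ defined over $\bbq$. Evaluating at $\bbq_v$-points for each $v\in\places$ and reassembling, we obtain the displayed formula for $g=(g_v)\in N_G(H)$. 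Conversely, if $g\in G$ satisfies $\rho_H(g_v)\vpz_H\in\bbq_v\cdot\vpz_H$ at each place, then $\Ad(g_v)$ preserves $\Lie(\Hbf)\otimes_\bbq\bbq_v$; since $\Hbf$ is connected this forces $g_v\in N_{G_v}(H_v)$, and hence $g\in N_G(H)$.

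For part (2), the plan is to trap $\eta_H(\Gamma)$ inside a lattice that is visibly discrete and closed in $V_H$. By the setup in \S\ref{sec:space-lattices}, the basis $\mathcal B_\Gbf$ was chosen so that $\Ad(\Gamma)\subset\SL_d(\bbz_\places)$, and hence $\rho_H(\Gamma)=\wedge^{\dim\Hbf}\Ad(\Gamma)$ preserves the $\bbz_\places$-submodule $\Lambda_H\subset V_H$ generated by wedges of $\dim\Hbf$ distinct elements of $\mathcal B_\Gbf$. Since $\vbf_\H$ has integer coordinates in this basis, its diagonal image $\vpz_H$ lies in $\Lambda_H$, and so $\eta_H(\Gamma)=\rho_H(\Gamma)\vpz_H\subset\Lambda_H$. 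The diagonal embedding $\bbz_\places\hookrightarrow\bbq_\places$ is a discrete cocompact subring, so $\Lambda_H$ is discrete and closed in $V_H$, and therefore so is its subset $\eta_H(\Gamma)$.

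There is no substantive obstacle in either part: the content is classical, and the only thing one needs to be careful about is the interaction between the global $\bbq$-rational objects ($\Hbf$, $\vbf_\H$, $\chi_H$) and their $\bbq_v$-specializations, together with the observation that the $\bbz_\places$-integral structure provided by the choice of $\mathcal B_\Gbf$ makes the diagonal $\Gamma$-orbit automatically discrete in $V_H$.
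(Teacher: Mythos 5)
Your proposal is correct and takes the same route as the paper: the paper's own proof consists of the one-line remark that both parts follow from the definitions (with arithmeticity of $\Gamma$ invoked for part (2)), and your write-up simply spells out that unwinding — the Pl\"ucker-line characterization of the normalizer for (1), and the trapping of $\eta_H(\Gamma)$ inside the $\bbz_\Sigma$-lattice $\Lambda_H$ (discrete and closed since $\bbz_\Sigma$ is discrete and cocompact in $\bbq_\Sigma$, and a subset of a closed discrete set is again closed and discrete) for (2).
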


\begin{proof}
Property~(1) is a consequences of the definition.

In light of our assumption that $\Gamma$ is arithmetic, property~(2) also follows from the definitions. We note, however, that this {\em qualitative} result does not require arithmeticity of $\Gamma$, see~\cite[Thm.~3.4]{DM}.
\end{proof}

\begin{lemma}\label{counting lemma}
There exists some constant $\consta\label{k:hcal-ht-T}$ so that the following holds.
\be\label{eq:hcal-ht-T}
\#\{\H\in\Hcal:\height(\H)\leq T\}\ll T^{\ref{k:hcal-ht-T}}.
\ee
\end{lemma}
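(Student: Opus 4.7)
The plan is to observe that the map $\mathbf{H} \mapsto \mathbf{v}_{\mathbf{H}}$ is essentially injective on $\mathcal H$, and then bound the number of possible primitive integer vectors by a trivial lattice point count. Indeed, since every $\mathbf{H} \in \mathcal H$ is connected, it is determined by its Lie algebra $\mathrm{Lie}(\mathbf{H}) \subset \mathrm{Lie}(\mathbf{G})$. The Lie algebra in turn is determined by the line $\wedge^{\dim \mathbf{H}} \mathrm{Lie}(\mathbf{H})$ in $\wedge^{\dim \mathbf{H}}\mathrm{Lie}(\mathbf{G})$, and hence (up to sign) by the primitive integer vector $\mathbf{v}_{\mathbf{H}}$.

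First I would stratify the count by $m = \dim \mathbf{H}$, which lies in $\{1,\dots,\dim \mathbf{G}-1\}$. For each such $m$, $\mathbf{v}_{\mathbf{H}}$ lies in the lattice
\[
\Lambda_m := \wedge^{m}\mathrm{Lie}(\mathbf{G}) \cap \wedge^{m}\mathfrak{sl}_N(\mathbb{Z}),
\]
which sits inside $\wedge^{m}\mathfrak{sl}_N(\mathbb{Z}) \cong \mathbb{Z}^{d_m}$ with $d_m = \binom{N^2-1}{m}$. Since $\mathrm{ht}(\mathbf{H}) = \|\mathbf{v}_{\mathbf{H}}\|_\infty$ (recall that for a primitive integer vector the $p$-adic norms are $\le 1$, so $\|\mathbf{v}_{\mathbf{H}}\|_{\Sigma} = \|\mathbf{v}_{\mathbf{H}}\|_{\infty}$), the trivial lattice point bound gives
\[
\#\{\mathbf{v}\in\Lambda_m : \|\mathbf{v}\|_{\infty}\le T\} \le (2T+1)^{d_m}.
\]

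Summing over $m$ and using that $\dim \mathbf{G} \le N^2$,
\[
\#\{\mathbf{H}\in\mathcal H : \mathrm{ht}(\mathbf{H}) \le T\}
\le \sum_{m=1}^{\dim\mathbf{G}-1} (2T+1)^{d_m}
\le (N^2-1)(2T+1)^{D_0},
\]
where $D_0 = \max_{1\le m\le N^2-1}\binom{N^2-1}{m}$ depends only on $N$. Absorbing the constant $(N^2-1)\cdot 3^{D_0}$ into the implicit constant (which is allowed to depend on $N$), we obtain the desired estimate $\ll T^{A_{\ref{k:hcal-ht-T}}}$ with $A_{\ref{k:hcal-ht-T}} = D_0$.

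There is no serious obstacle here: the only content beyond a trivial lattice point count is the remark that $\mathbf{H}$ is determined by $\mathbf{v}_{\mathbf{H}}$, which follows from connectedness. One does not even need to use that $\mathbf{H}$ lies in $\mathcal H$ (that the solvable radical equals the unipotent radical plays no role for the upper bound); the only saving one could hope to extract from this condition is a sharper exponent, but any polynomial bound suffices for our purposes.
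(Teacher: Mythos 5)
Your proof is correct and is essentially the paper's argument: the paper's proof is the one-line remark that the bound "follows from the definitions of $\vbf_\H$ and $\height(\H)$", i.e.\ exactly the observation that a connected $\bbq$-subgroup is determined by its Lie algebra, hence by the primitive integral vector $\vbf_\H$, so the count reduces to a lattice point count of integer vectors of sup-norm at most $T$ in a space of dimension bounded in terms of $N$. Your write-up just makes this explicit (including the stratification by $\dim\Hbf$ and the remark that the class-$\Hcal$ condition is not needed for the upper bound), so there is nothing further to add.
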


\begin{proof}
This follows from the definitions of $\vbf_\H$ and $\height(\H).$
\end{proof}

\begin{lemma}\label{lem:subgroup-Hcal}
There exists some $\consta\label{k:MH}>0$ so that the following holds.
Given any $\bbq$-group $\Lbf\subset\G$, there exists a normal subgroup $\Lbf^\Hcal\subset\Lbf$ 
which is maximal among all subgroups of $\Lbf$ which belong to class $\Hcal$;
moreover, 
\be\label{eq:ht-H-Hcal}
\height\Bigl(\Lbf^\Hcal\Bigr)\ll\height(\Lbf)^{\ref{k:MH}}.
\ee
\end{lemma}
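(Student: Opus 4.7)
The natural candidate is $\Lbf^\Hcal = [\Lbf,\Lbf] \cdot \rad_u(\Lbf)$, or equivalently the connected $\bbq$-subgroup of $\Lbf$ whose Lie algebra is $\lfrak^\Hcal := [\lfrak,\lfrak] + \rad_u(\lfrak)$, where $\lfrak=\Lie(\Lbf)$. My plan is first to check this is in $\Hcal$, then verify the maximality and normality properties, and finally prove the height bound using Lemma~\ref{lem:ht-rad-u-rad} together with a standard linear-algebra bound on the height of a sum of two $\bbq$-subspaces.

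Fix a Levi decomposition $\lfrak = \mfrak \oplus \rad_u(\lfrak)$ with $\mfrak$ reductive; splitting $\mfrak = [\mfrak,\mfrak] \oplus \zfrak(\mfrak)$ we have $[\lfrak,\lfrak] + \rad_u(\lfrak) = [\mfrak,\mfrak] + \rad_u(\lfrak)$, since $[\mfrak,\rad_u(\lfrak)],[\rad_u(\lfrak),\rad_u(\lfrak)] \subset \rad_u(\lfrak)$. Thus $\rad(\Lbf^\Hcal)=\rad_u(\Lbf^\Hcal) = \rad_u(\Lbf)$, so $\Lbf^\Hcal \in \Hcal$. For maximality, suppose $\Kbf \subset \Lbf$ with $\Kbf\in\Hcal$: then $\Kbf(\overline\bbq)$ is generated by unipotent elements, and any unipotent element of $\Lbf(\overline\bbq)$ projects trivially to the torus $\Lbf/\Lbf^\Hcal$ (whose Lie algebra $\mfrak/[\mfrak,\mfrak] \cong \zfrak(\mfrak)$ is a toral quotient); hence $\Kbf \subset \Lbf^\Hcal$. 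Normality in $\Lbf$ is clear since $[\Lbf,\Lbf]$ and $\rad_u(\Lbf)$ are characteristic in $\Lbf$.

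For the height bound, Lemma~\ref{lem:ht-rad-u-rad} gives
\[
\height([\Lbf,\Lbf]) \ll \height(\Lbf)^\star, \qquad \height(\rad_u(\Lbf)) \ll \height(\Lbf)^\star.
\]
It remains to show that whenever $\Vbf_1,\Vbf_2$ are $\bbq$-subspaces of $\sl_N$ with heights $H_1,H_2$, then $\height(\Vbf_1+\Vbf_2) \ll (H_1 H_2)^\star$. To see this, extract from the primitive integral vector $\vbf_{\Vbf_i}$ a $\bbz$-basis $\{e^{(i)}_j\}$ for $\Vbf_i \cap \sl_N(\bbz)$ with $\|e^{(i)}_j\| \ll H_i^\star$ (this can be done via standard Siegel-type bounds, or simply by solving appropriate linear systems with Cramer's rule). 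Running Gaussian elimination on the combined set $\{e^{(1)}_j\} \cup \{e^{(2)}_j\}$ produces a $\bbq$-basis of $\Vbf_1+\Vbf_2$ whose wedge product has norm polynomially bounded in $H_1H_2$; dividing by the gcd of its integer coefficients gives a primitive integral vector of norm $\ll (H_1H_2)^\star$, as desired. Applying this to $\Vbf_1 = [\lfrak,\lfrak]$ and $\Vbf_2 = \rad_u(\lfrak)$ yields $\height(\Lbf^\Hcal) \ll \height(\Lbf)^{\ref{k:MH}}$.

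The only nontrivial ingredient is the linear-algebraic height-of-sum bound, which is routine but deserves the brief justification above; everything else reduces to the structure theory of algebraic groups in characteristic zero and the previously established Lemma~\ref{lem:ht-rad-u-rad}.
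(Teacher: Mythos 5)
Your proof is correct and follows essentially the same route as the paper: both identify $\Lie(\Lbf^\Hcal)=[\lfrak,\lfrak]+\rad_u(\lfrak)$ via the reductive quotient $\Lbf/\rad_u(\Lbf)$ and then invoke Lemma~\ref{lem:ht-rad-u-rad} for the height bound. The paper's proof is terser and leaves the height-of-a-sum-of-subspaces step implicit; you spell it out via Gaussian elimination, which works, though a slightly cleaner route is the covolume inequality $\cfun(\Lambda_1)\cfun(\Lambda_2)\gg\cfun(\Lambda_1\cap\Lambda_2)\cfun(\Lambda_1+\Lambda_2)$ combined with $\cfun(\Lambda_1\cap\Lambda_2)\geq 1$ for integral lattices, a fact the paper records later as~\eqref{basic sum-intersection inequality}.
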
 

\begin{proof}
Since $\Lbf/\rad_u(\Lbf)$ is a reductive group and unipotent subgroups in $\Lbf$
map to unipotent subgroups in $\Lbf/\rad_u(\Lbf)$, we have 
\be\label{eq:Hcal-H'RH}
\Lie(\Lbf^\hcal)=[\Lie(\Lbf),\Lie(\Lbf)]+\Lie(\rad_u(\Lbf));
\ee
in particular, $\Lbf^\hcal$ exists.

By Lemma~\ref{lem:ht-rad-u-rad} we have $\height([{\bf L},{\bf L}])\dotll\height({\bf L})^\star$ and 
$\height(\rad_u({\bf L}))\dotll\height({\bf L})^\star$. The claim thus follows from~\eqref{eq:Hcal-H'RH}.
\end{proof}

\subsection{}\label{sec:LH-MH}
Let 
\[
\Lbf_\H=\Bigl\{g\in\G: \wedge^{\dim\H}\Ad (g) \vbf_\H=\vbf_\H\Bigr\}.
\] 
Then $\Lbf_\H$ is a $\bbq$-group. 
The subgroup $\Lbf_\H$ is not necessarily in $\hcal.$ 
Define 
\[
\lplus_\H:=\Lbf_\H^\Hcal,
\] 
see \eqref{eq:ht-H-Hcal} for the notation. 

Put $L_H=\Lbf_\H(\bbq_\places)$ and $\llplus_H=\lplus_\H(\bbq_\places).$ Note that 
\[
L_H=\{g\in G: \rho_H(g)\vpz_H=\vpz_H\}.
\]

We will simply denote these groups by $\Lbf$, $L$, $\lplus$, and $M$ 
when there is no confusion.

\begin{lemma}\label{lem:height-L-M}
There exist
$\consta\label{k:height-L-M}$ with the following property. For any $\H\in\Hcal$ we have 
\be\label{eq:height-L-M}
\height(\Lbf_\Hbf)\ll\height(\Hbf)^{\ref{k:height-L-M}}\quad\text{and}\quad\height(\lplus_\Hbf)\ll\height(\Hbf)^{\ref{k:height-L-M}}.
\ee
\end{lemma}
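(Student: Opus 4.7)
The second inequality follows from the first via Lemma~\ref{lem:subgroup-Hcal}: once we establish $\height(\Lbf_\Hbf)\ll\height(\Hbf)^\star$, applying \eqref{eq:ht-H-Hcal} to $\Lbf=\Lbf_\H$ immediately gives
\[
\height(\lplus_\Hbf)=\height(\Lbf_\Hbf^\Hcal)\ll\height(\Lbf_\Hbf)^{\ref{k:MH}}\ll\height(\Hbf)^{\star}.
\]
So the task reduces to bounding $\height(\Lbf_\Hbf)=\|\vbf_{\Lbf_\Hbf}\|$ polynomially in $\height(\Hbf)=\|\vbf_\Hbf\|$.

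The strategy is to exhibit $\Lie(\Lbf_\Hbf)$ as the kernel of an explicit linear map whose entries are linear in the coordinates of $\vbf_\Hbf$. Differentiating the defining relation $\rho_H(g)\vbf_\Hbf=\vbf_\Hbf$, we have
\[
\Lie(\Lbf_\Hbf)=\bigl\{X\in\Lie(\Gbf):\,(d\rho_H)(X)\,\vbf_\Hbf=0\bigr\}\cap\Lie(\Gbf),
\]
where $d\rho_H(X)=\sum_{i=1}^{\dim\Hbf}\id\wedge\cdots\wedge\ad(X)\wedge\cdots\wedge\id$. First I would fix the standard $\bbz$-basis $\bcal_\Gbf$ of $\Lie(\Gbf)\cap\sl_N(\bbz)$ from \S\ref{sec:space-lattices}; in this basis $\ad:\Lie(\Gbf)\to\mathfrak{gl}(\Lie(\Gbf))$ is represented by a matrix whose entries are integer-coefficient linear forms in $X$, the coefficients bounded by $\height(\Gbf)^{\star}$. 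Hence the composite map $T_{\vbf_\Hbf}:\Lie(\Gbf)\to\wedge^{\dim\Hbf}\Lie(\Gbf)$, $X\mapsto (d\rho_H)(X)\vbf_\Hbf$, has matrix entries which are linear forms in the coordinates of $\vbf_\Hbf$, of absolute value $\ll\height(\Hbf)$.

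Now $\Lie(\Lbf_\Hbf)=\ker T_{\vbf_\Hbf}$ is a rational subspace of $\Lie(\Gbf)$ of some dimension $d$, described by at most $\binom{\dim\Gbf}{\dim\Hbf}\leq 2^{N^2}$ linear equations with integer coefficients of size $\ll\height(\Hbf)$. Choose any $\dim\Gbf-d$ rows of $T_{\vbf_\Hbf}$ whose span coincides with the row span, and apply Cramer's rule: this produces $d$ integral vectors spanning $\Lie(\Lbf_\Hbf)$ whose coordinates are $(\dim\Gbf-d)\times(\dim\Gbf-d)$ minors of $T_{\vbf_\Hbf}$, each bounded by $(\dim\Gbf)!\cdot\bigl(\text{max entry}\bigr)^{\dim\Gbf-d}\ll\height(\Hbf)^{\star}$. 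The wedge product of these vectors is a nonzero integral vector on the line $\wedge^{d}\Lie(\Lbf_\Hbf)$, and therefore dominates the primitive vector $\vbf_{\Lbf_\Hbf}$ in norm. This yields $\|\vbf_{\Lbf_\Hbf}\|\ll\height(\Hbf)^{\star}$, establishing the first inequality with an exponent $\ref{k:height-L-M}$ depending only on $N$.

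The main (minor) obstacle is purely bookkeeping: to make the exponent depend only on $N$ rather than on $\height(\Gbf)$, one must be careful that the $\height(\Gbf)$-dependent factors from writing $\ad$ in the chosen basis are absorbed into the multiplicative constant $\ll$ and not into the exponent. Since $\ad$ appears only through multiplication by a fixed (depending on $\Gbf$) matrix of size $\dim\Gbf\leq N^2$, all such factors are absorbed into the $\ll$ constant by definition of $\star$-notation in \S\ref{sec:notation-exponent}, and the exponent $\ref{k:height-L-M}$ is controlled purely by $\dim\Gbf\leq N^2$.
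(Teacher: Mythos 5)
Your proposal is correct and follows essentially the same route as the paper. The paper's own proof is terser — it reduces the $\lplus_\Hbf$ inequality to the $\Lbf_\Hbf$ one via Lemma~\ref{lem:subgroup-Hcal} exactly as you do, then simply records $\Lie(\Lbf_\H)=\{\zpz\in\Lie(\G):\wedge^{\dim\H}\operatorname{ad}(\zpz)\vbf_\H=0\}$ together with $\|\vbf_\H\|=\height(\H)$ and declares the first inequality immediate — while you flesh out the ``immediate'' step (Cramer's rule/minors giving an integral spanning set for the kernel with coordinates $\ll\height(\H)^\star$, whose wedge dominates the primitive vector). This detail is welcome but not a different argument.
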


\begin{proof}
Since $\lplus_\H:=\Lbf_\H^\Hcal,$ the second inequality is a consequence of the first inequality and~\eqref{eq:ht-H-Hcal}.

Recall now that
\[
\Lie\Bigl(\Lbf_\H\Bigr)=\Bigl\{\zpz\in\Lie(\G): \wedge^{\dim\H}\operatorname{ad} (\zpz) \vbf_\H=0\Bigr\},
\]
and that $\vbf_\H$ is an integral vector with $\|\vbf_\H\|=\height(\H)$. 

The first inequality thus follows, and the proof is complete. 
\end{proof}

%\marginpar{removed section on number of points in normalizer}

\begin{lemma}\label{l;proper}\label{lem:proper}
\begin{enumerate}
\item For any $\gamma\in \Gamma$ and any $\H\in\Hcal$, we have 
\[
1\leq\height(\gamma\Hbf\gamma^{-1})=\cfun\Bigl(\eta_H(\gamma)\Bigr).
\]
\item Let $r>1$ and suppose $\gamma\in\Gamma$ is so that $\cfun\Bigl(\eta_H(\gamma)\Bigr)\leq \vsr$. 
Then 
\begin{enumerate}
\item $\height(\gamma{\bf L}_\H\gamma^{-1})\ll\vsr^\star.$
\item $\height(\gamma{\lplus_\H}\gamma^{-1})\ll\vsr^\star.$
\end{enumerate}
\end{enumerate}
\end{lemma}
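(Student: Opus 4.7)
The plan is to reduce both parts to the product-formula description of height from \S\ref{ss;height}, together with Lemma~\ref{lem:height-L-M}. The key geometric observation is that conjugation by a rational element $\gamma$ carries the stabilizer subgroups $\Lbf_\H, \lplus_\H$ to the corresponding stabilizers for the conjugate subgroup $\gamma\H\gamma^{-1}$, so the heights transform in a controlled way.

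For part (1), I would fix a $\Z$-basis $e_1,\dots,e_{\dim\H}$ of $\Lie(\H)(\R)\cap\sl_N(\Z)$, so $\vbf_\H=e_1\wedge\cdots\wedge e_{\dim\H}$. Then $\Ad(\gamma)e_1,\dots,\Ad(\gamma)e_{\dim\H}$ is a $\bbq$-basis of $\Lie(\gamma\H\gamma^{-1})$, so by the alternative product-formula definition of height in \S\ref{ss;height},
\[
\height(\gamma\Hbf\gamma^{-1})=\prod_{v\in\qpl}\|\rho_H(\gamma)\vbf_\H\|_v.
\]
Since $\iota(\Gamma)\subset\SL_N(\bbz_\places)$, for every $v\notin\places$ both $\gamma$ and $\gamma^{-1}$ lie in $\SL_N(\bbz_v)$; hence $\rho_H(\gamma)$ and $\rho_H(\gamma^{-1})$ have entries in $\bbz_v$, so $\rho_H(\gamma)$ is an isometry for the max norm at $v$. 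As $\vbf_\H$ is $\bbz$-primitive, $\|\vbf_\H\|_v=1$ for every finite $v$, and thus $\|\rho_H(\gamma)\vbf_\H\|_v=1$ for all $v\notin\places$. The product then collapses to
\[
\height(\gamma\Hbf\gamma^{-1})=\prod_{v\in\places}\|\rho_H(\gamma)\vbf_\H\|_v=\cfun(\eta_H(\gamma)),
\]
recalling that $\vpz_H$ is the diagonal image of $\vbf_\H$ in $V_H$. The lower bound $\height(\gamma\Hbf\gamma^{-1})\geq 1$ is automatic since height is the max norm of a nonzero $\bbz$-integral vector.

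For part (2), I would first show that $\gamma\Lbf_\H\gamma^{-1}=\Lbf_{\gamma\H\gamma^{-1}}$. Indeed, $\rho_H(\gamma)\vbf_\H$ and $\vbf_{\gamma\H\gamma^{-1}}$ span the same rational line (both are rational generators of $\wedge^{\dim\H}\Ad(\gamma)\Lie(\H)$), so they have the same $\rho_H$-stabilizer, and a direct calculation gives $\{g:\rho_H(g)\rho_H(\gamma)\vbf_\H=\rho_H(\gamma)\vbf_\H\}=\gamma\Lbf_\H\gamma^{-1}$. The analogous identity $\gamma\lplus_\H\gamma^{-1}=\lplus_{\gamma\H\gamma^{-1}}$ holds because $\lplus_\H=\Lbf_\H^\Hcal$ is the maximal normal $\hcal$-subgroup of $\Lbf_\H$, and this construction is manifestly natural under $\bbq$-conjugation. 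Applying Lemma~\ref{lem:height-L-M} to the group $\gamma\H\gamma^{-1}\in\Hcal$ and using part~(1) together with the hypothesis $\cfun(\eta_H(\gamma))\leq \vsr$ gives
\[
\height(\gamma\Lbf_\H\gamma^{-1})=\height(\Lbf_{\gamma\H\gamma^{-1}})\ll\height(\gamma\H\gamma^{-1})^{\ref{k:height-L-M}}\leq \vsr^{\ref{k:height-L-M}},
\]
and identically for $\gamma\lplus_\H\gamma^{-1}$.

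I do not expect a real obstacle: the only delicate point is justifying the isometry of $\rho_H(\gamma)$ at places outside $\places$ so that the product formula gives exactly the $\places$-factor $\cfun(\eta_H(\gamma))$; everything else is formal manipulation combined with Lemma~\ref{lem:height-L-M} (and the naturality of the assignment $\Lbf\mapsto\Lbf^\Hcal$ of Lemma~\ref{lem:subgroup-Hcal}).
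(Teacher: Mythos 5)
Your proof is correct and follows essentially the same route as the paper: part (1) uses the integrality of $\Gamma$ at finite places outside $\Sigma$ together with primitivity of $\vbf_\H$ to show the height product collapses to the $\Sigma$-factor $\cfun(\eta_H(\gamma))$, and part (2) reduces to Lemma~\ref{lem:height-L-M} via the conjugation identities $\gamma\Lbf_\H\gamma^{-1}=\Lbf_{\gamma\H\gamma^{-1}}$ and $\gamma\lplus_\H\gamma^{-1}=\lplus_{\gamma\H\gamma^{-1}}$. You spell out the primitivity/isometry step in more detail than the paper, but the argument is the same.
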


\begin{proof}
Recall that $\Ad(\Gamma)\subset\SL_d(\bbz_\places)$.
Recall that ${\bf v_H}$ is primitive, in particular, $\|\eta_H(\gamma)\|_p=1$ for all $p\not\in\places$.
Part (1) of the lemma thus follows from the definition of $\height(\gamma\Hbf\gamma^{-1}).$

To see parts (2)a and (2)b, note that 
\[
\text{$\gamma{\bf L}_\H\gamma^{-1}={\bf L}_{\gamma\Hbf\gamma^{-1}}\;\;$
and $\;\;\gamma\lplus_\H\gamma^{-1}={\lplus}_{\gamma\Hbf\gamma^{-1}}.$}
\] 
Hence, the claim follows from part (1) and~\eqref{eq:height-L-M}.
\end{proof} 

Let $\H\in\Hcal.$ For any $g\in G$ and any $\vsr>1$, put   
\be\label{eq:def-rmh}
\RMH_H(g,\vsr):=\lceil\log\Bigl(R_H(g,r)\Bigr)\rceil,
\ee
where $R_H(g,r):=\max\bigl\{\cfun\Bigl(\eta_{\llplus_H}(g\gamma)\Bigr):\gamma\in\Gamma,\cfun\Bigl(\eta_H(\gamma)\Bigr)\leq\vsr\bigr\}$.

\begin{cor}\label{cor:rmh-latticept} %The following two estimates hold.
\begin{enumerate}
\item $R_H(g,\vsr)\ll|g|^\star\vsr^\star.$
\item $\#\Bigl(\eta_H(\Gamma)\cap\vsballr\Bigr)\ll\vsr^{\star}$.
%\be\label{e;lattice-pts}
%\#\Bigl(\eta_H(\Gamma)\cap\vsballr\Bigr)\ll\vsr^{\star}\height(\Hbf)^{\star}%\leq \ref{c:VH-integral-pts}\height(\Hbf)^{\dim V_H}\vsr^{\dim\H}. 
%\ee
\end{enumerate}
\end{cor}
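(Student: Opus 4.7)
The plan rests on one central identification that I would establish first. For any $\gamma\in\Gamma$ and any $\Hbf'\in\Hcal$, the action of $\Ad(\gamma)\in\SL_d(\bbz_\Sigma)$ on the lattice $\gfrak(\bbz_\Sigma)$ (set up in~\S\ref{sec:space-lattices}) lifts to exterior powers, and by transitivity of $\SL$ on primitive vectors over the PID $\bbz_\Sigma$, yields
\[
\rho_{\Hbf'}(\gamma)\vpz_{\Hbf'}\,=\,u\cdot\vpz_{\gamma\Hbf'\gamma^{-1}}\qquad\text{for some }u\in\bbz_\Sigma^\times.
\]
The $\bbz_\Sigma^\times$-invariance of $\cfun$ (see~\S\ref{sec:norms-gfrak}) then gives $\cfun(\rho_{\Hbf'}(\gamma)\vpz_{\Hbf'})=\height(\gamma\Hbf'\gamma^{-1})$, already recovering Lemma~\ref{lem:proper}(1).

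For part~(1), I would take $\gamma\in\Gamma$ with $\cfun(\eta_H(\gamma))\leq\vsr$ and apply this to $\Hbf'=\lplus_\Hbf$. By Lemma~\ref{lem:proper}(2)(b), $\height(\gamma\lplus_\Hbf\gamma^{-1})\ll\vsr^\star$, hence $\cfun(\rho_{\llplus_H}(\gamma)\vpz_{\llplus_H})\ll\vsr^\star$. Applying $\rho_{\llplus_H}(g)$ and estimating place-by-place via~\eqref{eq:def-k-adjoint} yields
\[
\cfun\bigl(\eta_{\llplus_H}(g\gamma)\bigr)\,\ll\,|g|^\star\cdot\cfun\bigl(\rho_{\llplus_H}(\gamma)\vpz_{\llplus_H}\bigr)\,\ll\,|g|^\star\vsr^\star,
\]
and taking the maximum over admissible $\gamma$ finishes part~(1).

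For part~(2), I would observe that $\eta_H(\Gamma)$ is contained in the discrete $\bbz_\Sigma$-lattice $\wedge^{\dim\Hbf}\gfrak(\bbz_\Sigma)\subset V_H$, whose covolume is bounded by $\height(\Gbf)^\star$ (from the setup of~\S\ref{sec:space-lattices}). A standard adelic lattice-point count then bounds the number of points in any sup-norm ball of radius $\vsr$ by $\ll\vsr^\star$, with exponent depending only on $\dim V_H$ and $\#\Sigma$ (both permitted inside the $\ll$-convention). This gives $\#(\eta_H(\Gamma)\cap B_{V_H}(\vpz_H,\vsr))\ll\vsr^\star$ uniformly over $\Hbf\in\Hcal$.

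The main delicate step will be justifying the primitivity identification $\rho_{\Hbf'}(\gamma)\vpz_{\Hbf'}=u\cdot\vpz_{\gamma\Hbf'\gamma^{-1}}$ modulo $\bbz_\Sigma^\times$; once in hand, both parts follow by direct applications of Lemma~\ref{lem:proper}, the norm bound~\eqref{eq:def-k-adjoint}, and a standard adelic lattice-point count.
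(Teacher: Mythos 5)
Your proposal is correct and takes essentially the same approach as the paper: for part (1) the paper likewise combines Lemma~\ref{lem:proper}(2)(b) with Lemma~\ref{lem:proper}(1) to get $\cfun(\eta_{\llplus_H}(\gamma))\ll \vsr^\star$ and then invokes~\eqref{eq:def-k-adjoint} to absorb the effect of $g$, and for part (2) the paper's one-line observation that $\Ad(\Gamma)\subset\SL_d(\bbz_\places)$ is exactly the lattice-point count you write out. The only redundant step in your write-up is re-deriving the primitivity identification $\rho_{\Hbf'}(\gamma)\vpz_{\Hbf'}\in\bbz_\Sigma^\times\cdot\vpz_{\gamma\Hbf'\gamma^{-1}}$, since this is precisely the content of Lemma~\ref{lem:proper}(1) which you already cite.
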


\begin{proof}
We first prove part~(1).
For any $\gamma\in\Gamma$ so that $\cfun\Bigl(\eta_H(\gamma)\Bigr)\leq\vsr$, we have
$\height(\gamma\lplus_\H\gamma^{-1})\ll r^\star$, see Lemma~\ref{lem:proper}(2)(b).
Moreover, by Lemma~\ref{lem:proper}(1), we have
\[
%\cfun\Bigl(\eta_{\llplus_H}(\gamma)\Bigr)^\star\height(\lplus_\H)^{-\star}
1\leq\height(\gamma\lplus_\H\gamma^{-1})=\cfun\Bigl(\eta_{\llplus_H}(\gamma)\Bigr).
\]
Using~\eqref{eq:def-k-adjoint} to control the effect of $g$, the above implies the claim in part~(1). 

The second claim follows from the fact that
$\Ad(\Gamma)\subset\SL_d(\bbz_\places)$.
\end{proof}

\begin{lemma}\label{lem:almost-tube}
Let $\H\in\hcal$. Assume there exist an $L>0$, a sequence $\ell_n\to0$, and a sequence $g_n\Gamma\to g\Gamma$
satisfying the following.
\begin{enumerate}
\item $\cfun(\eta_H(g_n))\leq L$ for all $n$, and 
\item $\max_{z\in\mathcal B_U}\|\zpz\wedge \eta_H(g_n)\|\leq \ell_n$ for all $n$.
\end{enumerate}
Then $g\in \{g'\in N_G(U,H): \cfun(\eta_H(g'))\leq L\}\Gamma$.
\end{lemma}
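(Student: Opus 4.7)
My plan is to use the discreteness of $\eta_H(\Gamma)$ in $V_H$ (Lemma~\ref{lem:rep-properties}(2)), the finiteness of $\hcal$-subgroups of bounded height (Lemma~\ref{counting lemma}), and the height/orbit identity of Lemma~\ref{l;proper}, to reduce to a stable conjugate of $\Hbf$ and then pass to the limit.

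First I would use $g_n\Gamma\to g\Gamma$ to pick $\gamma_n\in\Gamma$ with $h_n:=g_n\gamma_n\to g$ in $G$ along a subsequence. Writing $\eta_H(g_n)=\rho_H(h_n)\rho_H(\gamma_n^{-1})\vpz_H$, the uniform boundedness of $\rho_H(h_n^{\pm 1})$ at each place combined with hypothesis (1) yields $\cfun(\rho_H(\gamma_n^{-1})\vpz_H)\ll L$. The vector $\rho_H(\gamma_n^{-1})\vpz_H$ is a scalar multiple of the primitive vector $\vpz_{\Hbf'_n}$ for $\Hbf'_n:=\gamma_n^{-1}\Hbf\gamma_n\in\hcal$. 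By Lemma~\ref{l;proper}(1), $\height(\Hbf'_n)=\cfun(\rho_H(\gamma_n^{-1})\vpz_H)$ is uniformly bounded, and by Lemma~\ref{counting lemma} the groups $\Hbf'_n$ take only finitely many values; passing to a further subsequence, $\Hbf'_n=\Hbf_0$ is constant. Thus $\gamma_n=\gamma_1\beta_n$ with $\beta_n\in N_\Gamma(\Hbf_0)$, and
\[
\rho_H(\gamma_n^{-1})\vpz_H \;=\; c_1\chi_{\Hbf_0}(\beta_n)^{-1}\vpz_{\Hbf_0} \;=:\; c_n\vpz_{\Hbf_0},
\]
where $c_1\in\bbq^\times$ is defined by $\rho_H(\gamma_1^{-1})\vpz_H=c_1\vpz_{\Hbf_0}$ and $\chi_{\Hbf_0}(\beta_n)\in\bbz_\places^\times$.

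I would then propose the candidate $g':=g\gamma_1^{-1}\in g\Gamma$. Using the product formula $\prod_{v\in\places}|\chi_{\Hbf_0}(\beta_n)|_v=1$ together with the identity $\cfun(\eta_H(g_n))=\cfun(\rho_H(h_n)\rho_H(\gamma_1^{-1})\vpz_H)$, continuity and $h_n\to g$ give $\cfun(\eta_H(g'))\leq L$. For the normalizer condition $\zpz\wedge\eta_H(g')=0$ for $\zpz\in\mathcal B_U$, the identity
\[
\|\zpz\wedge\eta_H(g_n)\|_v \;=\; |c_n|_v\,\|\zpz\wedge\rho_H(h_n)\vpz_{\Hbf_0}\|_v \;\leq\; \ell_n \to 0
\]
shows, at every place $v$ where $|c_n|_v$ stays bounded below along the subsequence, that $\|\zpz\wedge\rho_H(g)\vpz_{\Hbf_0}\|_v=0$, and so $\|\zpz\wedge\eta_H(g')\|_v=0$ by scaling. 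Equivalently, $g_v\in N_{G_v}(U_v,\Hbf_0(\bbq_v))$ at every ``good'' place.

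The main technical obstacle is the possibility that at some place $v$ one has $|c_n|_v\to 0$, i.e.\ $|\chi_{\Hbf_0}(\beta_n)|_v\to\infty$; there, the wedge inequality becomes vacuous and cannot by itself yield the vanishing. However, the product formula forces a compensating place $v'$ with $|c_n|_{v'}\to\infty$, and at $v'$ the wedge estimate becomes extremely strong and yields $\|\zpz\wedge\rho_H(g)\vpz_{\Hbf_0}\|_{v'}=0$. I plan to resolve the difficulty by exploiting that $\chi_{\Hbf_0}(N_\Gamma(\Hbf_0))\subset\bbz_\places^\times$ is a finitely generated discrete group: any persistent divergence of $|\chi_{\Hbf_0}(\beta_n)|_v$ can be absorbed by replacing $\gamma_1$ with $\gamma_1\beta^\ast$ for a suitable $\beta^\ast\in N_\Gamma(\Hbf_0)$, so that after finitely many such adjustments every place is simultaneously ``good'' and the desired $\gamma\in\Gamma$ is obtained for which $g\gamma^{-1}$ lies in $N_G(U,\Hbf)$ and satisfies $\cfun(\eta_H(g\gamma^{-1}))\leq L$.
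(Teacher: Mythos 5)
Your overall strategy closely follows the paper's: pick $\gamma_n$ with $g_n\gamma_n^{-1}\to g$, use the bounded injectivity and Lemma~\ref{lem:proper} to get a height bound, invoke Lemma~\ref{counting lemma} to extract a subsequence along which the conjugate of $\Hbf$ is constant, and then pass to the limit. To your credit, you notice something that the paper's own proof slides past: fixing the conjugate $\gamma_n^{-1}\Hbf\gamma_n=\Hbf_0$ only pins $\rho_H(\gamma_n^{-1})\vpz_H$ down to a $\bbz_\places^\times$-multiple $c_n\vpz_{\Hbf_0}$, not the vector itself. The paper's assertion that ``$\gamma_n\Hbf\gamma_n^{-1}=\gamma\Hbf\gamma^{-1}$, or equivalently $\eta_H(\gamma_n)=\eta_H(\gamma)$'' is not in fact an equivalence once $\#\places\geq 2$: the units $c_n\in\bbz_\places^\times$ can be unbounded (e.g.\ take $g_n=\gamma_n=\operatorname{diag}(p^{-n},p^n)$ in $\SL_2(\bbz[1/p])$, with $\Hbf$ the upper unipotent and $U$ the lower unipotent at $\infty$ only, so that $\eta_H(\gamma_n)=p^{-2n}\vpz_H$). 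Your identity $\|\zpz\wedge\eta_H(g_n)\|_v=|c_n|_v\,\|\zpz\wedge\rho_H(h_n)\vpz_{\Hbf_0}\|_v$ is therefore the more careful version of the paper's computation, and your use of the product formula together with hypothesis~(1) (to show $\prod_v|c_n|_v=1$ and $\cfun$ converges) is correct.

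However, your proposed resolution of the troublesome case $|c_n|_v\to 0$ does not work. Replacing $\gamma_1$ by $\gamma_1\beta^*$ for a fixed $\beta^*\in N_\Gamma(\Hbf_0)$ leaves the sequence $c_n$ unchanged: $c_n$ is intrinsic, being the ratio between $\rho_H(\gamma_n^{-1})\vpz_H$ and the reference primitive vector $\vpz_{\Hbf_0}$, and neither of these depends on the auxiliary decomposition $\gamma_n=\gamma_1\beta_n$. What \emph{does} change is the candidate $g'=g\gamma_1^{-1}$: its $\eta_H(g')$ gets rescaled by the single unit $\chi_{\Hbf_0}(\beta^*)^{-1}$. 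But rescaling $\eta_H(g')$ by a unit affects neither $\cfun(\eta_H(g'))$ nor whether $\zpz\wedge\eta_H(g')$ vanishes. So ``absorbing a persistent divergence of the sequence $|\chi_{\Hbf_0}(\beta_n)|_v$'' by a single element $\beta^*$ is not possible, and ``after finitely many such adjustments every place is simultaneously good'' does not follow. The difficulty you identify is genuine and is not resolved by the argument you sketch: when $|c_n|_{v_0}\to 0$ at some $v_0\in\places'$, the vectors $\eta_H(g_n)_{v_0}\to 0$, the wedge condition at $v_0$ becomes vacuous in the limit, and the correct $\gamma$ (which does exist, as the $\SL_2$ example shows, via a Weyl element) need not lie in the coset $\gamma_1 N_\Gamma(\Hbf_0)$ at all. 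Some further input — controlling the norms $\|\eta_H(\gamma_n)\|_v$ at each place, not just $\cfun(\eta_H(\gamma_n))$, or a different choice of reduction — is needed to close this case.
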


\begin{proof}
In view of our assumption, there exists a sequence $\{\gamma_n\}$ so that
\[
g_n\gamma_n^{-1}\to g.
\]
Hence, using the assumption in (1), we get that
\be\label{eq:eta-H-L-tube}
\cfun(\eta_{H}(g_n\gamma_n^{-1}\gamma_n))=\cfun(\eta_{H}(g_n))\leq L.
\ee
Moreover, since $g_n\gamma_n^{-1}\to g$, we have $|g_n\gamma_n^{-1}|\leq 1+|g|$ 
for all large enough~$n$. This and the above imply that for some constant $A'$ depending only on $N$, we have  
\[
\cfun(\eta_{H}(\gamma_n))=\height(\gamma_n\mathbf{H}\gamma_n^{-1})\leq L(2+|g|)^{A'}
\]
for all large enough $n$. 

Using~\eqref{eq:hcal-ht-T} and passing to a subsequence, we assume that 
$\gamma_n{\bf H}\gamma_n^{-1}=\gamma{\bf H}\gamma^{-1}$ for all $n$, or equivalently that  $\eta_H(\gamma_n)=\eta_H(\gamma)$.
%Since $\gamma_n{\bf H}\gamma_n^{-1}=\gamma{\bf H}\gamma^{-1}$, we have $\gamma_n{\bf M}_{\bf W}\gamma_p^{-1}=\gamma{\bf M}_{\bf W}\gamma^{-1}$.
%That is: $\eta_{W}(\gamma_p)=\eta_W(\gamma)$ and $\eta_{M_W}(\gamma_p)=\eta_{M_W}(\gamma)$ for all $p$.
%
%Put ${\bf M}={\bf M}_{\bf W}$.
Then for any $\zpz\in\mathcal B_U$
\begin{align*}
\zpz\wedge\eta_{\gamma H\gamma^{-1}}(g_n\gamma_n^{-1})&=\zpz\wedge\eta_{H}(g_n\gamma_n^{-1}\gamma_n)\\
&=\zpz\wedge\eta_{H}(g_n). 
\end{align*}
This computation and the assumption in~(2) now imply that 
\[
\|\zpz\wedge\eta_{\gamma H\gamma^{-1}}(g\gamma_n^{-1})\|\leq \ell_n\quad\text{for all $\zpz\in\mathcal B_U$.}
\]
Passing to the limit, we get that  $\zpz\wedge\eta_{\gamma H\gamma^{-1}}(g)=0$ for all $\zpz\in\mathcal B_U$.
That is
\be\label{eq:ggamma-is-right-1-tube}
\zpz\wedge\eta_{H}(g\gamma)=0\quad\text{for all $\zpz\in\mathcal B_U$}.
\ee

%in view of (1), we have 
%\[
%\cfun(\eta_{H}(g_{n}\gamma_n^{-1}\gamma_n))=\cfun(\eta_{H}(g_{n})\leq L.
%\]
Similarly, using the fact that $\eta_H(\gamma_n)=\eta_H(\gamma)$ for all $n$ and passing to the limit in~\eqref{eq:eta-H-L-tube} we get that
\be\label{eq:ggamma-is-right-2-tube}
\cfun(\eta_H(g\gamma))\leq L.
\ee
%Finally using $\eta_W(\gamma_p)=\eta_W(\gamma)$ and passing to the limit in~\eqref{eq:eta-MW-si} we get that
%$\cfun(\eta_W(g\gamma))\leq s_i$. 
In view of~\eqref{eq:ggamma-is-right-1-tube} and~\eqref{eq:ggamma-is-right-2-tube} we obtain 
\[
g\gamma\in\{g' \in N_G(U,H):\cfun(\eta_{H}(g'))\leq L\},
\]
as we claimed.
\end{proof}

\begin{cor}\label{lem:tube-closed}
Let $\H\in\hcal$ and let $L>0$. The set
\[
\{g\in N_G(U,H): \cfun(\eta_H(g))\leq L\}\Gamma/\Gamma
\]
is a closed subset of $G/\Gamma$.
\end{cor}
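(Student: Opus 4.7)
The plan is to deduce Corollary~\ref{lem:tube-closed} directly from Lemma~\ref{lem:almost-tube}, applied with the constant sequence $\ell_n \equiv 0$. To verify closedness of the set in $G/\Gamma$, I would take a sequence of points $x_n$ in the set converging to some $x = g\Gamma$; by definition, for each $n$ one may choose a representative $g_n \in G$ of the coset $x_n$ satisfying $g_n \in N_G(U,H)$ and $\cfun(\eta_H(g_n)) \leq L$. The cosets then converge, $g_n\Gamma \to g\Gamma$, which is exactly the convergence hypothesis of Lemma~\ref{lem:almost-tube}.

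The key observation linking the two statements is that the condition $g_n \in N_G(U,H)$, which by definition says $\Ad(g_n^{-1})\ufrak \subset \hfrak$, is equivalent to $\zpz \wedge \eta_H(g_n) = 0$ for every $\zpz \in \mathcal B_U$. Indeed, $\eta_H(g_n) = \rho_H(g_n)\vpz_H$ represents the subspace $\Ad(g_n)\hfrak \subset \gfrak$ as its top exterior power, and a vector $\zpz$ lies in this subspace precisely when it wedges to zero against $\eta_H(g_n)$. With this identification, both hypotheses of Lemma~\ref{lem:almost-tube} are satisfied --- hypothesis~(1) by our choice of representatives and hypothesis~(2) trivially with $\ell_n = 0$ --- and the conclusion of that lemma places $g$ in $\{g' \in N_G(U,H): \cfun(\eta_H(g')) \leq L\}\Gamma$, as desired.

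I do not anticipate any real obstacle. The nontrivial content --- handling the possibility that different $g_n$ normalize different $\Gamma$-conjugates of $\mathbf{H}$, passing to a subsequence on which these conjugates stabilize (using the height bound from Lemma~\ref{lem:proper} together with the finiteness statement of Lemma~\ref{counting lemma}), and extracting the desired representative in the limit --- is already carried out in the proof of Lemma~\ref{lem:almost-tube}. The corollary is essentially a clean packaging of that lemma in the special case of exact (rather than approximate) normalization, and the only genuinely new verification is the elementary translation between the group-theoretic definition of $N_G(U,H)$ and the wedge-product characterization used in the hypothesis of the lemma.
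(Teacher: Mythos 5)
Your proof is correct and follows the same route as the paper: restate membership in $N_G(U,H)$ as the vanishing $\zpz\wedge\eta_H(g)=0$ for all $\zpz\in\mathcal B_U$, then invoke Lemma~\ref{lem:almost-tube} with $\ell_n\equiv 0$. The paper's own argument is a one-line version of exactly this, so no further comment is needed.
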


\begin{proof}
Recall that $N_G(U,H)=\{g\in G: \zpz\wedge\eta_{H}(g\gamma)=0$ for all $\zpz\in\mathcal B_U\}$.
The claim thus follows from Lemma~\ref{lem:almost-tube}.
\end{proof}

\subsection{}\label{sec:Loj-inequality}
%In this section we will recall a quantitative refinements of the Lojasiewicz inequality over $\bbc$, see Theorem~A. 
%In the $p$-adic setting, we will use Theorem~B which guarantees existence of solutions 
%{\em near an approximate solution} for a system of Diophantine equations.
Theorems A and B below will be used in the proof of Lemma~\ref{lem:loj}.
We begin by recalling an effective versions of Hilbert's Nullstellensatz theorem;
the statement presented here is due to D.~Masser, G.~W\"{u}stholz,~\cite[Thm. IV]{EffNul-Mass-Wus}, see also~\cite{Seid, JKS} and references there.

\begin{thm*}[Effective Nullstellensatz]
Assume $f, f_1,\ldots, f_n\in\bbz[t_1,\ldots,t_m]$ have total degree at most $D_0$ and logarithmic height at most $\mathsf h$.
Suppose $f$ vanishes at all the common zeros (if any) of $\{f_i\}$ in $\bbc^m$.

Put $M=2^{m-1}.$ Then there exist
\begin{itemize}
\item some $b\in\bbn$ with $b\ll (8D_0)^{2M}$, 
\item $q_1,\ldots,q_n\in\bbz[t_1,\ldots,t_m]$ of total degree at most $(8D_0)^{2M+1}$ and logarithmic height at most
$(8D_0)^{4M-1}(\mathsf h+8D_0\log(8D_0))$, and 
\item some $a\in\bbz$ with $\log|a|\leq (8D_0)^{4M-1}(\mathsf h+8D_0\log(8D_0))$
\end{itemize}
so that
\[
af^b=\sum_i q_if_i.
\]
\end{thm*}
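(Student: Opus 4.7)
The plan is to follow the strategy of Masser--W\"ustholz, which rests on arithmetic elimination theory and a projective version of B\'ezout's theorem with height tracking. First, I would reduce to the ``unit case'' of the Nullstellensatz via the Rabinowitsch trick: introduce a new variable $t_{m+1}$ and work with the $n+1$ polynomials $f_1,\dots,f_n,\,1-t_{m+1}f$ in $\bbz[t_1,\dots,t_{m+1}]$. The hypothesis that $f$ vanishes on the common zeros of the $f_i$'s in $\bbc^m$ translates into the assertion that these $n+1$ polynomials have no common zero in $\bbc^{m+1}$. An effective representation $1=\sum p_i(\,f_i,\,1-t_{m+1}f\,)$ in that larger ring, after substituting $t_{m+1}=1/f$ and clearing denominators, yields the desired identity $a f^b=\sum q_i f_i$ with $b$ equal to the $t_{m+1}$-degree of the representation.

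Second, I would homogenize the $f_i$ to forms $F_1,\dots,F_n$ in $\bbz[x_0,\dots,x_m]$ of degree $\le D_0$ and pass to the projective setting. Here the classical B\'ezout bound gives that the ``incidence variety'' cut out by the $F_i$'s in $\Fbf^m_{\bbz}$ has $\bbq$-degree at most $D_0^m$, which is the geometric source of the $\ll (8D_0)^{2M}$ bounds on $b$ and on $\deg q_i$. To produce the cofactors $q_i$ explicitly, one forms a ``generalized resultant'' matrix whose entries are monomials in the coefficients of the $F_i$, and invokes Cramer's rule: a nonvanishing subdeterminant of this resultant matrix gives both the integer $a$ and, after division, the polynomials $q_i$.

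Third, for the arithmetic (logarithmic height) bounds, I would use Philippon's arithmetic B\'ezout theorem, or more directly the height estimates for Chow forms of complete intersections obtained in Nesterenko--Philippon--Masser--W\"ustholz. Each elimination step of one variable essentially composes a polynomial of degree $\le D_0$ with another, multiplying heights by a factor polynomial in $D_0$ and in $\mathsf h$; iterating $m-1$ such eliminations gives the doubly-exponential factor $(8D_0)^{4M-1}$ in the height bound, and the extra additive term $8D_0\log(8D_0)$ absorbs the ``geometric'' contribution from the B\'ezout degree.

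The main obstacle will be controlling the integer denominator $a$ and avoiding exponential blowup at each elimination step. The bound on $\log|a|$ comes from bounding the $p$-adic valuations, for every prime $p$, of the chosen non-vanishing resultant subdeterminant; this requires estimates of Mahler-type for the heights of products of polynomials, together with Gelfond's inequality to control the height of products of factors that arise when one substitutes $t_{m+1}=1/f$ and clears denominators. A cleaner alternative I would consider is to work directly with the U-resultant of Kronecker--Macaulay and to invoke the arithmetic degree formula for the U-resultant; this packages the height accounting into a single inequality and bypasses the inductive blowup, which is the technical heart of the Masser--W\"ustholz proof.
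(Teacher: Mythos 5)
The paper does not prove this statement: it is imported verbatim as \cite[Thm.\ IV]{EffNul-Mass-Wus} and used as a black box (along with the companion results of Brownawell and Greenberg) in the proof of Lemma~\ref{lem:loj}. So there is no proof in the paper to compare your sketch against.

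Evaluating the sketch on its own terms: the overall architecture is plausible and does resemble the Masser--W\"ustholz argument --- the Rabinowitsch trick to reduce to the case of no common zero, elimination theory on the homogenized system, and careful arithmetic tracking of heights through the elimination steps. However, some of the tools you invoke are anachronistic: Philippon's arithmetic B\'ezout theorem and the arithmetic theory of Chow forms in the Nesterenko--Philippon form postdate the 1983 Masser--W\"ustholz paper and cannot have been what they used; their height accounting is more elementary and done by hand via resultant determinants. More substantively, your sketch glosses over the two places where the actual constants in the statement come from: the bound $b,\deg q_i \ll (8D_0)^{2M}$ with $M=2^{m-1}$ reflects a specific binary-tree elimination scheme (eliminating variables pairwise and doubling the relevant degree each round), not a one-shot projective B\'ezout estimate; and the nontrivial claim that the denominator can be cleared to a single nonzero \emph{integer} $a$ (rather than a polynomial) with $\log|a|$ controlled by $(8D_0)^{4M-1}(\mathsf h + 8D_0 \log(8D_0))$ is exactly the hard part, and your sketch only flags it as ``the main obstacle'' without a concrete mechanism. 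As written this is a reasonable research plan for recovering a theorem of this type, but it is not a proof, and it would not by itself produce the specific constants that the paper quotes and then uses in Lemma~\ref{lem:loj}.
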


%Recall that $\bbc_p$ denotes the completion of the algebraic closure of $\bbq_p$ for all $p\in\places_f$;
%as {\em abstract} fields $\bbc$ and $\bbc_p$, for any $p\in\places_f$, are isomorphic.
%Therefore, $\bbc^m$ in the above theorem may be replaced by $\bbc_p^m$ for any $p\in\places_f$. 
We need the following theorem of W.~Brownawell which can be thought of as a local version of the above theorem.

\begin{thma*}[Cf.~\cite{Brwell-Loj}]
Let $f_1,\ldots, f_n\in\bbz[t_1,\ldots,t_m]$ have total degree at most $D_0$ and logarithmic height at most $\mathsf h$.
If $f_1,\ldots,f_n$ have no common zero within $0<\bpz\leq1$ of some $w\in\bbc^m$, then
\[
\max\{|f_j(w)|:1\leq j\leq n\}\geq C_1\,e^{-\ref{k:loj-brw}\mathsf h}\Bigl(\frac{\|w\|^2}{\bpz}\Bigr)^{-\ref{k:loj-brw}}
\]
where $C_1$ and $\consta\label{k:loj-brw}$ are explicit constants depending only on $n, m$, and $D_0$.
\end{thma*}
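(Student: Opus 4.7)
The plan is to reduce Theorem~A to the effective Nullstellensatz stated immediately before it via a Rabinowitsch-type trick, converting the local non-vanishing hypothesis into a global one while tracking the arithmetic data. If the $f_i$'s had no common zero anywhere in $\bbc^m$ one could apply the effective Nullstellensatz directly, write $a=\sum q_i f_i$ with $a\in\bbz\setminus\{0\}$ and controlled $q_i$, specialize at $w$, and read off a lower bound on $\max_j |f_j(w)|$ from the height bounds on the $q_i$ together with $|a|\geq 1$. The content of Theorem~A is to upgrade this to the weaker hypothesis that no common zero lies in the ball of radius $\bpz$ about $w$, while retaining only polynomial dependence on $\|w\|^2/\bpz$.

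Concretely, I would introduce a fresh variable $s$ and a rational approximation $\tilde w\in\bbq^m$ of $w$ with $\|w-\tilde w\|\leq \bpz/2$ and with denominator bounded polynomially in $1/\bpz$ and $\|w\|$. Then I would add to the system $f_1,\ldots,f_n$ the polynomial
\[
g(t,s)\;=\;1 - s\cdot\bigl(\bpz^2 - \|t-\tilde w\|^2\bigr).
\]
After clearing denominators this lies in $\bbz[t,s]$ with total degree $\leq 3$ and logarithmic height bounded polynomially in $\log\|w\|$ and $\log(1/\bpz)$. By the local hypothesis, the augmented system $\{f_1,\ldots,f_n,g\}$ has no common zero in $\bbc^{m+1}$: any such zero would force $\bpz^2>\|t-\tilde w\|^2$, whence $\|t-w\|\leq\|t-\tilde w\|+\|w-\tilde w\|<\bpz$, contradicting the assumption on $w$.

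The effective Nullstellensatz then produces $a\in\bbz\setminus\{0\}$ and $Q_0,Q_1,\ldots,Q_n\in\bbz[t,s]$ of explicitly bounded degree and logarithmic height such that
\[
a \;=\; Q_0(t,s)\,g(t,s)+\sum_{i=1}^n Q_i(t,s)\,f_i(t).
\]
Specializing at $(w,s_0)$ with $s_0=1/(\bpz^2-\|w-\tilde w\|^2)$ kills the $Q_0 g$ term, and by construction $|s_0|\ll\bpz^{-2}$. Thus $a=\sum_i Q_i(w,s_0)f_i(w)$; bounding each $|Q_i(w,s_0)|$ by its height times a power of $\max(\|w\|,\bpz^{-1})$, together with $|a|\geq 1$, produces a lower bound on $\max_j |f_j(w)|$ of the shape $C_1 e^{-A\mathsf h}(\|w\|^2/\bpz)^{-A}$ demanded by the statement.

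The main obstacle is controlling simultaneously the \emph{arithmetic} height of $g$ (governed by the denominator of $\tilde w$) and the \emph{analytic} quality of the approximation: one needs $\tilde w$ close enough to $w$ that the local zero-set hypothesis persists for the augmented system, yet with denominator bounded by a power of $\bpz^{-1}\|w\|$ so that the height of $g$ grows only like $\log\|w\|+\log(1/\bpz)$, not like $\|w\|$. A secondary difficulty is tracking the doubly-exponential-in-$m$ exponents coming from the Masser--W\"ustholz bound through the specialization, and verifying that they combine into a single exponent depending only on $n,m,D_0$, as in the conclusion.
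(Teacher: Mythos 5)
The paper does not actually supply a proof of Theorem~A; it is quoted from Brownawell \cite{Brwell-Loj}, and only the $p$-adic analogue Theorem~B is reproved in the appendix (by a completely different route: primary decomposition, induction on the dimension of the variety, and a quantitative Hensel lemma). So there is no in-text proof to compare against, but your reduction has a concrete gap that makes it fail.

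The Rabinowitsch trick does not encode the ball-exclusion hypothesis. Adding $g(t,s)=1-s\bigl(\bpz^2-\|t-\tilde w\|^2\bigr)$ to the system does not force common zeros of the $f_i$ to be near $\tilde w$. If $t_0\in\bbc^m$ is \emph{any} common zero of $f_1,\dots,f_n$ with $\bpz^2-\|t_0-\tilde w\|^2\neq 0$, then setting $s_0=1/(\bpz^2-\|t_0-\tilde w\|^2)$ gives a common zero of the augmented system; the equation $g=0$ only says the bracketed quantity is nonzero, not that it is positive, so it places no restriction on the \emph{distance} of $t_0$ from $w$. (Moreover, for $g$ to be a polynomial one must take $\|t-\tilde w\|^2=\sum_i(t_i-\tilde w_i)^2$, which for $t\in\bbc^m$ is a complex number, so the inequality "$\bpz^2>\|t-\tilde w\|^2$" you invoke is not even meaningful; and if instead one works over $\bbr^{2m}$ with the Hermitian norm squared, then $g=0$ still allows $\|t_0-\tilde w\|>\bpz$ via a negative value of $s_0$.) Thus the augmented system generically has common zeros whenever the original $f_i$ do, and the effective Nullstellensatz will give $a=0$ rather than a nonzero integer. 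This is the essential difficulty that separates the local Lojasiewicz-type inequality from the global Nullstellensatz: an open semialgebraic condition such as "outside a ball" cannot be turned into a Zariski-closed (or emptiness-of-common-zeros) condition by adjoining one polynomial equation. Brownawell's proof instead quantifies the distance from $w$ to the common zero set directly (via Chow forms / elimination theory together with arithmetic height bounds), rather than trying to transport the problem to a global Nullstellensatz instance.
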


In the $p$-adic setting, we have the following theorem. This theorem is proved by M.~Greenberg, we reconstruct Greenberg's proof 
in Appendix \ref{sec:thmB} to make the dependence on the height of the polynomials in question explicit.

\begin{thmb*}[Cf.~\cite{GrBerg-Loj-1} and~\cite{GrBerg-Loj-2}]
Let $f_1,\ldots, f_n\in\bbz[t_1,\ldots,t_m]$ have total degree at most $D_0$
and logarithmic height at most $\mathsf h$.
There exists $\consta\label{k:GrBr-Loj}$ depending only on $m,$ $n$, and $D_0$ so that the following holds.

Suppose $w_1,\ldots, w_m\in\bbz_p$ and $C_2> 2\ref{k:GrBr-Loj}\mathsf h$ are such that 
\[
\text{$f_j(w_1,\ldots,w_m) \equiv 0\; {\rm (mod}\; p^{C_2}{\rm )}$ for all $j.$}
\]
Then, there exist $y_1,\ldots,y_m \in\bbz_p$ such that 
\[
y_i \equiv w_i\quad {\rm\biggl(mod} \;p^{\lceil\frac{ C_2-\ref{k:GrBr-Loj}\mathsf h}{\ref{k:GrBr-Loj}}\rceil}{\rm \biggr)}
\] 
and $f_j(y_1,\ldots,y_m)=0$ for all $j.$ 
\end{thmb*}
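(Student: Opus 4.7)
The plan is to prove Theorem B by noetherian induction on the dimension of the variety $V = V(f_1,\ldots,f_n) \subset \bbc_p^m$, with the inductive step split according to whether the Jacobian is approximately well-conditioned at $w$. The degenerate case $V(\bbc_p) = \emptyset$ is handled directly by the effective Nullstellensatz quoted above: one obtains $a \in \bbz \setminus \{0\}$ and $q_i \in \bbz[t_1,\ldots,t_m]$ with $a = \sum_i q_i f_i$ and $\log|a|$ bounded by an explicit function of $D_0$, $m$, and $\mathsf h$. Since $w \in \bbz_p^m$ gives $v_p(q_i(w)) \geq 0$, evaluating the identity at $w$ forces $v_p(a) \geq C_2$, i.e.\ $|a| \geq p^{C_2}$, which contradicts the Nullstellensatz bound on $|a|$ once $C_2 > 2 A_{\ref{k:GrBr-Loj}} \mathsf h$ for suitably large $A_{\ref{k:GrBr-Loj}}$. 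Hence we may assume $V$ has some dimension $d \geq 0$, of codimension $s = m-d$.

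For the inductive step, let $J(t) = (\partial f_j/\partial t_i)$ be the Jacobian matrix and split into two cases according to the $p$-adic size of the largest $s \times s$ minor of $J(w)$. In the \emph{regular case}, where some $s \times s$ minor $M$ satisfies $v_p(M(w)) \leq C_2/3$, I would apply a quantitative multivariate Newton iteration to the corresponding $s$-equation subsystem $f_{j_1},\ldots,f_{j_s}$: since $v_p(f_{j_k}(w)) \geq C_2$ is much larger than $2 v_p(M(w))$, Newton's method converges in $\bbz_p^m$ to a point $y$ with $f_{j_k}(y) = 0$ for $k=1,\ldots,s$ and $v_p(y - w) \geq C_2 - 2 v_p(M(w))$. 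To certify that the remaining $f_j$ also vanish at $y$, I would apply the effective Nullstellensatz a second time inside the localization at $M$, producing $b,c \in \bbn$ and polynomial identities of the form $a\, M^c f_j^b \in (f_{j_1},\ldots,f_{j_s})$ in $\bbz[t_1,\ldots,t_m]$, forcing $f_j(y) = 0$ since $M(y) \neq 0$ and $\bbz_p$ is an integral domain.

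In the \emph{singular case}, every $s \times s$ minor of $J(w)$ has $v_p(M(w)) > C_2/3$, so $w$ is an approximate solution (with exponent on the order of $C_2/3$) of the enlarged system obtained by adjoining all $s \times s$ minors of $J$ to $\{f_j\}$. This enlarged system cuts out a subvariety of $V$ of dimension strictly less than $d$, and its defining polynomials have total degree $\leq s D_0$ and logarithmic height bounded polynomially in $D_0$ and $\mathsf h$ (since the entries of $J$, and hence their $s \times s$ minors, are combinatorial expressions in the coefficients of the $f_j$). The induction hypothesis applied to this smaller-dimensional system, with the slightly diminished congruence exponent, produces the required exact solution $y$.

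The main obstacle is the bookkeeping of constants through the induction. Each step loses factors coming from the effective Nullstellensatz (which are double-exponential in $m$) and from the Newton step (which consumes a fixed fraction of $C_2$). Since the induction terminates in at most $m+1$ steps and all losses are explicit, one obtains a constant $A_{\ref{k:GrBr-Loj}}$ depending only on $m$, $n$, and $D_0$; the most delicate part is propagating the losses so that the final congruence exponent is the claimed $\lceil (C_2 - A_{\ref{k:GrBr-Loj}} \mathsf h)/A_{\ref{k:GrBr-Loj}} \rceil$ rather than a substantially weaker bound.
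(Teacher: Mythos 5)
Your overall strategy — induction on the dimension of the zero set, a split into a ``regular'' case handled by a $p$-adic Newton/Hensel step and a ``singular'' case handled by adjoining the Jacobian minors and recursing — is essentially the strategy of Greenberg that the paper follows. The base case via effective Nullstellensatz is correct and is exactly what the paper does. However, there is a genuine gap in your treatment of the regular case.

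You assert that the effective Nullstellensatz, localized at the minor $M$, gives identities $aM^c f_j^b \in (f_{j_1},\dots,f_{j_s})$ in $\bbz[t_1,\dots,t_m]$, which you then use to force $f_j(y)=0$ from $f_{j_k}(y)=0$ and $M(y)\neq 0$. This identity is false in general. The variety $V(f_{j_1},\dots,f_{j_s})$ will typically have irreducible components of codimension $s$ other than $V$, and $M$ need not vanish identically on those extra components; on such a component the remaining $f_j$ will in general not vanish, so $M^c f_j^b$ cannot lie in $(f_{j_1},\dots,f_{j_s})$ after clearing an integer denominator. (For instance with $f_1=x^2-y^2$, $f_2=z$, $f_3=x-y$ and the minor $M=2x$: the line $\{x=-y,\,z=0\}$ is a component of $V(f_1,f_2)$ on which $M$ is generically nonzero but $f_3$ is not identically zero.) The danger this identity was meant to rule out is precisely that your Hensel-lifted $y$ lands on one of these spurious components of $V(f_{j_1},\dots,f_{j_s})$ rather than on $V$.

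The paper avoids this by first reducing to the case where the ideal is prime (so $\mathbf Y$ is $\bbq$-irreducible), using Gr\"obner bases to produce a primary decomposition with controlled degrees and heights, and then explicitly enumerating the other $u$-dimensional components $\mathbf Z_{(\alpha),j}$ of each subsystem $\mathbf Y_{(\alpha)}$. The trichotomy is then: either all relevant minors of the Jacobian are $p$-adically small at $w$ (singular case, recurse on the augmented system), or $w$ is $p$-adically close to some spurious component $\mathbf Z_{(\theta),j}$ (recurse on the smaller-dimensional $\mathbf Y \cap \mathbf Z_{(\theta),j}$), or neither — and in this last, genuinely regular case Hensel's lemma produces $y$ with $f_{(\alpha)}(y)=0$, and the fact that $y$ is $p$-adically far from every $\mathbf Z_{(\theta),j}$ and that $\Delta_{(\alpha),(\beta)}(y)\neq 0$ is what, via the implicit function theorem, pins $y$ to the correct component $\mathbf Y$. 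You would need to incorporate both the irreducibility reduction and the explicit control of the spurious components to make the regular case sound; the Nullstellensatz-in-a-localization shortcut does not work.
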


The following lemma is a crucial ingredient for our inductive argument in the proof of Theorem~\ref{thm:eff-linearization-general}.

\begin{lemma}\label{l;lojas-ineq}\label{s;loj-ineq}\label{lem:loj}
There exist $\consta\label{k:epsion-loja}$, $\consta\label{k:Loja}$, and $C_0$ where $C_0$ depends on
$N$, the number of places $\#\places$, and polynomially on the finite primes in $\places$ and on $\height(G)$
so that the following holds.

Let $r>1$, $\epsilon>1$, and $g\in G$ be fixed.  
Suppose $\Hbf_1,\Hbf_2 < \Gbf$ are two $\bbq$-subgroups of class-$\mathcal H$  
with $\cfun\Bigl(\eta_{H_i}(g)\Bigr)\leq r$ for $i=1,2.$
Assume that 
\be\label{eq:assump-loj}
\max_{\zpz\in\bcal_U}\Bigl\|\zpz\wedge{}{\eta_{H_i}(g)}\Bigr\|\leq \epsilon\quad
\text{ for $i=1,2.$}
\ee
Let $\Hbf_{1,2}:=\bigl(\Hbf_1\cap\Hbf_2\bigr)^{\Hcal}$.
Then if $\epsilon\leq C_0 |g|^{-\ref{k:epsion-loja}} r^{-\ref{k:epsion-loja}}$, the group $\H_{1,2}$ is not trivial, $\height(\Hbf_{1,2})\ll |g|^\star r^\star$, and 
\be\label{eq:concl-loj}
\max_{\zpz\in\bcal_U}\Bigl\|\zpz\wedge{}{\eta_{H_{1,2}}(g )}\Bigr\|
\ll |g|^{\ref{k:Loja}}r^{\ref{k:Loja}}\epsilon^{1/\ref{k:Loja}}.
\ee
\end{lemma}

\begin{proof}
First note that~\eqref{eq:assump-loj} and~\eqref{eq:def-k-adjoint} imply the following:
\[
\bigl\|\Ad(g^{-1})\zpz\wedge \vpz_{H_i}\bigr\|\dotll |g|^\star\epsilon\quad
\text{ for $i=1,2$ and all $\zpz\in\mathcal{B}_U$}. 
\]
Rewriting this at the level of the Lie algebra, we have
\be\label{eq:loj-dist-Lie}
\vsdist(\Ad( g^{-1})\zpz,\hfrak_i)\ll |g|^\star r^\star\epsilon\quad
\mbox{ for $i=1,2$ and all $\zpz\in\mathcal{B}_U,$} 
\ee 
where $\mathfrak h _ i$ denotes the Lie algebra of $H _ i = \mathbf H _ i (\bbq _ \Sigma)$.

Now~\eqref{eq:def-k-adjoint} and $\cfun\Bigl(\eta_{H_i}(g)\Bigr)\leq r$ imply 
\[
\height(\Hbf_i)=\cfun\Bigl(\vpz_{H_i}\Bigr)\dotll r|g|^\star\quad\text{ for $i=1,2$.}
\]
Therefore, $\height (\Hbf _ 1 \cap \Hbf _ 2) \ll \height (\Hbf _ 1) \cdot \height (\Hbf _ 2)$ and hence by~\eqref{eq:ht-H-Hcal} we have $\height(\Hbf_{1, 2}) \ll |g|^\star r^\star$.

As $\hfrak_1$ and $\hfrak_2$ 
are rational subspaces of $\gfrak$ with height $\ll |g|^\star r^\star$,
the estimates~\eqref{eq:loj-dist-Lie} thus imply that
\be\label{e;dist-intersection}
{\vsdist(\Ad( g^{-1})\zpz,\hfrak_1\cap\hfrak_2)\ll |g|^\star r^\star\epsilon\quad 
\text{ for all $\zpz\in\mathcal{B}_U,$}}
\ee
see, e.g., \cite[\S13.4]{EMV}.

For every finite place $p\in\places$ let $\Omega_p=\bbq_p$, 
and let $\Omega_\infty=\bbc$. Set $\Omega_\places=\prod_{\places}\Omega_p$. 
Let $\mathcal{N}$ denote the cone of ad-nilpotent elements in 
$\gfrak\otimes \Omega_\places.$ 
Then 
\be\label{eq:z-ad-nil}
\Ad( g^{-1})\zpz\in\mathcal N.
\ee

There are $n,m\gg1$ so that the subspace $\hfrak_1\cap\hfrak_2$ and the cone $\mathcal N$
are $\bbq$-varieties defined by $\{f_{{\rm sp},j}:1\leq j\leq n\}\subset\bbz[t_1,\ldots, t_m]$ and 
$\{f_{{\rm cn},j}:1\leq j\leq n\}\subset\bbz[t_1,\ldots, t_m]$, respectively\footnote{The subscript sp stands for subspace and cp stands for cone.}; further, 
the logarithmic heights $\mathsf h$ of these polynomials are bounded by 
\[
B_0+\log r
\] 
for some $B_0$ depends on
$N$, the number of places $\#\places$, and polynomially on the finite primes in $\places$ and on $\height(G)$.

In particular, conditions of Theorems A and B are satisfied for $\{f_{{\rm sp},j}\}\cup\{f_{{\rm cn},j}\}$.
In view of Theorems A and B, thus,~\eqref{e;dist-intersection} and~\eqref{eq:z-ad-nil} imply the following estimate
\[
\vsdist\biggl(\Ad( g^{-1})\zpz,\mathcal{N}\cap\Bigl(\Bigl(\hfrak_1\cap\hfrak_2\Bigr)\otimes{\Omega_\places}\Bigr)\biggr)\ll
|g|^\star r^\star\epsilon^\star.
\]

Let $\hfrak_{1,2}=\Lie(\H_{1,2})$. By the definition of the family $\hcal$, see~\S\ref{sec:Hcal},
we have $\hfrak_{1,2}$ contains the Lie algebra generated by 
$\mathcal{N}\cap\Bigl(\Bigl(\hfrak_1\cap\hfrak_2\Bigr)\otimes\Omega_\places\Bigr)$.
Therefore, the above estimate implies that 
\be\label{e;dist-h'}
{\vsdist\Bigl(\Ad( g^{-1})\zpz,\hfrak_{1,2}\otimes\Omega_\places\Bigr)
\ll |g|^\star r^\star\epsilon^\star\quad
\mbox{ for all $\zpz\in\mathcal{B}_U$}}.
\ee
Now since $\Ad( g^{-1})\zpz\in\gfrak$, we get  the following from~\eqref{e;dist-h'}.
\be\label{eq:dist-h'-Q-S}
{\vsdist(\Ad( g^{-1})\zpz,\hfrak_{1,2})
\ll |g|^\star r^\star\epsilon^\star\quad
\mbox{ for all $\zpz\in\mathcal{B}_U$}}.
\ee
Equations~\eqref{eq:dist-h'-Q-S} implies $\hfrak_{1,2}\neq\{0\}$  
so long as right hand side of~\eqref{eq:dist-h'-Q-S} is a sufficiently high power of $|g|^{-1}$; 
this is satisfied if $\epsilon\ll |g|^\star r^\star.$
Equation~\eqref{eq:concl-loj} is now an immediate consequence of~\eqref{eq:dist-h'-Q-S}.
\end{proof}

\section{Non-divergence of unipotent flows in $\SL_N(\bbq_\Sigma)/\SL_N(\bbz_\Sigma)$ with an application to almost invariant Lie algebras}\label{sec:non-div-I}

%In this section we record statements related to polynomial behavior of unipotent flows. These ideas have a long history going back to works of Margulis and Dani. We will then use a non-divergence result of Kleinbock and Margulis~\cite{KM}, and Kleinbock and Tomanov~\cite{KT:Nondiv} in order to prove Proposition~\ref{prop:almot-inv-Lie}.

In this section we recall the basic nondivergence results regarding the action of unipotent groups on $\SL_N(\bbq_\Sigma)/\SL_N(\bbz_\Sigma)$, and deduce some important corollaries that will play a central role in the following sections. The basic reference for this section is the paper \cite{KT:Nondiv} by Kleinbock and Tomanov, which can be viewed as a $\Sigma$-arithmetic adaptation of \cite{KM} by Kleinbock and Margulis (which itself relies on the nondivergence result of Margulis \cite{Margulis-Nondiv}, perhaps the first general result regarding dynamics of unipotent groups on arithmetic quotients, and Dani \cite{Dani}).

Some of the implicit multiplicative constants in this section satisfy a stricter requirement, i.e., they depend on  
$N$, $\#\places$, and polynomially on the finite primes in $\places$, {\em but not on $\height(\Gbf)$}. We will explicate these  by an index, i.e., we write $\ll_{N,\places}$ or $\gg_{N,\places}$ for these implicit multiplicative constants.

%are to be interpreted in the stricter sense of \S\ref{sec:notation-exponent}.

\medskip

\subsection{} Let $\GL _ N ^ 1 (\bbq _ \Sigma)$ denote the group
\begin{equation*}
\GL _ N ^ 1 (\bbq _ \Sigma) = \left\{ (g _ v) \in \GL (\bbq _ \Sigma): \prod_ {v \in \Sigma} \det (g _ v) = 1 \right\}
.\end{equation*}
Then we can identify $\GL _ N ^ 1 (\bbq _ \Sigma) / \GL (\bbz _ \Sigma)$ with the space of
discrete $\bbz_\Sigma$-modules in $\bbq _ \Sigma ^ N$ of covolume 1, and there is a natural injective proper map  from $
\SL_N(\bbq_\Sigma)/\SL_N(\bbz_\Sigma)$ to $\GL _ N ^ 1 (\bbq _ \Sigma) / \GL (\bbz _ \Sigma)$ obtained by assigning to $(g_v)_{v \in \Sigma}$ the  $\bbq_\Sigma$-module spanned by the elements in $\bbq _ \Sigma ^ N$ formed by taking the $i$th column of all $g_v$ for $i=1$, \dots, $N$.
In view of this, we will view $
\SL_N(\bbq_\Sigma)/\SL_N(\bbz_\Sigma)$ as embedded in $\GL _ N ^ 1 (\bbq _ \Sigma) / \GL (\bbz _ \Sigma)$.

Let $\Gamma _ 1 = \GL _ N (\bbz _ \Sigma)$ and $G _ 1 = \GL _ N ^ 1 (\bbq _ \Sigma)$. For $x = g \Gamma _ 1 / \Gamma _ 1 \in G _ 1 / \Gamma _ 1$, let
\begin{equation*}
\alpha (x) = \max  \left\{ 1/\cfun (\zpz): \zpz \in g \bbz _ \Sigma ^ N \setminus \left\{ 0 \right\} \right\}; \end{equation*}
this function is a proper map from $G _ 1 /\Gamma_1$ to $\bbr ^ +$ (as well as from
the quotient space $\SL_N(\bbq_\Sigma)/\SL_N(\bbz_\Sigma)$ to $\bbr^+$) and any compact subset of $G _ 1 /\Gamma_1$ is contained in the compact subset of the form $\{ x: \alpha (x) < M\}$ for some $M > 0$.
Let $\Delta$ be a $\bbz _ \Sigma$-submodule  of rank $k$ in a discrete $\bbz _ \Sigma$-module $g \bbz _ \Sigma ^ N$, say generated over $\bbz _ \Sigma$ by $v_1, \dots, v_k \in \bbq _ \Sigma ^ N$.  Then while $v _ 1, \dots, v _ k$ are not uniquely defined, the wedge $v _ 1 \wedge \dots \wedge v _ k$ in $\wedge ^ k \bbq _ \Sigma ^ N$ is, %uniquely defined 
and we define $\cfun (\Delta) = \cfun (v _ 1 \wedge \dots \wedge v _ k)$. A $\bbz _ \Sigma$-submodule $\Delta$ of $g \bbz _ \Sigma ^ N$ is said to be \emph{primitive} in $g \bbz _ \Sigma ^ N$ if it is maximal with respect to finite-index extensions, i.e.\ it is not a proper $\bbz _ \Sigma$-submodule of finite index in any $\bbz _ \Sigma$-submodule of $g \bbz _ \Sigma ^ N$.

The results of \cite{KT:Nondiv} are more general in that they deal with general ``$(c, \alpha)$-good'' maps from a convex $B$ in a product of parameter spaces over $\bbq _ v$ for $v$ in some subset of $\Sigma$ to $G_1/\Gamma _ 1$, but the basic nondivergence estimate of the paper \cite[Thm. 9.4]{KT:Nondiv} gives the following:

\begin{thm} [cf. \cite{KT:Nondiv}]\label{Kleinbock-Tomanov theorem}
Let $U=\prod_{v\in\places} U_v$ be a $\bbq _ \Sigma$-unipotent subgroup, $\mathsf B_U(e)$ an open ball in $U$ and $\lambda _ k $ as in \S\ref{sec:lambda-n}. Let $g \in \GL _ N ^ 1 (\bbq _ \Sigma)$ and assume that for every primitive $\bbz _ \Sigma$-submodule $\Delta$ of $g \bbz _ \Sigma ^ N$ of rank $1 \leq k \leq N-1$
\begin{equation}\label{everything gets big inequality}
\max_ {u\in\mathsf B_U(e)} \cfun (\la_k(u) \Delta) \geq \eta
.\end{equation}
Then
\[
\left |\left \{u\in\mathsf B_U(e):\alpha(\la_k(u)g\Gamma_1) >\epsilon^{-1}\right \}\right|< E\left (\frac\epsilon\eta \right )^{1/D} \absolute{\mathsf B_U(e)},
\] 
with $D$ depending only on $N$ and $E$ depending on $N$, $\#\places$, and polynomially on finite primes in $\places$.
\end{thm}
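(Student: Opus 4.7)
The statement is essentially a direct application of the Kleinbock--Tomanov nondivergence machinery from~\cite{KT:Nondiv}, adapted to our setup of the expanding family $\{\lambda_k\}$ acting on a ball in a $\bbq_\Sigma$-unipotent subgroup. My plan is to verify that the two hypotheses of the general Kleinbock--Tomanov theorem are met --- namely the $(C,\alpha)$-good property for all the relevant ``coordinate functions,'' and the nondegeneracy condition --- and then invoke it.

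First I would unpack the polynomial nature of the map $u\mapsto\lambda_k(u)g$. Via the exponential map $\mathsf B_U(e)$ is identified with a product of bounded norm balls in $\ufrak_v$ for $v\in\Sigma$, and in these coordinates $u\mapsto\lambda_k(u)$ is polynomial of degree bounded in terms of $N$ and the Lie algebra structure of $U$ (note that $\lambda_k$ is a $\bbq_\Sigma$-diagonalizable \emph{linear} map on $\ufrak$, and passing back to $U$ via $\exp$ only increases degree by the nilpotency class of $\ufrak$, hence by a quantity $\ll 1$). Consequently for any fixed $g\in\GL_N^1(\bbq_\Sigma)$ and any $v_1\wedge\dots\wedge v_k \in \wedge^k(g\bbz_\Sigma^N)$, the coordinate components at each place $v\in\Sigma$ of $\lambda_k(u)(v_1\wedge\dots\wedge v_k)$ are $\bbq_v$-polynomials in $u$ of degree bounded by some $d=d(N)$.

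Next I would invoke the standard $(C,\alpha)$-good property: for any $\bbq_v$-polynomial of degree $\leq d$ on a ball $B\subset \bbq_v^m$, the function $u\mapsto |p(u)|_v$ is $(C_0,\alpha_0)$-good with $C_0,\alpha_0$ depending only on $d,m$ (this is \cite[Lemma~3.4]{KT:Nondiv} in the $\bbq_v$-context, extending the real-variable result of Kleinbock--Margulis). Taking the max over coordinates and then the product over $v\in\Sigma$ of $(C_0,\alpha_0)$-good functions is again $(C,\alpha)$-good with $C,\alpha$ depending only on $N$ and $\#\Sigma$. Hence for every primitive rank-$k$ submodule $\Delta$ of $g\bbz_\Sigma^N$, the function
\[
\phi_\Delta(u):=\cfun(\lambda_k(u)\Delta)
\]
is $(C,\alpha)$-good on $\mathsf B_U(e)$, with uniform constants. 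A technical point here, which I would need to check, is that~\eqref{assumption on Greek lambda} and the way we defined $\mathsf B_U(e)$ guarantee that restriction to sub-balls behaves well under the rescaling by $\lambda_k$; but this is immediate from the definitions.

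Finally I would apply \cite[Thm.~9.3 (or Thm.~9.4)]{KT:Nondiv}. That theorem takes as input a family of $(C,\alpha)$-good functions indexed by primitive $\bbz_\Sigma$-submodules of ranks $1\leq k\leq N-1$, together with the nondegeneracy condition
\[
\sup_{u\in\mathsf B_U(e)}\phi_\Delta(u)\geq\eta,
\]
which is precisely~\eqref{everything gets big inequality} in our hypothesis. The conclusion of that theorem is that, for any $\epsilon\in(0,\eta)$,
\[
\bigl|\{u\in\mathsf B_U(e):\alpha(\lambda_k(u)g\Gamma_1)>\epsilon^{-1}\}\bigr|\leq E\bigl(\epsilon/\eta\bigr)^{1/D}|\mathsf B_U(e)|,
\]
with $E,D$ depending only on the $(C,\alpha)$-good constants and on $N$, hence ultimately only on $N$ and $\#\Sigma$ in a polynomial way. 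The main nontrivial point to pin down is the uniform control of the $(C,\alpha)$-good constants as one moves through the different ranks and places; once that is secured the rest is mechanical bookkeeping.
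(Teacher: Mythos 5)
Your proposal is essentially the same route the paper takes: Theorem~\ref{Kleinbock-Tomanov theorem} is presented in the paper as a direct specialization of Kleinbock--Tomanov (quoted explicitly as coming from \cite[Thm.~9.4]{KT:Nondiv}), and the paper offers no independent proof of it. Your verification that the coordinate maps $u \mapsto \cfun(\la_k(u)\Delta)$ are $(C,\alpha)$-good --- via polynomiality of $\la_k$ in exponential coordinates, the per-place Remez-type estimate, and stability of the $(C,\alpha)$-good property under max/product over $\Sigma$ --- is exactly the kind of bookkeeping the paper relies on implicitly (and carries out in a closely related form later in Lemma~\ref{l;polygood} and the comment leading to~\eqref{eq:C-alpha-degree}), so your writeup is a correct fleshing-out of the citation.
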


In fact, the basic inductive argument used to prove Theorem \ref{Kleinbock-Tomanov theorem}, specifically \cite[Thm.~6.1]{KT:Nondiv} can be used to provide a more precise result that would be important for us in the sequel. This result does not seem to appear in the literature. One can view Kleinbock's~\cite[Thm.~0.2]{K-Ext} as a step in this direction, and a result very close to what we give below can be found in a draft by Breuillard and de Saxce \cite{BdS}. 

For $g \Gamma _ 1 \in G _ 1 / \Gamma _ 1$ and $1 \leq i \leq N -1$ let
\begin{equation*}
\alpha _ i (g \Gamma _ 1) = 1 / \min \left\{ \cfun (\Delta): \text{$\Delta$ is a primitive $\bbz _ \Sigma$-submodule of $g \bbz _ \Sigma ^ N$ of rank $i$} \right\}
.\end{equation*}

\begin{thm}\label{fancy nondivergence theorem} With the notations of Theorem~\ref{Kleinbock-Tomanov theorem} (but without the assumption~\eqref{everything gets big inequality}), there are $0=k_0 < k _ 1 < k _ 2 < \dots < k _ \ell < k_{\ell+1}=N $, and primitive $\bbz _ \Sigma$-submodules $\Delta _ {k_1} < \Delta _ {k_2} < \dots < \Delta _ {k_\ell}$  of $g \bbz _ \Sigma ^ N$ of rank corresponding to their index  so that if  $\eta(k_0),\dots,\eta(k_{\ell+1}) \in (0,1]$ is defined by
\begin{align}
\eta(0)&=\eta(N)=1 \nonumber\\
\eta(k_i) &= \max_ {u \in\mathsf B_U(e)} \cfun (\la_k(u) \Delta_{k_i}) \qquad\text{for $1\leq i \leq \ell$}
\label{eta equation}\end{align}
 then $\eta(\bullet)$ can be extended to a function $[1,N]\to(0,1]$ so that  $-\log \eta: [1,N]\to \R^+$ is concave and linear on each interval $[k_0, k_{1}]$, \dots, $[k_{\ell},k_{\ell+1}]$  and
\[
\left |\left \{ u \in\mathsf B_U(e): \exists i \text{ s.t.\ }\frac{\alpha_i(\la_k(u)g \Gamma_1)^{-1}}{\eta(i)} < \epsilon^i \right \}\right|< E \epsilon ^{1/D} \absolute{\mathsf B_U(e)},
\]
with $D$ depending only on $N$ and $E$ depending on $N$, $\#\places$, and polynomially on finite primes in $\places$. Moreover, given a primitive $\bbz _ \Sigma$-submodule $\tilde\Delta < g \bbz _ \Sigma ^ N$, we can choose $\Delta _ {k_1} < \Delta _ {k_2} < \dots < \Delta _ {k_\ell}$  so that 
\[\eta(\rank(\tilde\Delta)) \leq \max_ {u \in\mathsf B_U(e)} \cfun (\la_k(u) \tilde\Delta).
\]
\end{thm}
\medskip

\noindent
Note that it easily follows from the $\Sigma$-arithmetic version of Minkowski's second theorem, \cite[\S C.2, specifically Thm.~C.2.11]{Bombieri-Gubler}, that under the assumption \eqref{eta equation} for any $u \in \mathsf B_U(e)$ one can complete the partial flag $\Delta _ {k_1} < \Delta _ {k_2} < \dots < \Delta _ {k_\ell}$ of submodules of $ g \bbz _ \Sigma ^ N $to a full flag of primitive $\bbz _ \Sigma$-modules $\Delta _ 1 < \dots < \Delta _ {N -1}$ so that if $k_i < r  <k_{i+1}$ and $\tau = (k_{i+1}-r)/(k_{i+1}-k_{i})$ then
\begin{equation}\label{using Minkowski's 2nd} 
\begin{aligned}
 \cfun (\la_k(u)\Delta _ r) &< A \cfun \left (\la_k(u)\Delta _ {k_i}\right)^{\tau}\cfun \left(\la_k(u)\Delta _ {k_{i+1}}\right)^{1-\tau}\\
 &\leq A\eta(k_{i})^\tau \eta(k_{i+1})^{1-\tau}= A\eta(r)
,
\end{aligned} 
\end{equation}
 with $A$ depending only on $N$ and $\places$. 
 Hence for all $u \in \mathsf B_U(e)$
\begin{equation*}
\frac{\alpha_r(\la_k(u)g \Gamma_1)^{-1}}{\eta(r)} < A
.\end{equation*}

\begin{proof}
Consider the (finite) collection of all primitive $\bbz _ \Sigma$-submodules $\Delta < g \bbz _ \Sigma ^ N$ so that 
\begin{equation}\label{max lt one}
\max_ {u \in\mathsf B_U(e)} \cfun (\la_k(u) \Delta)<1,
\end{equation}
and for each such $\Delta$, let
\[\eta_\Delta = \max_ {u \in\mathsf B_U(e)} \cfun (\la_k(u) \Delta)
.\]
From all the possible partial flags of primitive $\bbz _ \Sigma$-submodules $\Delta _ {k_1} < \Delta _ {k_2} < \dots < \Delta _ {k_\ell}$ with all $\Delta _ {k _ i}$ in this sub collection, choose one for which the convex hull of the pairs of points
\begin{equation}\label{set to be taken convex hull of}
\left\{ (0,0), (k _ 1, - \log \eta _ {\Delta _ {k _ 1}}), \dots, (k _ 1, - \log \eta _ { \Delta _ {k _ \ell}}), (N,0) \right\}
\end{equation}
is maximal (with respect to the usual partial order by inclusion on subsets of $\bbr^2$).
There could be more than one possible choice, but any one of these choices would be good enough for us, and if $\tilde\Delta$ satisfies~\eqref{max lt one} we can choose such a $\Delta _ {k_1} < \Delta _ {k_2} < \dots < \Delta _ {k_\ell}$ so that the convex hull of the points in~\eqref{set to be taken convex hull of} contains the point 
$(\rank(\tilde\Delta), -\log \eta_{\tilde\Delta})$.

Fix the choice of primitive $\bbz _ \Sigma$-submodules $\Delta _ {k_1} < \Delta _ {k_2} < \dots < \Delta _ {k_\ell}$ and let $\eta: [0, N] \to \bbr ^ +$ be as in the statement of the theorem. Then the graph of $- \log \eta (\bullet)$ forms the upper half of the boundary of the convex hull of the set in \eqref{set to be taken convex hull of}, and 
$\eta(\rank(\tilde\Delta)) \leq \eta_{\tilde\Delta}$.

By the choice of the $\Delta _ {k _ i}$ and definition of $\eta (\bullet)$, it follows that for any $1 \leq r \leq N-1$ and any $\bbz _ \Sigma$-primitive submodule $\Delta$ of rank $r$ of $g \bbz _ \Sigma ^ N$ compatible with $\Delta _ {k_1} < \Delta _ {k_2} < \dots < \Delta _ {k_\ell}$, 
\[
\max_ {u \in\mathsf B_U(e)} \cfun (\la_k(u) \Delta)\geq \eta(r)
.\]

Applying \cite [Thm.~6.1]{KT:Nondiv} similarly to the way it is used to prove \cite[Thm.~9.3]{KT:Nondiv}, but with the poset used in \cite[Thm.~6.1]{KT:Nondiv} being the collection of $\bbz _ \Sigma$-submodules of $g \bbz _ \Sigma ^ N$ compatible with the chosen partial flag  $\Delta _ {k_1} < \Delta _ {k_2} < \dots < \Delta _ {k_\ell}$ one obtains that outside a subset $\mathsf C \subset \mathsf B_U(e)$ of measure $\absolute {\mathsf C} \ll_{N,\places} \epsilon ^{\star}$ we can find for every $u \in \mathsf B _ U (e) \setminus {\mathsf C}\,$ a completion $\Delta _ 1 < \dots < \Delta _ {N -1}$ (depending on $ u$) of the fixed partial flag $\Delta _ {k_1} <  \dots < \Delta _ {k_\ell}$ so that for every~$i$
\begin{equation}\label{good flag}
\epsilon \eta (i) \leq \cfun (\la_k(u)\Delta _ i) \leq A' \eta(i) 
,\end{equation}
with $A'$ depending only on $N$ and $\places$.
To be precise, we apply a variant of \cite [Thm.~6.1]{KT:Nondiv} where the marking equations (M1) and (M2) on \cite [p.~540]{KT:Nondiv} for a partial flag $\mathfrak G_u$ (compatible with our fixed flag $\Delta _ {k_1} <  \dots < \Delta _ {k_\ell}$) are replaced by (in the notations of this paper)
\begin{enumerate}
\item[(M1)] $\eta (\rank \Delta) \geq \cfun (\la_k(u) \Delta) \geq \epsilon \eta (\rank \Delta)$ for every $\Delta \in \mathfrak G_u$
\item [(M2)] $\cfun (\la_k(u) \Delta) \geq \eta (\rank \Delta)$ for every $\Delta$ compatible with $\mathfrak G_u$ and $\Delta _ {k_1} <  \dots < \Delta _ {k_\ell}$ but not in $\mathfrak G_u$.
\end{enumerate}
The argument of \cite [Thm.~6.1]{KT:Nondiv} would give us that for $u$ outside the set ${\mathsf C}$ as above there exists a partial flag $\mathfrak G_u$ for which (M1), (M2) holds.
Subsequently applying Minkowski's 2nd theorem (cf.~note following the statement of Theorem~\ref{fancy nondivergence theorem}, particularly \eqref{using Minkowski's 2nd}) we can complete the flag $\mathfrak G_u$ to a full flag so that \eqref{good flag} holds.

Such a marking was used in \cite{KT:Nondiv} (and \cite{KM}) to show that there is no primitive $v \in \la_k(u)g \bbz _ \Sigma ^ N$ with small $\cfun (v)$, i.e. to control $\alpha (g \Gamma _ 1) = \alpha _ 1 (g \Gamma _ 1)$, but in fact can be used to show $\alpha _ i (\la_k(u)g \Gamma _ 1) \ll_{N,\places} \epsilon^{-i}\eta (i)^{-1}$, as we now show.

The proof is by induction on the rank of the submodule $\Delta < g \bbz _ \Sigma ^ N$, and all implicit constants may depend on the step in the induction.
Note that since $- \log \eta (i)$ is a concave function,
\begin{equation}\label{monotenicity of eta}
\frac {\eta (i)} {\eta (i-1)} \leq \frac {\eta (i+1)} {\eta (i)} \qquad \text{for all $1 \leq i \leq N-1$}
.\end{equation}
We also recall the following important inequality for any primitive $\Delta, \Delta ' < g \bbz ^ N _ \Sigma$ and any $u \in U$
\begin{equation}\label{basic sum-intersection inequality}
\cfun (u\Delta) \, \cfun (u\Delta ') \geq \cfun (u\Delta \cap u\Delta ') \,\cfun (u\Delta' + u\Delta)/A
,\end{equation}
with $A$ depending only on $N,\Sigma$.

We start induction with rank one primitive submodules $ \bbz _ \Sigma v< g \bbz _ \Sigma ^ N$. Let $i$ be such that $v \in \Delta_ {i+1}$ but not in $\Delta _i$ (where for this purpose we take $\Delta _0=\{0\}$ and $\Delta _ N = g \bbz _ \Sigma ^ N$). Then by \eqref{basic sum-intersection inequality} and~\eqref{monotenicity of eta},
\begin{equation*}
\cfun (\la_k(u) \bbz _ \Sigma v) \geq \frac{\cfun (\la_k(u) \Delta _{i+1})}{A \cfun (\la_k(u) \Delta _{i})}\geq \frac{ \epsilon \eta (i+1) }{AA' \eta (i)} \geq \frac{\epsilon \eta(1)}{AA'}
.\end{equation*}

Consider now a rank-$r$ primitive submodule $\Delta < g \bbz _ \Sigma ^ N$, let $i$ be such that $\Delta < \Delta _ {i+1}$ and $i$ is minimal such (clearly, $i+1 \geq r$). Applying \eqref{basic sum-intersection inequality} once again, we obtain
\begin{equation*}
\cfun (\la_k(u) \Delta )\,\cfun (\la_k(u) \Delta _{i}) \geq \cfun (\la_k(u) (\Delta _{i}\cap \Delta))\,\cfun (\la_k(u) \Delta _{i+1})/A
.\end{equation*}
By induction $\cfun (\la_k(u) (\Delta _{i}\cap \Delta)) \gg_{N,\places} \epsilon ^ {r-1} \eta (r-1)$ hence
\begin{align*}
\cfun (\la_k(u) \Delta ) &\gg_{N,\places} \epsilon ^ {r-1} \eta (r -1) \frac {\cfun (\la_k(u) \Delta _{i+1}) }{\cfun (\la_k(u) \Delta _{i})}\\
& \gg_{N,\places} \epsilon ^ {r} \eta (r -1) \frac {\eta (i +1) }{ \eta (i)} \\
&\gg_{N,\places} \epsilon ^ {r} \eta (r -1) \frac {\eta (r) }{ \eta (r-1)} = \epsilon ^ {r} \eta (r)
,\end{align*}
and we are done.
\end{proof}

A key ingredient in the works of Margulis, Dani, Kleinbock-Margulis, and Kleinbock-Tomanov quoted above is an estimate on the size of the set where a polynomial function is small. The result needed, at least for the real case (i.e.~$\Sigma = \left\{ \infty \right\}$) is known as Remez inequality, and is used in \cite{KM} and \cite{KT:Nondiv} to verify the ``$(C, \alpha)$-good'' property. Since we will also use it in the sequel, we quote it below (in a slightly sharper form, though this is not relevant to us; Cf.\ e.g.\ \cite[Prop.~3.2]{KM}).

\begin{lemma}\label{l;polygood0}
Let $\mathfrak F$ be a local field with absolute value $|\;|$. 
Let $\mathsf B$ be a compact convex subset of $\mathfrak F^{\,r}$, and let $f\in \mathfrak F\,[t_1,\cdots,t_r]$ be a nonzero polynomial of degree $d$. Then for any $\delta>0$ we have
\be\label{eq:Good-Remez}
\Bigl|\Bigl\{z\in{\mathsf B}:|f(z)|< \delta\sup_{{z}\in\mathsf{B}}|f(z)|\Bigr\}\Bigr|\leq c \delta^{1/d}|{\mathsf B}| ,
\ee
where $|\mathsf K|$ denotes the Haar measure of $\mathsf K$ for any subset $\mathsf K\subset \mathfrak F^{\,r}$, with $c$ depending only on $d$ and $r$.
\end{lemma}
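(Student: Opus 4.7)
The plan is to prove the one-variable version of the estimate directly by factoring $f$ over the algebraic closure of $\mathfrak f$, and then bootstrap to arbitrary $r$ by a standard slicing/Fubini argument along the lines of \cite{KM} and \cite{KT:Nondiv}.

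First I would handle the one-variable case. Here $\mathsf B$ is a ball of some radius $R$ in $\mathfrak f$. Let $M=\sup_{\mathsf B}|f|$ and pick $z^*\in\mathsf B$ with $|f(z^*)|\geq M/2$. Factor $f(t)=a\prod_{i=1}^d(t-\alpha_i)$ over $\overline{\mathfrak f}$. Then any $z\in\mathsf B$ with $|f(z)|<\delta M$ satisfies
\[
\prod_{i=1}^d\frac{|z-\alpha_i|}{|z^*-\alpha_i|}<2\delta,
\]
so for at least one index $i$ we have $|z-\alpha_i|<(2\delta)^{1/d}|z^*-\alpha_i|$. Split the roots into those with $|z^*-\alpha_i|\leq 2R$ and those far from $\mathsf B$. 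For far roots, $|z-\alpha_i|$ and $|z^*-\alpha_i|$ are comparable uniformly for $z\in\mathsf B$ (in both the archimedean and non-archimedean settings), which forces $(2\delta)^{1/d}\gtrsim 1$ and so contributes nothing for small $\delta$. For nearby roots, the ``bad'' set $\{z\in\mathsf B:|z-\alpha_i|<2R(2\delta)^{1/d}\}$ has measure at most $c\delta^{1/d}|\mathsf B|$ by the elementary volume comparison for balls. A union bound over the at most $d$ contributing roots yields the one-variable estimate.

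Next I would reduce $r$ variables to one by induction on $r$. Choose a coordinate direction, say $t_r$, along which $f$ has positive degree, and foliate $\mathsf B$ by lines $\{z'\}\times I(z')$. By Fubini,
\[
\bigl|\{z\in\mathsf B:|f(z)|<\delta M\}\bigr|=\int_{\mathsf B'}\bigl|\{t\in I(z'):|f_{z'}(t)|<\delta M\}\bigr|\,dz',
\]
where $f_{z'}(t):=f(z',t)$ has degree at most $d$ in $t$. On slices where $M_{z'}:=\sup_{I(z')}|f_{z'}|\geq M/K$ for an appropriate constant $K=K(d,r)$, the one-variable bound applies and contributes $\leq c(K\delta)^{1/d}|I(z')|$. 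The remaining slices form a set $\mathsf B_{\mathrm{bad}}\subset\mathsf B'$ on which $M_{z'}<M/K$; since $z'\mapsto M_{z'}$ inherits a $(c,\alpha)$-good-type behaviour from $f$ (essentially because one can express the sup on a slice through the coefficients of $f_{z'}$, which are polynomials in $z'$), one applies the inductive hypothesis on $\mathsf B'$ to control $|\mathsf B_{\mathrm{bad}}|$, and the exponent $1/d$ is preserved at each step.

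The main obstacle will be making this ``bad slices'' step quantitatively clean without blowing up the exponent: the naive sup along slices is not itself a polynomial, so one must either pass through resultants, replace the sup by a carefully chosen polynomial proxy (as in \cite[Prop.\ 3.2]{KM}), or exploit that the class of $(c,\alpha)$-good functions is closed under the relevant operations, as carried out in the $\Sigma$-arithmetic setting in \cite[\S3]{KT:Nondiv}. The one-variable factorization step itself is direct; the work lies in packaging the induction so that the constant depends only on $d$ and $r$ while $\alpha=1/d$ is preserved.
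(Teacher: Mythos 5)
Your one-variable argument (root factorization over $\overline{\mathfrak f}$, splitting roots into those near and far from $\mathsf B$) is fine, apart from the harmless point that ``far'' should mean something like $|z^*-\alpha_i|>4R$ rather than $2R$ so that the ratio $|z-\alpha_i|/|z^*-\alpha_i|$ is bounded below uniformly; it differs from the paper, which invokes Lagrange interpolation, but either route gives the exponent $1/d$ in dimension one.

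The genuine gap is in the reduction from $r$ variables to one. With a fixed threshold $K$, the ``bad slices'' $\{z' : M_{z'}<M/K\}$ need not be small: take $f(z',t)=z't$ on $[0,1]^2$, where $M_{z'}=|z'|$, so the bad set has measure $1/K$, a constant fraction of $\mathsf B'$ independent of $\delta$. Since on a bad slice the only bound available from the induction at that stage is the trivial one (indeed the whole slice can lie in the sublevel set once $M_{z'}<\delta M$), the bad slices contribute on the order of $|\mathsf B|/K$, which is not $O(\delta^{1/d})|\mathsf B|$. The refined version of your idea --- control the distribution of $z'\mapsto M_{z'}$ via coefficient polynomials or resultants and sum over dyadic ranges $M_{z'}\asymp 2^{-j}M$ --- does produce a bound, but the dyadic sum of terms $\min\{1,(2^j\delta)^{1/d}\}\cdot 2^{-j/d}$ accumulates a factor $\log(1/\delta)$ (and in the standard literature this style of coordinate induction is exactly why multivariable $(C,\alpha)$-good statements come with exponents weaker than $1/d$). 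So the claim that ``the exponent $1/d$ is preserved at each step'' is precisely the point that is not established, and it is the whole content of the lemma, whose exponent is dimension-free as in Brudnyi--Ganzburg.

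The paper avoids this entirely by slicing along a line chosen \emph{after} fixing $\delta$ and the near-maximizing point: if $x\in\mathsf B$ is a point where $|f|$ essentially attains $\sup_{\mathsf B}|f|$, then averaging over lines $\ell$ through $x$ in the convex body $\mathsf B$ produces one line with $|\{|f|<\delta'\}\cap\ell|/|\mathsf B\cap\ell|\geq c_1\,|\{|f|<\delta'\}|/|\mathsf B|$, and since $x\in\ell$ the sup of $|f|$ on $\mathsf B\cap\ell$ equals the global sup. The one-variable bound then applies at the same relative level $\delta$ on a polynomial of degree at most $d$, so $1/d$ survives with a constant depending only on $d$ and the dimension. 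If you want to salvage your scheme, you would need to incorporate this ``slice through the maximum'' idea (or the Chebyshev-type symmetrization of Brudnyi--Ganzburg); coordinate slicing with a sup-proxy will not give the stated exponent.
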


See \cite{Brudnyi-Ganzburg-Remez-inequality} for a proof for $k=\bbr$; the general case is essentially identical.

\begin{proof}[Sketch of proof]
Let $\delta' = \delta\sup_{{z}\in\mathsf{B}}|f(z)|$. For $r=1$ this follows from Lagrange's interpolation formula. For higher dimension, let $x \in \mathsf B$ be such that $f(x)=\sup_{{z}\in\mathsf{B}}|f(z)|$. Then there is a line $\ell$ through $x$ where 
\[
\frac{|\Bigl\{z\in{\mathsf B}:|f(z)|< \delta'\Bigr\}\cap \ell| }{ |\mathsf B\cap \ell|}>
c_1 \frac {|\Bigl\{z\in{\mathsf B}:|f(z)|< \delta'\Bigr\}|}{|\mathsf B|}.
\] 
Since $x \in \ell$ by the choice of $x$ we have  
\[
\sup_{{z}\in\mathsf{B}}|f(z)|=\sup_{{z}\in\mathsf{B}\cap \ell}|f(z)|;
\]
now apply the one dimensional result.
%the result for arbitrary $r$ then follows from this and Fubini's theorem; 
%This lemma proved in~\cite[Cor.~3.3 and Lemma 3.4]{KT:Nondiv}. 
\end{proof}

\begin{lemma}\label{l;polygood}
Let $\Sigma' \subset \Sigma$.
For all positive integers $r$ and $d$, there exist explicit constant
$c=c(r,d,\places')$ with the following property.
For every $v \in \Sigma'$ and every $1\leq j\leq r'_v$ let 
$f_{v,j} \in\bbq_v[t_1,\cdots,t_{r_v}]$ be a nonzero polynomial of degree $\leq d$.
Define 
\[
f_v(t_1,\cdots,t_{r_v})=\|(f_{v,1}(t),\ldots, f_{v,r'_v}(t))\|_v=\max\{|f_{v,j}(t)|_v: 1\leq j\leq r'_v\}.
\]
Let $\mathsf B = \prod_ {v \in \Sigma'} \mathsf B _ v$ where $\mathsf B_v$ is a convex set in $\Q_v^{r_v}$ for each $v$, 
and set
\[
F(t_{vi}: v \in \Sigma ', 1 \leq i \leq r _ v) = \prod_ {v \in \Sigma'} f_v(t_1,\cdots,t_{r_v}).
\]
Then for any $\delta>0$ we have
\[
\Bigl|\Bigl\{ z \in{\mathsf B}:F(z)< \delta \sup_{{z}\in\mathsf{B}}F(z)\Bigr\}\Bigr|\leq c |\log \delta|^{\#\Sigma'-1}
\delta^{1/d}|{\mathsf B}|.
\]
Similarly, if we put $F(t_{vi})=\max_{\places'}f_v(t_{vi})$, then 
\[
\Bigl|\Bigl\{ z \in{\mathsf B}:F(z)< \delta \sup_{{z}\in\mathsf{B}}F(z)\Bigr\}\Bigr|\leq c \delta^{1/d}|{\mathsf B}|.
\] 
\end{lemma}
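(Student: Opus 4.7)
The plan is to reduce both assertions to the single-place Remez-type bound of Lemma~\ref{l;polygood0}, and then handle the factorization across places by a Fubini argument (for the max-version) or a dyadic decomposition (for the product-version). As the first step, for each $v \in \places'$ I would set $M_v := \sup_{\mathsf B_v} f_v$ and $g_v := f_v/M_v \in [0,1]$. Since $f_v = \max_j |f_{v,j}|_v$, choosing $j_\ast = j_\ast(v)$ so that $M_v = \sup_{\mathsf B_v}|f_{v,j_\ast}|_v$ and applying Lemma~\ref{l;polygood0} to the single polynomial $f_{v,j_\ast}$ yields the uniform single-place estimate
\[
|\{z_v \in \mathsf B_v : g_v(z_v) < s\}| \;\leq\; c_0\, s^{1/d}\, |\mathsf B_v|
\qquad (0<s\leq 1),
\]
with $c_0$ depending only on $d$ and $r_v$.

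For the max-version, where $F = \max_v f_v$, I would observe that $\sup_{\mathsf B} F = \max_v M_v$ and that if $v_\ast$ attains this max, then $F(z) < \delta \sup F$ forces $f_{v_\ast}(z_{v_\ast}) < \delta M_{v_\ast}$, a condition only on the single coordinate $z_{v_\ast}$. Fubini combined with the single-place estimate at $v_\ast$ immediately produces the bound $c_0\,\delta^{1/d}|\mathsf B|$, with no logarithmic factor.

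For the product-version we have $\sup F = \prod_v M_v$, so the condition becomes $\prod_v g_v(z_v) < \delta$, and all the interesting combinatorics lies in how a product of $n := \#\places'$ factors in $[0,1]$ can be small. I would dyadically decompose $(0,1]$ as the union of the intervals $I_k = [2^{-k-1}, 2^{-k})$, $k \geq 0$. If $g_v(z_v) \in I_{k_v}$ for every $v$, then $\prod_v g_v \geq 2^{-\sum_v (k_v+1)}$, so the event $\{F < \delta\sup F\}$ forces $\sum_v k_v \geq K$ for $K := \lceil \log_2(1/\delta)\rceil - n$. Since different factors depend on disjoint coordinates, the measure of each dyadic cell factorizes, and each factor is bounded via the single-place estimate by $c_0\,2^{-k_v/d}|\mathsf B_v|$. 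Summing these bounds over all tuples $(k_v) \in \bbz_{\geq 0}^n$ with $\sum_v k_v \geq K$, and using that $\#\{(k_v): \sum_v k_v = m\} = \binom{m+n-1}{n-1} \ll_n (m+1)^{n-1}$, collapses the resulting geometric series to $O_n\!\bigl(K^{n-1}\,2^{-K/d}\,|\mathsf B|\bigr) = O_n\!\bigl(|\log\delta|^{n-1}\,\delta^{1/d}\,|\mathsf B|\bigr)$, which is the desired bound.

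The only mildly subtle point, and thus the main obstacle, is arranging the combinatorics so that the exponent of $|\log \delta|$ comes out as $n-1$ rather than $n$. This is dictated by the fact that the dyadic tail sum $\sum_{m \geq K} m^{n-1} 2^{-m/d}$ is dominated by its leading term of order $K^{n-1} 2^{-K/d}$; once the dyadic framework is set up this is a routine geometric-series estimate. There is no deeper difficulty, because the independence across places is built into the product measure structure, and the single-place input is already supplied by Lemma~\ref{l;polygood0}.
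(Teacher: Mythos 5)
Your proof is correct and matches the paper's proof in essence: both reduce to the one-place Remez bound of Lemma~\ref{l;polygood0} (noting that $f_v < s\sup f_v$ forces a single polynomial $f_{v,j_\ast}$ to be small, which is the role of \cite[Lemma 3.1]{KT:Nondiv} in the paper), both dispose of the max-version by fixing the place achieving the supremum and applying Fubini, and both handle the product-version by a dyadic decomposition over $\places'$ whose combinatorics yields the $|\log\delta|^{\#\places'-1}$ factor. The only cosmetic difference is bookkeeping: the paper covers $\{F<2^{-m}\sup F\}$ by a union over compositions $m=\sum_v m_v$ (so the exponent $m^{n-1}$ appears as a partition count at a fixed level), whereas you decompose into disjoint dyadic shells and sum the tail $\sum_{m\ge K}\binom{m+n-1}{n-1}2^{-m/d}$; these give the same bound.
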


\begin{proof}
We first prove the first claim. Hence, let 
$F= \prod_ {v \in \Sigma'} f_v$ be as in that statement; note that $\max F=\prod\max f_v$.
Moreover,~\eqref{eq:Good-Remez} holds true for $f_v$ in place of $|f|$, see e.g.~\cite[Lemma 3.1]{KT:Nondiv}.

Note also that it suffices to prove the lemma for $\delta=2^{-m}$ where $m$ is a non-negative integer.
For all nonnegative integers $m'$ and any $v\in\places'$, put
\[
\mathsf B_v^{f_v,m'}=\Bigl\{z\in\mathsf B_v: f_v(z)\leq 2^{-m'}\max_{\mathsf B_v} f_v\Bigr\}.
\]
Then we have 
\[
\Bigl\{ z \in{\mathsf B}:F(z)< 2^{-m} \sup_{{z}\in\mathsf{B}}F(z)\Bigr\}=\bigcup \textstyle\prod_{\places'}\mathsf B_v^{f_v, m_v}
\]
where the union is taken over all partitions $m=\sum_{\places'} m_v$ with $m_v$ nonnegative integer for all $v\in\places$.

Now by~\eqref{eq:Good-Remez} applied for $f_v$ implies that $|\mathsf B_v^{f_v, m_v}|\leq C 2^{-m_v/d}|\mathsf B_v|$ for all $v\in\places'$ and $m_v$.
The claim follows from this as the number partitions $m=\sum_{\places'} m_v$ is $\leq m^{\#\places'-1}$.

To see the second claim, let $v$ be so that $\max_{\mathsf B_v}f_v=\max F$. The claim then follows from
the fact that~\eqref{eq:Good-Remez} holds for $f_v$.
\end{proof}

Note that replacing $\frac{1}{d}$ with $\frac{1}{d}-\epsilon$, for a small enough $\epsilon$ depending only on $d$ 
and the constant $c$ by a bigger constant depending on $\places'$ if necessary, we have the following. 
There exists some $\alpha=\alpha(d)$ so that for all $F$ as in Lemma~\ref{l;polygood} we have
\be\label{eq:C-alpha-degree}
\Bigl|\Bigl\{ z \in{\mathsf B}:F(z)< \delta \sup_{{z}\in\mathsf{B}}F(z)\Bigr\}\Bigr|\leq c 
\delta^{\alpha}|{\mathsf B}|
\ee
where $c=c(r,d,\places').$

In the sequel, we will deal with functions defined on $U$ of the form $u\mapsto\cfun(\eta_H(\la_k(u)g))$ and $u\mapsto\|\zpz\wedge\eta_H(\la_k(u)g)\|$, see~\S\ref{sec:Hcal-diophatine} for the notation. 
We let $\alpha$ %$\constb\label{t:kappa}$ 
be so that~\eqref{eq:C-alpha-degree} holds true for all of these functions; note that $\alpha$ depends only on $N$.

\begin{lemma}\label{c;max-speed}\label{lem:U-speed}
There exists some $\consta\label{k:U-speed-1}\label{k:cor:U-moves}$ so that the following holds.
Let $\H\in\Hcal$ and $g\in G$. %assume that $\H$ is not a normal subgroup of $\G$.
Put 
$
\epsilon_g=\max\{\|\zpz\wedge\eta_{M_H}(g)\|: \zpz\in\mathcal B_U\}.
$
Assume $\epsilon_g >0$, i.e.\ that $g^{-1}Ug$ does not normalize $H$.  
Then
\[
\Bigl|\Bigr\{u\in \mathsf B_U( e):\cfun\Bigl(\eta_H(\la_k(u)g)\Bigr)\leq R\Bigr\}\Bigr|\ll \Bigl(R|g|\height(\H)/\epsilon_g\Bigr)^{\ref{k:cor:U-moves}}e^{-k/\ref{k:cor:U-moves}}.
\]
\end{lemma}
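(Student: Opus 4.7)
The plan is to apply the Remez-type inequality (Lemma~\ref{l;polygood} and the consequence \eqref{eq:C-alpha-degree}) to the function
\[
F(u) = \cfun\bigl(\eta_H(\la_k(u)g)\bigr) = \prod_{v\in\Sigma} \|\rho_H(\la_k(u_v))\rho_H(g_v)\vbf_H\|_v,
\]
which at each place is the norm of a vector-valued polynomial in $u_v$ of degree bounded by $N$. This will yield $|\{u\in\mathsf B_U(e): F(u)\le R\}|\ll (R/\max_u F)^{\ref{t:kappa}}|\mathsf B_U(e)|$, so the main task is to lower bound $\max_u F$.

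Pick $\zpz_0\in\mathcal B_U$ and $v_0\in\Sigma$ realizing $\epsilon_g$, so $\zpz_0$ is nontrivial only at $v_0$ and $\|\zpz_0\wedge\eta_{M_H}(g)\|_{v_0}=\epsilon_g$. Restricting attention to $u$ nontrivial only at $v_0$ and of the form $\la_k(u)=\exp(t\,\la_k(\zpz_0))$, Taylor expansion gives
\[
\rho_H\bigl(\exp(t\la_k(\zpz_0))\bigr)\eta_H(g)=\sum_{j\ge 0}\frac{t^j}{j!}\,d\rho_H(\la_k(\zpz_0))^j\eta_H(g),
\]
a $V_H$-valued polynomial in $t$ of degree $\dotll 1$. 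By the scaling prescription of $\la_k$ at place $v_0$, we have $\la_k(\zpz_0)=a_{v_0}^{ck}\zpz_0$ for some integer $c\ge 1$, and the standard polynomial max-vs-coefficient inequality at place $v_0$ yields
\[
\max_{|t|_{v_0}\le 1}\bigl\|\rho_H(\exp(t\la_k(\zpz_0)))\eta_H(g)\bigr\|_{v_0}
\gg a_{v_0}^{ck}\,\|d\rho_H(\zpz_0)\eta_H(g)\|_{v_0}.
\]

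The crucial step is to show
\begin{equation}\label{dioph-key-bound}
\|d\rho_H(\zpz_0)\eta_H(g)\|_{v_0}\gg \epsilon_g\big/(|g|\,\height(\H))^{\star}.
\end{equation}
Writing $\wpz=\Ad(g_{v_0}^{-1})\zpz_0$ and using $d\rho_H(\zpz_0)\eta_H(g)=\rho_H(g_{v_0})(\wpz\cdot\vpz_H)$, it suffices to bound $\|\wpz\cdot\vpz_H\|_{v_0}$ from below. The hypothesis first gives (after pulling $\Ad(g_{v_0}^{-1})$ through the wedge) $\|\wpz\wedge\vpz_{M_H}\|_{v_0}\gg \epsilon_g/|g|^{\star}$, hence $\dist(\wpz,\mathfrak m_H)\gg \epsilon_g/(|g|\height(\H))^{\star}$ since $\|\vpz_{M_H}\|\ll\height(\H)^{\star}$ by Lemma~\ref{lem:height-L-M}. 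The key observation is that $\wpz$, being $\Ad$-conjugate to the nilpotent $\zpz_0$, is nilpotent in $\gfrak$; and any nilpotent element of $\lfrak_H$ generates a one-parameter unipotent subgroup which is a (connected, hence class-$\Hcal$) subgroup of $L_H$, therefore of $\llplus_H=L_H^\Hcal=M_H$. Thus the nilpotent cone of $\lfrak_H$ coincides with that of $\mathfrak m_H$, and a quantitative Lojasiewicz-type inequality in the spirit of Theorem~A (applied, as in the proof of Lemma~\ref{lem:loj}, to the $\bbq$-varieties cutting out $\mathcal N\cap\lfrak_H$ and $\mathfrak m_H$, both of heights controlled by $\height(\H)^{\star}$) upgrades this qualitative equality to \eqref{dioph-key-bound}.

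Finally, for $u$ nontrivial only at $v_0$, the other place factors of $F(u)$ equal $\|\rho_H(g_{v'})\vbf_H\|_{v'}\gg |g|^{-\star}$ (since $\vbf_H$ is primitive and integral and the operator norm of $\rho_H(g_{v'}^{-1})$ is $\ll|g|^\star$). Multiplying across places,
\[
\max_{u\in\mathsf B_U(e)} F(u)\gg e^{k/\star}\,\epsilon_g\,(|g|\,\height(\H))^{-\star}.
\]
Plugging this into Lemma~\ref{l;polygood}/\eqref{eq:C-alpha-degree} gives the claimed bound with $\ref{k:cor:U-moves}$ depending only on $N$. The main obstacle is the Lojasiewicz-type step \eqref{dioph-key-bound}: keeping track of the heights of the varieties involved so that the loss can be absorbed into a polynomial in $\height(\H)$ and $|g|$, and ensuring the exponent $\star$ is uniform in $\H$.
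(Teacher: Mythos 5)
Your proposal is correct and follows essentially the same route as the paper: reduce via the Remez-type bound of Lemma~\ref{l;polygood} to a lower bound on $\max_{u\in\mathsf B_U(e)}\cfun(\eta_H(\la_k(u)g))$, obtain that lower bound from the expansion of $\la_k$ together with a lower bound on the linear coefficient $\|{\rm Der}\rho_H(\zpz_0)\eta_H(g)\|$, and prove the latter by noting that smallness would force $\Ad(g^{-1})\zpz_0$ (a nilpotent element) close to $\Lie(L_H)$ and hence, by the effective Nullstellensatz/Lojasiewicz input of Theorems A and B, close to $\Lie(M_H)$, contradicting $\epsilon_g$. Your ``crucial step'' \eqref{dioph-key-bound} is exactly the paper's Lemma~\ref{lem:dist-fixu}, which the paper isolates as a separate lemma before running the same polynomial-expansion argument.
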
 

%Recall the definition of $\RMH_H(g,r)$ from~\eqref{eq:def-rmh}. 
%Put 
%\[
%\mathcal F=\{(\H,r), (\lplus_\H, \RMH(g,r))\}.
%\]
%Suppose $g\Gamma$ is $(\vare;\mathcal F)$-Diophantine, see Definition~\ref{def:Diophantine-intro}. 

%There exists some $\zpz_0\in\ufrak$ with $\|\zpz_0\|=1$ so that the following holds. 
%For any $0<\constb\label{t:U-speed}<1$, any $k\in\bbn$ with
%\[
%k\geq \ref{k:U-speed-1}\log\bigl(\height(\G)|g| r\vare\Bigl(\lplus_\H,\RMH_H(g,r)\Bigr)^{-1}\bigr),
%\]
%and any $\gamma\in\Gamma$ with $\cfun\Bigl(\eta_H(g\gamma)\Bigr)\leq \vsr$
%we have
%\[
%\max\Bigl\{\|\rho_H(\exp(t\zpz_0))\overline{\eta_H(g\gamma)}\|: |t|\leq e^{k}\Bigr\}\dotgg_{\ref{t:U-speed}} e^{\ref{t:U-speed}k}.
%\]

We need the following lemma for the proof of Lemma~\ref{lem:U-speed}.

\begin{lemma}\label{lem:dist-fixu}
There exists some $\consta\label{k:dist-fixu-1}$ so that the following holds.
Let the notation and assumptions be as in Lemma~\ref{lem:U-speed}. 
Moreover, let ${\rm Der}\rho_H$ denote the derivative of $\rho_H$.
Then 
\[
\max\bigl\{\|{\rm Der}\rho_H(\zpz)\eta_H(g)\|: \zpz\in\mathcal B_U\bigr\}\gg\delta
\]
where 
$\delta=\Bigl(\epsilon_g\height(\H)^{-1}|g|^{-1}\Bigr)^{\ref{k:dist-fixu-1}}$.
\end{lemma}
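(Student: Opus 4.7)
The plan is to work contrapositively: set $\delta_0:=\max_{\zpz\in\mathcal B_U}\|{\rm Der}\rho_H(\zpz)\eta_H(g)\|$ and derive
\[
\epsilon_g\ll \bigl(|g|\,\height(\Hbf)\bigr)^{\star}\,\delta_0^{1/\star},
\]
which, after rearrangement, gives the stated lower bound for a sufficiently large exponent $\ref{k:dist-fixu-1}$. Throughout, for each $\zpz\in\mathcal B_U$ I set $\wpz:=\Ad(g^{-1})\zpz$; this is ad-nilpotent (since $\zpz$ is) and satisfies $\|\wpz\|\ll |g|^{\star}$.

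First I reduce to the identity. Using the equivariance $\rho_H(g)\,{\rm Der}\rho_H(\wpz)={\rm Der}\rho_H(\Ad(g)\wpz)\,\rho_H(g)$, rewrite
\[
{\rm Der}\rho_H(\zpz)\eta_H(g)=\rho_H(g)\,\phi(\wpz),\qquad \phi(\wpz):={\rm Der}\rho_H(\wpz)\,\vpz_H,
\]
so $\|\phi(\wpz)\|\ll |g|^{\star}\delta_0$ for every $\zpz\in\mathcal B_U$. The map $\phi\colon\gfrak\to V_H$ is linear, and by the very definition of $\Lbf_\Hbf$ at the Lie-algebra level its kernel equals $\lfrak:=\Lie(\Lbf_\Hbf)$. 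Since $\phi$ is built linearly from $\vpz_H$, its matrix entries in the fixed basis of $\gfrak$ have height $\ll\height(\Hbf)^{\star}$; a standard quantitative linear-algebra argument (bounding from below the smallest nonzero singular value of an integer matrix in terms of its height) therefore yields
\[
\vsdist(\wpz,\lfrak)\ll \bigl(|g|\,\height(\Hbf)\bigr)^{\star}\,\delta_0.
\]

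The decisive structural input is that the nilpotent cone $\mathcal N\subset\gfrak$ meets $\lfrak$ and $\mfrak:=\Lie(\lplus_\Hbf)$ in the same set. Indeed by Lemma~\ref{lem:subgroup-Hcal} one has $\mfrak=[\lfrak,\lfrak]+\rad_u(\lfrak)$, so $\lfrak/\mfrak$ is the Lie algebra of a torus, on which no nonzero ad-nilpotent element lives; hence $\mathcal N\cap\lfrak=\mathcal N\cap\mfrak$. To promote this qualitative equality to a quantitative estimate, I apply the local Nullstellensatz---Theorem~A at the archimedean place and Theorem~B at each finite place in $\places$---to the combined system consisting of the polynomial equations cutting out $\mathcal N$ together with the linear forms defining $\lfrak$, all of which have height $\ll\height(\Hbf)^{\star}$ (using Lemma~\ref{lem:height-L-M} for the height of $\lfrak$). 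Since $\wpz$ is \emph{exactly} nilpotent and $\vsdist(\wpz,\lfrak)$ is bounded as above, one obtains some $\wpz^{\ast}\in\mathcal N\cap\lfrak=\mathcal N\cap\mfrak$ with
\[
\|\wpz-\wpz^{\ast}\|\ll \bigl(|g|\,\height(\Hbf)\bigr)^{\star}\,\delta_0^{1/\star},
\]
and in particular $\|\wpz\wedge \vpz_{M_H}\|\ll(|g|\height(\Hbf))^{\star}\delta_0^{1/\star}$.

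Finally, the identity $\zpz\wedge \eta_{M_H}(g)=\bigl(\wedge^{\dim M_H+1}\Ad(g)\bigr)\bigl(\wpz\wedge \vpz_{M_H}\bigr)$ gives $\|\zpz\wedge \eta_{M_H}(g)\|\ll|g|^{\star}\|\wpz\wedge \vpz_{M_H}\|$, and taking the maximum over $\zpz\in\mathcal B_U$ yields the required upper bound on $\epsilon_g$. The main technical obstacle is the quantitative Lojasiewicz step: converting $\mathcal N\cap\lfrak=\mathcal N\cap\mfrak$ into a polynomial-rate estimate simultaneously across the archimedean and non-archimedean factors of $\bbq_\places$. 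However, this is precisely the type of argument already carried out in the proof of Lemma~\ref{l;lojas-ineq}, whose template I intend to follow directly.
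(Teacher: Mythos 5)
Your argument is correct and follows essentially the same route as the paper's proof: contrapose, pull back by $\Ad(g^{-1})$ via equivariance, read off closeness to $\lfrak=\ker(\wpz\mapsto{\rm Der}\rho_H(\wpz)\vpz_H)$ from a height bound on that linear map, then invoke Theorems~A/B (exactly as in Lemma~\ref{l;lojas-ineq}) to upgrade to closeness to $\mfrak=\Lie(\lplus_\Hbf)$ using ad-nilpotence of $\Ad(g^{-1})\zpz$, and finally wedge with $\vpz_{M_H}$ and push forward by $\wedge\Ad(g)$. The one spot where you are more explicit than the paper is the structural identity $\mathcal N\cap\lfrak=\mathcal N\cap\mfrak$ (via $\lfrak/\mfrak$ being toral); the paper only implicitly uses the containment $\mathcal N\cap\lfrak\subset\mfrak$ by appealing to the Lemma~\ref{l;lojas-ineq} template, but your justification is correct and makes the point clearer.
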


\begin{proof}
Let ${b}>0$ and assume that
$
\max\bigl\{\|{\rm Der}\rho_H(\zpz){}{\eta_H(g)}\|: \zpz\in\mathcal B_U\bigr\}\leq {b}.
$
Using~\eqref{eq:def-k-adjoint}, then we have
\be\label{eq:U-MH-Der-Close}
 \max\Bigl\{\|{\rm Der}\rho_H(\Ad(g^{-1})\zpz)\vpz_H\|: \zpz\in\mathcal B_U\Bigr\}\ll |g|^{\star}{b}.
\ee

Recall from the definition of ${\bf L}_{\H}$ that
\[
\Lie( L_{H})=\{w\in\gfrak: {\rm Der}\rho_H(w)\vpz_H=0\}.
\]
That is: $\Lie(L_{H})$ is the kernel of the linear map 
$w\mapsto{\rm Der}\rho_H(w)\vpz_H$ from $\gfrak$ to $V_H.$
The vector $\vpz_H$ is an integral vector of size $\height(\H)$.
Therefore, the map $w\mapsto{\rm Der}\rho_H(w)\vpz_H$ 
can be realized by an integral matrix whose entries are bounded by $\height(\H)^\star$. 

Now by~\eqref{eq:U-MH-Der-Close}, for all $\zpz\in\ufrak$ with $\|\zpz\|=1$ the vector $\Ad(g^{-1})\zpz$ almost belongs to the kernel of this map, in view of the above bound we get that
\be\label{eq:dist-etaHM-Der}
 \vsdist\bigl(\Ad(g^{-1})\zpz,\Lie(L_H)\bigr)\ll |g|^\star \height(\H)^\star{b}^\star,
\ee
see e.g.~\cite[\S13.4]{EMV}.

Recall that $\ufrak$ is a nilpotent Lie algebra and $\lplus_{\H}={\bf L}_{\H}^\Hcal$. 
Hence, arguing as in the proof of Lemma~\ref{l;lojas-ineq}, i.e.\ using Theorems A and B, 
we get the following from~\eqref{eq:dist-etaHM-Der}.
\[
 \vsdist(\Ad(g^{-1})\zpz,\Lie(\llplus_{ H}))\ll |g|^\star \height(\H)^\star{b}^\star\quad
 \text{for all $\zpz\in\mathcal B_U.$}
\]
The above estimate thus implies that
\[
\Bigl\|\zpz\wedge\eta_{\llplus_H}(g)\Bigr\|\ll |g|^\star r^\star{b}^\star\quad\text{for all $\zpz\in\mathcal B_U$;}
\]
as we wanted to show.
\end{proof}

\begin{proof}[Proof of Lemma~\ref{lem:U-speed}]
In view of Lemma~\ref{l;polygood}, it suffices to prove that
\be\label{eq:max-laku-etaH}
\max\{\cfun(\eta_H(\la_k(u)g)): u\in\mathsf B\}\gg \Bigl(\frac{\epsilon_g}{\height(\H)|g|}\Bigr)^\star e^{\star k}.
\ee

To see this, for any $\zpz\in\mathcal B_U$ define 
\[
f_{\zpz}(t)=\rho_H(\exp(t\zpz)){\eta_H(g)}.
\]
Then $f_{\zpz}$ is a polynomial map from $\Q_v$ into $V_H$.
Let us write $f_\zpz=c_{\zpz,0}+\hat f_{\zpz}$ where $c_{\zpz,0}\in V_H$
and $\hat f_{\zpz}(0)=0$.

Let $\delta$ be as in the previous lemma. By the conclusion of that lemma, there exists some 
$\zpz_0\in\mathcal B_U$ so that 
\be\label{eq:coef-est}
\max\{|c|_v:c\text{ is a coefficient of $\hat f_{\zpz_0}$}\}\gg \delta^\star.
\ee

For any nonzero $T\in\Q_v$, define the renormalized polynomial 
\[
{\hat f_{\zpz_0,T}}(t):=\tfrac{1}{T}\hat f_{\zpz_0}({T}t).
\]
Then by~\eqref{eq:coef-est}, we have 
$
\sup_{|t|_v\leq 1}\|{\hat f_{\zpz_0,T}}(t)\|\gg\delta^\star.
$

Hence, there exists some $v\in\places$ so that 
\[
\max\{u_v\in \mathsf B_{U_w}( e):\|\eta_H(\la_k(u_v)g)\|_v\}\gg \delta^\star e^{\star k};
\]
we also used the fact that for all $w\in\places$ we have
$
\|\eta_H(g)\|_w\gg |g|^{-\star}\height(\H)^{-\star};
$
this lower bound follows as $\vpz_H$ is an integral vector whose $\infty$-norm is $\height(\H)$.

Altogether, we get that
\begin{align*}
\max\{\cfun(\eta_H&(\la_k(u)g):u\in\mathsf B_{U}( e)\}\\
&\geq \max\{\|\eta_H(\la_k(u_v)g\|_v\textstyle\prod_{w\neq v}\|\eta_H(g)\|_w:u_v\in \mathsf B_{U_v}( e)\}\\
&\geq \delta^\star|g|^{-\star}\height(\H)^{-\star} e^{\star k};
\end{align*}
this completes the proof of~\eqref{eq:max-laku-etaH} and hence the lemma.
\end{proof}

%We recall the following from \cite{KT:Nondiv}.
%
%\begin{thm}\label{nondivergence for SLN}
%Some reasonable statement of nondivergence for $\SL_N$ in S arithmetic context
%\end{thm}

\begin{propos}\label{prop:almost-inv-Lie}
There is a constant $D'$ depending only on $N$
so that the following holds. Let $\mathbf H \in \mathcal H$ and $r > 1$. 
Suppose that $k>1$ and
\begin{equation}\label{H almost fixed}
\cfun (\eta _ H (\lambda _ k (u) g))<r \qquad \text{for all $u \in \mathsf B _ U (e)$}
.\end{equation}
Then
\begin{equation*}
\cfun (\eta _ {M_H} (\lambda _ k (u) g))\ll r^\star \absolute g ^{\star} \qquad \text{for all $u \in \mathsf B _ U (e)$,}
\end{equation*}
moreover, for all $u \in \mathsf B _ U (e)$ and $\zpz \in \mathcal{B}_U$ we have
\begin{equation*}
\norm {\zpz \wedge \eta _ {M_H} (\lambda _ k (u) g)} \ll r^\star \absolute g ^{\star} e^{-k/D'}
.\end{equation*}
\end{propos}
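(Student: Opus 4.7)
I would deduce the proposition from Lemma~\ref{lem:U-speed} applied in contrapositive form at the base point $g$, and then propagate the resulting pointwise estimate to the full ball $\mathsf B_U(e)$ by a binomial-expansion argument that exploits the defining property of $M_H$.

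Setting $u = e$ in the hypothesis yields $\cfun(\eta_H(g)) < r$, which combined with the bound $\cfun(\eta_H(g)) \gg \height(\H)/|g|^\star$ gives $\height(\H)\ll (r|g|)^\star$, and then $\height(M_H) \ll (r|g|)^\star$ by Lemma~\ref{lem:height-L-M}. In particular $\|\eta_{M_H}(g)\| \leq |g|^\star\height(M_H) \ll (r|g|)^\star$. Moreover, since the whole ball $\mathsf B_U(e)$ lies in the set appearing in Lemma~\ref{lem:U-speed} (with $R = r$), rearranging its conclusion yields
\[
\epsilon_g := \max_{\zpz\in\mathcal B_U}\|\zpz\wedge\eta_{M_H}(g)\| \ll (r|g|)^\star e^{-k/D_1}
\]
for some $D_1$ depending only on $N$. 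Geometrically this says $\mathrm{Ad}(g^{-1})\ufrak$ lies nearly inside $\Lie(M_H)$.

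To extend these estimates to all $u\in\mathsf B_U(e)$, write $u = \exp(\zpz_u)$ and $\lambda_k(u)g = g\exp(\xi_u)$ with $\xi_u := \mathrm{Ad}(g^{-1})\lambda_k(\zpz_u)$, so $\|\xi_u\| \ll |g|^\star e^{\star k}$. Expanding $\lambda_k(\zpz_u)$ in the basis $\mathcal B_U$ and using the small-$\epsilon_g$ estimate to decompose each $\mathrm{Ad}(g^{-1})\zpz$ (for $\zpz\in\mathcal B_U$) as a vector in $\Lie(M_H)$ plus an error of norm $\ll \epsilon_g|g|^\star$ (arguing as in the proof of Lemma~\ref{l;lojas-ineq}), one obtains $\xi_u = \mu_u + \delta_u$ with $\mu_u \in \Lie(M_H)$ and $\|\delta_u\| \ll \epsilon_g |g|^\star e^{\star k}$. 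Now
\[
\eta_{M_H}(\lambda_k(u)g) - \eta_{M_H}(g) = \rho_{M_H}(g)\bigl(\exp(\mathrm{Der}\rho_{M_H}(\xi_u))\vpz_{M_H} - \vpz_{M_H}\bigr),
\]
and the exponential series truncates at order $\leq \dim V_{M_H}$ since $\xi_u \in \mathrm{Ad}(g^{-1})\ufrak$ is ad-nilpotent on $\gfrak$. The crucial algebraic observation is that $M_H$ fixes $\vpz_{M_H}$ (as $M_H$ is of class $\mathcal H$ and hence $M_H\subset L_{M_H}$), so $\mathrm{Der}\rho_{M_H}(\mu_u)\vpz_{M_H} = 0$. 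Expanding $\mathrm{Der}\rho_{M_H}(\xi_u)^j\vpz_{M_H} = (A_1+A_2)^j\vpz_{M_H}$ with $A_1 := \mathrm{Der}\rho_{M_H}(\mu_u)$ and $A_2 := \mathrm{Der}\rho_{M_H}(\delta_u)$, only words whose rightmost letter (applied first) is $A_2$ survive, and each contributes at most $(\max(\|A_1\|,\|A_2\|))^{j-1}\|A_2\|\|\vpz_{M_H}\| \ll (r|g|)^\star e^{\beta k}\epsilon_g$ for some $\beta$ depending only on $N$.

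Summing the $O(1)$ surviving terms and multiplying by $\rho_{M_H}(g)$ gives
\[
\|\eta_{M_H}(\lambda_k(u)g) - \eta_{M_H}(g)\| \ll (r|g|)^\star e^{\beta k}\epsilon_g \ll (r|g|)^\star e^{(\beta - 1/D_1)k}.
\]
Taking the constant $\ref{k:cor:U-moves}$ in Lemma~\ref{lem:U-speed} sufficiently large (so that $1/D_1 > \beta + 1/D'$) reduces this to $\ll (r|g|)^\star e^{-k/D'}$. Part~(1) then follows by the triangle inequality and passing from $\|\cdot\|$ to $\cfun$ via the product over $\Sigma$, and part~(2) follows from $\|\zpz\wedge\eta_{M_H}(\lambda_k(u)g)\| \leq \|\zpz\wedge\eta_{M_H}(g)\| + \|\eta_{M_H}(\lambda_k(u)g) - \eta_{M_H}(g)\| \ll (r|g|)^\star e^{-k/D'}$ for each $\zpz\in\mathcal B_U$. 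The main technical obstacle is ensuring that the decay $e^{-k/D_1}$ outpaces the exponential growth $e^{\beta k}$ arising from $\|\mu_u\|\ll e^{\star k}|g|^\star$; this is made possible by the structural fact that $A_1$ annihilates $\vpz_{M_H}$, so the large $\mu_u$ only enters as a $(j-1)$-fold amplification factor after the single ``kick'' by the small $\delta_u$.
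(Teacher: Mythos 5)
There is a genuine gap in the propagation step from $u=e$ to general $u\in\mathsf B_U(e)$, and the proposed fix of ``taking $\ref{k:cor:U-moves}$ sufficiently large'' goes in the wrong direction. Your argument gives $\epsilon_g\ll (r|g|)^\star e^{-k/D_1}$ with $D_1=\ref{k:cor:U-moves}^2$ a constant determined by Lemma~\ref{lem:U-speed}, and then bounds the change $\|\eta_{M_H}(\lambda_k(u)g)-\eta_{M_H}(g)\|\ll (r|g|)^\star e^{\beta k}\epsilon_g$, where $\beta$ is another constant fixed by $N$ and the expansion rate of $\lambda$: the coefficients of $\lambda_k(\zpz_u)$ in $\mathcal B_U$ grow like $e^{\star k}$, so $\|\delta_u\|\ll\epsilon_g|g|^\star e^{\star k}$ and $\|\mu_u\|\ll |g|^\star e^{\star k}$, and after the truncated exponential each surviving word of length $j$ contributes at best $e^{\star jk}\epsilon_g$. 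There is no a priori reason for $1/D_1$ to exceed $\beta$, and one in fact expects $\beta\gg 1/D_1$; the combined bound $(r|g|)^\star e^{(\beta-1/D_1)k}$ then grows rather than decays, which breaks both conclusions of the proposition (you derive the first from the same difference estimate). Worse, since $D_1$ increases with $\ref{k:cor:U-moves}$, enlarging $\ref{k:cor:U-moves}$ makes $1/D_1$ \emph{smaller} and only hurts the comparison, and in any case $\ref{k:cor:U-moves}$ is produced by the proof of Lemma~\ref{lem:U-speed} and cannot be freely tuned.

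The structural defect is that you quantify the near-invariance $\epsilon_g$ only at the single time $u=e$ and then try to pay for all $k$ expansion steps at once through the exponential series. The paper instead works directly on the expanded ball $\lambda_k(\mathsf B_U(e))$: Theorem~\ref{Kleinbock-Tomanov theorem} produces, for all $u$ outside a small-measure exceptional set, a $\gamma_u\in\SL_N(\bbz_\Sigma)$ with $|ug\gamma_u^{-1}|\ll |g|^\star$, which bounds $\height(\gamma_u\Hbf\gamma_u^{-1})$ and $\height(\gamma_u\lplus_{\Hbf}\gamma_u^{-1})$ and hence $\cfun(\eta_{M_H}(ug))\ll (r|g|)^\star$ on that set; the Remez inequality (Lemma~\ref{l;polygood}), applied to the polynomial $u\mapsto\eta_{M_H}(ug)$, then upgrades this to all of $\lambda_k(\mathsf B_U(e))$ at $O(1)$ cost. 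For the decay estimate, Lemma~\ref{lem:U-speed} is applied not at $g$ but at translated base points $ug$ with $u\in\lambda_{k-1}(\mathsf B_U(e))$ in the good set (using~\eqref{assumption on Greek lambda} to shift the hypothesis to a ball around $ug$), and then a single Remez step from $\lambda_{k-1}(\mathsf B_U(e))$ to $\lambda_k(\mathsf B_U(e))$ costs only a bounded factor, so the $e^{-k/\star}$ decay survives intact. This avoids the exponential mismatch entirely.
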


\begin{proof}
Let $l = \dim (\mathbf H)$. Recall that $\vpz_H$ denotes the integer vector corresponding to $\Lie (\mathbf H)$ in  $\wedge ^ l \mathfrak g \subset \wedge ^ l \sl_N (\bbq _ \places)$ --- here and in what follows we view $\wedge ^ r \mathfrak g$ as a rational subspace of $\wedge ^ r \sl_N (\bbq _ \places)$ of height $\ll 1$ (recall from \S\ref{sec:notation-exponent} that the implicit constants for $\ll$ and $\gg$ are allowed to depend polynomialy on $\height (\G)$).

In the notations of \S\ref{sec:lambda-n}, let $\mathsf B=\mathsf B_U(e)$ %\ $\mathsf B^{\varrho}=\exp(\mathsf B_\ufrak(0,\varrho))$ 
and set
\[
\vartheta = 0.1 \frac {\absolute {\mathsf B} }{ \absolute {\lambda _ 1 (\mathsf B)}}.
\]
For any primitive $\bbz _ \Sigma$-submodule $\Delta$ of $g \bbz _ \Sigma ^ N$, it holds that $\cfun (\Delta) \gg \absolute g ^ {-\rank \Delta}$, hence
by Theorem~\ref{Kleinbock-Tomanov theorem} there exists a subset $\mathsf B_g \subset \la_k(\mathsf B)$
with 
\[
|\la_k(\mathsf B)\setminus \mathsf B_g|< \vartheta |\la_k(\mathsf B)|
\]
so that
\[
\alpha(ug \Gamma) \ll \absolute g^{\star}\qquad\text{ for  all $u \in\mathsf B_g$}.
\]
This implies that
for every $u \in\mathsf B_g$ there exists some $\gamma_u \in \SL_N (\bbz _ \places)$ so that
\be\label{eq:gamma-u-ht-prop}
|ug \gamma_u^{-1}|\ll \absolute g ^\star.
\ee
%Without loss of generality we assume $e\in\mathsf B_g$ and let $\gamma_e=e$.
Now~\eqref{eq:gamma-u-ht-prop} and~\eqref{H almost fixed} imply that
\be\label{eq:gamma-u-ht-wpz}
\cfun(\gamma_u\vpz_H)\ll |ug \gamma_u^{-1}|^\star \cdot \cfun(ug\vpz_H) \ll \absolute g^{\star} r.
\ee
Applying a similar argument to the integral vector $\wpz\in\wedge ^{\dim\G} \sl_N (\bbq _ \places)$ 
corresponding to $\wedge^{\dim\G}\Lie(\G)$ and using the fact that
\[
\wpz=ug\wpz=ug\gamma_u^{-1}\gamma_u\wpz,
\] 
we have that $\Ad(\gamma_u) \mathfrak g$ is a rational subspace of $\sl_N(\bbq _ \places)$ 
of height $\ll \absolute g ^{\star}$.

Define 
$
{\bf L}'=\{g \in \SL_N(\bbq _ \places): g\vpz_H=\vpz_H\}.
$ 
It follows from~\eqref{H almost fixed} applied with $u=e$ that
$\cfun(\vpz_H)\ll |g|^\star r$; hence, $\height({\bf L}')\ll |g|^\star r$.

Moreover, the definitions imply that ${\bf L}_\H=\G\cap{\bf L}'$ 
and that ${\bf M}_\H={\bf L}_\H^\hcal$, see~\S\ref{sec:LH-MH}. Further, in view of~\eqref{eq:height-L-M} we have 
$\height({\bf M}_\H)\ll|g|^\star r^\star$. 

%Recall our notation: $\vpz_{M_H}$ denote the vector corresponding to $\wedge^{\dim{\bf M}_\H}\Lie({\bf M}_\H)$
%in $\wedge^{\dim{\bf M}_\H}\gfrak\subset\wedge^{\dim{\bf M}_\H}\sl_N(\Q_\places)$.

Similarly, for each $u\in\mathsf B_g$ define 
\[
{\bf L}'_u=\{g \in \SL_N(\bbq _ \places): g\gamma_u\vpz_H=\gamma_u\vpz_H\}=\gamma_u{\bf L}'\gamma_u^{-1};
\]
then $\height({\bf L}'_u)\ll |g|^\star r$.
Put ${\bf L}_u=\gamma_u{\bf L}_\H\gamma_u^{-1}$, and 
let ${\bf M}_u={\bf L}_u^\hcal=\gamma_u{\bf M}_{\bf H}\gamma_u^{-1}$. 
Then $\height({\bf M}_u)=\cfun(\gamma_u\vpz_{M_H})\ll |g|^\star r^\star$.

For every $u\in\mathsf B_g$ we have 
\be\label{eq:su-Mu-M-prop}
\cfun(\eta_{\llplus_H}(ug))\ll |ug\gamma_u^{-1}|^\star \cfun(\gamma_u\vpz_{M_H})\ll |g|^\star r^\star.
\ee
Since $u\mapsto\eta_{M_H}(ug\gamma)$ is a polynomial, the estimate 
in~\eqref{eq:su-Mu-M-prop} and Lemma~\ref{l;polygood} imply that 
\be\label{eq:cfun-su-Mu-prop}
\cfun(\eta_{\llplus_H}(\la_k(u)g))\ll |g|^{\star} r^\star\;\;\text{for all $u\in\mathsf B$.}
\ee
In particular, the first claim in the proposition holds.

We now turn to the proof of the second claim. 
Let $u\in\mathsf B_g \cap \la_{k-1}(\mathsf B)$. By the choice of $\vartheta$, this set has measure $\geq 0.9 \absolute{\la_{k-1}(\mathsf B)}$, in particular is nonempty. 
Let $\gamma_u\in\SL_N(\Z_\places)$ be as in~\eqref{eq:gamma-u-ht-prop}.

By \eqref{assumption on Greek lambda}, $\lambda _ {k -\kappa} (\mathsf B) \lambda _ {k -1} (\mathsf B) \subset \lambda _ k (\mathsf B)$; hence by \eqref{H almost fixed} we have 
\begin{equation*}
\cfun (\eta _ H (\lambda _ {k -\kappa} (v) u g )) < r \quad\text{for all $v\in\mathsf B$}.
\end{equation*}
Therefore, by Lemma~\ref{lem:U-speed} for every $u\in\mathsf B_g \cap \la_{k-1}(\mathsf B)$,
\begin{equation*}
\max\{\|\zpz\wedge\eta_{M_H}(ug)\|: \zpz\in\mathcal B_U\} \ll r \absolute {ug \gamma _ u ^{-1}} \cdot \height (\gamma _ u \mathbf H \gamma _ u ^{-1}) e ^ {-k/\ref{k:cor:U-moves} ^2}
.\end{equation*}
For $u \in \mathsf B _ g$, \ $\absolute {ug \gamma _ u ^{-1}} \ll \absolute g ^{\star}$ and $\height (\gamma _ u \mathbf H \gamma _ u ^{-1})  = \cfun (\gamma _ u \vpz_H) \ll \absolute g ^{\star} r$ hence for $u\in\mathsf B_g \cap \la_{k-1}(\mathsf B)$
\begin{equation*}
\max\{\|\zpz\wedge\eta_{M_H}(ug)\|: \zpz\in\mathcal B_U\} \ll \absolute g ^{\star} r ^{\star} e ^ {- k / \star}
.\end{equation*}
Since $u\mapsto\zpz\wedge{\eta_{\llplus_H}(\la_{k-1}(u)g)}$ is a polynomial, the above estimate together with Lemma~\ref{l;polygood} implies that
\[
\max\{\|\zpz\wedge\eta_{M_H}(\la_k(u)g)\|: \zpz\in\mathcal B_U\}
\ll |g|^{\star}r^{\star }e^{-{k}/\star}\;\;\text{for all $u\in\mathsf B$.}
\]

This finishes the proof of the second claim and the proposition.
\end{proof}

\section{Non-divergence of unipotent flows for general algebraic groups}\label{sec:non-div}
Consider now $\mathbf G$ a $\bbq$-group of class-$\mathcal{H}$ and $G = \mathbf G (\bbq _ \Sigma)$ as in \S\ref{sec:notation}. Let $d = \dim \mathbf G$. Recall that $\gfrak(\bbz_\places)=\gfrak\cap\mathfrak{sl}_N(\bbz_\places)$, see~\S\ref{sec:space-lattices}. Let $U=\prod_{v\in\places}U_v\subset G$ be a $\bbq _ \Sigma$-unipotent group as in \S\ref{sec:lambda-n}. By assumption $\mathbf G$ is equipped with an embedding $\iota: \mathbf G \to \SL _ N$, and a lattice $\Gamma$ commensurable to $ G \cap \SL _ N(\bbz_\Sigma)$.
Taking a finite index subgroup if necessary, we assume that $\Gamma < G \cap \SL _ N (\bbz _ \Sigma)$, and that $\Ad(\Gamma)$ preserves $\gfrak(\bbz_\places)$. Hence we get a finite to one map 
\[G / \Gamma \to \SL _ N (\bbq _ \Sigma) / \SL _ N (\bbz _ \Sigma),\]
or using the adjoint representation a different map 
\[G / \Gamma \to \SL _ d (\bbq _ \Sigma) / \SL _ d (\bbz _ \Sigma).\]
This latter map is in general not finite to one, but has compact fibers, since if $\mathbf C_\mathbf G$ denotes the connected component of the center of $G$ (necessarily a unipotent group as $\mathbf G$ is of class-$\mathcal{H}$) the fibers would be finite-to-one extension of the compact space $\mathbf C_\mathbf G(\bbq_\Sigma)/\mathbf C_\mathbf G(\bbz_\Sigma)$.  
 Therefore we may apply Theorem~\ref{Kleinbock-Tomanov theorem} to the image of $G / \Gamma$ to either of these quotient spaces to deduce that for every $g \in G$ and $\delta > 0$, there is a compact $K \subset G / \Gamma$ so that for every $k$ for all $u \in \mathsf B _ U (e)$ outside a set of measure $< \delta$ we have that $\lambda _ k (u) g \Gamma \in K$.

Theorem~\ref{Kleinbock-Tomanov theorem} gives more: it also says that for a compact set $K$ that does not depend on the point $g\Gamma$, if $\lambda _ k (u) g \Gamma \not \in K$ for a large set of $u \in \mathsf B _ U (e)$ then there would be a $\bbz _ \Sigma$-submodule in $g \SL _ N (\bbz _ \Sigma)$ which is not changed much by the action of $U$, at least not when we act by $\lambda _ k (\mathsf B _ U (e))$, and Theorem~\ref{fancy nondivergence theorem} gives somewhat finer information. However both of these theorems relate the properties of $U$ orbits in $G / \Gamma$ to the structure of the ambient $\SL_\bullet (\bbq _ \Sigma)/\SL_\bullet (\bbz _ \Sigma)$ and not some intrinsic algebraic structure of $G / \Gamma$.

In \cite{DM-Nondiv} Dani and Margulis prove (in the real case) that given a one parameter unipotent subgroup $u _ t$ of $G$ and $\delta > 0$, one can find an (fixed) compact subset $K \subset G / \Gamma$ so that if a trajectory of the one-parameter unipotent group $u _ t$ starting from $g \Gamma$ does not eventually spend $1 - \delta$ of its time in $K$ then there is a $\bbq$-parabolic subgroup  $\mathbf P < \mathbf G$ so that $g \in \mathbf P (\bbq _ \Sigma)$. This information is intrinsic for $G / \Gamma$.

The purpose of this section is to provide an effective version of \cite{DM-Nondiv}, where the existence of many  $u \in \mathsf B _ U (e)$ for which $\lambda _ k (u) g \Gamma$ is outside a suitable fixed compact region is used to imply some Diophantine conditions at appropriate scale for $g \Gamma$. We note that in addition to \cite{DM-Nondiv}, understanding intrinsically behavior of orbits near the cusp in arithmetic quotients $G / \Gamma$, this time for certain diagonalizable groups, was studied by Tomanov and Weiss in \cite{TW}.

Recall from \S\ref{sec:space-lattices} the definition 
\[
X_\eta=\Bigl\{g\Gamma\in X: \min_{0\neq \zpz\in\gfrak(\bbz_\places)}\cfun(\Ad(g)\zpz)\geq \eta\Bigr\}.
\]
It follows from the discussion at the beginning of this section that
for any $\eta > 0$, the set $X _ \eta$ is a compact subset of $G / \Gamma$.

\begin{lemma}\label{lem:in-cusp-small-par}
There exists some $0<\kappa(N,\places)<1$ with the following property.
Let $\wpz\in\gfrak(\zeds)$ and suppose that there exists some $g\in G$ so that $\cfun(\Ad(g)\wpz)\leq \kappa(N,\places)$. 
Then $\wpz$ is a nilpotent element.
\end{lemma}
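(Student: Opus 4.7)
My plan is to exploit the characteristic polynomial of $\wpz$, viewed via the fixed embedding $\iota:\Gbf\hookrightarrow\SL_N$ as an element of $\sl_N(\bbq_\places)$. Write
\[
\det(tI-\wpz)=t^N+\sum_{i=1}^N c_i(\wpz)\,t^{N-i},
\]
where each $c_i$ is a polynomial of degree $i$ in the matrix entries with integer coefficients (up to sign, the sum of the $i\times i$ principal minors). Since $\wpz\in\gfrak(\zeds)\subset\sl_N(\zeds)$, every $c_i(\wpz)$ lies in $\zeds$. Moreover, being the characteristic polynomial, the $c_i$ are invariant under matrix conjugation, so $c_i(\Ad(g)\wpz)=c_i(\wpz)$ for every $g\in G$.

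Next I would estimate $|c_i|$ place-by-place. For any $h\in\sl_N(\bbq_v)$, the expansion of a principal minor as a signed sum of $i!$ products of $i$ entries gives at the archimedean place $|c_i(h)|_\infty\leq N!\,\|h\|_\infty^{\,i}$, while at every finite place the ultrametric inequality yields $|c_i(h)|_v\leq \|h\|_v^{\,i}$. Multiplying over $v\in\places$ and using $\prod_{v\in\places}\|h\|_v^{\,i}=\cfun(h)^i$, I obtain
\[
\prod_{v\in\places}|c_i(\Ad(g)\wpz)|_v\;\leq\;N!\cdot\cfun(\Ad(g)\wpz)^i\;\leq\;N!\cdot\kappa(N)^i.
\]

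To finish, I would invoke the product formula in the form appropriate for $\zeds$: for any nonzero $r\in\zeds$ one has $|r|_p\leq 1$ at every prime $p\notin\places$, so the standard product formula over all places of $\bbq$ gives $\prod_{v\in\places}|r|_v\geq 1$. Choosing $\kappa(N):=\tfrac{1}{2\cdot N!}$, one has $N!\,\kappa(N)^i\leq N!\,\kappa(N)<1$ for every $i\geq 1$, which forces $c_i(\wpz)=0$ for all $1\leq i\leq N$; i.e.\ $\wpz$ has all eigenvalues zero and is nilpotent as a matrix (and hence also ad-nilpotent, since matrix-nilpotence of $\wpz$ implies $\ad(\wpz)=L_\wpz-R_\wpz$ is a difference of commuting nilpotent operators). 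There is essentially no obstacle in this argument; the only small point to check is the intended meaning of ``nilpotent,'' but the matrix-nilpotence proved here is the stronger of the two natural choices, so it settles the lemma.
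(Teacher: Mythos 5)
Your proof is correct, and it takes a genuinely (if modestly) different route from the paper's. Both arguments rest on the same engine: a conjugation-invariant, $\zeds$-valued invariant of $\wpz$ together with the product formula, which forces that invariant to vanish once $\cfun(\Ad(g)\wpz)$ is small enough. The paper chooses as its invariant $\bar\sigma(\wpz)$, the product of the \emph{nonzero} eigenvalues of $\wpz$; this requires an aside that $\bar\sigma$ is Galois-invariant (hence rational) and, crucially, an application of the normalization lemma of~\S\ref{sec:norms-gfrak} to pass from a bound on $\cfun(\Ad(g)\wpz)$ to uniform $v$-norm bounds on the eigenvalues of $r\Ad(g)\wpz$. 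You instead take the coefficients $c_1,\dots,c_N$ of the characteristic polynomial, which are visibly in $\zeds$ and conjugation-invariant, and exploit the multiplicativity of $\cfun$ directly: the archimedean bound $|c_i|_\infty\leq N!\,\|h\|_\infty^i$ and the ultrametric bound $|c_i|_p\leq\|h\|_p^i$ multiply at once to $\prod_{v\in\places}|c_i|_v\leq N!\,\cfun(\Ad(g)\wpz)^i$, and then one invocation of the product formula finishes. This is a bit cleaner — it sidesteps both the Galois observation and the normalization lemma — at the cost of only a slightly worse explicit constant ($\kappa(N)=1/(2\cdot N!)$), which is immaterial here. Your closing remark that matrix-nilpotence of $\wpz$ gives ad-nilpotence (via $\operatorname{ad}\wpz=L_\wpz-R_\wpz$) correctly resolves the one ambiguity in what ``nilpotent element'' means.
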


\begin{proof}
Let $\bar\sigma(\wpz)=\prod\sigma$ where the product is taken over all the nonzero eigenvalues of $\wpz$;
if the product is empty, i.e., $\wpz$ is nilpotent, put $\bar\sigma(\wpz)=0$.
Then $\bar\sigma(\wpz)\in\bbq$ --- indeed $\bar\sigma(\wpz)$ is invariant under the Galois group of the splitting field of $\wpz$.
Further, since $\wpz\in\gfrak(\zeds)$, the product formula implies that
either $\cfun(\bar\sigma(\wpz))\geq 1$ or $\bar\sigma(\wpz)=0$.

Let $\kappa>0$ and assume that $\cfun(\Ad(g)\wpz)\leq \kappa$ for some $g\in G$. 
Using Lemma~\ref{lem:c-fun-norm}, there exist some $r\in\zeds^\times$ and a constant $A=A(N,\Sigma)$ 
so that 
\[
A^{-1}\cfun(\Ad(g)\wpz)^{1/\#\places}\leq \|r\Ad(g)\wpz\|_v\leq A\cfun(\Ad(g)\wpz)^{1/\#\places}
\] 
for all $v\in\places$. 
Therefore, all the eigenvalues of $r\Ad(g)\wpz$ have $v$-norm $\ll_{N,\places}\kappa^{\star/\#\places}$.

Since $\cfun(r)=1$ and $\Ad(g)\wpz$ has the same eigenvalues as $\wpz$, we get that $\cfun(\bar\sigma(\wpz))\geq 1$ cannot hold
for small enough $\kappa$; hence, $\wpz$ is nilpotent.   
\end{proof}

\begin{lemma}\label{lem:nilp-subalg}
There exists some %$\consta\label{k:eta-kappa}$ and some 
$\kappa'(N,\places)$ with the following property. 
Let $V\subset\gfrak$ be a nonzero rational subspace, and let 
$\vpz\in\wedge\gfrak(\zeds)$ be a primitive integral vector corresponding to $V$.
Assume that there is some $g\in G$ so that 
\[\max_{u \in \mathsf B_U(e)}\cfun(\la_k(u)g\vpz)\leq \rho < \kappa'(N,\places).\] 
Then there exists a unipotent $\Q$-group $\mathbf W < \mathbf G$ so that
\begin{equation}\label{max size of unipotent group}
\max_{u \in \mathsf B_U(e)} \cfun(\la_k(u)g\vpz_W)\ll \rho^{\dim(W)/\dim(V)}
\end{equation}
where $\vpz_W\in\wedge^{\dim (\mathbf W)}\gfrak(\zeds)$ is the primitive integer vector corresponding to~$\mathbf W$ as in \S\ref{ss;height}.
\end{lemma}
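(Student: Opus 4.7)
The plan is to extract the desired unipotent subgroup from $V$ by combining the $\Sigma$-arithmetic Minkowski successive minima theorem with Lemma~\ref{lem:in-cusp-small-par} and Engel's theorem. Let $m=\dim V$. First I would apply Minkowski's theorem to the primitive $\bbz_\Sigma$-submodule $\Lambda:=V\cap\gfrak(\zeds)$ under the action of $g$ (via the adjoint representation) to obtain a $\bbz_\Sigma$-basis $\zpz_1,\ldots,\zpz_m$ of $\Lambda$ ordered so that $\cfun(g\zpz_1)\leq\cdots\leq\cfun(g\zpz_m)$ and $\prod_{i=1}^m\cfun(g\zpz_i)\asymp\cfun(g\vpz)\leq\rho$. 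Set $\tau=\rho^{1/m}$; choosing $\kappa'(N)$ small enough that $\tau\leq\kappa(N)/C$ for a structure constant $C$ reflecting the Lie bracket, at least one $\zpz_i$ satisfies $\cfun(g\zpz_i)\leq\tau$, and I let $\ell\geq 1$ be the largest such index.

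By Lemma~\ref{lem:in-cusp-small-par}, $\zpz_1,\ldots,\zpz_\ell$ are all nilpotent. Let $\wfrak$ be the $\bbq$-Lie subalgebra of $\gfrak$ generated by $\{\zpz_1,\ldots,\zpz_\ell\}$. Because bracket is bilinear, $\cfun([g\wpz_1,g\wpz_2])\leq C\,\cfun(g\wpz_1)\cfun(g\wpz_2)$, and iteratively every $s$-fold nested bracket $\wpz$ of the $\zpz_i$'s satisfies $\cfun(g\wpz)\leq(C\tau)^s\leq\kappa(N)$; hence every such bracket is nilpotent by Lemma~\ref{lem:in-cusp-small-par}. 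It follows that every element of $\wfrak$ is ad-nilpotent, and by Engel's theorem $\wfrak$ is a nilpotent Lie subalgebra of $\gfrak$ defined over $\bbq$. Exponentiating yields a unipotent $\bbq$-subgroup $\mathbf W<\mathbf G$ with $\dim\mathbf W=\dim\wfrak$.

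To bound $\cfun(g\vpz_W)$, I would pick a Hall-type $\bbq$-basis of $\wfrak$ consisting of iterated brackets of $\zpz_1,\ldots,\zpz_\ell$; clearing denominators gives $\dim\mathbf W$ linearly independent elements of $\wfrak\cap\gfrak(\zeds)$, each with $\cfun(g\,\cdot\,)\leq\tau$. The reverse Minkowski inequality then gives
\[
\cfun(g\vpz_W)\ll\tau^{\dim W}=\rho^{\dim W/\dim V}.
\]

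The main obstacle is upgrading this bound from $u=e$ to all $u\in\mathsf B_U(e)$. I expect this can be done by running essentially the same argument at each $u$: the hypothesis $\max_u\cfun(\la_k(u)g\vpz)\leq\rho$ produces, for each $u$, a basis of $\Lambda$ with $\prod_i\cfun(\la_k(u)g\zpz_i^{(u)})\ll\rho$ and hence a unipotent $\bbq$-subgroup $\mathbf W^{(u)}$ satisfying $\cfun(\la_k(u)g\vpz_{W^{(u)}})\ll\rho^{\dim W^{(u)}/m}$. These subgroups have heights bounded polynomially in $\rho^{-1}$ and $|g|$, so by Lemma~\ref{counting lemma} only finitely many such $\mathbf W^{(u)}$ arise as $u$ ranges over $\mathsf B_U(e)$. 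Using the polynomial dependence of $\la_k(u)g\vpz_{W'}$ on $u$ for each candidate $\mathbf W'$ together with Lemma~\ref{l;polygood}, I would show that a single candidate $\mathbf W$ witnesses the desired bound for all $u\in\mathsf B_U(e)$ (up to acceptable constants absorbed into the implicit $\ll$), perhaps after replacing $\mathbf W$ by an intersection of finitely many candidates -- which remains unipotent.
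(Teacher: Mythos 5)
Your argument for the case $u=e$ is plausible, but there are two gaps, one of which is fatal to the approach as sketched.

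The minor gap is the Engel step. You show that every \emph{iterated bracket} of $\zpz_1,\dots,\zpz_\ell$ is nilpotent, and from this you assert that ``every element of $\wfrak$ is ad-nilpotent, and by Engel's theorem $\wfrak$ is nilpotent.'' But a linear combination of nilpotent elements need not be nilpotent, so the first deduction does not follow merely from nilpotency of the brackets. One does get what you want, but it requires a more careful statement --- e.g.\ Jacobson's Engel-type theorem for weakly closed subsets of nilpotent elements (the set of iterated brackets is closed under bracket, hence weakly closed, so its span is nilpotent and consists of nilpotent operators), or the Lie's-theorem argument that in a solvable algebra spanned by ad-nilpotent elements the triangularizing weights all vanish. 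The paper's own proof works with a flag that is \emph{a priori} closed under bracket into strictly smaller levels, which gives solvability directly before invoking nilpotency of the individual basis vectors.

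The serious gap is the uniformity over $u\in\mathsf B_U(e)$. Your plan is to produce, for each $u$, a group $\mathbf W^{(u)}$, observe that only finitely many candidates of bounded height occur, and then upgrade via Lemma~\ref{l;polygood}. But the candidate count from Lemma~\ref{counting lemma} is $\ll T^{\star}$ where $T$ is polynomial in $|g|$ and $\rho^{-1}$; it is not uniform. Pigeonholing then gives one candidate $\mathbf W$ whose ``good set'' has measure $\geq c_0 \absolute{\mathsf B_U(e)}$ with $c_0\sim (|g|\rho^{-1})^{-\star}$, and the $(C,\alpha)$-good property of Lemma~\ref{l;polygood} only yields $\sup_u\cfun(\la_k(u)g\vpz_W)\ll c_0^{-\star}\rho^{\dim W/\dim V}$, i.e.\ a bound polluted by powers of $|g|$ and $\rho^{-1}$. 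That is strictly weaker than \eqref{max size of unipotent group}, whose implicit constant may depend only on $N$, $\#\places$, the primes in $\places$, and $\height(\Gbf)$. The paper sidesteps this entirely by applying Theorem~\ref{facny nondivergence theorem}, which produces a flag $\Delta_{k_1}<\dots<\Delta_{k_\ell}$ and a concave function $\eta(\bullet)$ whose values are \emph{by definition} $\max_{u\in\mathsf B_U(e)}\cfun(\la_k(u)\Delta_{k_i})$; the required bound over the whole ball is then automatic, and the only thing left to prove is that the distinguished $\Delta_{k_j}$ is a rational nilpotent Lie subalgebra. Your Minkowski-at-$u=e$-then-globalize strategy essentially reverses the paper's order of operations, and it is precisely the globalization step that needs the machinery of Theorem~\ref{facny nondivergence theorem}.
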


\begin{proof}
Let $d = \dim (\mathbf G)$. We apply Theorem~\ref{fancy nondivergence theorem} on the image of $G / \Gamma$ in $\SL _d (\bbq _ \Sigma) / \SL _ d (\bbz _ \Sigma)$ obtained via $\Ad$, with $\tilde \Delta$ the $\bbz _ \Sigma$-submodule of $\Ad(g)\gfrak(\bbz_\places)$ corresponding to $V$ (or more precisely $gV(\bbq)$); let $r$ denote the dimension of the $\bbq$-subspace $V$ (equivalently, $r=\rank (\tilde \Delta)$).

Let $\eta (\bullet)$ and $\Delta _ {k_1} < \Delta _ {k_2} < \dots < \Delta _ {k_\ell}$ be as in that theorem. Then
$\eta (r) \leq \rho$ hence by concavity of $- \log \eta (\bullet)$ we have that $\eta (1) \leq \rho ^{1/ r}$. Let $s$ be maximal so that $\frac {\eta (s)}{\eta(s-1)} \leq \rho ^{1/ r}$; clearly $1 \leq r \leq d-1$, and because $- \log \eta (\bullet)$ changes its slope at $s$ this implies that there is some $1 \leq j \leq \ell$ for which $k_j = \rank (\Delta _ {k_j})=s$.

We claim that (assuming $\kappa ' (N,\places)$ is small enough) the rational subspace of $\mathfrak g$ corresponding to $\rank (\Delta _ {k_j})$ is the Lie algebra of a unipotent $\bbq$-subgroup $\mathbf W <\mathbf G$. Let us denote by $\vpz_W$ the vector in $\wedge^{\rank (\Delta _ {k_j})}\gfrak(\zeds)$ corresponding to this rational subspace. 
By the choice of $\Delta _ {k_j}$, we have
\[
\cfun(\la_k(u)g\vpz_W) \equiv \cfun(\la_k(u)\Delta _ {k_j}) = \eta(s) \leq \rho^{s/r}
\]
for all $u\in\mathsf B_U(e)$; so \eqref{max size of unipotent group} is satisfied. 

It remains to show that $\Delta _ {k_j}$ does indeed correspond to a rational nilpotent Lie algebra.
Fix an $\epsilon > 0$ (depending only on $N$) so that for a set of $u \in \mathsf B_U(e)$ of size $\geq 0.5 \absolute{\mathsf B_U(e)}$ we can find a completion $\Delta _ 1 < \dots < \Delta _ {N -1}$ (depending on $ u$) of the fixed partial flag $\Delta _ {k_1} <  \dots < \Delta _ {k_\ell}$ so that for every~$i$
\be\label{good flag 2}
\epsilon \eta (i) \leq \cfun (\la_k(u)\Delta _ i) \ll_{N,\places} \eta(i)
.\end{equation}
Indeed, we will only use the existence of one such $u$.

Using \eqref{basic sum-intersection inequality}, we can deduce from \eqref{good flag 2} that for every $v \in \la_k(u) \Delta _ {i+1}$ which is not in $\la_k(u) \Delta _ {i}$
\begin{equation*}
\cfun (v) \gg \frac {\cfun (\la_k(u) \Delta _ {i+1}) }{ \cfun (\la_k(u) \Delta _ {i})}\gg \frac {\eta (i+1)}{\eta (i)}
\end{equation*}
(since $\epsilon$ depends only on $N$, we absorbed it in the implicit constant). 
	
Moreover by induction one easily shows that we can pick $v_i \in \la_k(u) \Delta _ i$ (in particular, $v_i \in \Ad(\la_k(u)g)\gfrak (\bbz _ \Sigma)$) so that $v_1, \dots,v_i$ generate $\la_k(u) \Delta _ i$ and $\cfun (v_i) \ll \frac {\eta (i)}{\eta (i-1)}$. 

We conclude that there is some $A$ depending only  $N$ and $\places$ so that if $v \in \la_k(u) \Delta _ {i}$ but not in $\la_k(u) \Delta _ {i-1}$
then $\cfun (v) \geq \frac {\eta (i)}{A\eta (i-1)}$ and $\cfun (v_i) \leq \frac {A\eta (i)}{\eta (i-1)}$.

Recall that for $i \leq s$ we have that $\frac {\eta (i)}{\eta (i-1)} \leq \rho ^{1/ r}$.
As $\cfun ([z,z']) \ll \cfun (z) \cfun(z')$, it follows that if $\kappa ' (N,\places)$ (hence also $\rho$) is small enough, for $i < i' \leq s$ we have that $\cfun ([v_i,v_{i'}])$ is so small it forces $[v_i,v_{i'}]$ to belong to $\Delta _ {i-1}$. 

It follows that $\Delta _ {k_j}$ is closed under $[\cdot,\cdot]$. Since by Lemma~\ref{lem:in-cusp-small-par} if $\kappa ' (N,\places)$ is small enough, all the $v_i$ for $ i\leq s=k_j$ are nilpotent, it follows that all $v \in \Delta _ {k_j}$ are nilpotent. Hence $\Delta _ {k_j}$ corresponds to the Lie algebra of a unipotent $\Q$-subgroup of $\mathbf G$.
\end{proof}

\begin{thm}\label{thm:non-div-par}
There exists a constant $F$ depending on $N$ and a constant $E$ depending on $N, \#\Sigma$ and polynomialy on $\height(\G)$ and the primes in $\Sigma$
so that for any $g\in G$, $k\geq 1$, and any $0<\eta\leq 1/2$ at least one of the following holds.
\begin{enumerate}
\item 
\[
|\{u\in \mathsf B_U(e): \la_k(u )g\Gamma\not\in X_{\eta}\}|\leq E \eta^{1/F}.
\]

\item There exists a unipotent $\bbq$-subgroup ${\bf W}$ of height $\height(\mathbf W) \leq E |g|^{F} \eta^{1/F}$ so that
\be\label{eq:nondiv-v-almost-fixed-statement}
{\cfun\Bigl(\eta_W(\la_k(u)g)\Bigr)\leq E \eta^{1/F}\quad\text{ for all $u\in\mathsf B_U(e)$}}.
\ee
Moreover, if we put ${\bf M}={\bf M}_{\bf W}$, then ${\bf M}\neq \G$,
\[
\height(\lplus)\leq E |g|^{F}\eta^{1/F},
\] 
and we have: 
%\[
%d_{P}(\la_k(u)g)< 1\quad\text{ for all $u\in\mathsf B_U(e)$}.
%\]
\begin{enumerate}
\item For all $u\in\mathsf B_U(e)$ we have
\[
\cfun(\eta_{M}(\la_{k}(u)g))\leq E |g|^F\eta^{1/F}. 
\]
\item For all $u\in\mathsf B_U(e)$ we have
\[
\max_{\zpz\in\mathcal B_U} \|\zpz\wedge{}{\eta_{M}(\la_{k}(u)g)}\|\leq E |g|^F\eta^{1/F}e^{-k/F}.
\]
\end{enumerate}

\end{enumerate}
\end{thm}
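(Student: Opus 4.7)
\emph{Overall strategy.} The plan is to run the nondivergence estimate of Theorem~\ref{Kleinbock-Tomanov theorem} via the adjoint embedding of $G/\Gamma$ into $\SL_d(\bbq_\Sigma)/\SL_d(\bbz_\Sigma)$ (with $d = \dim \mathbf G$); then convert the resulting small rational subspace of $\gfrak$ into a unipotent $\bbq$-subgroup via Lemma~\ref{lem:nilp-subalg}; and finally use Proposition~\ref{prop:almost-inv-Lie} to upgrade control of $\eta_W$ to control of both $\eta_{M_{\mathbf W}}$ and its near-invariance under~$U$.

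\emph{Step 1 (Nondivergence).} Under the basis $\mathcal B_\mathbf G$, the lattice $\gfrak(\bbz_\Sigma)$ is identified with $\bbz_\Sigma^d$, so that $g\Gamma\in X_\eta$ translates to $\alpha(\Ad(g)\Gamma_1)\le \eta^{-1}$ in $\SL_d(\bbq_\Sigma)/\SL_d(\bbz_\Sigma)$. Applying Theorem~\ref{Kleinbock-Tomanov theorem} (via the adjoint map, which has compact fibers over $G/\Gamma$) with $\epsilon=\eta$ and auxiliary parameter $\eta^{1/2}$: either every primitive $\bbz_\Sigma$-submodule $\Delta$ of $\Ad(g)\gfrak(\bbz_\Sigma)$ satisfies $\max_{u\in\mathsf B_U(e)}\cfun(\la_k(u)\Delta)\ge \eta^{1/2}$, and alternative (1) holds with any $F\ge 2D$; or there is such a $\Delta$ violating this bound, corresponding to a nonzero rational subspace $V\subset\gfrak$ with $\max_{u\in\mathsf B_U(e)}\cfun(\la_k(u)g\vpz_V)<\eta^{1/2}$.

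\emph{Step 2 (Extraction of a unipotent subgroup).} Assuming we are in the second case and that $\eta^{1/2}<\kappa'(N)$ (otherwise alternative (1) is trivial after enlarging $E$), Lemma~\ref{lem:nilp-subalg} applied to $V$ produces a unipotent $\bbq$-subgroup $\mathbf W<\mathbf G$ with
\[
\max_{u\in\mathsf B_U(e)}\cfun(\la_k(u)g\vpz_W)\ll \eta^{1/F}
\]
for a suitable $F$ depending only on $N$. Being unipotent, $\mathbf W\in\hcal$. Evaluating at $u=e$ and using that $\vpz_W$ is a primitive integer vector (so $\cfun(\vpz_W)=\height(\mathbf W)$) together with~\eqref{eq:def-k-adjoint} yields $\height(\mathbf W)\ll|g|^{\star}\eta^{1/F}$.

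\emph{Step 3 (Upgrade via Proposition~\ref{prop:almost-inv-Lie}).} Set $r=\max\bigl(1,\max_{u}\cfun(\eta_W(\la_k(u)g))\bigr)$, so $r\ll 1$ for $\eta$ small. Proposition~\ref{prop:almost-inv-Lie} applied to $\mathbf W$ gives, for all $u\in\mathsf B_U(e)$,
\begin{align*}
\cfun(\eta_M(\la_k(u)g)) &\ll r^{\star}|g|^{\star},\\
\max_{\zpz\in\mathcal B_U}\|\zpz\wedge\eta_M(\la_k(u)g)\| &\ll r^{\star}|g|^{\star}e^{-k/D'},
\end{align*}
where $\mathbf M=\lplus_\mathbf W$. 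After absorbing constants into $F$ these are exactly (2)(a) and (2)(b). The height bound $\height(\mathbf M)=\height(\mathbf L_\mathbf W^{\hcal})\ll \height(\mathbf L_\mathbf W)^{\star}\ll \height(\mathbf W)^{\star}$ follows from Lemma~\ref{lem:height-L-M} and~\eqref{eq:ht-H-Hcal}.

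\emph{Step 4 ($\mathbf M\ne \mathbf G$).} If $\mathbf M=\mathbf G$, then $\mathbf G\subseteq \mathbf L_\mathbf W$, i.e.\ $\Ad(\mathbf G)$ fixes $\vpz_W$, which forces $\Lie(\mathbf W)$ to be a $\bbq$-ideal of $\gfrak$. Consequently $\Ad(g)\vpz_W=\chi(g)\vpz_W$ for some $\bbq$-character $\chi$ of $\mathbf G$. Since $\mathbf G\in\hcal$ is generated (over $\overline{\bbq}$) by unipotent subgroups it admits no nontrivial $\bbq$-character, so $\chi\equiv 1$. Hence $\cfun(\Ad(g)\vpz_W)=\height(\mathbf W)\ge 1$, contradicting $\cfun(\Ad(g)\vpz_W)\ll\eta^{1/F}$ once $\eta$ is small enough (the remaining range of $\eta$ is absorbed into $E$).

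\emph{Main obstacle.} The proof is essentially a composition of three inputs, so the principal difficulty is purely bookkeeping: each application of a ``$(C,\alpha)$-good'' inequality and each invocation of the effective Nullstellensatz/{\L}ojasiewicz bounds hidden inside Proposition~\ref{prop:almost-inv-Lie} degrades the exponent, and one must verify that a single $F$ depending only on $N$ suffices throughout and that all $|g|^{\star}$ factors can be folded into a single $|g|^F$. The conceptual content lies in the intrinsic step of Lemma~\ref{lem:nilp-subalg}, which converts a small subspace of $\gfrak$ into a small \emph{unipotent subalgebra}, thereby promoting the external nondivergence statement in $\SL_d(\bbq_\Sigma)/\SL_d(\bbz_\Sigma)$ to an intrinsic one for $G/\Gamma$.
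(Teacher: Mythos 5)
Your proposal is correct and follows essentially the same route as the paper's proof: apply Theorem~\ref{Kleinbock-Tomanov theorem} through the adjoint embedding into $\SL_d(\bbq_\Sigma)/\SL_d(\bbz_\Sigma)$, convert the resulting small primitive submodule into a unipotent $\bbq$-subgroup $\mathbf W$ via Lemma~\ref{lem:nilp-subalg}, rule out $\mathbf M_{\mathbf W}=\Gbf$ using $\cfun(\vpz_W)\geq 1$ versus $\cfun(g\vpz_W)\ll\eta^{1/F}$, and obtain (2)(a)--(b) from Proposition~\ref{prop:almost-inv-Lie}. Your explicit two-scale bookkeeping in Step 1 (hypothesis at $\eta^{1/2}$, $\epsilon=\eta$) and the character-free rephrasing of the $\mathbf M\neq\Gbf$ step are only cosmetic differences from the argument in the paper.
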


\begin{proof}
We may assume $\eta < \kappa ' (N,\places)$ with $\kappa ' (N,\places)$ as in Lemma~\ref{lem:nilp-subalg} since otherwise for sufficiently large implicit constant alternative (1) in the statement of this theorem becomes vacuous.

Apply Theorem~\ref{Kleinbock-Tomanov theorem}.
Then either alternative (1) in the theorem holds, or
there exists some primitive integral vector $\vpz \in \wedge^r\gfrak(\zeds)$ so that
\[\max_{u \in \mathsf B_U(e)}\cfun(\la_k(u)g\vpz)\leq \eta.\]

By Lemma~\ref{lem:nilp-subalg}, we conclude that there is some unipotent $\bbq$-group $\mathbf W < \mathbf G$ so that
\be\label{eq:nondiv-w-almost-fixed 2}
\max_{u \in \mathsf B_U(e)} \cfun(\la_k(u)g\vpz_W)\ll \eta^{\dim(W)/r}
\end{equation}

Applying~\eqref{eq:nondiv-w-almost-fixed 2} with $u=e$ we get that $\height({\bf W})\ll |g|^\star \eta^{\dim(W)/r}$.
Let $\bf M$ be as in (2) in the statement of this theorem. Then $\height({\bf M})\ll \height({\bf W})^\star \ll |g|^\star \eta^{\star}$, see~\eqref{eq:ht-H-Hcal}.
Moreover, if $\eta$ is small enough, then~\eqref{eq:nondiv-w-almost-fixed 2} (say for $u=e$)
implies that
\[
\cfun(g\vpz_W)<1/2.
\]
As $\cfun(\vpz_W)\geq 1$ this means that $g$ does not fix $\vpz_W$ and so (since $\mathbf G$ is of class-$\mathcal H$, hence fixes $\vpz_H$ for any normal subgroup $\mathbf H \lhd \mathbf G$) the group $\bf W$ is not a normal subgroup of $\G$. In particular, ${\bf M}\neq \G$.

Applying Proposition~\ref{prop:almost-inv-Lie} for $\H={\bf W}$, we have that parts (a) and (b) in (2) of the statement of the theorem hold, concluding the proof of this theorem.
\end{proof}

Theorem~\ref{thm:non-div-par} allows us to give a new, and arguably more elementary, proof to the main result of \cite{DM-Nondiv} (though the main ingredients are similar):

\begin{coro} \label{cor:DM} Suppose $\mathbf G$ is semisimple, and $g \in G$ is such that
\begin{equation}\label{enough to deduce in parabolic}
|\{u\in \mathsf B_U(e): \la_k(u )g\Gamma\not\in X_{\eta}\}|> E \eta^{1/F} \qquad \text{for infinitely many $k$.}
\end{equation}
 Then $U g \subset g\mathbf P (\bbq _ \Sigma)$ for some parabolic proper $\bbq$-subgroup of~$\mathbf G$.
\end{coro}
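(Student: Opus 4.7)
The plan is to apply Theorem~\ref{thm:non-div-par} along the infinitely many $k$ provided by hypothesis~\eqref{enough to deduce in parabolic}, extract a single algebraic subgroup that works for all such $k$, and conclude via the Borel--Tits theorem. Hypothesis~\eqref{enough to deduce in parabolic} forces alternative~(1) of Theorem~\ref{thm:non-div-par} to fail for infinitely many $k$, so for each such $k$ alternative~(2) of that theorem supplies a nontrivial proper $\bbq$-unipotent subgroup $\mathbf{W}_k<\mathbf{G}$ with $\height(\mathbf{W}_k)$ bounded by a quantity depending only on $|g|$ and $\eta$ (not on $k$). By Lemma~\ref{counting lemma}, there are only finitely many possibilities for $\mathbf{W}_k$, so after passing to a subsequence I may assume $\mathbf{W}_k=\mathbf{W}$ is constant along the sequence.

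Next I would exploit the expansion property of $\la_k$: since $\bigcup_k \la_k(\mathsf B_U(e))=U$, every $u'\in U$ equals $\la_k(u)$ for some $u\in\mathsf B_U(e)$ once $k$ is sufficiently large. The bound $\cfun(\eta_W(\la_k(u)g))\le E\eta^{1/F}$ from Theorem~\ref{thm:non-div-par}(2) therefore transfers to
\[
\cfun\bigl(\rho_W(u')\rho_W(g)\vpz_W\bigr) \le E\eta^{1/F} \qquad \text{for every } u'\in U.
\]
Since $\cfun$ factors as a product of local norms and $U=\prod_v U_v$, fixing the components $u_w$ for $w\neq v$ turns the place-$v$ contribution $\|\rho_W(u_v)\rho_W(g_v)\vpz_W\|_v$ into a bounded polynomial in $u_v\in U_v$. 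Each of its coordinate polynomials from the $\bbq_v$-affine space $U_v$ to $\bbq_v$ is then bounded, hence constant; so $U_v$, and thus all of $U$, fixes the vector $\rho_W(g)\vpz_W$. Equivalently, $g^{-1}Ug \subset \stab_{\mathbf{G}}(\vpz_W)(\bbq_\places) = N_{\mathbf{G}}(\mathbf{W})(\bbq_\places)$.

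The Borel--Tits theorem applied to the $\bbq$-semisimple $\mathbf{G}$ and the nontrivial proper $\bbq$-unipotent subgroup $\mathbf{W}$ then supplies a proper $\bbq$-parabolic $\mathbf{P}<\mathbf{G}$ with $\mathbf{W} \subset \rad_u(\mathbf{P})$ and $N_{\mathbf{G}}(\mathbf{W})\subset\mathbf{P}$. Hence $g^{-1}Ug\subset \mathbf{P}(\bbq_\places)$, so $U\subset (g\mathbf{P} g^{-1})(\bbq_\places)$. The main obstacle I anticipate is upgrading this $\bbq_\places$-conjugate to a genuine $\bbq$-parabolic containing $Ug$; I expect this to follow by replacing $g$ with $g\gamma$ for a suitable $\gamma\in\Gamma$ (which preserves the hypothesis, since it depends only on the class $g\Gamma$) so that $g\gamma\in\mathbf{P}(\bbq_\places)$, at which point $U(g\gamma)\subset \mathbf{P}(\bbq_\places)$ gives the desired conclusion.
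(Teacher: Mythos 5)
Your proof is correct in its essential content and reaches the same conclusion as the paper, but by a genuinely different route in the middle step; the final step concern you raise is actually also present in the paper's own argument.

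The paper, after passing to a subsequence where $\mathbf W_j = \mathbf W$ and $\mathbf M_j = \mathbf M_{\mathbf W}$ are constant, exploits item (b) of Theorem~\ref{thm:non-div-par}(2): the factor $e^{-\ell_j/F}$ in the bound $\max_{\zpz}\|\zpz\wedge\eta_M(g)\|\leq E|g|^F\eta^{1/F}e^{-\ell_j/F}$ forces $\zpz\wedge\eta_M(g)=0$ as $\ell_j\to\infty$, giving $g^{-1}Ug\subset M$ directly. That argument relies on the $\mathbf M_{\mathbf W}$ machinery of \S\ref{sec:LH-MH} and Proposition~\ref{prop:almost-inv-Lie}. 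You instead use only the first half of Theorem~\ref{thm:non-div-par}(2) (the bound $\cfun(\eta_W(\la_k(u)g))\leq E\eta^{1/F}$), combined with the fact that $\bigcup_k \la_k(\mathsf B_U(e))=U$: you get a \emph{uniform} bound on the polynomial $u'\mapsto \eta_W(u'g)$ over all of $U$, which forces it to be constant place by place, so $U$ fixes $g\vpz_W$. This is more elementary: it bypasses $\mathbf M_{\mathbf W}$ and the derivative estimates of Lemma~\ref{lem:dist-fixu} entirely. (One small imprecision: $\stab_{\mathbf G}(\vpz_W)=\mathbf L_{\mathbf W}$, which is contained in but not a priori equal to $N_{\mathbf G}(\mathbf W)$; the inclusion is all you need.) The paper's $e^{-k/F}$ route is however needed for the effective statements elsewhere, where one cannot pass to the limit $k\to\infty$, so you should not read your simplification as making that machinery dispensable.

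On your final-step worry: what your argument establishes is $g^{-1}Ug\subset\mathbf P(\bbq_\Sigma)$ with $\mathbf P$ a proper $\bbq$-parabolic, i.e.\ $Ug\subset g\,\mathbf P(\bbq_\Sigma)$. Note the paper's own proof is in exactly the same position: the sentence ``hence $Ug\subset\mathbf M(\bbq_\Sigma)$'' there should be read as $g\in N_G(U,M)$, i.e.\ $Ug\subset g\,\mathbf M(\bbq_\Sigma)$ (the identity $\zpz\wedge\eta_M(g)=0$ for all $\zpz\in\mathcal B_U$ is precisely the defining condition of $N_G(U,M)$ and does not place $g$ itself in $M$). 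Your proposed fix of replacing $g$ by $g\gamma$ does not resolve this: conjugating by $\gamma\in\Gamma$ only replaces $\mathbf P$ by $\gamma^{-1}\mathbf P\gamma$ and leaves the statement in the form $(g\gamma)^{-1}U(g\gamma)\subset\mathbf P'(\bbq_\Sigma)$. The conclusion one genuinely obtains, and what the paper's proof establishes, is $Ug\subset g\,\mathbf P(\bbq_\Sigma)$ for some proper $\bbq$-parabolic $\mathbf P$; you should either stop there or observe that this is what is meant.
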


%For the purpose of this corollary, a subgroup $\mathbf P < \mathbf G$ will be said to be parabolic if it is the inverse image of a parabolic subgroup of $\mathbf G / \rad_u(\mathbf G)$.

\begin{lemma}\label{lem:Jacob-Moro-Par}
Assume $\G$ is semisimple. 
Let $W\subset\gfrak$ be a rational subspace which generates a unipotent subalgebra. 
There exists a $\bbq$-parabolic subgroup ${\bf P}(W)$ so that $\height({\bf P}(W))\ll \height(W)^\star$ and 
$W\subset\Lie(\rad_u({\bf P}(W))$.
\end{lemma}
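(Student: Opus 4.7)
The approach is to combine the classical Morozov--Borel--Tits theorem---any unipotent $\bbq$-subgroup of a semisimple $\bbq$-group lies in the unipotent radical of a $\bbq$-parabolic---with polynomial height bounds obtained by iterating the ``unipotent radical of the normalizer'' operation.

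First, I would replace $W$ by the $\bbq$-Lie subalgebra $\wfrak \subset \gfrak$ that it generates. By hypothesis $\wfrak$ consists of $\Ad$-nilpotent elements. Since $\wfrak$ is produced by at most $\dG$ rounds of bracketing applied to a bounded-height $\bbq$-basis of $W$, and each round is a bilinear polynomial operation on $\gfrak$, one gets $\height(\wfrak) \ll \height(W)^\star$. Let $\mathbf W \subset \mathbf G$ be the connected unipotent $\bbq$-subgroup with $\Lie(\mathbf W) = \wfrak$; since the exponential map from a nilpotent Lie algebra to its corresponding unipotent group is a polynomial of degree bounded by $N$, we have $\height(\mathbf W) \ll \height(W)^\star$.

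Second, I would construct $\mathbf P(W)$ iteratively. Set $\mathbf W_0 := \mathbf W$ and $\mathbf W_{i+1} := \rad_u\bigl(N_{\mathbf G}(\mathbf W_i)^\circ\bigr)$. Each $\mathbf W_i$ is a connected unipotent $\bbq$-subgroup, and since $\mathbf W_i$ is a connected normal unipotent subgroup of $N_{\mathbf G}(\mathbf W_i)^\circ$, we have $\mathbf W_i \subseteq \mathbf W_{i+1}$. The increasing chain $\mathbf W_0 \subseteq \mathbf W_1 \subseteq \cdots$ thus stabilizes at some $\mathbf W^* = \mathbf W_r$ with $r \leq \dG$. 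At each step, $N_{\mathbf G}(\mathbf W_i)^\circ$ is the connected component of the stabilizer of the line through $\vpz_{\mathbf W_i} \in \wedge^{\dim \mathbf W_i}\gfrak$ under $\Ad$, so $\height(N_{\mathbf G}(\mathbf W_i)^\circ) \ll \height(\mathbf W_i)^\star$; Lemma~\ref{lem:ht-rad-u-rad} then gives $\height(\mathbf W_{i+1}) \ll \height(\mathbf W_i)^\star$. Composing these bounds over the bounded number of iterations yields $\height(\mathbf W^*) \ll \height(W)^\star$.

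Third, I set $\mathbf P(W) := N_{\mathbf G}(\mathbf W^*)^\circ$. By stability of the chain, $\rad_u(\mathbf P(W)) = \mathbf W^*$, and by construction $W \subseteq \wfrak \subseteq \Lie(\mathbf W^*) = \Lie(\rad_u(\mathbf P(W)))$. The classical Morozov--Borel--Tits theorem asserts that in a semisimple $\bbq$-group the connected normalizer of a unipotent subgroup which coincides with the unipotent radical of its own connected normalizer is automatically parabolic; hence $\mathbf P(W)$ is a $\bbq$-parabolic subgroup of $\mathbf G$. Finally $\height(\mathbf P(W)) = \height(N_{\mathbf G}(\mathbf W^*)^\circ) \ll \height(\mathbf W^*)^\star \ll \height(W)^\star$, as required. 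The main obstacle is purely qualitative: verifying that the stable $\mathbf W^*$ has parabolic normalizer requires the (non-effective) Morozov--Borel--Tits theorem; but since this provides only existence and no quantitative input, the polynomial height tracking via Lemma~\ref{lem:ht-rad-u-rad} and the standard stabilizer-of-a-line estimate carry all the effective content.
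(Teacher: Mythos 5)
Your proof is correct and follows essentially the same route as the paper: both iterate the operation $\mathbf W_{i+1} = \rad_u(N_\Gbf(\mathbf W_i))$ starting from the unipotent group generated by $W$, observe the chain stabilizes in at most $\dim\Gbf$ steps, and invoke Borel--Tits to conclude the normalizer of the stable group is parabolic. You spell out the height bookkeeping (via the stabilizer-of-a-line bound and Lemma~\ref{lem:ht-rad-u-rad}) in more detail than the paper, which leaves this implicit, but the argument is the same.
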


\begin{proof}
Let $\hat W$ be the algebra generated by $W$ and let $U_0=\{\exp(\hat W)\}$. 
Let ${\bf U}_0$ denote the corresponding algebraic group. 
Define inductively ${\bf U}_i=\rad_u(N_\G({\bf U}_{i-1}))$. 
Then ${\bf U}_i\subset N_{\G}({\bf U}_{i-1})$ and ${\bf U}_{i-1}\subset{\bf U}_i$.
This process terminates after $d\leq \dim\G$ number of steps and gives a unipotent 
subgroup ${\bf U}_d$ so that ${\bf U}_d=\rad_u(N_\G({\bf U}_d))$.
Therefore, $N_\G({\bf U}_d)$ is a parabolic subgroup, see~\cite{BT}; the claim holds with ${\bf P}(W)=N_\G({\bf U}_d)$.
\end{proof}

\begin{proof}[Proof of Corollary~\ref{cor:DM}] 
Suppose \eqref{enough to deduce in parabolic} holds along some sequence, say $\ell _ 1, \ell _ 2, \dots$ of $k$s (to avoid confusion with $k _ i$ of Theorem~\ref{fancy nondivergence theorem} we use $\ell$ rather than $k$).
Then by Theorem~\ref{thm:non-div-par} there exists for every $j$ a unipotent $\bbq$-subgroup $\mathbf W _ j$ and $\mathbf M_j = \mathbf M _ {\mathbf W _ j}$ with $\mathbf M _ j \ne \mathbf G$ so that the heights of $\mathbf W _ j$ and $\mathbf M _ j$ are bounded uniformly in $j$ and so that
\be \label{almost U invariant}
\max_{\zpz \in \mathcal B_U} \| \zpz \wedge{}{\eta_{M}(g)}\| \leq E |g|^F \eta^{1/F}e^{-\ell _ j/F}.
\end{equation}
since the only finitely many $\bbq$-subgroups of a given height, passing to a subsequence if necessary, we may assume that $\mathbf W _ j = \mathbf W$ and $\mathbf M _ j = \mathbf M$ for all $j$, hence from \eqref{almost U invariant} it follows that $\zpz \wedge{}{\eta_{M}(g)}=0$ for all $\zpz \in \mathcal B_U$, hence $U g \subset g\mathbf M (\bbq _ \Sigma)$. By Lemma~\ref{lem:Jacob-Moro-Par}, $\mathbf M$ is contained in some nontrivial $\bbq$-parabolic subgroup $\mathbf P < \mathbf G$ (indeed, with $\height(\mathbf P) \ll \absolute g^\star$).
\end{proof}

\section{Proof of Theorem~\ref{thm:eff-linearization-general}}\label{sec:proof-main}

For every $\Gamma$-invariant subcollection $\mathcal{F} \subset \mathcal H$, \ $t \in \bbr^+$,
 and $\vare: \bbr^+ \to (0, 1)$, let $\exceptional (\vare, t, \fcal)$ be the set
\begin{equation*}
\exceptional (\vare, t, \fcal) = \left\{ u \in \mathsf B _ U (e): \ \text {$\lambda _ k (u) g \Gamma$ is not $(\vare, t,\mathcal{F})$-Diophantine} \right\}
.\end{equation*}
For every $\epsilon \in (0, 1)$, we let $\underline \epsilon$ denote the constant function $\underline \epsilon:s \mapsto \epsilon$.
For every  $1 \leq r \leq \dim \mathbf G$, we let $\mathcal{F} _r$ denote the collection of class-$\hcal$ subgroups of $\Gbf$ of dimension $\leq r$.
For notational simplicity, let $\mathcal{F}_0=\emptyset$.

The bulk of the proof of Theorem~\ref{thm:eff-linearization-general} is the following estimate:

\begin{lemma}\label{main lemma} 
There are $\consta\label{first constant} $, $\consta\label{second constant}>2$, and $D\geq 1$ depending on $N$, and $0<c_0\leq 1$ depending on $N$, $\#\places$, and polynomially on the primes in $\places$ so that the following holds. For $1 \leq r \leq \dim \mathbf G$ and $\eta,\beta,\tau \in (0, 1)$, $n \in \bbr^+$  with 
\begin{equation}\label{condition for main lemma}
\eta^{1/\ref{second constant}} \leq c_0 \cdot (\tau\beta e^{-n} \tilde E_\Gbf^{-1})^{\ref{first constant}}
\end{equation}
at least one of the following holds:
\begin{enumerate}
\item 
$
\absolute {\left (\exceptional (\underline \eta,n,\mathcal{F} _ r )  \setminus \exceptional (\underline\eta',n',\mathcal{F} _ {r-1})\right )\cap \{ u \in \mathsf B _ U (e): \lambda _ k (u) g \Gamma\in X_\tau\}}
 \ll \beta^{1/D}
$
\ for
\[n'= \ref{first constant} (n + \log (1/\tau)+ \log(1/\beta)+\log \tilde E_\Gbf) \qquad\eta' = e^{n'} \eta^{1/ \ref{second constant}}, \ \text{or}\]

\item
 for some $\mathbf H $ of dimension $r$ 
 %and height $\ll \absolute{g}^\star e^{2n}$
\begin{enumerate}
\item For all $u\in\mathsf B_U(e)$ we have 
\[
\cfun(\eta_{H}(\la_k(u)g))\leq \beta^{-1}e^{n}. 
\]
\item For all $u\in\mathsf B_U(e)$ we have  
\[
\max_{\zpz\in\mathcal B_U} \Bigl\|\zpz\wedge \eta_{H}(\la_{k}(u)g)\Bigr\|\leq \eta^{1/2}.
\]
\end{enumerate}
\end{enumerate}
\end{lemma}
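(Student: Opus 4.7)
The proof proceeds by contradiction: suppose case~(2) fails and set
\[
T := \left(\exceptional(\underline\eta,n,\mathcal{F}_r) \setminus \exceptional(\underline\eta',n',\mathcal{F}_{r-1})\right) \cap X_\tau;
\]
the target is $|T| \ll \beta^{1/D}$. I would first observe a \emph{dimension trap}: for $u \in T$, the witness $\mathbf H_u \in \mathcal H$ of $(\underline\eta,n,\mathcal{F}_r)$-exceptionality must satisfy $\dim H_u = r$. Indeed, since $0<\eta<1$ and $\ref{second constant}>1$ we have $\eta' = e^{n'}\eta^{1/\ref{second constant}} > \eta$ and $n' \geq n$, so any witness of dimension $<r$ would lie in $\mathcal F_{r-1}$ and also witness $(\underline\eta',n',\mathcal F_{r-1})$-exceptionality, contradicting $u \notin \exceptional(\underline\eta',n',\mathcal F_{r-1})$.

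Next I would normalize via effective reduction theory: because $\lambda_k(u)g\Gamma \in X_\tau$, Lemma~\ref{prop:eff-red-theory} produces $\gamma_u \in \Gamma$ with $|\lambda_k(u)g\gamma_u^{-1}| \leq \tilde E_\Gbf \tau^{-F}$. Writing $\eta_{H_u}(\lambda_k(u)g) = \rho_H(\lambda_k(u)g\gamma_u^{-1})\rho_H(\gamma_u)\vpz_{H_u}$ and invoking \eqref{eq:def-k-adjoint} yields
\[
\height(\mathbf H_u') \leq L := c\,(\tilde E_\Gbf \tau^{-1})^{F_1} e^{n}
\]
where $\mathbf H_u' := \gamma_u \mathbf H_u \gamma_u^{-1}$ and $F_1$ depends only on $N$. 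By Lemma~\ref{counting lemma} at most $L^{\ref{k:hcal-ht-T}}$ subgroups $\mathbf H \in \mathcal H$ of dimension $r$ have height $\leq L$. For each such $\mathbf H$ introduce the tube
\[
S_{\mathbf H} := \left\{u \in \mathsf B_U(e): \cfun(\eta_H(\lambda_k(u)g)) < e^n,\ \max_{\zpz\in\mathcal B_U}\|\zpz \wedge \eta_H(\lambda_k(u)g)\| < \eta\right\}.
\]

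As $u \mapsto \eta_H(\lambda_k(u)g)$ is polynomial of bounded degree, Lemma~\ref{l;polygood} supplies the following dichotomy on each tube: setting $\beta_1 := c_1 L^{-\ref{k:hcal-ht-T}}\beta^{1/D}$, if $|S_{\mathbf H}| \geq \beta_1|\mathsf B_U(e)|$ then both defining bounds propagate to all of $\mathsf B_U(e)$ with multiplicative loss $\beta_1^{-1/\ref{t:kappa}}$, recovering case~(2)(a)-(b) after absorbing the $L^\star$ factor. The inequality $\beta_1^{-1/\ref{t:kappa}}\eta \leq \eta^{1/2}$ needed for (2)(b) is precisely what condition~\eqref{condition for main lemma} is designed to guarantee: it forces $\eta$ small enough in $\beta,\tau,\tilde E_\Gbf,e^{-n}$ that the polynomial loss is tolerable.

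Assuming case~(2) fails, $|S_{\mathbf H}| < \beta_1|\mathsf B_U(e)|$ for every small-height $\mathbf H$, and it remains to show that $T \cap X_\tau$ is covered by these tubes; summing then gives $|T\cap X_\tau| \leq L^{\ref{k:hcal-ht-T}}\beta_1|\mathsf B_U(e)|\ll \beta^{1/D}$. This covering step is the main obstacle: for $u \in T$ the witness $\mathbf H_u$ is generally not of small height, while its conjugate $\mathbf H_u'$ is small-height but witnesses at the shifted point $\lambda_k(u)g\gamma_u^{-1}$ rather than at $\lambda_k(u)g$. The resolution is a clustering argument based on Lemma~\ref{lem:loj}, applied at the bounded-norm base point $\lambda_k(u)g\gamma_u^{-1}$: if two distinct $\Gamma$-conjugates of some fixed small-height $\mathbf H$ both appeared as witnesses, the class-$\mathcal H$ closure of their intersection would lie in $\mathcal F_{r-1}$ with wedge bound $\ll L^\star \eta^{1/\ref{k:Loja}}$ and height bound $\ll L^\star$, and (transported back to the base point $\lambda_k(u)g$ via the identity $\eta_{\gamma^{-1}\mathbf H\gamma}(g') = \eta_{\mathbf H}(g'\gamma^{-1})$) would violate $u \notin \exceptional(\underline\eta',n',\mathcal F_{r-1})$. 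This requires $\ref{second constant} \geq \ref{k:Loja}$ (so that $L^\star\eta^{1/\ref{k:Loja}} \leq \eta'$ once $L^\star \leq \eta^{1/\ref{second constant}-1/\ref{k:Loja}}$, itself a consequence of \eqref{condition for main lemma}) and $\ref{first constant}$ large enough for the height bound $\leq e^{n'}$ on the intersection. The delicate point throughout is that every auxiliary quantity in the Lojasiewicz step must be polynomially controlled by $\tilde E_\Gbf,\tau^{-1},\beta^{-1},e^n$ so that the induction hypothesis at level $r-1$ is genuinely invocable, which dictates the precise shape of condition~\eqref{condition for main lemma}.
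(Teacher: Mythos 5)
The proposal correctly identifies several of the key ingredients — the dimension trap forcing $\dim \mathbf H_u = r$, the normalization via Lemma~\ref{prop:eff-red-theory} so that Lemma~\ref{l;lojas-ineq} can be applied at a bounded base point, and the role of condition~\eqref{condition for main lemma} in making the polynomial loss tolerable. But there is a genuine gap in the covering step, which you yourself flag as "the main obstacle" without resolving it. The tubes $S_{\mathbf H}$ are defined at the base point $\lambda_k(u)g$, while the small-height group $\mathbf H_u' = \gamma_u\mathbf H_u\gamma_u^{-1}$ only witnesses exceptionality at the shifted point $\lambda_k(u)g\gamma_u^{-1}$: indeed $\eta_{H_u'}(\lambda_k(u)g) = \eta_{H_u}(\lambda_k(u)g\gamma_u)$, which is unrelated to the small quantity $\eta_{H_u}(\lambda_k(u)g)$. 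So $u$ need not lie in $S_{\mathbf H_u'}$, and the original witness $\mathbf H_u$ has uncontrolled height (since $|\lambda_k(u)g|$ grows with $k$). Consequently the union $\bigcup_{\height(\mathbf H)\le L} S_{\mathbf H}$ need not cover $T$. Your proposed "clustering argument" shows that a single $u$ cannot simultaneously admit two distinct conjugate witnesses, but it does not produce \emph{any} small-height group that works at the unshifted base point, nor does it reduce the covering to finitely many tubes, since $\gamma_u$ ranges over an uncontrolled subset of $\Gamma$.

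The paper resolves this with a structurally different device: a Vitali covering of $\log(\exceptional(\underline\eta,n,\mathcal F_r))$ by balls $\mathsf B_i = \mathsf B_\ufrak(0,\rho_i)+\upz_i$, each tagged with a group $\mathbf H_i$ of arbitrary height and with $\rho_i$ maximal subject to the relaxed bounds $\beta^{-1}e^n$, $\eta^{1/2}$ holding throughout $\mathsf B_i$ — with equality attained somewhere (property $(\mathcal E3)$). There is no counting of groups at all: the measure estimate $\sum_i|\mathsf B_i|\ll 1$ comes from the disjointness of the scaled-down balls, and the estimate on each ball comes from $(\mathcal E3)$ combined with Lemma~\ref{l;polygood}. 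The role of Lojasiewicz (plus the reduction-theory normalization $\gamma$, which in the paper appears \emph{after} the covering, not before it) is only to show that for $u\in T\cap\exp(\mathsf B_i)$ the witness group must equal $\mathbf H_i$; otherwise the intersection $(\gamma^{-1}\mathbf H\gamma\cap\gamma^{-1}\mathbf H_i\gamma)^{\mathcal H}$, conjugated back, would violate $u\notin\exceptional(\underline\eta',n',\mathcal F_{r-1})$. You would need to replace your height-counting cover with this Vitali construction (or an equivalent that does not rely on bounding $\height(\mathbf H_u)$) for the argument to close.
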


\begin{proof}
Recall from \S\ref{sec:lambda-n} the definition
\[
\mathsf B_\ufrak(0,\delta)=\Bigl\{\sum_{\zpz\in\mathcal B_U} r_\zpz\zpz: |r_\zpz|_\places\leq \delta\Bigr\}\]
so that
$
\mathsf B_U(e):=\exp\Bigl(\mathsf B_\ufrak(0,1)\Bigr)$.

\medskip

We will cover the set
\begin{equation*}
\log (\exceptional (\underline \eta,n,\mathcal{F} _ r )) \subset \mathsf B = \mathsf B_\ufrak(0,1)
\end{equation*}
by a collection of balls $\mathcal{E} = \left\{ \mathsf B _ i =\mathsf B_\ufrak(0,\rho_i)+ \upz _ i \right\} _ {i \in I}$ and for each such ball attach a class-$\mathcal H$ group $\mathbf H _ i\in \mathcal{F} _ r $ so that
\begin{enumerate}
\item[($ \mathcal{E}1$)] $\sum_ {i \in I} \absolute {\mathsf B _ i} \ll 1$.

\item [($ \mathcal{E}2$)] for every $\upz \in \mathsf B_i$ and $u = \exp(\upz)$
\begin{align}
\cfun(\eta_{H_i}(\la_k(u)g))&\leq \beta^{-1}e^{n} \label{half good estimate 1}\\
\max_{\mathcal B_U}\|\zpz\wedge{\eta_{H_i}(\la_k(u)g)}\|&\leq \eta^{1/2}
 \label{half good estimate 2}
\end{align}
\item [($ \mathcal{E}3$)] For every $i \in I$, for some $\upz \in \mathsf B _ i$, equality holds in at least one of \eqref{half good estimate 1} or \eqref{half good estimate 2}.
\end{enumerate}
\medskip

\noindent
More precisely, we will \emph{try} to construct a cover $\mathcal E$ with these properties, and \emph{if} we fail this will establish that part (2) of Lemma~\ref{main lemma} holds.

Assuming we succeed, we will show that these properties imply, for a suitable choice of constants $\ref{first constant},\ref{second constant},\kappa$ that for $n', \eta'$ as above 
\begin{multline} \label{exceptional inclusion equation}
\left (\exceptional (\underline \eta,n,\mathcal{F} _ r )  \setminus \exceptional (\underline\eta',n',\mathcal{F} _ {r-1})\right )\cap \{ u \in \mathsf B _ U (e): \lambda _ k (u) g \Gamma\in X_\tau\} \subset\\
 \bigcup_ {i \in I} \left\{ u \in \exp (\mathsf B _ i): \text {$\lambda _ k (u) g$ satisfies $\eqref{half good estimate 1}'$ and $\eqref{half good estimate 2}'$} \right\}
\end{multline}
where $\eqref{half good estimate 1}'$ and $\eqref{half good estimate 2}'$ denote inequalities \eqref{half good estimate 1} and \eqref{half good estimate 2} but with $e^n$ and $\eta$ instead of $\beta^{-1}e^{n}$ and $\eta^{1/2}$, respectively.
Once~\eqref{exceptional inclusion equation} is established, we get from Lemma \ref{l;polygood} that
%\begin{multline}\label{eq: measure exc-r use cor}
\begin{multline*}
\absolute {\left (\exceptional (\underline \eta,n,\mathcal{F} _ r )  \setminus \exceptional (\underline\eta',n',\mathcal{F} _ {r-1})\right )\cap \{ u \in \mathsf B _ U (e): \lambda _ k (u) g \Gamma\in X_\tau\}}\\ 
\ll \max(\eta^{1/2},\beta)^{1/D}.
\end{multline*} 
The desired estimate in part~(1) follows from this bound and~\eqref{condition for main lemma}.

The construction of the open cover is straightforward. For every $\upz \in \mathsf B_\ufrak(0,1)$ for which $u=\exp (\upz) \in \exceptional (\underline \eta,n,\mathcal{F} _ r ) $, there is (by definition) a $\bbq$-group $\mathbf H _ {\upz} \in \mathcal H$ of dimension $\leq r$  so that $\eqref{half good estimate 1}'$ and $\eqref{half good estimate 2}'$ holds (with $\mathbf H _ {\upz}$ replacing the yet undefined $\mathbf H _ i$).

For each such $\upz$, let $\mathsf B ( {\upz})$ denote the set
$\mathsf B ( {\upz})=\mathsf B_\ufrak(0,\rho_{\upz})+ \upz$
with $\rho _ {\upz}$ chosen to be as small as possible so that for some
$\vpz \in \mathsf B ( {\upz})$ either $\cfun(\eta_{H_\upz}(\la_k(\exp \vpz)g))= \beta^{-1}e^{n}$ or
$
\max_{\mathcal B_U}\norm{\zpz\wedge\eta_{H_\upz}(\la_k(\exp\vpz)g)} =\eta^{1/2} $. 
Unless the estimates~(2)(a) and~(2)(b) of the statement of this theorem holds for some $r$-dimensional $\mathbf H \in \mathcal H$, for any $\upz \in \mathsf B_\ufrak(0,1)$ it holds that $\mathsf B( {\upz}) \subset \mathsf B_\ufrak(0,3)$. Note that the estimate of (2)(a) together with \eqref{eq:def-k-adjoint} gives that $\height(\mathbf H)\ll |g|^\star e^{2n}$.

Assuming there is no such $\mathbf H$, the Vitali covering argument allows us to find a subcollection $\mathcal E=\left\{ \mathsf B _ i = \mathsf B_\ufrak(0,\rho_{i})+ \upz_i \right\} _ {i \in I}$ of $\left\{ \mathsf B ( {\upz}) \right\}$ so that the collection of smaller balls $\left\{ \mathsf B_\ufrak(0,\rho_{i}/3)+ \upz_i \right\} _ {i \in I}$ is a disjoint collection of subsets of $\mathsf B_\ufrak(0,3)$ but
\begin{equation}\label{union contains exceptional set}
\bigcup_ {i \in I}\mathsf B _ i \supset \log (\exceptional (\underline \eta,n,\mathcal{F} _ r ))
.\end{equation}
The resulting collection $\mathcal E $ clearly satisfies ($ \mathcal{E}1$)--($ \mathcal{E}3$).

\medskip

It remains to establish \eqref{exceptional inclusion equation}. Fix some 
\[
u \in \exceptional (\underline \eta,n,\mathcal{F} _ r ) \cap \{ u \in \mathsf B _ U (e): \lambda _ k (u) g \Gamma\in X_\tau\}.
\]
Then by \eqref{union contains exceptional set} there is an $i \in I$ so that $u \in \exp (\mathsf B _ i)$; put $\H_i=\H_{u_i}$. By the definition of $\mathsf B _ i$  estimates \eqref{half good estimate 1} and \eqref{half good estimate 2} hold, while by the definition of $\exceptional (\underline \eta,n,\mathcal{F} _ r )$ there is an $\mathbf H \in \mathcal H$ of dimension $\leq r$ so that
$\cfun(\eta_{H}(\la_k(u)g))\leq e^{n}$ and $\max_{\mathcal B_U}\| \zpz \wedge{\eta_{H}(\la_k(u)g)}\|\leq \eta$.
There are now two possibilities: either $\mathbf H = \mathbf H _ i$, in which case $u$ is contained in the set on the right hand side of \eqref{exceptional inclusion equation}, or $\mathbf H \neq \mathbf H _ i$.

Thus suppose that $\mathbf H \neq \mathbf H _ i$. 
%for $u \in  \exp (\mathsf B _ i) \cap X_\tau$ and $\mathbf H$, $\mathbf H _ i$ as before. 
By Lemma~\ref{prop:eff-red-theory} there is a $\gamma \in \Gamma$ so that $| \lambda _ k (u) g\gamma |\leq E_\Gbf \tau^{-F}$. Since $\eta _ {\gamma^{-1} \bullet \gamma }(\la_k(u)g\gamma) = \eta _ \bullet (\la_k(u)g)$, we have
\begin{equation*}
\begin{aligned}
\cfun(\eta_{L}(\la_k(u)g\gamma))&\leq \beta^{-1}e^{n} \\
\max_{\mathcal B_U}\| \zpz \wedge{\eta_{L}(\la_k(u)g\gamma)}\|&\leq \eta^{1/2} 
\end{aligned}
\qquad\text{for $\mathbf L=\gamma^{-1} \Hbf \gamma,\gamma^{-1} \Hbf_i \gamma $}.
\end{equation*}
Since $\Hbf \neq \Hbf_i$, we have that 
%\be\label{eq: define tilde H to use in cor}
\[
\tilde {\mathbf H}= (\gamma ^{-1} \mathbf H \gamma \cap \gamma ^{-1} \mathbf H _ i \gamma) ^ {\mathcal H}
\] 
is of dimension $\leq r -1$. Applying Lemma~\ref{l;lojas-ineq} with $\epsilon = \eta ^ {1/2}$ and $r= \beta^{-1}e ^ { n}$ we get that if $\eta \ll E_\Gbf ^{\star} \beta^{-\star}\tau^{-\star} e^{\star n }$, the group $\tilde {\mathbf H}$ is nontrivial and
\begin{align*}
\cfun(\eta_{\tilde H}(\la_k(u)g \gamma))&\ll (E_\Gbf\beta^{-1}\tau^{-1} e^{n})^{\ref{first constant}} \\
\max_{\mathcal B_U}\| \zpz \wedge{\eta_{\tilde H}(\la_k(u)g \gamma)}\|&\ll (E_\Gbf\beta^{-1}\tau^{-1} e^{n})^{\ref{first constant}} \eta ^ {1/ \ref{second constant}}.
\end{align*}
Therefore (in view of our convention regarding implicit constants, and recalling that $\tilde E_\Gbf = E_\Gbf \height(\Gbf)$) we have that $u \in \exceptional (\underline{\eta'}, n',\mathcal F_{r-1})$ for $n'= \ref{first constant} (n + \log (1/\beta)+ \log (1/\tau)+ \log \tilde E_\Gbf)$ and $\eta' = e^{n'} \eta^{1/ \ref{second constant}}$.
\end{proof}

\begin{proof}[Proof of Theorem~\ref{thm:eff-linearization-general}]
We need to estimate the size of the set
\[
\Bad = \biggl\{u\in\mathsf B_U(e):\la_k(u)g\Gamma\not\in X_\eta\ \text{ or is not $(\vare,t)$-Diophantine}\biggr\}.
\]
Set 
\begin{align*}
\Bad _ {\eta} &= \left\{ u\in\mathsf B_U(e):\la_k(u)g\Gamma\not\in X_\eta \right\}\\
\Bad _ {\exceptional} &= \left\{ u\in\mathsf B_U(e):\text{$\la_k(u)g\Gamma$ is not $(\vare,t)$-Diophantine but is in $X_\eta$} \right\}
.\end{align*}
Clearly $\Bad = \Bad _ \eta \cup \Bad _ {\exceptional}$.

\medskip

We can bound the size of $\Bad _ \eta$ using Theorem~\ref{thm:non-div-par}, obtaining that
\be \label{size of bad set sub Greek eta}
\absolute {\Bad _ \eta} \leq E \eta ^ {1/F}
\end{equation}
unless there exists a group ${\bf M} \neq \G$,
\[
\height(\lplus)\leq E |g|^{F}\eta^{1/F},
\]
so that for all $u \in \mathsf B_U(e)$
\begin{align*}
\cfun(\eta_{M}(\la_{k}(u)g))&\leq E |g|^F \eta^{1/F}\\
\max_{\zpz \in \mathcal B_U} \| \zpz \wedge{}{\eta_{M}(\la_{k-1}(u)g)}\| &\leq E |g|^F \eta^{1/F}e^{-k/F}.
\end{align*}
This clearly implies that (2) of Theorem~\ref{thm:eff-linearization-general} holds (if we choose $A$ large enough).

\medskip

Assume therefore for the remainder of the proof that \eqref{size of bad set sub Greek eta} holds.
Let $d = \dim\Gbf$.
It follows from Definition~\ref{def:Diophantine-intro} that in the notations of Lemma~\ref{main lemma}
\begin{multline}\label{bad is contained equation}
\Bad _ {\exceptional} \subset \\\left (\bigcup_ {\ell=1}^{\lceil t \rceil} \exceptional \left (\underline{\vare (e^{\ell})},\ell+1, \mathcal{F} _ {d-1}\right ) \cup \exceptional \left (\underline{\vare (0)},1, \mathcal{F} _ {d-1}\right )\right ) \cap \Bigl(\mathsf B_U(e)\setminus \Bad _\eta\Bigr)
.\end{multline}
Fix $\ell$. To estimate $\absolute{\exceptional \left (\underline{\vare (e^{\ell})},\ell+1, \mathcal{F} _ {d-1}\right )\cap \Bigl(\mathsf B_U(e)\setminus \Bad _\eta\Bigr)}$, define iteratively, starting with $r=d-1$, \ 
$n_{d-1}=\ell +1$ and $\eta _ {d-1}=\vare (e ^ \ell)$.
%, and let $\beta=\eta_{d-1}^{1/F'}$ where $F'$ is a large constant, to be determined later, depending only on $N$. 
Proceed by induction to define
\begin{equation*}
n_{r-1}=\ref{first constant} (n_r +\ell+2\log (1/\eta)+ \log \tilde E_\Gbf) \qquad\eta_{r-1} = e^{n_{r-1}} \eta_r^{1/ \ref{second constant}}
.\end{equation*}
Then
\begin{multline}\label{excluded sum equation}
\absolute {\exceptional \left (\underline{\vare (e^{\ell})},\ell+1, \mathcal{F} _ {\dim \mathbf G}\right )\cap \Bigl(\mathsf B_U(e)\setminus \Bad _\eta\Bigr)} \\
\leq \sum_ {r=1}^ {d-1} \absolute { \left (\exceptional(\underline{\eta_r},n_r, \mathcal{F} _ {r}) \setminus (\exceptional(\underline{\eta_{r-1}},n_{r-1}, \mathcal{F} _ {r-1})\right )\cap \Bigl(\mathsf B_U(e)\setminus \Bad _\eta\Bigr)}
.\end{multline}

We want to apply Lemma~\ref{main lemma} (with $\tau=\eta$  and $\beta=\eta e^{-\ell}$ where $\eta$ is as in this theorem). In order to apply Lemma~\ref{main lemma}, our choice of parameters needs to satisfy condition~\eqref{condition for main lemma}, with the critical case being that of $r=1$. In this case \eqref{condition for main lemma} becomes
\begin{equation}
\label{nuisance equation}
\eta_1^{1/\ref{second constant}} \leq c_0 \cdot (\eta^2 e^{-\ell} e^{-n_1} \tilde E_\Gbf^{-1})^{\ref{first constant}}.
\end{equation}
Iteratively working through the constants, there are $\consta\label{first' constant}>\ref {first constant}$, $\consta\label{second' constant}>\ref{second constant}$, easily explicated in terms of $d$ and $\ref{first constant}$ and $\ref{second constant}$ so that
\begin{equation*}
e^{n_1} \leq \left (\frac {2 \tilde E _ {\mathbf G} e^\ell }{ \eta^2 e^{-\ell} }\right) ^ {\ref{first' constant}}
\qquad
\eta_1 \leq e ^ {2 n _ 1} \vare(e^ \ell) ^ {1/\ref{second' constant}}
.\end{equation*}
%Take the constant $F'$ used to define $\beta$ to be $F' = 100 \max(\ref{first' constant} \ref{second' constant}, F^2)$. 
Then assuming \eqref{condition on vare} with $A$ large enough and suitable choice of constant $E _ 1$ we can ensure that $\eta _ 1 \leq \vare (e ^ \ell) ^ {1/2 \ref{second' constant}}$ and that \eqref{nuisance equation} holds.

By Lemma~\ref{main lemma} and \eqref{condition on vare} (for $A$ large enough), for every $r$
\begin{multline}\label{bound using main lemma}
\absolute { \left (\exceptional(\underline{\eta_r},n_r, \mathcal{F} _ {r}) \setminus (\exceptional(\underline{\eta_{r-1}},n_{r-1}, \mathcal{F} _ {r-1})\right )\cap \Bigl(\mathsf B_U(e)\setminus \Bad _\eta\Bigr)} \\\ll \eta^{1/D}e^{-\ell/D}
%\\
%& \ll e^{\star \ell}\eta^{- \star} E_\Gbf ^{\star} \vare (e ^ \ell) ^ {1/F}
%\\
%& \ll \eta^{- \star} E_\Gbf ^{\star} \vare (e ^ %\ell) ^ {1/2F}
\end{multline}
unless for some $\mathbf H $ of dimension $r$, for all $u \in \mathsf B_U(e)$ and some constant $F$ depending only on $N$ and $E_1'$ depending polynomially on $\tilde E_\Gbf$ and on $N$,
\begin{align}\label{exceptional case in proof 1}
\cfun(\eta_{H}(\la_k(u)g))&\leq \eta^{-1}e^{\ell+n_r}\leq E_1' e^{F \ell}\eta^{- F}
\\
\label{exceptional case in proof 2}
\max_{\zpz \in \mathcal B_U} \Bigl\| \zpz \wedge \eta_{H}(\la_{k}(u)g)\Bigr\|&\leq  \eta_r^{1/2}
\leq E_1' e^{F \ell}\eta^{- F} \vare  (e ^ \ell) ^ {1/F} .
\end{align}

\medskip

If equations \eqref{exceptional case in proof 1} and \eqref{exceptional case in proof 2} hold for all $u \in \mathsf B _ U (e)$, there are two cases. Firstly, it may happen that $\mathbf H \lhd \mathbf G$ in which case 
\[\height (\mathbf H) = \cfun(\eta_{H}(\la_k(u)g)) \leq E_1' e^{F \ell}\eta^{- F} .
\]
Then as we assumed $\vare(s) \leq  \eta^A s^{-A} /E_1$ (for the constants $A$ and $E_1$ of the theorem we are proving, which are yet to be fixed) if $A$ was chosen large enough, by \eqref{exceptional case in proof 2}
\begin{equation}\label{exceptional case in proof 2'}
\max_{\zpz \in \mathcal B_U} \Bigl\| \zpz \wedge \eta_{H}(\la_{k}(u)g)\Bigr\| \leq \vare  (e ^ \ell) ^ {1/2F}
.\end{equation}
For a given $\Hbf$, the value of $\ell$ has to be large enough so that \eqref{exceptional case in proof 1} holds, namely 
%for some constant $F'$ depending only on $N$
\begin{equation*}
e^\ell \geq (\height (\H)\eta^{F}/E_1')^{1/F}
,\end{equation*}
so 
\begin{equation*}
\max_{\zpz \in \mathcal B_U} \Bigl\| \zpz \wedge \eta_{H}(\la_{k}(u)g)\Bigr\| \leq  \vare  (\height (\H)^{1/F}\eta/ E_1') ^ {1/2F}
\end{equation*}
and (3) of the statement of Theorem~\ref{thm:eff-linearization-general} is satisfied.

\medskip
If equations \eqref{exceptional case in proof 1} and \eqref{exceptional case in proof 2} hold for all $u \in \mathsf B _ U (e)$, but $\mathbf H$ is not a normal  subgroup of $\mathbf G$ we apply Proposition~\ref{prop:almost-inv-Lie} and conclude that $\mathbf M = \mathbf M _ {\mathbf H}$ satisfies for all $u \in \mathsf B _ U (e)$
\begin{equation}\label{third case bound}
\begin{aligned}
\cfun (\eta _ {M} (\la _ k (u) g)) &
\ll {E_\Gbf}^{\star} e^{\star \ell} \eta^{-\star}\\
\max_ {\zpz \in \mathcal{B}_U} \norm {\zpz \wedge \eta _ {M} (\la _ k (u) g)}&  \ll {E_\Gbf}^{\star} e^{\star \ell} \eta^{-\star} e^{-k/D'}
.\end{aligned}
\end{equation}
(the dependence of the upper bounds in \eqref{third case bound} on $|g|$ can be eliminated as in the proof of Proposition~\ref{prop:almost-inv-Lie} by using the fact that by~\eqref{size of bad set sub Greek eta} for most $u \in \mathsf B_U(e)$ there is a $\gamma_u \in \Gamma$ so that $\absolute {\la_k(u)g\gamma_u} \ll\eta^{-\star}$).
In this case, (2) of Theorem~\ref{thm:eff-linearization-general} holds.

\medskip 

The only remaining case is if \eqref{bound using main lemma} holds for every $r$ and $\ell$ (as well as the analogous estimates for $\exceptional (\underline {\vare (0)}, 1, \mathcal{F} _ {d -1})$, for which we omit the details, but is handled similarly), in which case it follows from~\eqref{bad is contained equation} and~\eqref{excluded sum equation} that $\absolute {\Bad _ {\exceptional}}  \ll \eta ^ {1/D}$, establishing (1) of Theorem~\ref{thm:eff-linearization-general}.
\end{proof}

%Recall that if $u\in \exceptional (\underline \eta,n,\mathcal{F} _ r ) $, then 
Let us record the following corollary of the proofs of Lemma~\ref{main lemma} and Theorem~\ref{thm:eff-linearization-general}. 

\begin{coro}\label{cor: main lemma}
%There exists $\hat E$ depending on $N$, $\#\places$, and polynomially on $\height(\G)$ and the primes in $\hat E$ so that the following holds.  
Let the notation be as in Theorem~\ref{thm:eff-linearization-general}. In particular, $A, D$ and $E_1$ are as in loc.\ cit. Let $g\in G$, $t>0$, $k\geq 1$, and $0<\eta<1/2$. Assume 
\[
0<\vare \leq  \eta^A e^{-tA} /E_1.
\]
Suppose there exists 
\[
\exceptional_r\subset \{ u \in \mathsf B _ U (e): \lambda _ k (u) g \Gamma\in X_\eta\}
\]
with $|\exceptional_r|>E_1\eta^{1/D}$ so that 
and for every $u\in\exceptional_r$ there is a $\bbq$-group $\mathbf H_u \in \mathcal H$ of dimension $\leq r$  satisfying both of the following 
\begin{align*}
\cfun(\eta_{H_u}(\la_k(u)g))&\leq e^{t} \\
\max_{\mathcal B_U}\|\zpz\wedge{\eta_{H_u}(\la_k(u)g)}\|&\leq \vare.
\end{align*} 

Then Theorem~\ref{thm:eff-linearization-general}(2) holds with a subgroup $\H\in\mathcal H$ which is also contained in $\gamma{\bf H}_u\gamma^{-1}$ for some $u\in\exceptional_r$ and some $\gamma\in\Gamma$.  
\end{coro}

\section{Some corollaries of Theorem~\ref{thm:eff-linearization-general}}\label{sec:cor-main}

In this section we discuss some of the consequences of Theorem~\ref{thm:eff-linearization-general}.
Recall that for any $T>0$, we put
\be\label{eq:def-sigmaT}
\sigma(T)=\min\left (\{1\}\cup \biggl\{\|\zpz\wedge\vpz_H \|:\begin{array}{c} \Hbf\in\Hcal, \Hbf\lhd\Gbf,\\  
\height(\H)\leq T, \{1\}\neq\Hbf\neq \G\end{array}\biggr\}\right ).
\ee

\begin{thm}\label{thm:linearization-noneff-eff-S-arith}
Let $A$, $D$, and $E_1$ be as in Theorem~\ref{thm:eff-linearization-general}. 
There exists some $\vartheta$ depending only on $N$ so that the following holds.
Let $0<\eta<1/2$. Let $\{x_m\}$ be a sequence in $X$ and let $k_m\to\infty$ be a sequence of natural numbers. 
For each $m$ let $\mathsf V_m\subset\mathsf B_U(e)$ be a measurable set with measure $>\vartheta E_1\eta^{1/D}$.  
Let
\[
Y=\bigcap_{\ell\geq 1} \overline{\bigcup_{m\geq \ell} \{\la_{k_m}(u)x_m: u\in \mathsf V_m\}}.
\]
Then exactly one of the following holds.
\begin{enumerate}
\item $Y$ contains an $\vare$-Diophantine point for 
\[
\vare(s)=\Bigl(\eta s^{-1} \sigma(E_1^A\eta^{-A}s^A)/2E_1\Bigr)^A.
\]
\item There exists 
\begin{enumerate}
\item a finite collection  $\{(\H_i, L_i): 1\leq i \leq \ell\} \subset \hcal\times\R^+$, and 
\item a countable (possibly finite) collection
\[
\mathcal W=\{({\bf W}_j, R_j, r_j): j\in J\}\subset \hcal\times \R^+\times \bbr^+
\]
where ${\bf W}_j$ is a non-normal unipotent subgroup for all $j\in J$, and $r_j\to0$
\end{enumerate}
so that if we put
\[
Y_i = \{g \in N_G(U,H_i):\cfun(\eta_{H_i}(g))\leq L_i\}\Gamma/\Gamma
\]
and
\[
Z_j=\biggl\{g \in N_G(U,M_{W_j}):\begin{array}{c}\cfun(\eta_{M_{W_j}}(g))\leq R_j\; \&\\ \cfun(\eta_{W_j}(g))\leq r_j\end{array}\biggr\}\Gamma/\Gamma,
\]
then 
\[
Y \subset \biggl(\bigcup_{i =1}^\ell Y_i\biggr) \bigcup \biggl(\bigcup_{j \in J} Z_j\biggr).
\]
\end{enumerate}
\end{thm}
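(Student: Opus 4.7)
My plan is to apply Theorem~\ref{thm:eff-linearization-general} to each $x_m$ separately. Fix a slowly increasing sequence $t_m \to \infty$, choose lifts $g_m \in G$ of $x_m$, and apply Theorem~\ref{thm:eff-linearization-general} with parameters $(g_m, k_m, t_m, \eta)$ and the $\vare$ specified in the statement. A direct substitution shows that alternative (3) of that theorem is incompatible with the prescribed $\vare$: setting $s = \height(\Hbf)^{1/A}\eta/E_1$ gives $E_1^A\eta^{-A}s^A = \height(\Hbf)$, whence $\vare(s)^{1/A} = \sigma(\height(\Hbf))/(2\height(\Hbf)^{1/A})$, which is strictly less than $\sigma(\height(\Hbf))$ and contradicts the definition~\eqref{eq:def-sigmaT} of $\sigma$ for any nontrivial proper normal $\Hbf \lhd \Gbf$. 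So for each $m$, only cases (1) or (2) of Theorem~\ref{thm:eff-linearization-general} can occur.

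\textbf{Producing the $\vare$-Diophantine point from case (1).} Suppose case (1) of Theorem~\ref{thm:eff-linearization-general} holds for infinitely many $m$. The exceptional set there has measure at most $E_1 \eta^{1/D}$, and taking $\vartheta \geq 2$ in the hypothesis forces $|\mathsf V_m| > \vartheta E_1 \eta^{1/D}$, so I select $u_m \in \mathsf V_m$ with $y_m := \lambda_{k_m}(u_m) x_m \in X_\eta$ and $(\vare, t_m)$-Diophantine. By Lemma~\ref{prop:eff-red-theory} I lift $y_m$ to $h_m \in G$ with $|h_m|$ uniformly bounded, pass to a convergent subsequence $h_m \to h_\infty$, and note $y := h_\infty \Gamma \in Y$. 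Since $(\vare, t')$-Diophantine implies $(\vare, t)$-Diophantine for $t' > t$, and the latter condition is closed in $X$, letting $t_m \to \infty$ forces $y$ to be $(\vare, t)$-Diophantine for every $t > 0$, i.e.\ $\vare$-Diophantine. This is alternative (1) of the present theorem.

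\textbf{Covering $Y$ by tubes: the bounded regime.} Otherwise, case (2) of Theorem~\ref{thm:eff-linearization-general} holds for all but finitely many $m$, producing proper $\Hbf_m \in \Hcal$ with $\cfun(\eta_{H_m}(\lambda_{k_m}(u) g_m)) \leq L_m$ and $\max_{\zpz \in \mathcal B_U} \|\zpz \wedge \eta_{H_m}(\lambda_{k_m}(u) g_m)\| \leq e^{-k_m/D} L_m$ for all $u \in \mathsf B_U(e)$. I will describe $Y$ by exhausting it with $Y \cap X_\beta$ as $\beta \to 0$. Fix $\beta > 0$, let $y \in Y \cap X_\beta$, and approximate $y = \lim_j \lambda_{k_{m_j}}(u_{m_j}) x_{m_j}$; Lemma~\ref{prop:eff-red-theory} provides lifts $h_j := \lambda_{k_{m_j}}(u_{m_j}) g_{m_j} \gamma_j$ with $|h_j|$ bounded by a $\beta$-dependent constant. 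Setting $\tilde \Hbf_j := \gamma_j^{-1} \Hbf_{m_j} \gamma_j$ rewrites the tube estimates as $\cfun(\eta_{\tilde H_j}(h_j)) \leq L_{m_j}$ and $\max_\zpz \|\zpz \wedge \eta_{\tilde H_j}(h_j)\| \to 0$. When $L_{m_j}$ stays bounded along a subsequence, $\height(\tilde\Hbf_j)$ is bounded, Lemma~\ref{counting lemma} supplies only finitely many possibilities for $\tilde\Hbf_j$, and after fixing one Lemma~\ref{lem:almost-tube} together with Corollary~\ref{lem:tube-closed} places $y$ in a tube of the form $Y_i$. Iterating this extraction over the (finitely many) distinct $\tilde\Hbf$'s arising at scale $\beta$ produces the finite family in (2)(a).

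\textbf{Cusp regime and main obstacle.} The principal difficulty is the remaining case, where $L_{m_j} \to \infty$, corresponding to $x_{m_j}$ escaping into the cusp. Here the proper $\Hbf_m$ furnished by Theorem~\ref{thm:eff-linearization-general} arises through the non-divergence machinery of Theorem~\ref{thm:non-div-par}, so $\Hbf_m = \mathbf M_{\mathbf W_m}$ for a non-normal unipotent $\bbq$-subgroup $\mathbf W_m$ with $\max_u \cfun(\eta_{W_m}(\lambda_{k_m}(u) g_m)) \leq r_m$, and the depth of the cusp excursion forces $r_m \to 0$. Conjugating by $\gamma_j$ and passing to a subsequence then yields a countable family $\{(\mathbf W_j, R_j, r_j)\}$ with $r_j \to 0$; Lemma~\ref{lem:almost-tube} places $y$ in the corresponding $Z_j$. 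Local finiteness is automatic: if $g\Gamma \in X_\beta$ then $\cfun(\eta_{W_j}(g)) \geq \beta$ for the primitive integral vector $\vpz_{W_j}$, so $Z_j \cap Y \cap X_\beta = \emptyset$ unless $r_j \geq \beta$, and only finitely many $r_j$ exceed $\beta$. The hard step will be to cleanly extract the unipotent $\mathbf W_m$ and the decay $r_m \to 0$ from the cusp regime — by tracing how Theorem~\ref{thm:non-div-par} enters the proof of Theorem~\ref{thm:eff-linearization-general}(2) — and to align the conjugations $\gamma_j$ so that the $\mathbf W_j$'s are genuinely countably indexed (by their heights) rather than uncountably branching.
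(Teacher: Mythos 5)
Your outline gets the case analysis and the treatment of alternative (3) right, and your argument producing a Diophantine point from alternative (1) (passing to a convergent subsequence of renormalized $(\vare, t_m)$-Diophantine points with $t_m \to \infty$) works, but the tube-covering step has a structural defect that prevents the sketch from closing.

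The root problem is that you apply Theorem~\ref{thm:eff-linearization-general} with $t_m \to \infty$. The height bound in case~(2) of that theorem is $L_m \leq (E|g_m|^A + E_1 e^{A t_m})\eta^{-A}$, which contains the $e^{A t_m}$ term regardless of how you choose lifts, so once $t_m \to \infty$ the ``bounded regime'' simply never occurs: $L_m \to \infty$ always, and you cannot extract a \emph{finite} family of $\Hbf$'s from Lemma~\ref{counting lemma}. The paper's proof avoids exactly this: it first proves a fixed-$t$ version (Lemma~\ref{lem:linearization-noneff-eff-S-arith-t-fixed}) where $L$ stays bounded, and then recovers the $\vare$-Diophantine alternative by observing that $\{x \in X_\eta : x \text{ is } (\vare,t)\text{-Diophantine}\}$ is a nested family of compacta, so if the fixed-$t$ lemma's first alternative holds for every $t$, the intersection is nonempty. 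This elementary topological reduction is what lets one work with $L$ bounded throughout.

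Two further gaps, both related: first, even at fixed $t$ you need $|g_m|$ bounded for $L_m$ to be bounded, and for a sequence $x_m$ escaping to the cusp an arbitrary lift has $|g_m| \to \infty$. The fix is to renormalize \emph{before} applying Theorem~\ref{thm:eff-linearization-general}: pick $u_m$ so that $g_m\Gamma := \lambda_{k_m-1}(u_m)x_m$ lies in a controlled cusp region (Lemma~\ref{prop:eff-red-theory}), then apply the theorem to $g_m$ with flow time $k_m - \kappa$, using~\eqref{assumption on Greek lambda} to compare the exceptional set there to $\lambda_{k_m}(\mathsf V_m)$; this is where the constant $\vartheta$ actually enters, and it must be tied to the doubling of $\lambda$. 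Second, your claim that in the cusp regime the $\Hbf_m$ from Theorem~\ref{thm:eff-linearization-general}(2) equals $\mathbf M_{\mathbf W_m}$ for a unipotent $\mathbf W_m$ is not implied by that theorem's statement, nor is the decay $r_m \to 0$. The paper handles this by first deploying Lemma~\ref{lem:cusp-or-not-limsup} to partition the indices $m$ into groups $J_i$ by cusp depth $n_i$, and \emph{directly} applying Theorem~\ref{thm:non-div-par} on $J_i$ for $i \geq 2$: there one genuinely gets a unipotent $\mathbf W$ and a smallness parameter $s_i \ll 2^{-n_{i-1}/F}$, and $s_i \to 0$ because the depths $n_i$ are increasing. ``Tracing'' the non-divergence mechanism inside the proof of Theorem~\ref{thm:eff-linearization-general} does not, by itself, deliver either the unipotent structure or the decay; you need the $J_i$ organization and the fresh application of Theorem~\ref{thm:non-div-par}.

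Finally, a smaller point: your local finiteness argument for the $Z_j$'s needs a quantitative comparison between the shortest vector of $\Ad(g)\gfrak(\bbz_\Sigma)$ and $\cfun(\eta_{W_j}(g))$ for a rank-$(\dim W_j)$ sublattice; the implication holds but with an exponent (roughly $\cfun(\eta_{W_j}(g)) \gg \beta^{\dim W_j}$), not the raw inequality $\geq \beta$. This does not affect the conclusion, only the precision of the bound.
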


\medskip
\noindent
This theorem implies Theorem~\ref{thm:linearization-noneff-eff} since each $Z_j$ is contained in 
\[
\bigl\{g \in N_G(U,M_{W_j}):\cfun(\eta_{M_{W_j}}(g))\leq R_j\bigr\}\,\Gamma/\Gamma,
\]
and as $r_j \to 0$ for any $\beta$ only finitely many of the $Z_j$ can interset $X_\beta$.
Recall that for every $i$ the sets $Y_i$ above are closed subsets of $X$ (see Corollary~\ref{lem:tube-closed}), and the same proof gives that the sets $Z_j$ are closed as well.

\medskip

We first prove a special case of Theorem~\ref{thm:linearization-noneff-eff-S-arith}.

\begin{lemma}\label{lem:linearization-noneff-eff-S-arith-t-fixed}
Let $A$, $D$, and $E_1$ be as in Theorem~\ref{thm:eff-linearization-general}. 
There exists some $\vartheta$ depending only on $N$ so that the following holds.
Let $0<\eta<1/2$ and let $t\in\bbr^+$. Let $\{x_m\}$ be a sequence in $X$ and let $k_m\to\infty$ be a sequence of natural numbers. 
For each $m$ let $\mathsf V_m\subset\mathsf B_U(e)$ be a measurable set with measure $>\vartheta E_1\eta^{1/D}$.  
Let
\[
Y=\bigcap_{\ell\geq 1} \overline{\bigcup_{m\geq \ell} \{\la_{k_m}(u)x_m: u\in \mathsf V_m\}}.
\]
Then at least one of the following holds.
\begin{enumerate}
\item $Y\cap X_\eta$ contains an $(\vare,t)$-Diophantine point for 
\[
\vare(s)=\Bigl(\eta s^{-1} \sigma(E_1^A\eta^{-A}s^A)/2E_1\Bigr)^A.
\]
\item There exists 
\begin{enumerate}
\item a finite collection  $\{(\H_i, L_i): 1\leq i \leq \ell\} \subset \hcal\times\R^+$, and 
\item a countable (possibly finite) collection
\[
\mathcal W=\{({\bf W}_j, R_j, r_j): j\in J\}\subset \hcal\times \R^+\times \bbr^+
\]
where ${\bf W}_j$ is a non-normal unipotent subgroup for all $j\in J$, and $r_j\to0$
\end{enumerate}
so that if we put
\[
Y_i = \{g \in N_G(U,H_i):\cfun(\eta_{H_i}(g))\leq L_i\}\Gamma/\Gamma
\]
and
\[
Z_j=\biggl\{g \in N_G(U,M_{W_j}):\begin{array}{c}\cfun(\eta_{M_{W_j}}(g))\leq R_j\; \&\\ \cfun(\eta_{W_j}(g))\leq r_j\end{array}\biggr\}\Gamma/\Gamma,
\]
then 
\[
Y \subset \biggl(\bigcup_{i =1}^\ell Y_i\biggr) \bigcup \biggl(\bigcup_{j \in J} Z_j\biggr).
\]
\end{enumerate}
\end{lemma}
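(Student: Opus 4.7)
The plan is to apply Theorem~\ref{thm:eff-linearization-general} to each pair $(g_m, k_m)$, where $x_m = g_m\Gamma$, with the $t$ and $\eta$ from the hypothesis and with the $\varepsilon$ defined in the statement. First I would verify the hypothesis $\varepsilon(s) \le \eta^A s^{-A}/E_1$: since $\sigma \le 1$ pointwise, $\varepsilon(s) \le (\eta/(2E_1 s))^A = \eta^A s^{-A}/(2E_1)^A \le \eta^A s^{-A}/E_1$. Next I would rule out case~(3) of the main theorem with this choice of $\varepsilon$: if it held with some nontrivial proper normal $\Hbf \lhd \Gbf$ of height $T$, then on the one hand $\max_{\zpz \in \mathcal B_U}\|\zpz \wedge \vpz_H\| \ge \sigma(T)$ by definition of $\sigma$, while on the other hand a direct calculation gives $\varepsilon(T^{1/A}\eta/E_1)^{1/A} = \sigma(T)/(2T^{1/A})$; these force $T \le 2^{-A} < 1$, incompatible with $\height(\Hbf) \ge 1$. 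Hence for each $m$ only cases~(1) and~(2) of the main theorem survive.

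Take $\vartheta$ to be an absolute constant slightly larger than $1$, say $\vartheta = 2$. If case~(1) holds along an infinite subsequence, then the hypothesis $|\mathsf V_m| > \vartheta E_1\eta^{1/D}$ combined with the bound $<E_1\eta^{1/D}$ on the bad set produces some $u_m \in \mathsf V_m$ with $y_m := \lambda_{k_m}(u_m)g_m\Gamma \in X_\eta$ and $(\varepsilon,t)$-Diophantine. Compactness of $X_\eta$ lets me extract a convergent subsequence with limit $y \in Y \cap X_\eta$. The key analytic point is that $\sigma$, and hence $\varepsilon$, is right-continuous as a non-increasing function defined by a minimum over subgroups of bounded height; this is enough to pass the $(\varepsilon,t)$-Diophantine condition to the limit, because for each $\Hbf \in \hcal$ with $\cfun(\eta_H(y)) < e^t$ the quantities $\cfun(\eta_H(y_m))$ and $\max_\zpz\|\zpz \wedge \eta_H(y_m)\|$ converge to their values at $y$. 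This establishes alternative~(1) of the lemma.

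Otherwise case~(2) holds for all but finitely many $m$, producing $\Hbf_m \in \hcal$ satisfying the height and orbit bounds of the main theorem. To cover $Y$ in alternative~(2) I would work inside $Y \cap X_\beta$ for each $\beta > 0$ and take the union as $\beta \downarrow 0$. Given $y \in Y \cap X_\beta$, write $y = \lim_j \lambda_{k_{m_j}}(u_j)g_{m_j}\Gamma$, apply Lemma~\ref{prop:eff-red-theory} to obtain $\gamma_j \in \Gamma$ with $|h_j\gamma_j^{-1}| \ll \beta^{-F}$ for $h_j := \lambda_{k_{m_j}}(u_j)g_{m_j}$, and set $\tilde\Hbf_j := \gamma_j \Hbf_{m_j}\gamma_j^{-1}$. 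Using $\cfun(\eta_{\tilde\Hbf_j}(h_j\gamma_j^{-1})) = \cfun(\eta_{\Hbf_{m_j}}(h_j))$ together with the bound on $|h_j\gamma_j^{-1}|$ and~\eqref{eq:def-k-adjoint} yields $\height(\tilde\Hbf_j)$ uniformly bounded. Lemma~\ref{counting lemma} then lets me pass to a sub-subsequence on which $\tilde\Hbf_j = \Hbf$ is fixed, and Lemma~\ref{lem:almost-tube} (applicable because the wedge bound tends to zero through the $e^{-k_{m_j}/D}$ factor) places $y$ in a $Y_i$-type tube. Ranging over the finitely many $\Gamma$-orbits of bounded-height class-$\hcal$ subgroups meeting $Y \cap X_\beta$ gives the finite collection $\{(\Hbf_i, L_i)\}$.

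The hard part will be the cusp limits $y \in Y$ not contained in any $X_\beta$, for which no $\Gamma$-translate places $\tilde\Hbf_j$ at bounded height. To cover these, I would open the proof of case~(2) of Theorem~\ref{thm:eff-linearization-general} and isolate the branch coming from the non-divergence Theorem~\ref{thm:non-div-par}, in which $\Hbf_m = \mathbf{M}_{\mathbf{W}_m}$ for some unipotent non-normal $\bbq$-subgroup $\mathbf{W}_m$ with the extra uniform bound $\cfun(\eta_{\mathbf{W}_m}(\lambda_{k_m}(u)g_m)) \le E\eta^{1/F}$. Partitioning by $\Gamma$-orbits of $\mathbf{W}_m$ and passing to limits after reduction-theory translation produces the countable collection $\{(\mathbf{W}_j, R_j, r_j)\}$ defining the $Z_j$-tubes; the required property $r_j \to 0$ is enforced by demanding local finiteness of the cover ($\{j : Y \cap X_\beta \cap Z_j \ne \emptyset\}$ finite for every $\beta > 0$), which forces the $r_j$ associated with $\Gamma$-orbits that first meet $Y$ deeper in the cusp to converge to zero along the index set.
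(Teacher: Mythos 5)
Your outline is in the right spirit, but there is a genuine gap at the very first step that propagates through the whole argument. You apply Theorem~\ref{thm:eff-linearization-general} to pairs $(g_m,k_m)$ with $x_m=g_m\Gamma$, but the lemma makes no assumption on $\{x_m\}$ being bounded, so $|g_m|$ is uncontrolled. In case~(2) of the main theorem the group $\Hbf_m$ comes with $\height(\Hbf_m)\leq (E|g_m|^A+E_1e^{At})\eta^{-A}$ and $\cfun(\eta_{\Hbf_m}(\la_{k_m}(u)g_m))\leq (E|g_m|^A+E_1e^{At})\eta^{-A}$; both blow up with $|g_m|$. Your later translation by $\gamma_j$ gives $\cfun(\eta_{\tilde\Hbf_j}(h_j\gamma_j^{-1}))=\cfun(\eta_{\Hbf_{m_j}}(h_j))$, but the right side is bounded only by a quantity involving $|g_{m_j}|$, so $\height(\tilde\Hbf_j)\ll\beta^{-\star}\cfun(\eta_{\Hbf_{m_j}}(h_j))$ is \emph{not} uniformly bounded as you claim, and you cannot extract a finite family $\{\Hbf_i\}$ nor invoke Lemma~\ref{counting lemma}. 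The paper's proof first re-centers the base point: using the iterated Lemma~\ref{lem:cusp-or-not-limsup} it partitions $\{m\}$ into sets $J_i$ according to cusp depth $2^{-n_i}$, replaces $x_m$ for $m\in J_i$ by $g_m\Gamma=\la_{k_m-1}(u_m)x_m\in X_{2^{-n_i}}$ with $|g_m|\leq T_i$, and applies Theorem~\ref{thm:eff-linearization-general} to $(g_m,k_m-\kappa)$ using $\la_{k_m-\kappa}(\mathsf B_U(e))\la_{k_m-1}(u_m)\subset\la_{k_m}(\mathsf B_U(e))$ (this containment is also what fixes the constant $\vartheta$; it is not simply $\vartheta=2$). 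That re-centering is what delivers the uniform height bound $L$ for $m\in J_1$ and is missing from your argument.

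The cusp branch is also only gestured at: saying ``open the proof of case~(2) and isolate the non-divergence branch'' does not supply the mechanism for $r_j\to 0$. In the paper, for $m\in J_i$ with $i\geq 2$ Theorem~\ref{thm:non-div-par} is applied directly to $g_m$ (not via Theorem~\ref{thm:eff-linearization-general}) using the fact that the $\la_{k_m-\kappa}(\mathsf B_U(e))$-orbit of $g_m\Gamma$ misses $X_{\theta_i}$, yielding $\mathbf W_m$ with $\cfun(\eta_{W_m}(\la_{k_m-\kappa}(u)g_m))\leq s_i := E\theta_i^{1/F}$, and $s_i\to 0$ precisely because $\theta_i=E'2^{-n_{i-1}/F}\to 0$. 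The property $r_j\to 0$ is therefore a consequence of the $J_i$-decomposition, not something you can ``enforce by demanding local finiteness'' after the fact; without the decomposition there is no index structure on which to decree it. (A minor remark: your phrase ``cusp limits $y\in Y$ not contained in any $X_\beta$'' describes an empty set, since $X=\bigcup_\beta X_\beta$; the real difficulty is that approximating orbit pieces for $m\in J_i$ with $i$ large lie deep in the cusp even when their limits do not, which is exactly what the $J_i$-partition isolates.)
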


\medskip

We need the following lemma.

\begin{lemma}\label{lem:cusp-or-not-limsup}
Let $E$ and $F$ be as in Theorem~\ref{thm:non-div-par}. 
Let the notation be as in Theorem~\ref{thm:linearization-noneff-eff-S-arith}.
Then one of the following holds.
\begin{enumerate}
\item There exists some $\beta_0>0$ and subsequence $m_i\to\infty$ so that
\[
|\{u\in\mathsf B_U(e):\la_{k_{m_i}-1}(u)x_{m_i}\not\in X_\beta\}|\leq E\beta^{1/F}.
\]
for all $\beta\leq\beta_0$, or
\item $Y=\emptyset$.
\end{enumerate}
\end{lemma}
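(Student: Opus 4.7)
The plan is to argue by contrapositive: assuming $Y\neq\emptyset$, I will establish~(1). Since $X=\bigcup_{\eta>0}X_\eta$, I pick $y\in Y$ and some $\beta_0\in(0,1/2)$ with $y\in X_{2\beta_0}$. By definition of $Y$ there are $m_i\to\infty$ and $u_i\in\mathsf V_{m_i}\subset\mathsf B_U(e)$ with $y_i:=\la_{k_{m_i}}(u_i)\,x_{m_i}\to y$, so $y_i\in X_{\beta_0}$ for $i$ large. By Lemma~\ref{prop:eff-red-theory} I may choose representatives $\tilde g_i\in G$ of $y_i$ with $|\tilde g_i|\ll\beta_0^{-F}$, uniformly bounded in $i$.

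The key observation is that the orbit $\la_{k_{m_i}-1}(u)\,x_{m_i}$ that I need to control can be realized as a $U$-translation of the \emph{bounded} base point $\tilde g_i$ rather than of the possibly unbounded $x_{m_i}$. Writing $g_{m_i}:=\la_{k_{m_i}}(u_i)^{-1}\tilde g_i$ as a representative of $x_{m_i}$,
\[
\la_{k_{m_i}-1}(u)\,x_{m_i}=\phi_i(u)\,y_i,\qquad \phi_i(u):=\la_{k_{m_i}-1}(u)\,\la_{k_{m_i}}(u_i)^{-1}.
\]
The map $\phi_i$ is the composition of the expanding linear map $\la_{k_{m_i}-1}$ on $\ufrak$ with right-translation by the fixed element $\la_{k_{m_i}}(u_i)^{-1}$, hence has constant Jacobian $|\det\la|^{k_{m_i}-1}$. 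Iterating assumption~\eqref{assumption on Greek lambda} a bounded number of times (depending only on $\kappa$ and $N$) yields a constant $C$ for which $\la_{k-1}(\mathsf B_U(e))\cdot\la_k(\mathsf B_U(e))\subset\la_{k+C}(\mathsf B_U(e))$; in particular $\phi_i(\mathsf B_U(e))\subset\la_{k_{m_i}+C}(\mathsf B_U(e))$.

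I would then apply Theorem~\ref{thm:non-div-par} to the base point $\tilde g_i$, at scale $k_{m_i}+C$ and parameter $\beta$. Because $|\tilde g_i|\ll\beta_0^{-F}$ uniformly in $i$, option~(2) of that theorem would produce a $\bbq$-subgroup of height $\leq E|\tilde g_i|^F\beta^{1/F}\ll\beta_0^{-F^2}\beta^{1/F}$, which drops below $1$ once $\beta\leq\beta_1:=c_0\beta_0^{\,\star}$ for a suitable constant $c_0$; so for all such $\beta$ only option~(1) can hold, giving $|\{v\in\mathsf B_U(e):\la_{k_{m_i}+C}(v)\,y_i\notin X_\beta\}|\leq E\beta^{1/F}$. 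Pulling this back through $\phi_i$ via a change of variables, and using that $|\la_{k_{m_i}+C}(\mathsf B_U(e))|/|\phi_i(\mathsf B_U(e))|=|\det\la|^{\,C+1}$ is a constant depending only on $N$, I obtain $|\{u\in\mathsf B_U(e):\la_{k_{m_i}-1}(u)\,x_{m_i}\notin X_\beta\}|\ll\beta^{1/F}$ for every $\beta\leq\beta_1$ along the subsequence $m_i$, which is~(1) (after absorbing $|\det\la|^{\,C+1}$ into the constant $E$). The main obstacle is verifying the geometric inclusion $\phi_i(\mathsf B_U(e))\subset\la_{k_{m_i}+C}(\mathsf B_U(e))$ from~\eqref{assumption on Greek lambda} and tracking the measure-rescaling precisely so the final constants match those of Theorem~\ref{thm:non-div-par}.
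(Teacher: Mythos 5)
Your proof is correct in outline, and all the key steps go through, but it takes a genuinely different route from the paper's.

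The paper argues by a different dichotomy: it suffices to find $\beta'>0$ and a subsequence $m_i$ along which the scale-$(k_{m_i}-1)$ orbit segment of $x_{m_i}$ \emph{meets} $X_{\beta'}$, since then option~(2) of Theorem~\ref{thm:non-div-par}, applied directly to $x_{m_i}$ at scale $k_{m_i}-1$, would force the whole segment outside some $X_{\beta''}$ with $\beta''\gg \beta'$ (this hidden step is a Minkowski-type passage from a small covolume of the rank-$\dim\mathbf W$ sublattice $\eta_W(\cdot)$ to a short vector), a contradiction; and in the complementary case the paper pushes the failure up to scale $k_m$ via Lemma~\ref{l;polygood} to conclude $Y=\emptyset$. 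You instead switch base point: you express the scale-$(k_{m_i}-1)$ orbit of $x_{m_i}$ as a right-translated, slightly enlarged orbit $\phi_i(\mathsf B_U(e))\,y_i$ of the nearby point $y_i\to y\in Y\cap X_{2\beta_0}$, invoke Lemma~\ref{prop:eff-red-theory} to get a representative $\tilde g_i$ with $|\tilde g_i|\ll\beta_0^{-F}$ uniformly, and then rule out option~(2) of Theorem~\ref{thm:non-div-par} \emph{directly} by the integrality of height, since $E|\tilde g_i|^F\beta^{1/F}<1$ once $\beta$ is small enough. This bypasses the covolume-to-short-vector step entirely. The trade-offs: your argument requires the geometric inclusion $\la_{k-1}(\mathsf B_U(e))\cdot\la_k(\mathsf B_U(e))\subset\la_{k+\kappa}(\mathsf B_U(e))$ (which indeed follows from~\eqref{assumption on Greek lambda} applied at $j=k+\kappa$ together with the nesting $\la_{j-1}(\mathsf B)\subset\la_j(\mathsf B)$, itself a consequence of~\eqref{assumption on Greek lambda}), plus the Jacobian bookkeeping, which you correctly identify as the place where a harmless multiplicative constant $|\det\la|^{\kappa+1}$ creeps in, so you get $\ll\beta^{1/F}$ rather than the literal $E\beta^{1/F}$ of the statement. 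This discrepancy doesn't affect the downstream use in Lemma~\ref{lem:linearization-noneff-eff-S-arith-t-fixed}. A small virtue of your approach is that the reduction-theory maneuver you perform here is exactly parallel to what that later lemma does anyway when it passes from $x_m$ to $g_m$, so the two proofs read more uniformly.
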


\begin{proof}
In view of Theorem~\ref{thm:non-div-par} it suffices to show that there exists some $\beta'$ and a subsequence $\{m_i\}$
so that
\[
\{u\in\mathsf B_U(e):\la_{k_{m_i}-1}(u)x_{m_i}\}\cap X_{\beta'}\neq\emptyset.
\]
Indeed if this is established, then Theorem~\ref{thm:non-div-par}(2) cannot hold for any $\eta=\beta\leq \beta'^F/2E$, $x_{m_i}$ and $k_{m_i}-1$, hence Theorem~\ref{thm:non-div-par}(1) holds which implies part~(1) in this lemma with $\beta_0=\beta'^F/2E$.

Assume contrary to the above claim that for every $\beta$ there exists some $m_\beta$ so that for all $m\geq m_\beta$ 
\[
\{u\in\mathsf B_U(e):\la_{k_m-1}(u)x_{m}\}\cap X_\beta =\emptyset.
\]  
Then by Theorem~\ref{thm:non-div-par} applied with $\eta=\beta$ and the point $x_m=h_m\Gamma$, we thus get that 
there exists a unipotent $\bbq$-subgroup ${\bf W}$ which is not normal in $\G$ so that
\[
\cfun\Bigl(\eta_W(\la_{k_m-1}(u)h_m)\Bigr)\leq E \beta^{1/F}\;\text{ for all $u\in\mathsf B_U(e)$};
\]
see~\eqref{eq:nondiv-v-almost-fixed-statement} also Lemma~\ref{lem:nilp-subalg}. 
This and Lemma~\ref{l;polygood} implies that
\be\label{eq:def-E'-poly}
\cfun\Bigl(\eta_W(\la_{k_m}(u)h_m)\Bigr)\leq E' \beta^{1/F}\;\text{ for all $u\in\mathsf B_U(e)$}.
\ee
Therefore, we get that 
\be\label{eq:orbits-misses-X-beta}
\{\la_{k_m}(u)x_m:u\in\mathsf B_U(e)\}\cap X_{E' \beta^{1/F}}=\emptyset\;\;\text{ for all $m>m_\beta$.}
\ee
Hence the claim in part~(2) holds.
\end{proof}

\begin{proof}[Proof of Lemma~\ref{lem:linearization-noneff-eff-S-arith-t-fixed}]
The proof is based on applying Theorem~\ref{thm:eff-linearization-general} to the pieces of the orbits 
\[
\{\la_{k_m}(u)x_m: u\in\mathsf V_m\}.
\]
We show that Theorem~\ref{thm:eff-linearization-general}(3) cannot hold for the choice of $\vare$ we made in the lemma.
Further, we show that if there are infinitely many $m$ so that Theorem~\ref{thm:eff-linearization-general}(1) holds, then part~(1) in the lemma holds. In consequence, we are reduced to the case that for all but finitely many $m$ Theorem~\ref{thm:eff-linearization-general}(2) holds. In this case we use Lemma~\ref{lem:almost-tube} to conclude that part~(2) above holds.

\medskip

We begin by replacing $x_m$ with a possibly different point in the orbit which is chosen to have a representative of a controlled size. 

Assuming $Y\neq\emptyset$ and repeatedly applying Lemma~\ref{lem:cusp-or-not-limsup}, we may find natural numbers $\{n_i: i\in I\}$ with $|\log\beta_0|<n_1<n_2<\cdots$ so that if we put  
\[
J_i:=\{m\in\bbn\setminus J_{i-1}:|\{u\in\mathsf B_U(e):\la_{k_m-1}(u)x_m\not\in X_{2^{-n_i}}|\leq E2^{-n_i/F}\},
\]
for all $i\in I$ and $J_0=\emptyset$, then the following hold:
\begin{itemize}
    \item for all $i\in I$, $J_i$ is an infinite set, and
    \item for every $x\in Y$, there exists an $i\in I$, a sequence ${m_p}\to\infty$ in $J_i$, and for any $m_p$ there is some $u_{m_p}\in\mathsf B_U(e)$ so that 
\[
\la_{k_{m_p}}(u_{m_p})x_{m_p}\to x.
\]
\end{itemize}
We remark that by Lemma~\ref{lem:cusp-or-not-limsup} if $Y\neq\emptyset$, then $I\neq \emptyset$, but it may well be finite: for instance if $\{x_m\}$ is a bounded sequence, then we may choose $n_1$ large enough so that $I=\{1\}$. 

Recall the constants $E_\G$ and $F$ from Lemma~\ref{prop:eff-red-theory}.
For every $i\in I$ and $m\in J_i$ fix some $g_m\in\G$ so that 
\be\label{eq:Mi-gm-bound}
|g_m|\leq E_\G 2^{n_{i}F}=:T_i
\ee 
and $g_m\Gamma=\la_{k_m-1}(u_m)x_m\in X_{2^{-n_i}}$ for some $u_m\in\mathsf B_U(e)$. 

Recall also from~\eqref{assumption on Greek lambda} that for some $\kappa>0$ we have
\be\label{eq:doubing-section-cor}
\la_{k_m-\kappa}(\mathsf B_U(e))\la_{k_m-1}(u_m)\subset\la_{k_m}(\mathsf B_U(e)).
\ee
%Fix some $\varrho>0$ so that 
%$\la_{k_m}(\exp(\mathsf B_\ufrak(0,\varrho)))\subset \la_{k_m-\kappa}(\mathsf B_U(e))$ 
%for all $m$. 
Let
\[
\vartheta=2|\mathsf B_U(e)|/|\la_{-\kappa}(\mathsf B_U(e))|.
\]

Apply Theorem~\ref{thm:eff-linearization-general} with $g_m$, $k_m-\kappa$, $\eta$, $t$, and
\be\label{eq:def-vare-sigma}
\vare(s)=\Bigl(\eta s^{-1} \sigma(E_1^A\eta^{-A}s^A)/2E_1\Bigr)^A;
\ee
note that $\vare$ satisfies the condition in~\eqref{condition on vare}.

We first argue that Theorem~\ref{thm:eff-linearization-general}(3) cannot hold.
Indeed, assume contrary to this claim that there exists some $\H\lhd\G$ satisfying Theorem~\ref{thm:eff-linearization-general}(3). 
That is: 
$
\height(\H)\leq E_1(e^t\eta^{-1})^A
$
and
\[
\max_{\zpz\in\mathcal B_U}\Bigl\|\zpz\wedge\vpz_H\Bigr\|\leq \vare\Bigl(\height(\H)^{1/A}\eta/E_1\Bigr)^{1/A}.
\]
In view of~\eqref{eq:def-vare-sigma} we thus get that  
\[
\max_{\zpz\in\mathcal B_U}\Bigl\|\zpz\wedge\vpz_H\Bigr\|\leq \height(\H)^{-1/A}\sigma(\height(\H))/2< \sigma(\height(\H)).
\]
However, this contradicts the definition of $\sigma$, see~\eqref{eq:def-sigmaT}.

Assume now that the conclusion in Theorem~\ref{thm:eff-linearization-general}(1) holds for a subsequence $m_i\to\infty$. 
Then, since
\[
\la_{k_{m_i}-\kappa}(\mathsf B_U(e))\la_{k_{m_i}-1}(u_{m_i})\subset \la_{k_{m_i}}(\mathsf B_U(e)),
\]
$|\la_{k_{m_i}-\kappa}(\mathsf B_U(e))|/ |\la_{k_{m_i}}(\mathsf B_U(e))|\geq 2/\vartheta$,
and $|\mathsf V_{m_i}|>\vartheta E_1\eta^{1/D}$, we have that 
\[
\{\la_{k_{m_i}}(u)x_{m_i}: u\in\mathsf V_m\}\cap \{x\in X_\eta: \text{$x$ is $(\vare,t)$-Diophantine}\}\neq\emptyset
\]
for all $m_i$. Hence $Y\cap \{x\in X_\eta: \text{$x$ is $(\vare,t)$-Diophantine}\}\neq\emptyset$ and part~(1) in the lemma holds.

Altogether, we are reduced to the case that Theorem~\ref{thm:eff-linearization-general}(2) holds for all but finitely many $m$. Dropping the first few terms, which does not effect $Y$, we assume that Theorem~\ref{thm:eff-linearization-general}(2) holds for all $m$, or more precisely that Theorem~\ref{thm:eff-linearization-general}(2) holds for $g_m$, $k_m-\kappa$, $\eta$, and $(\vare,t)$.
In particular, we have the following: For every $m\in J_1$ there exists a nontrivial proper subgroup $\Hbf_m\in\Hcal$ with 
\[
\height(\Hbf_m)\leq (E|g_m|^A+E_1e^{At})\eta^{-A}\leq (ET_1^A + E_1e^{At})\eta^{-A}=:L, 
\]
so that the following hold.
\begin{enumerate}
\item[$(\dagger)_1$] For all $u\in\mathsf B_U(e)$ we have
\[
\cfun(\eta_{H_m}(\la_{k_{m}-\kappa}(u)g_{m}))\leq L.
\]
\item[$(\ddagger)_1$] For every $u\in\mathsf B_U(e)$ we have
\[
\max_{\zpz\in\mathcal B_U}\Bigl\|\zpz\wedge{\eta_{H_m}(\la_{k_{m}-\kappa}(u)g_{m})}\Bigr\|\leq Le^{-k_{m}+\kappa/D}.
\]
\end{enumerate}

Let $\mathcal F=\{(\H,L): \height(\H)\leq L\}$. 
In view of~\eqref{eq:hcal-ht-T}, $\mathcal F$ is a finite family.

\medskip

Let now $i\in I$ and $i\geq 2$ --- we note again that it is possible that $I=\{1\}$ and this case is empty.

Arguing as in Lemma~\ref{lem:cusp-or-not-limsup}, see in particular~\eqref{eq:orbits-misses-X-beta},
for all $m\in J_i$ we have 
\[
\{\la_{k_m}(u)x_m:u\in\mathsf B_U(e)\}\cap X_{\theta_i}=\emptyset.
\]
where $\theta_i=E'2^{-n_{i-1}/F}$.
This, in view of~\eqref{eq:doubing-section-cor} implies that
\be\label{eq:Ji-bound-holds-for-gm}
\{\la_{k_m-\kappa}(u)g_m\Gamma:u\in\mathsf B_U(e)\}\cap X_{\theta_i}=\emptyset.
\ee
Therefore, by Theorem~\ref{thm:non-div-par}, for every $m\in J_i$
there exists some unipotent subgroup ${\bf W}_m$ with 
\[
\height(\mathbf{W}_m) \leq ET_i^{F} \theta_i^{1/F}=:S_i
\] 
so that
\be\label{eq:nondiv-v-almost-fixed-statement-cor}
{\cfun\Bigl(\eta_{W_m}(\la_{k_m-\kappa}(u)g)\Bigr)\leq E \theta_i^{1/F}=:s_i\;\;\text{ for all $u\in\mathsf B_U(e)$}}.
\ee
Moreover, if we put ${\bf M}_m={\bf M}_{{\bf W}_m}$, then ${\bf M}_m\neq \G$,
\[
\height(\lplus_m)\leq S_i,
\] 
and the following hold: 
%\[
%d_{P}(\la_k(u)g)< 1\quad\text{ for all $u\in\mathsf B_U(e)$}.
%\]
\begin{enumerate}
\item[$(\dagger)_i$] For all $u\in\mathsf B_U(e)$ we have
\[
\cfun(\eta_{M}(\la_{k_m-\kappa}(u)g))\leq S_i. 
\]
\item[$(\ddagger)_i$] For all $u\in\mathsf B_U(e)$ we have
\[
\max_{\zpz\in\mathcal B_U} \|\zpz\wedge{}{\eta_{M}(\la_{k_m-\kappa}(u)g)}\|\leq S_ie^{-k_m+\kappa/F}.
\]
\end{enumerate}

Let $\mathcal E_i=\{({\bf W}, S_i, s_i): \height({\bf W})\leq S_i, {\bf W}\in\hcal \text{ is unipotent and not normal}\}$.
Then $\mathcal E_i$ is a finite family for each $i$.

\medskip

We now show that the claim in part~(2) holds with $\mathcal F$ and $\mathcal E_i,$ $i\geq 2$.
Let $x=g\Gamma\in Y$. 
Then there exists an $i\in I$ and a sequence ${m_p}\to\infty$ in $J_i$ so that the following holds.
For any $m_p$ there is some $u_{m_p}\in\mathsf V_{m_p}$ so that 
\[
\la_{k_{m_p}}(u_{m_p})g_{m_p}\Gamma\to g\Gamma.
\]
Assume first that $i=1$. Then passing to a subsequence we may assume that $(\dagger)_1$ and $(\ddagger)_1$ hold 
with $\H_{m_p}=\H$ for all $p$. Hence by Lemma~\ref{lem:almost-tube} we have
\[
g\in\{g' \in N_G(U,M_W):\cfun(\eta_{H}(g'))\leq L\}\Gamma.
\]

Similarly, if $i\geq 2$ we may pass to a subsequence and assume that ${\bf W}_{m_p}={\bf W}_m$ for all $p$. 
One then argues as in Lemma~\ref{lem:almost-tube} and gets that
\[
g\in \{g' \in N_G(U,M_W):\cfun(\eta_{M_W}(g'))\leq S_i, \cfun(\eta_W(g'))\leq s_i\}.
\]
The proof is complete.
\end{proof}

\begin{proof}[Proof of Theorem~\ref{thm:linearization-noneff-eff-S-arith}]
Let $0<\eta<1/2$ and define $\vare$ as in part~(1).

%For every $t>0$ let
%\begin{align*}
%\mathcal O_{t}&=\{x\in X: \text{$x$ is {\em not} $(\vare,t)$-Diophantine}\}\\
%&=\bigcup_{\H\in\hcal}\Bigl\{g\in G: \cfun(\eta_H(g))<e^t\; \text{and} \;\max_{\zpz\in\mathcal B_U}\|\zpz\wedge {\eta_H(g)}\|<\vare\bigl(\cfun(\eta_H(g))\bigr)\Bigr\}/\Gamma
%\end{align*}
%--- note that the above union is $\Gamma$-invariant, moreover, $\mathcal O_{t}\subset\mathcal O_{t'}$ for $t<t'$.
 
%Recall that $N_G(U,H)= \{g\in G: \zpz\wedge \eta_H(g)=0$ for all $\zpz\in\mathcal B_U\}$; 
%in particular, for every $\H\in\mathcal H$, we have
%\[
%\{g\in N_G(U,H):\cfun(\eta_H(g))<e^t\}\subset\mathcal O_{t}.
%\]

Recall from Definition~\ref{def:Diophantine-intro} that
\[
\{x\in X: \text{$x$ is $\vare$-Diophantine}\}=\bigcap_t \{x\in X: \text{$x$ is $(\vare,t)$-Diophantine}\}.
\]
Moreover, 
$\{x\in X_\eta: \text{$x$ is $(\vare,t)$-Diophantine}\}$
is a nested family of compact sets. 
Therefore, if Lemma~\ref{lem:linearization-noneff-eff-S-arith-t-fixed}(1) holds for all $t$, then Theorem~\ref{thm:linearization-noneff-eff-S-arith}(1) holds.  
Therefore, we may assume there exists some $t$ so that Lemma~\ref{lem:linearization-noneff-eff-S-arith-t-fixed}(2) holds.
This implies that Theorem~\ref{thm:linearization-noneff-eff-S-arith}(2) holds and completes the proof.
\end{proof}

We now state and prove analogue of Theorem~\ref{thm:linearization-noneff} in the more general $\places$-arithmetic setting. 

\begin{thm}\label{thm:linearization-noneff-S-arith}
Let $\alpha>0$. Let $\{H_i:1\leq i\leq r\}\subset\Hcal$ be a finite subset consisting of proper subgroups, and for each $1\leq i\leq r$
let $\mathcal C_i\subset N_G(U,H_i)$ be a compact subset. 
There exists an open neighborhood $\mathcal O=\mathcal O(\alpha,\{H_i\}, \{\mathcal C_i\})$ 
so that $X \setminus \mathcal O$ is compact and disjoint from
$\cup_i \mathcal C_i\Gamma/\Gamma$ so that the following holds.
For every $x\in\mathcal G(U)$ there exists some $k_0=k_0(\alpha,\{H_i\}, \{\mathcal C_i\}, x)$ 
so that for all $k\geq k_0$ we have 
\[
|\{u\in\mathsf B_U(e): \la_k(u)x\in\mathcal O\}|< \alpha 
\] 
\end{thm}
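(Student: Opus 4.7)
The approach is to apply Theorem~\ref{thm:eff-linearization-general} directly to a representative $g$ of $x=g\Gamma$, choosing the Diophantine function $\vare$ as in Theorem~\ref{thm:linearization-noneff-eff-S-arith} so that the ``good set'' in alternative~(1) is automatically disjoint from~$\mathcal O$, and then arguing that for $x\in\mathcal G(U)$ neither alternative~(2) nor alternative~(3) can persist as $k\to\infty$.

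\textbf{Construction of $\mathcal O$.} With $A,D,E_1$ as in Theorem~\ref{thm:eff-linearization-general}, choose $\eta\in(0,1/2)$ with $E_1\eta^{1/D}<\alpha$, put $L_i=1+\max_{g\in\mathcal C_i}\cfun(\eta_{H_i}(g))$ (finite by compactness), pick $t>\max_i\log L_i$, and set
\[
\vare(s)=\Bigl(\eta s^{-1}\sigma(E_1^A\eta^{-A}s^A)/(2E_1)\Bigr)^A,
\]
which satisfies $\vare(s)\leq\eta^A s^{-A}/E_1$ and hence the hypothesis of Theorem~\ref{thm:eff-linearization-general}. Fix $\epsilon_i<\vare(L_i)$ and define
\[
\mathcal O_i=\bigl\{g\Gamma:\exists\gamma\in\Gamma\text{ with }\cfun(\eta_{H_i}(g\gamma))<L_i\text{ and }\max_{\zpz\in\mathcal B_U}\|\zpz\wedge\eta_{H_i}(g\gamma)\|<\epsilon_i\bigr\}.
\]
This $\mathcal O_i$ is open (as the projection of a $\Gamma$-invariant open set in $G$) and contains $\mathcal C_i\Gamma/\Gamma$, so $\mathcal O=(X\setminus X_\eta)\cup\bigcup_i\mathcal O_i$ has complement $X\setminus\mathcal O=X_\eta\cap\bigcap_i(X\setminus\mathcal O_i)$, a closed subset of the compact set $X_\eta$ that is disjoint from $\bigcup_i\mathcal C_i\Gamma/\Gamma$.

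\textbf{Ruling out alternatives (2) and (3).} Fix $x=g\Gamma\in\mathcal G(U)$ and apply Theorem~\ref{thm:eff-linearization-general} to $g$ with parameters $k,\eta,t,\vare$. Alternative~(3) is excluded by construction: for any proper nontrivial $\Hbf\lhd\Gbf$ the substitution $s=\height(\Hbf)^{1/A}\eta/E_1$ gives
\[
\vare(s)^{1/A}=\sigma(\height(\Hbf))/(2\height(\Hbf)^{1/A})<\sigma(\height(\Hbf))\leq\max_{\zpz\in\mathcal B_U}\|\zpz\wedge\vpz_H\|
\]
by~\eqref{eq:def-sigmaT}, contradicting the inequality demanded in~(3). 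For alternative~(2), the witnessing $\Hbf\in\Hcal$ has $\height(\Hbf)\leq(E|g|^A+E_1e^{At})\eta^{-A}$ uniformly in~$k$, so by Lemma~\ref{counting lemma} only finitely many subgroups can occur. If~(2) held for infinitely many $k$, pass to a subsequence along which $\Hbf$ is fixed, evaluate the given bounds at $u=e\in\mathsf B_U(e)$ (so $\la_k(e)g=g$), and let $k\to\infty$ in the $e^{-k/D}$ factor to force $\max_{\zpz\in\mathcal B_U}\|\zpz\wedge\eta_H(g)\|=0$; this means $g\in N_G(U,\Hbf)$, and since $\Hbf$ is proper it would place $x$ in $\mathcal S(U)$, contradicting $x\in\mathcal G(U)$. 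Hence there exists $k_0=k_0(x)$ so that for all $k\geq k_0$ only alternative~(1) holds.

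\textbf{Conclusion from alternative (1).} Under~(1) the set of $u\in\mathsf B_U(e)$ for which $\la_k(u)g\Gamma\not\in X_\eta$ or is not $(\vare,t)$-Diophantine has measure $<E_1\eta^{1/D}$. It suffices to check this set contains $\{u\in\mathsf B_U(e):\la_k(u)x\in\mathcal O\}$: the cusp case $\la_k(u)x\in X\setminus X_\eta$ is immediate, while if $\la_k(u)x\in\mathcal O_i$ then the witnessing $\gamma$ gives $H'=\gamma H_i\gamma^{-1}\in\Hcal$ (proper, nontrivial) with $\cfun(\eta_{H'}(\la_k(u)g))=\cfun(\eta_{H_i}(\la_k(u)g\gamma))<L_i<e^t$, so the $(\vare,t)$-Diophantine property applied to $H'$ would require
\[
\max_{\zpz\in\mathcal B_U}\|\zpz\wedge\eta_{H'}(\la_k(u)g)\|\geq\vare(\cfun(\eta_{H'}(\la_k(u)g)))\geq\vare(L_i)>\epsilon_i,
\]
contradicting the defining inequality of $\mathcal O_i$. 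Therefore $|\{u\in\mathsf B_U(e):\la_k(u)x\in\mathcal O\}|<E_1\eta^{1/D}<\alpha$ for all $k\geq k_0(x)$. I expect the main delicate point to be the exclusion of alternative~(2); this crucially exploits that Theorem~\ref{thm:eff-linearization-general}(2) controls the orbit at \emph{every} $u\in\mathsf B_U(e)$ (not merely in measure), allowing evaluation at $u=e$ to extract, in the limit $k\to\infty$, an algebraic obstruction on $g$ itself and thus to contradict the genericity hypothesis.
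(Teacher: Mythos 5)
Your proof is correct and takes essentially the same approach as the paper's: the same choices of $\eta$, $\vare$, and $t$, the same use of Lemma~\ref{counting lemma} and the evaluation at $u=e$ to pass from alternative~(2) (along a fixed subgroup, as $k\to\infty$) to the algebraic obstruction $g\in N_G(U,\Hbf)$, and the same use of $\sigma$ in $\vare$ to rule out~(3). The only difference is cosmetic: the paper simply takes $\mathcal O$ to be the full set of cusp or non-$(\vare,t)$-Diophantine points, so that alternative~(1) of Theorem~\ref{thm:eff-linearization-general} directly gives the measure bound, whereas you build a smaller $\mathcal O$ from explicit tubes $\mathcal O_i$ around the $\mathcal C_i\Gamma/\Gamma$ and then verify that $\mathcal O_i$ sits inside the non-Diophantine locus via the conjugate $\Hbf'=\gamma\Hbf_i\gamma^{-1}$; this extra step introduces the identification $\eta_{H'}(\la_k(u)g)=\eta_{H_i}(\la_k(u)g\gamma)$, which in the $\places$-arithmetic setting holds only up to a $\bbz_\places^\times$-unit, a subtlety the paper sidesteps by choosing the canonical $\mathcal O$.
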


\begin{proof}
Let $\eta=(\alpha/E_1)^D$ where $D$ and $E_1$ are as in Theorem~\ref{thm:eff-linearization-general}. 

Let $x\in\mathcal G(U)$ and let $g\in G$ be so that $x=g\Gamma$.
%; our estimates will depend on $|g|$. 
%As we observed before, since $g\Gamma\in\mathcal G(U)$, we have $g\Gamma$ is $\vare$-Diophantine 
%for some function $\tilde\vare$, see Definition~\ref{def:Diophantine-intro}.
Define
\be\label{eq:def-vare-tilde-vare}
\vare(s)=\Bigl(\eta s^{-1} \sigma(E_1^A\eta^{-A}s^A)/2E_1\Bigr)^A
%\vare(s)=\eta^{A^2} \tilde\vare(E_1\eta^{-A}s) \sigma(E_1^A\eta^{-A^2}s^A)^{A} /2E_1^{A}
\ee
where $\sigma(T)$ is defined as in~\eqref{eq:def-sigmaT}.
%Then $\vare\leq \tilde\vare$ and $g\Gamma$ is $\vare$-Diophantine. 

Let $t\in\bbr^+$ be so that $\height(\H_i)\leq e^t$ and $\cfun(\eta_{H_i}(h))<e^t$ for all $1\leq i\leq r$ and all $h\in\mathcal C_i$. 
We will show that the theorem holds with 
\[
\mathcal O=\{x\in X: \text{$x\not\in X_\eta$ or $x$ is {\em not} $(\vare,t)$-Diophantine}\}.
\]

First note that for any $i$ and any $h\in\mathcal C_i$ we have $\cfun(\eta_{H_i}(h))<e^t$ and $\zpz\wedge\eta_{H_i}(h)=0$ for all $\zpz\in\mathcal B_U$. Therefore,
\[
\cup_i \mathcal C_i\Gamma/\Gamma\subset \mathcal O.
\]

We claim there exists some $k_0$ so that for all $k\geq k_0$, Theorem~\ref{thm:eff-linearization-general}(1) holds
for $g$, $k$, and $(\vare, t)$. First note that this claim in view of the assertion in Theorem~\ref{thm:eff-linearization-general}(1) implies that 
\[
|\{u\in\mathsf B: \la_k(u)x\in\mathcal O\}|\leq E_1\eta^{1/D}=\alpha
\]
and the theorem follows. 

Let us now prove the claim. Assume contrary to the claim that Theorem~\ref{thm:eff-linearization-general}(2) or (3) holds for $g$, a sequence $k_n\to\infty$, and $(\vare, t)$.
We first show that Theorem~\ref{thm:eff-linearization-general}(3) cannot hold. Indeed, if Theorem~\ref{thm:eff-linearization-general}(3) holds, then there is some $\H\lhd\G$ with
$
\height(\H)\leq E_1(e^t\eta^{-1})^A
$
so that
\[
\max_{\zpz\in\mathcal B_U}\Bigl\|\zpz\wedge\vpz_H\Bigr\|\leq E_1\eta^{-A}\vare\Bigl(\height(\H)^{1/A}\eta^{A}/E_1\Bigr)^{1/A}.
\]
In view of~\eqref{eq:def-vare-tilde-vare} we thus get that  
\[
\max_{\zpz\in\mathcal B_U}\Bigl\|\zpz\wedge\vpz_H\Bigr\|\leq \vare(\height(\H)^{1/A})^{1/A}\sigma(\height(\H))/2<\sigma(\height(\H)).
\]
This contradicts the definition of $\sigma$ in~\eqref{eq:def-sigmaT}.

Hence we may reduce to the case that Theorem~\ref{thm:eff-linearization-general}(2) 
holds for $g$, $k_n\to\infty$, and $(\vare, t)$.
Let $L:=(E|g|^A+E_1e^{At})\eta^{-A}$. 
Then in view of Theorem~\ref{thm:eff-linearization-general}(2), for every $n$ there exists  
a proper subgroup $\H_n\in\hcal$ with $\height(\H_n)\leq L$ so that for all $u\in\mathsf B_U(e)$ we have 
\[
\max_{\zpz\in\mathcal B_U} \Bigl\|\zpz\wedge{\eta_{H_n}(\la_{k_n}(u)g)}\Bigr\|\leq  e^{-k_n/D}L.
\]
Since there are only finitely many subgroups $\H\in\Hcal$ with $\height(\H)\leq L$, see~\eqref{eq:hcal-ht-T}, 
passing to a subsequence we assume $\H_n=\H$ for all $n$. Hence 
\[
\max_{\zpz\in\mathcal B_U} \Bigl\|\zpz\wedge{\eta_{H}(\la_{k_n}(u)g)}\Bigr\|\leq  e^{-k_n/D}L.
\]
Applying this with $u=e$ and passing to the limit we get that 
\[
\zpz\wedge{\eta_{H}(g)}=0\quad\text{for all $\zpz\in\mathcal B_U$}.
\]
This contradicts the fact that $g\Gamma\in\mathcal G(U)$ and completes the proof.
\end{proof}

\section{Friendly measures}\label{sec:friendly}

In this section we discuss generalizations of our main theorems to the class of friendly measures which were studied in~\cite{KLW}, see~\S\ref{sec:friendly-intro} for the definition.

Let the notation be as in \S\ref{sec:eff-linearization}; in particular,
\[
U=\{u(t)=\exp(t\zpz): t\in\R\}.
\]
for some nilpotent element $\zpz\in\mathfrak{g}$ with $\|\zpz\|=1$.

In \cite{KLW}, an extension of Theorem \ref{Kleinbock-Tomanov theorem} for $\Sigma=\{ \infty \}$ was presented where the Haar measure on $U$
is replaced by a (uniformly) friendly measure $\mu$. While for simplicity of notation we keep our treatment of friendly measures to this case, Kleinbock and Tomanov wrote in \cite{KT:Nondiv} the $\Sigma$-arithmetic nondivergence results also for the case of friendly measures. The only difference between the statement of Theorem \ref{Kleinbock-Tomanov theorem} and the analogous statement for uniformly friendly measures (other than the obvious difference of how the size of subsets of $\lambda _ k (\mathsf B _ U)$ are measured) is that the exponent $1/D$ of the theorem is allowed to depend on the doubling constant for $\mu$. Theorem~\ref{fancy nondivergence theorem} also holds for uniformly friendly measures with the same modification.
We also note (and use below) that in view of~\cite[Prop.~7.33]{KLW}, an analogue of Lemma~\ref{l;polygood} holds true for $\mu$ in place of the Haar measure on $U$ (with a different $c$ and exponent). 

Repeating the the proof of Theorem~\ref{thm:non-div-par} but with the (uniformly) friendly versions of Theorems~\ref{Kleinbock-Tomanov theorem} and~~\ref{fancy nondivergence theorem}, we obtain the following:

\begin{thm}\label{thm:non-div-par-friendly}
Let $\mu$ be a uniformly friendly measure on $\R$. 
There exists a constant $F$ depending on $N$ and $\mu$ 
so that for any $g\in G$, $k\geq 1$, and any $0<\eta\leq 1/2$ small enough at least one of the following holds.
\begin{enumerate}
\item 
\[
\mu\Bigl(\{t\in[-1,1]: u(e^kt )g\Gamma\not\in X_{\eta}\}\Bigr)\ll \eta^{1/F}.
\]

\item There exists a unipotent $\bbq$-subgroup ${\bf W}$ so that
\[
{\|(\eta_W(u(e^kt )g)\|\ll\eta^{1/F}\;\text{ for all $t\in[-1,1]$}}.
\]
Moreover, if we put ${\bf M}={\bf M}_{\bf W}$, then ${\bf M}\neq \G$,
\[
\height(\lplus)\ll |g|^{F}\eta^{1/F},
\] 
and we have: 
%\[
%d_{P}(\la_k(u)g)< 1\quad\text{ for all $u\in\mathsf B_U(e)$}.
%\]
\begin{enumerate}
\item For all $t\in[-1,1]$ we have
\[
\|\eta_{M}(u(e^kt )g)\|\ll |g|^F\eta^{1/F}. 
\]
\item For all $t\in[-1,1]$ we have
\[
\max_{\zpz\in\mathcal B_U} \|\zpz\wedge{\eta_{M}(u(e^kt )g)}\|\ll |g|^F\eta^{1/F}e^{-k/F}.
\]
\end{enumerate}

%\item There exists a $\bbq$-parabolic subgroup ${\bf P}$ with $\height({\bf P})\ll |g|^\star\eta^\star$ so that
%the following holds. Put ${\bf M}={\bf P}^\hcal$, then 
%\[
%\height(\lplus)\ll |g|^{F}\eta^{1/F}
%\] 
%and we have: 
%%\[
%%d_{P}(\la_k(u)g)< 1\quad\text{ for all $u\in\mathsf B_U(e)$}.
%%\]
%\begin{enumerate}
%\item For all $t\in[-1,1]$ we have
%\[
%\|\eta_{M}(u(e^t)g)\|\ll |g|^F\eta^{1/F}. 
%\]
%\item For all $t\in[-1,1]$ we have
%\[
%\|\zpz\wedge{\eta_{M}(u(e^{k-1}t)g)}\|\ll |g|^F\eta^{1/F}e^{-k/F}.
%\]
%\end{enumerate}

\end{enumerate}
\end{thm}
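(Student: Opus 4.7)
The plan is to follow the proof of Theorem~\ref{thm:non-div-par} essentially line-for-line, substituting Haar-measure ingredients with their uniformly-friendly analogues. The three places where the Haar measure is used are (i) the nondivergence result Theorem~\ref{Kleinbock-Tomanov theorem} (and the finer version Theorem~\ref{facny nondivergence theorem}); (ii) the Remez-type inequality Lemma~\ref{l;polygood} invoked through Lemma~\ref{lem:U-speed} and Lemma~\ref{lem:dist-fixu} in the proof of Proposition~\ref{prop:almost-inv-Lie}; and (iii) in the proof of Lemma~\ref{lem:nilp-subalg}. All three have friendly-measure counterparts: the first was written out by Kleinbock and Tomanov in \cite{KT:Nondiv} for uniformly friendly measures (with the exponent $1/F$ now allowed to depend on the doubling and decay constants of $\mu$), while the needed $(C,\alpha)$-good property of polynomials-in-$t$ on the support of $\mu$ follows from \cite[Prop.~7.33]{KLW}.

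First I would apply the friendly analogue of Theorem~\ref{Kleinbock-Tomanov theorem} to the image of $g\Gamma$ in $\SL_d(\bbq_\Sigma)/\SL_d(\bbz_\Sigma)$ under the adjoint representation, with the convex subset $[-1,1]\subset\R$ reparametrized via $t\mapsto u(e^k t)$. This yields either conclusion~(1) directly, or a primitive integral vector $\vpz\in\wedge^r\gfrak(\zeds)$ so that $\sup_{t\in[-1,1]}\cfun(u(e^kt)g\vpz)\leq\eta$ (with an implicit constant depending on $\mu$). Feeding this into the argument of Lemma~\ref{lem:nilp-subalg}, whose conclusion is a purely algebraic statement about a single vector and whose proof uses Theorem~\ref{facny nondivergence theorem} plus the basic sum-intersection inequality \eqref{basic sum-intersection inequality}, produces a unipotent $\bbq$-subgroup $\mathbf W<\Gbf$ with
\[
\sup_{t\in[-1,1]}\cfun(u(e^kt)g\vpz_W)\ll \eta^{\dim(\mathbf W)/\dim(V)};
\]
here one uses the friendly version of Theorem~\ref{facny nondivergence theorem}, again from \cite{KT:Nondiv}. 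Because $\Gbf\in\mathcal H$, any normal subgroup of $\Gbf$ would fix $\vpz_W$, so the inequality $\cfun(g\vpz_W)<1/2<1\leq\cfun(\vpz_W)$ (which holds for $\eta$ small enough by applying the estimate at $t=0$) rules out $\mathbf W\lhd\Gbf$, and hence $\mathbf M:=\mathbf M_{\mathbf W}\neq\Gbf$.

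The main obstacle is replaying the argument of Proposition~\ref{prop:almost-inv-Lie}. The structure of that proof is: (a) use nondivergence to find, for most $t$ in the parameter set, a $\gamma_t\in\Gamma$ with $|u(e^kt)g\gamma_t^{-1}|\ll|g|^\star$, which controls $\cfun(\eta_M(u(e^kt)g))$; and then (b) upgrade the pointwise bound on $\cfun$ to a polynomial-in-$t$ bound using Lemma~\ref{l;polygood}, then invoke Lemma~\ref{lem:U-speed} (which in turn uses Lemma~\ref{lem:dist-fixu} and Lemma~\ref{l;polygood}) to obtain the decay factor $e^{-k/F}$. For (a) one substitutes the friendly Theorem~\ref{Kleinbock-Tomanov theorem}; for (b) one substitutes every use of Lemma~\ref{l;polygood} by the friendly analogue, which holds because each function of $t$ appearing (i.e.\ $t\mapsto \cfun(\eta_H(u(e^kt)g))$ and $t\mapsto\|\zpz\wedge\eta_H(u(e^kt)g)\|$) is of the form $\max_v$ of absolute values of polynomials of degree $\ll 1$, and \cite[Prop.~7.33]{KLW} (equivalently the $(C,\alpha)$-good property built into the definition of uniformly friendly) provides the needed decay estimate with constants depending on $\mu$. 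The bookkeeping on constants is identical to the Haar case with $F$ enlarged to absorb the $\mu$-dependent factors, giving (2)(a) and~(2)(b).

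In summary, the proof is a mechanical translation, the only genuine check being that all polynomial-in-$t$ nondivergence estimates used along the way have uniformly-friendly counterparts with the required $(C,\alpha)$-good property, which is \cite[Prop.~7.33]{KLW}. No new algebraic input is required beyond what already appears in the treatment of Theorem~\ref{thm:non-div-par}.
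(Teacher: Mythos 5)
Your proposal is correct and matches the paper's approach exactly: the paper proves Theorem~\ref{thm:non-div-par-friendly} by the one-line remark ``Repeating the proof of Theorem~\ref{thm:non-div-par} but with the (uniformly) friendly versions of Theorems~\ref{Kleinbock-Tomanov theorem} and~\ref{facny nondivergence theorem},'' having already noted in the preceding paragraph that both nondivergence theorems and the $(C,\alpha)$-good Lemma~\ref{l;polygood} admit friendly-measure analogues (the latter via~\cite[Prop.~7.33]{KLW}). Your elaboration correctly isolates the same three entry points of the Haar measure and the same substitutions, with the exponent $F$ absorbing the $\mu$-dependence.
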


Similarly, the proof of Theorem~\ref{thm:eff-linearization-general-real} is easily adapted to the friendly case, giving:

\begin{thm}\label{thm:eff-linearization-friendly}
Let $\mu$ be a uniformly friendly measure on $\bbr$. There are constants $A, D$ depending only on $N$ and $\mu$, and $E_1$ depending on $N$, $G$, $\Gamma$, and $\mu$ so that the following holds. Let $g\in G$, $k\geq 1$, and $0<\eta<1/2$. Assume $\vare:\bbr^+ \to (0,1)$ satisfies 
for any $s \in \bbr^+$ that
\[
\vare(s) \leq  \eta^A s^{-A} /E_1.
\]
Then at least one of the following three possibilities holds.
\begin{enumerate}
\item 
\begin{equation*}
\mu\biggl(\biggl\{\xi \in [-1,1]:
\begin{array}{c} 
u(e^k \xi)g\Gamma\not\in X_\eta\text{ or }\\
 \text{$u(e^k \xi)g\Gamma$ is not $(\vare,t)$-Diophantine}
\end{array}
\biggr\}\biggr)
< E_1\eta^{1/D}
\end{equation*} 

\item There exist a nontrivial proper subgroup $\Hbf\in\Hcal$ of \[\height(\Hbf)\leq E_1 ( |g|^A+ e^{At})\eta^{-A}\] 
so that the following hold for all $\xi \in [-1,1]$:
\begin{align*}
\|\eta_{H}(u(e^k\xi)g)\|&\leq E_1 ( |g|^A+ e^{At})\eta^{-A}\\
\Bigl\|\zpz\wedge{\eta_{H}(u(e^k\xi)g)}\Bigr\|&\leq E_1 e^{-k/D} ( |g|^A+ e^{At})\eta^{-A}
\end{align*}
where $\zpz$ is as in~\eqref{def of z in intro}.
\item There exist a nontrivial proper normal subgroup $\Hbf \lhd \Gbf$ of  \[\height(\Hbf)\leq E_1  e^{At}\eta^{-A}\]
so that 
\[
 \Bigl\|\zpz\wedge\vpz_{H}\Bigr\|\leq E_1 \eta^{-A}  \vare(\height(\Hbf)^{1/A} \eta^{A}/ E_1 )^{1/A}.
\]

\end{enumerate}
\end{thm}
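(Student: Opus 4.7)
The plan is to follow the proof of Theorem~\ref{thm:eff-linearization-general} (specialized to the one-parameter case of Theorem~\ref{thm:eff-linearization-general-real}) essentially verbatim, substituting $\mu$ for Lebesgue measure throughout and invoking the friendly analogues of the two measure-theoretic ingredients. The two ingredients that need replacement are (i) the nondivergence estimate in the proof for the set $\Bad_\eta$, and (ii) the Remez-type inequality Lemma~\ref{l;polygood} used repeatedly in Lemma~\ref{main lemma} and in Lemma~\ref{lem:U-speed}/Proposition~\ref{prop:almost-inv-Lie}. Ingredient~(i) is exactly Theorem~\ref{thm:non-div-par-friendly}. Ingredient~(ii) is supplied by \cite[Prop.~7.33]{KLW}: a uniformly friendly measure $\mu$ on $\bbr$ is $(C,\alpha)$-good against polynomials of bounded degree, so the one-dimensional ($\Sigma=\{\infty\}$) version of Lemma~\ref{l;polygood} holds with $\mu$ in place of Lebesgue, with a constant $\alpha$ depending only on the degree and on the decay/Federer parameters of $\mu$.

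With these two substitutions in hand, I would first upgrade the key propagation estimate Proposition~\ref{prop:almost-inv-Lie} to the friendly setting. Its proof uses Theorem~\ref{Kleinbock-Tomanov theorem} to find many $u$ with $|u(e^k\xi)g\gamma_u^{-1}|\ll |g|^\star$, and then applies the Remez-type Lemma~\ref{l;polygood} to promote a pointwise bound on $\cfun(\eta_{M_H}(\cdot g))$ from a large set to all of the interval. Both steps have direct friendly analogues (the friendly version of Theorem~\ref{Kleinbock-Tomanov theorem} is stated and used in \cite{KT:Nondiv}, and the friendly Remez was just noted). The Lemma~\ref{lem:U-speed} argument goes through identically for the same reason.

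Next I would rework Lemma~\ref{main lemma}. The construction of the cover $\mathcal E$ proceeds as in the proof of Lemma~\ref{main lemma} above, but I would replace the Vitali covering lemma (which gives a disjoint subcollection whose triples cover and whose total Lebesgue measure is $\ll 1$) by a Besicovitch-type covering lemma. The Federer property of $\mu$ ensures
\[
\mu(3\mathsf B_i)\leq A'\mu(\mathsf B_i)
\]
for all balls of interest, so the standard Besicovitch/Vitali argument yields a sub-cover with $\sum_i \mu(\mathsf B_i)\ll \mu(\mathrm{supp}\,\mu\cap[-3,3])\ll 1$. The rest of the proof of Lemma~\ref{main lemma} is purely algebraic (the bifurcation into cases~(1) or (2) using Lemma~\ref{l;lojas-ineq}), and the only measure-theoretic step at the end, namely bounding $\mu$ of each piece on the right-hand side of~\eqref{exceptional inclusion equation}, is handled by the friendly Remez inequality applied to the polynomial $\xi\mapsto \eta_{H_i}(u(e^k\xi)g)$ on each ball $\mathsf B_i$.

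The assembly of these pieces into the final statement is literally the proof of Theorem~\ref{thm:eff-linearization-general} given in \S\ref{sec:proof-main}: split $\Bad=\Bad_\eta\cup \Bad_\exceptional$, bound $\mu(\Bad_\eta)$ by Theorem~\ref{thm:non-div-par-friendly} (or conclude case~(2) of the theorem), decompose $\Bad_\exceptional$ through the filtration $\mathcal F_0\subset \mathcal F_1\subset\cdots\subset\mathcal F_{d-1}$ and iterate the friendly version of Lemma~\ref{main lemma} with the same choice of parameters $n_r,\eta_r$, terminating either in case~(1), in case~(3) via the normal-subgroup alternative (unchanged), or in case~(2) via Proposition~\ref{prop:almost-inv-Lie} in its friendly form. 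The only point at which any care is needed is in checking that the constants $A,D,E_1$ in the friendly version may now depend on the Federer and decay exponents of $\mu$ through the friendly Remez constant $\alpha=\alpha(\mu,d)$; this is the mildest of changes and is exactly the dependence announced in the statement. I expect no essential obstacle: the main subtlety is simply to be careful that the Besicovitch cover of $\mathrm{supp}\,\mu$ produces a $\mu$-integrable total weight, which is what the Federer condition is there to guarantee.
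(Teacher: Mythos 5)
Your proposal follows the paper's intended approach: the paper gives no detailed proof of this theorem, stating only that the Lebesgue argument of Theorem~\ref{thm:eff-linearization-general-real} ``is easily adapted'' by substituting the friendly nondivergence estimate (Theorem~\ref{thm:non-div-par-friendly}) and the friendly analogue of Lemma~\ref{l;polygood} from \cite[Prop.~7.33]{KLW}, which are precisely the ingredients you identify, along with the Federer control of $\sum_i\mu(\mathsf B_i)$ in the covering step of Lemma~\ref{main lemma}. One small economy you could make: since both the hypotheses and conclusions of Proposition~\ref{prop:almost-inv-Lie} and Lemma~\ref{lem:U-speed} are pointwise statements over all of $\mathsf B_U(e)$, they can be invoked unchanged in the friendly setting --- their internal use of Lebesgue measure is harmless --- so no ``friendly upgrade'' of those particular lemmas is actually needed.
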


As a consequence Theorem~\ref{thm:linearization-noneff-eff-friendly} follows, see the proof of Theorem~\ref{thm:linearization-noneff-eff-S-arith}. We also get the following analogues of Theorem~\ref{thm:linearization-noneff} whose proof is mutatis mutandis the same as the proof of Theorem~\ref{thm:linearization-noneff-S-arith}.

\begin{thm}\label{thm:linearization-noneff-friendly}
Let $\mu$ be a uniformly friendly measure on $\R$. 
Let $\eta>0$. Let 
\[
\{H_i:1\leq i\leq r\}\subset\Hcal
\] 
be a finite subset consisting of proper subgroups, and for each $1\leq i\leq r$
let $\mathcal C_i\subset N_G(U,H_i)$ be a compact subset. 
There exists an open neighborhood $\mathcal O=\mathcal O(\alpha,\{H_i\}, \{\mathcal C_i\})$ 
so that $X \setminus \mathcal O$ is compact and disjoint from
$\cup_i \mathcal C_i\Gamma/\Gamma$ so that the following holds. 
For every $x\in\mathcal G(U)$ there exists some $k_0=k_0(\mu,\eta,\{H_i\}, \{\mathcal C_i\}, x)$ 
so that for all $k\geq k_0$ we have 
\[
\mu\Bigl(\{t\in[-1,1]: u(e^kt)x\in\mathcal O\}\Bigr)< \eta  
\] 
\end{thm}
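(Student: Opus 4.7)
The plan is to mimic the proof of Theorem~\ref{thm:linearization-noneff-S-arith} verbatim, with Theorem~\ref{thm:eff-linearization-friendly} taking the place of Theorem~\ref{thm:eff-linearization-general}. Given the input $\eta>0$ and compact subsets $\mathcal C_i\subset N_G(U,H_i)$, first I would set $\beta=(\eta/E_1)^D$ where $D$, $E_1$ are the constants furnished by Theorem~\ref{thm:eff-linearization-friendly}, and define
\[
\vare(s)=\Bigl(\beta\,s^{-1}\sigma\bigl(E_1^A\beta^{-A}s^A\bigr)/(2E_1)\Bigr)^A,
\]
which satisfies the hypothesis $\vare(s)\leq \beta^A s^{-A}/E_1$ of Theorem~\ref{thm:eff-linearization-friendly}. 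Using the compactness of each $\mathcal C_i$, I would then choose a single $t\in\bbr^+$ so that $\height(\Hbf_i)\leq e^t$ and $\cfun(\eta_{H_i}(h))<e^t$ for every $1\leq i\leq r$ and every $h\in\mathcal C_i$.

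Next, I would define the open set
\[
\mathcal O=\Bigl\{x\in X:\ x\notin X_\beta\ \text{ or $x$ is not $(\vare,t)$-Diophantine}\Bigr\}.
\]
The complement $X\setminus\mathcal O$ is the intersection of the compact set $X_\beta$ with the closure of a set cut out by finitely many (height $\leq e^{\star t}$, by Lemma~\ref{counting lemma}) polynomial inequalities in the representations $\rho_H$, hence is compact. To check that $\bigcup_i\mathcal C_i\Gamma/\Gamma\subset\mathcal O$, note that for $h\in\mathcal C_i\subset N_G(U,H_i)$ we have $\zpz\wedge\eta_{H_i}(h)=0$ and $\cfun(\eta_{H_i}(h))<e^t$, so the Diophantine inequality \eqref{eq:dioph-cond-intro-2} fails at $\Hbf_i$, placing $h\Gamma$ in $\mathcal O$.

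For the main estimate, I would fix $x=g\Gamma\in\mathcal G(U)$ and apply Theorem~\ref{thm:eff-linearization-friendly} to $g$, $k$, $\beta$, $t$, $\vare$. I would then rule out alternatives (2) and (3):

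\smallskip
Alternative (3) is impossible: since $\vare$ is defined via $\sigma$, the second inequality in (3) would force $\|\zpz\wedge\vpz_H\|<\sigma(\height(\Hbf))$ for some normal $\Hbf\lhd\Gbf$ of bounded height, contradicting the definition \eqref{eq:def-sigmaT} of $\sigma$. This is exactly the argument used in Theorem~\ref{thm:linearization-noneff-S-arith}.

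\smallskip
Alternative (2), were it to hold for arbitrarily large $k$, would produce for every $k\to\infty$ a proper subgroup $\Hbf_k\in\hcal$ with $\height(\Hbf_k)\leq L:=E_1(|g|^A+e^{At})\beta^{-A}$ and
\[
\max_{\zpz\in\mathcal B_U}\Bigl\|\zpz\wedge\eta_{H_k}(u(e^k\xi)g)\Bigr\|\leq E_1Le^{-k/D}\quad\text{for all $\xi\in[-1,1]$.}
\]
By Lemma~\ref{counting lemma} only finitely many $\Hbf\in\hcal$ have height $\leq L$, so after passing to a subsequence $\Hbf_k=\Hbf$ is constant. Setting $\xi=0$ and letting $k\to\infty$ gives $\zpz\wedge\eta_H(g)=0$ for every $\zpz\in\mathcal B_U$, i.e.\ $Ug\subset gH$, contradicting $g\Gamma\in\mathcal G(U)$.

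\smallskip
Hence for all $k$ beyond some $k_0=k_0(\mu,\eta,\{H_i\},\{\mathcal C_i\},x)$, alternative (1) of Theorem~\ref{thm:eff-linearization-friendly} holds, giving
\[
\mu\bigl(\{t\in[-1,1]:\ u(e^kt)x\in\mathcal O\}\bigr)\leq E_1\beta^{1/D}=\eta,
\]
as required. The main (very mild) obstacle is ensuring that the same single choice of $\mathcal O$ works uniformly for all basepoints $x\in\mathcal G(U)$: this is why the constants $\beta,\vare,t$ are chosen only from the data $(\eta,\{H_i\},\{\mathcal C_i\})$, whereas the threshold $k_0$ is allowed to depend on $x$ (coming from the need to pass to a subsequence to rule out alternative (2), a step that is inherently non-effective in $x$).
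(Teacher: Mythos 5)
Your proof is correct and matches the paper's approach: the paper itself says the proof of this theorem is ``mutatis mutandis the same as the proof of Theorem~\ref{thm:linearization-noneff-S-arith},'' and your argument reproduces that proof faithfully, substituting Theorem~\ref{thm:eff-linearization-friendly} for Theorem~\ref{thm:eff-linearization-general} and renaming the intermediate scale $\beta$ to avoid clashing with the $\eta$ in the statement.
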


\appendix
\section{Proof of Theorem~B}\label{sec:thmB}

In this section we prove Theorem~B.
In qualitative form, this is proved by Greenberg in~\cite{GrBerg-Loj-1} and~\cite{GrBerg-Loj-2}. 
We reproduce the argument here to make the estimates explicit.

\begin{proof}[Proof of Theorem~B]
Let $\bbc_p$ denote the completion of the algebraic closure of $\bbq_p$ for all $p\in\places_f$;
as {\em abstract} fields $\bbc$ and $\bbc_p$, for any $p\in\places_f$, are isomorphic.
Therefore, $\bbc^m$ in {\em Effective Nullstellensatz} theorem of~\S\ref{sec:Loj-inequality} may be replaced by $\bbc_p^m$ for any $p\in\places_f$. 

As in~\cite[p.~59--60]{GrBerg-Loj-1} and~\cite[Steps 1 and 2]{GrBerg-Loj-2} we begin with some reductions. 

Let $\mathcal I\subset\bbz[t_1,\ldots,t_m]$ be the ideal generated by $\{f_i\}$,
and let ${\bf Y}$ be the variety defined by $\mathcal I$ in $\bbc_p^m$.

Put $\mathcal J:=\mathcal I\bbq[t_1,\ldots,t_m]$.
The radical and the primary decomposition of $\mathcal J$ in $\bbq[t_1,\ldots,t_m]$ 
can be computed, see~\cite[Chap.~8.7]{BeWei-Grobner}; 
this computation uses Gr\"{o}bner basis and yields the following.  There exists a computable 
constant $s=s(m, n, D_0)$ so that
\begin{itemize}
\item $\Bigl(\sqrt\mathcal J\Bigr)^s\subset\mathcal J$,
\item $\sqrt\mathcal J=\cap_{1}^b\mathcal P_j$ where $\mathcal P_j$ is a prime ideal for all $1\leq j\leq b\leq s,$ and
\item $\mathcal P_j$ is generated by $\{f_{j,\ell}:1\leq \ell\leq s\}$ where the total degree of $\{f_{j,\ell}\}$ is bounded by $s$ 
and the logarithmic height of the numerators and denominators of each $f_{j,\ell}$ is controlled by $s\mathsf h$.
\end{itemize}

Moreover, by~\cite[Cor.\ 3.8]{GiaTrZa}, we may replace $s$ with $s'\geq s$,
which is again computable and depends only on $m$, $n$, and $D_0$, so that the following holds.
For every $1\leq j\leq b$ the ideal $\mathcal P_j\cap\bbz[t_1,\ldots,t_m]$ is generated by 
$\{g_{j,k}:1\leq k\leq s'\}$, furthermore, the total degree of $\{g_{j,k}\}$ is bounded by $s'$ 
and for every $j$ and $k$ the logarithmic height of $g_{j,k}$ is bounded by $s'\mathsf h$.

Altogether, we may assume that 
$\mathcal I$ is a prime ideal, i.e.\ $\mathbf Y$ is $\bbq$-irreducible. 

We now use induction on $u:=\dim\mathbf Y$ to prove the claim, 
see~\cite[Case 1, p.~60]{GrBerg-Loj-1} and~\cite[Step 3]{GrBerg-Loj-2}.

The base case is when $u=-1$, that is: when $\mathcal I$ contains a nonzero constant.
In this case we use the {\em effective nullstellensatz theorem} above and find 
some $a\in\bbz$ with 
\[
\log|a|\leq (8D_0)^{4M-1}(\mathsf h+8D_0\log(8D_0))
\]
where $M=2^{m-1}$ so that $a=\sum_i q_if_i$.

This implies the claim in the theorem when $u=-1$.

Assume now that $\mathbf Y$ is nonempty and that the theorem is 
established in dimensions less than $u$. 
Let ${\rm Jac}$ be the Jacobian matrix of $\{f_i\}$ and let $\Delta$ be the system of minors 
of order $m-u$ taken from ${\rm Jac}$.
Since ${\rm char}(\bbq)=0$, the locus of common zeros of $\{\Delta,\{f_i\}\}$ 
is a proper $\bbq$-subvariety of ${\bf Y}$. 
By inductive hypothesis, thus, there exists some $\dpz'$ depending on $m$, $n$, and $D$ 
which satisfies the claim in the theorem for $\{\Delta, \{f_i\}\}$. 

For any $1\leq\alpha_1<\cdots<\alpha_{m-u}\leq n$, put $(\alpha)=(\alpha_1,\ldots,\alpha_{m-u})$ 
and set $f_{(\alpha)}=\{f_{\alpha_1},\ldots,f_{\alpha_{m-u}}\}$. 
Let $\mathbf Y_{(\alpha)}$ be the variety defined by $f_{(\alpha)}$.  
Let $\mathbf Z_{(\alpha)}=\cup_{j=1}^c\mathbf Z_{(\alpha),j}$ where for all $1\leq j\leq b$, 
we have $\mathbf Z_{(\alpha),j}\subset\mathbf Y_{(\alpha)}$,  $\mathbf Z_{(\alpha),j}$ is $\bbq$-irreducible with
$\dim\mathbf Z_{(\alpha),j}=u$, and $\mathbf Z_{(\alpha),j}\neq\mathbf Y$.

Let $\mathcal I_{(\alpha),j}\subset \bbz[t_1,\ldots,t_m]$ be the ideal corresponding to $\mathbf Z_{(\alpha),j}$.  
Since $\mathbf Y_{(\alpha)}$ is defined by $f_{(\alpha)}$, a similar argument as above 
implies that there exists a computable constant $r=r(m,n,D_0)$ so that
\begin{itemize}
\item $c\leq r$, and
\item for every $1\leq j\leq c$, there exists $\{g_{(\alpha),j,k}:1\leq k\leq r\}$ so that $\mathcal I_{(\alpha),j}$ 
is generated by $\{g_{(\alpha),j,k}\}$, further, the total degree of $\{g_{(\alpha),j,k}\}$ is bounded by $r$ 
and for every $j$ and $k$ the logarithmic height of $g_{(\alpha),j,k}$ is controlled by $r\mathsf h$
\end{itemize}

Since $\mathbf Z_{(\alpha),j}\neq\mathbf Y$ for all $j$, by inductive hypothesis, 
there exists $\dpz_{(\alpha)}'$ depending on $m$, $n$, and $D_0$ which satisfies the claim in the theorem for
\[
\Bigl\{\{g_{((\alpha),j,k)}:1\leq k\leq r\}, \{f_i:1\leq i\leq n\}\Bigr\},
\]
for all $1\leq j\leq c.$

Given $(\alpha)=(\alpha_1,\ldots,\alpha_{m-u})$ and $(\beta)=(\beta_1,\ldots,\beta_{m-u})$
let $\Delta_{(\alpha),(\beta)}$ denote the corresponding minor from ${\rm Jac}$. 
By the implicit function theorem, if $z\in\mathbf Y_{(\alpha)}$ is such that 
$\Delta_{(\alpha),(\beta)}(z)\neq 0$ for some $(\beta)$, then $z$ lies on exactly one component of 
$\mathbf Y_{(\alpha)}$, moreover, that component has dimension $u$.

Let $r'=r'(m,n,D_0)$ be so that the logarithmic height of $\{\Delta,\{f_i\}\}$ is bounded by $r'\mathsf h$.  
Define 
\[
\dpz:=2r'\dpz'+r\sum_{(\alpha)}\dpz'_{(\alpha)}.
\] 
We claim that the theorem holds with $\ref{k:GrBr-Loj}=\dpz$.

Let $w=(w_1,\ldots,w_m)$ be as in the statement of the theorem. If either
\begin{enumerate}
\item $v_p(\Delta_{(\alpha,(\beta))}(w))> 2r'\dpz'\mathsf h$ for all $(\alpha)$ and $(\beta)$, or
\item $v_p(g_{(\alpha), j, k}(w))> 2r\dpz'_{(\alpha)}\mathsf h$ for some $(\alpha)$, some $j$, and all $k$,
\end{enumerate}
then we get the claim from the inductive hypothesis.

Therefore, we may assume that there are $(\alpha)$ and $(\beta)$ so that 
\be\label{eq:Hensel-1}
v_p(\Delta_{(\alpha),(\beta)}(w))\leq 2r'\dpz'\mathsf h,
\ee
and for every $(\theta)$ and every $j$ there exists some $k$ so that 
\be\label{eq:Hensel-2}
v_p(g_{(\theta), j, k}(w))\leq 2r\dpz'_{(\theta)}\mathsf h.
\ee

Now a suitable version of Hensel's Lemma, see~\cite[Note 1]{GrBerg-Loj-2}, 
implies that there exists some $y\in\bbz_p^m$ so that $f_{(\alpha)}(y)=0$ and
\be\label{eq:Hensel-y-w}
v_p(y-w)>C_2-2r'\dpz'\mathsf h.
\ee
The theorem follows if we show that $y\in\mathbf Y$.

Let us recall that
\be\label{eq:Hensel-C-big}
C_2>4r'\dpz'\mathsf h+2r\Bigl(\max\{\dpz'_{(\theta)}:(\theta)\}\Bigr)\mathsf h.
\ee
Then,~\eqref{eq:Hensel-2},~\eqref{eq:Hensel-y-w}, and~\eqref{eq:Hensel-C-big}, 
imply that $v_p(g_{(\theta), j, k}(w))=v_p(g_{(\theta), j, k}(y))$. 
In particular, $y\not\in\mathbf Z_{(\theta),j}$ for all $(\theta)$ and all $j$. 
Similarly,~\eqref{eq:Hensel-1},~\eqref{eq:Hensel-y-w}, and~\eqref{eq:Hensel-C-big}, imply 
that $\Delta_{(\alpha),(\beta)}(y)\neq0.$

Thus, the implicit function theorem implies that $y$ belongs to $\mathbf Y$.
\end{proof}

\end{document}